\title{
Uniformization of  Sierpi\'nski carpets in the plane}%
\author{Mario Bonk}
\address{Mario Bonk\\Department of Mathematics \\
University of California, Los Angeles\\Box 951555\\
Los Angeles, CA 90055\\USA} \email{mbonk@math.ucla.edu}
\thanks{The author was supported by NSF grants DMS 0244421, DMS 0456940,  and
DMS 0652915.}
\date{September 15, 2010. Revised: February 21, 2011.}
\newcommand\C{{\mathbb C}}
\newcommand\oC{\widehat {\mathbb C}}
\newcommand\N{{\mathbb N}}
\newcommand\D{{\mathbb D}}
\newcommand\R{{\mathbb R}}
\newcommand\iu{{\textbf {\textit{i}}}}
\renewcommand\H{{\mathbb H}}
\newcommand\Sph{\widehat {\mathbb C}}
\newcommand\geo{\partial_\infty}
\newcommand\dee{\partial}
\newcommand\Mod{\operatorname{mod}}
\newcommand\M{\mathcal{M}}
\newcommand\dist{\operatorname{dist}}
\newcommand\diam{\operatorname{diam}}
\newcommand\inte{\operatorname{int}}
\newcommand\forr{\quad\text{for} \quad}
\newcommand\foral{\quad\text{ for all} \quad}
\newcommand\co{\colon}
\renewcommand\:{\colon}
\newcommand\sub {\subseteq}
\newcommand\ra {\rightarrow}
\def\length{\mathop{\mathrm{length}}}
\newcommand\Ga{\Gamma}
\newcommand\Om{\Omega}
\newcommand\ga{\gamma}
\newcommand\la{\lambda}
\newcommand\eps{\epsilon}
\newcommand\sig{\sigma}
\newcommand\Sig{\Sigma}
\newcommand\de{\delta}
\newcommand\OC{{\widehat{\mathbb C}}}
\newcommand\no{\noindent} 
\newcommand\Md{\operatorname{M}} 
\newtheorem{theorem}{Theorem}[section]
\newtheorem{conjecture}[theorem]{Conjecture}
\newtheorem{proposition}[theorem]{Proposition}
\newtheorem{corollary}[theorem]{Corollary}
\newtheorem{remark}[theorem]{Remark}
\newtheorem{lemma}[theorem]{Lemma}
\theoremstyle{definition}
\newtheorem{example}[theorem]{Example}
\begin{document}

\abstract{Let  $S_i$, $i\in I$,  be a  countable collection of Jordan  curves  in the extended complex plane
 $\Sph$
that bound pairwise disjoint closed Jordan  regions. If the Jordan  curves  are uniform quasicircles and are uniformly relatively separated, then there exists a quasiconformal map 
$f\: \Sph\ra \Sph$ such that $f(S_i)$ is a round circle  for all $i\in I$. This implies that every  Sierpi\'nski carpet in $\oC$ whose peripheral circles are uniformly relatively separated uniform quasicircles can be mapped to a round Sierpi\'nski carpet by a quasisymmetric map.}
\endabstract

\maketitle

\tableofcontents
\section{Introduction}\label{s:Intro}

\no 
Let $\Sph=\C\cup\{\infty\}$ denote the extended complex plane equipped with the chordal metric $\sigma$ given by 
\begin{equation}\label{eq:chmet}
\sigma(x,y)=\frac{2|x-y|}{\sqrt{1+|x|^2}
\sqrt{1+|y|^2}} \forr x,y\in \C,
\end{equation}   
and by  a suitable limit of  this expression if $x=\infty$ or 
$y=\infty$. 
As usual $\OC$ can be identified with the unit sphere in $\R^3$ equipped with the restriction of the Euclidean metric by stereographic projection. 

A Jordan  curve  $S\sub \Sph$ is called a {\em quasicircle} if the following condition holds: there exists a constant $k\ge 1 $ such that 
for all points  $x,y\in S$, $x\ne y$, we have the inequality 
\begin{equation}\label{eq:qcirc}
\diam(\ga)\le k \sig(x,y)
\end{equation}  for the diameter  of one of the subarcs 
$\ga$ of $S$ with endpoints $x$ and $y$.   
Essentially, this condition rules out cusps for $S$. 
Typical examples of quasicircles are  von Koch 
snowflake-type curves.     
It is well-known that $S$ is a quasicircle if and only  if there exists a quasiconformal map $f\:\Sph\ra \Sph$ such that $f(S)$ is a round circle. So  the quasicircles are precisely the images 
of round circles under quasiconformal homeomorphisms  on  $\Sph$. 

One can ask whether  a similar statement  is true 
for a collection $\mathcal{S}=\{S_i: i\in I\}$  of pairwise disjoint quasicircles $S_i$, where $I$ is a  countable index set.  So we want to find a  quasiconformal homeomorphism  $f$ on $\Sph$  that makes all the quasicircles in the collection simultaneously round. 

It is clear that such a map $f$ need not exist if we do not impose  further restrictions on the collection $\mathcal{S}$. Indeed, as follows   from standard distortion estimates for quasiconformal maps,  a necessary condition for the existence of the  map $f$ is that $\mathcal{S}$  consists of {\em uniform quasicircles}: there exists a constant $k\ge 1$ such that each $S_i$ for $i\in I$ is a $k$-quasicircle, i.e., it satisfies condition \eqref{eq:qcirc}.  Even if the Jordan curves  $S_i$ are uniform quasicircles, the desired map $f$ need not exist. An example can be obtained from  an infinite collection of disjoint squares that contains a sequence of pairs of squares with parallel sides 
of equal length such that the  distance between the sides goes to zero  faster than the sidelength (see 
Example~\ref{ex:nonunif}).

A way to exclude such examples is to impose {\em uniform relative separation} on the collection $\mathcal{S}$:
there exists a constant $s>0$ such that
\begin{equation} \label{eq:unifsep}
\frac{\dist(S_i,S_j)}{\min\{\diam(S_i),\diam(S_j)\}}
\ge s,
\end{equation}  
whenever $i,j\in I$, $i\ne j$. 
This requirement stipulates that the relative distance
of two distinct quasicircles in $\mathcal{S}$ 
(the distance rescaled by the smaller diameter 
of the sets) is uniformly  bounded from below. 
The condition of uniform relative separation still allows 
rather tight collections of quasicircles. For example, the peripheral circles of the standard Sierpi\'nski carpet $T$ (given by the boundaries of the squares used in the construction of $T$; see Section~\ref{s:carpet}) form a collection of uniformly relatively separated uniform quasicircles. 

Even if the collection $\mathcal{S}$ consists of  uniformly relatively 
separated uniform quasicircles, a map $f$ as desired need not exist due to possible nesting of the quasicircles $S_i$ (see 
Example~\ref{ex:nested}). This problem is ruled out if we require 
that the curves $S_i$ bound pairwise  disjoint closed Jordan  regions.

If we impose all the conditions on $\mathcal{S}$ as discussed, then we actually get 
a positive statement as our  first main result  shows.   

\begin{theorem}\label{thm:simulunif}
Suppose that $\mathcal{S}=\{ S_i:i\in I\}$ is a family of 
Jordan  curves  in $\Sph$ that bound pairwise 
disjoint closed Jordan  regions. If $\mathcal{S}$ consists of uniformly relatively separated uniform quasicircles, then there 
exists a quasiconformal map 
$f\: \Sph\ra \Sph$ such that $f(S_i)$ is a round circle for all $i\in I$.
\end{theorem}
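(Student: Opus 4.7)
The plan is to prove Theorem~\ref{thm:simulunif} by a finite approximation argument combined with a compactness and limiting procedure. Enumerate $I = \mathbb{N}$ and let $I_n = \{1, \dots, n\}$. For each $n$ I would first construct a quasiconformal self-map $f_n$ of $\Sph$ that sends $S_i$ to a round circle for every $i \in I_n$, with the quasiconformality constant bounded independently of $n$. After normalizing the $f_n$ to fix three appropriately chosen points of $\Sph$, standard normal-family compactness for $K$-quasiconformal maps yields a subsequential limit $f \co \Sph \ra \Sph$. For each fixed $i$, the image $f(S_i)$ is then a Hausdorff limit of round circles, and the non-degeneracy of $f$ together with the fact that $S_i$ is a Jordan curve forces $f(S_i)$ to be a genuine round circle.

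To construct the finite-case map $f_n$, observe that the complement $\Omega_n$ of the union of the closed Jordan regions bounded by $S_1, \dots, S_n$ is a finitely connected planar domain. The classical Koebe uniformization theorem yields a conformal map $\phi_n$ of $\Omega_n$ onto a circle domain. Since each $S_i$ is a $k$-quasicircle, $\phi_n$ extends to a homeomorphism of $\overline{\Omega}_n$, and one can then extend quasiconformally across each Jordan region onto the corresponding round disk, with distortion depending only on $k$. Gluing the pieces produces the desired $f_n$.

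The main obstacle is proving that the resulting quasiconformality constants are uniformly bounded in $n$. This is precisely where the uniform relative separation condition \eqref{eq:unifsep} enters: without it, the Koebe uniformization $\phi_n$ may cause pinching between distinct Jordan regions, producing annuli of arbitrarily small modulus and causing the distortion of the extensions to blow up. I expect this step to rest on quantitative modulus estimates comparing separating curve families before and after uniformization, using the quasicircle constant $k$ to control the quasisymmetry of $\phi_n$ along each boundary component and the separation constant $s$ to control the geometry of annuli surrounding each $S_i$. Concretely, one should verify that the boundary map $\phi_n|_{S_i}$ is quasisymmetric with a constant depending only on $k$ and $s$ (not on $n$ or $i$), which ensures that the quasiconformal fillings inside each Jordan region carry distortion bounds independent of $n$. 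Once a uniform distortion bound is in hand, the limit step proceeds through the well-known compactness and convergence theory for $K$-quasiconformal maps of $\Sph$, completing the proof.
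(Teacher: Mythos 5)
Your overall architecture matches the paper's: uniformize finite approximations $\Omega_n$ by Koebe, prove uniform distortion bounds depending only on $s$ and $k$, extend across the Jordan regions, and pass to a limit. (Your limit step, taking normal-family limits of globally quasiconformal maps rather than limits of quasisymmetric embeddings of $T$ followed by a single extension, is a harmless reordering.) You also correctly identify the crux: uniform quasisymmetry of the Koebe maps $\phi_n$ with constants independent of $n$.

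However, the way you propose to establish that crux --- ``quantitative modulus estimates comparing separating curve families before and after uniformization'' --- is precisely the approach that fails here, and recognizing why is the central point of the paper. Classical modulus of path families \emph{inside} $\Omega_n$ is not bounded below in terms of $s$ and $k$ alone: as $n$ grows the domains become sparser, and if the residual set $T$ has measure zero every path family contained in $T$ has vanishing modulus, so no Loewner-type lower bound survives in the source. The paper circumvents this with Schramm's \emph{transboundary} modulus, which lets paths pass through the complementary Jordan regions and is still conformally invariant. Even then there is a second obstruction: transboundary modulus in the \emph{target} circle domain need not be small when the image continua are far apart, because complementary disks can act as ``bridges.'' The resolution (Proposition~\ref{thm:uptrans}) is to discard a uniformly bounded number $N=N(\mu)$ of carefully selected fat complementary components, chosen via the annulus argument of Lemmas~\ref{lem:ringfat} and \ref{lem:subann}, and to estimate transboundary modulus only in the complement of those; the matching lower bound in the source (Propositions~\ref{thm:lowmod} and \ref{prop:lowtrans}) then uses that the remaining domain has at most $N$ complementary components, so its Loewner function depends only on $s$ and $k$. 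None of this machinery, nor any substitute for it, appears in your proposal, so the step you flag as the main obstacle is genuinely open as written. A secondary, more minor gap: the quasiconformal filling of each Jordan region with dilatation depending only on $k$ requires a normalization guaranteeing that the complements of $D_i$ and of its image are uniformly large (cf.\ condition \eqref{complbig} in Proposition~\ref{prop:qcircext}); this is arranged by fixing $0,1,\infty$ in $T$, which your sketch does not address.
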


The proof will show that this statement is quantitative in the following sense: if $\mathcal{S}$ 
consists of  $s$-relatively separated $k$-quasicircles, then the map $f$ can be chosen to be $H$-quasiconformal with $H$ only depending on $s$ and $k$.

One can ask to what extent the map $f$ is uniquely determined. Suppose that $\{D_i:i\in I\}$ is a  collection
of pairwise disjoint closed  Jordan  regions such that $\partial D_i=S_i$, where 
the collection $\mathcal{S}=\{S_i:i\in I\}$ is as in Theorem~\ref{thm:simulunif}.  Obviously, it is easy to perturb  $f$ on  the interior $\inte(D_i)$ of any of the sets $D_i$ while retaining the roundness of the circles $f(S_i)$. So it is only meaningful to ask for uniqueness of $f$ on the complementary set 
\begin{equation}\label{eq:T}
T=\Sph\setminus \bigcup_{i\in I}\inte(D_i) 
\end{equation}  of the regions $D_i$, $i\in I$. 
Again if $T$ has non-empty interior, then $f$ is not unique on $T$,
but it turns out that if $T$ has  measure zero, then $f$ is uniquely determined on $T$ up to post-composition with  a M\"obius transformation.  This follows from rigidity statements for {\em Schottky sets} in $\Sph$, i.e., compact subsets of $\Sph$ whose complementary 
components consist of pairwise disjoint open disks
\cite[Thm.~1.1]{BKM}. 

If one uses  this rigidity result in combination with 
Theorem~\ref{thm:simulunif}, one obtains the following 
existence and uniqueness statement for the uniformization of Sierpi\'nski carpets  by round Sierpi\'nski carpets, i.e., 
Sierpi\'nski carpets in $\OC$ 
whose complementary components are round disks (see Section~\ref{s:carpet} for terminology). 

\begin{corollary}\label{cor:carpetunif}
Suppose that $T\sub \Sph$ is a Sierpi\'nski 
carpet whose peripheral circles are uniformly relatively separated uniform quasicircles. Then $T$ can be mapped to    a round       
 Sierpi\'nski carpet $T'$ by a quasisymmetric
 homeomorphism $f\: T\ra T'$.

If  $T$ has spherical measure zero, then 
the quasisymmetric map $f$  is unique up to post-composition with a M\"obius transformation on $\Sph$. 
\end{corollary}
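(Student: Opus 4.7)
My plan is to deduce the corollary from Theorem~\ref{thm:simulunif} (for existence) together with the rigidity of Schottky sets \cite[Thm.~1.1]{BKM} (for uniqueness), essentially following the blueprint laid out in the discussion preceding the statement.

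For existence, I would enumerate the complementary components of $T$ in $\OC$ as open Jordan regions $\{D_i:i\in I\}$; their boundaries $S_i=\partial D_i$ are by definition the peripheral circles of $T$. Because the peripheral circles of a Sierpi\'nski carpet are pairwise disjoint, the closed Jordan regions $\overline{D_i}$ are pairwise disjoint as well. The hypothesis says that $\{S_i:i\in I\}$ is a family of uniformly relatively separated uniform quasicircles, so Theorem~\ref{thm:simulunif} provides a quasiconformal map $F\:\OC\to\OC$ such that $F(S_i)$ is a round circle for every $i\in I$. Then $T':=F(T)$ is homeomorphic to $T$ and hence a topological Sierpi\'nski carpet; its complementary components $F(D_i)$ are the Jordan regions bounded by the round circles $F(S_i)$, so each is a round disk. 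Therefore $T'$ is a round Sierpi\'nski carpet, and $f:=F|_T$ is the desired quasisymmetric homeomorphism, since any quasiconformal self-map of $\OC$ equipped with the chordal metric is globally quasisymmetric.

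For uniqueness, assume that $T$ has spherical measure zero, and fix the reference map $f_0:=F_0|_T\: T\to T_0'$ produced by the existence part from a quasiconformal $F_0\:\OC\to\OC$. Since $F_0$ preserves null sets, $T_0'=F_0(T)$ also has spherical measure zero. Now let $g\: T\to T''$ be an arbitrary quasisymmetric homeomorphism onto a round Sierpi\'nski carpet $T''$. Then $h:=g\circ f_0^{-1}\:T_0'\to T''$ is a quasisymmetric homeomorphism between Schottky sets in $\OC$, with source $T_0'$ of measure zero. The Schottky rigidity result \cite[Thm.~1.1]{BKM} then forces $h$ to be the restriction of a M\"obius transformation $M$ on $\OC$, so that $g=M\circ f_0$ on $T$. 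Applying this to two quasisymmetric maps $f_1,f_2\:T\to T_1',T_2'$ yields M\"obius transformations $M_1,M_2$ with $f_i=M_i\circ f_0$, and therefore $f_2=(M_2M_1^{-1})\circ f_1$, as required.

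I expect the main obstacle to be marshalling the Schottky rigidity correctly: I would need to confirm that \cite[Thm.~1.1]{BKM} applies in precisely the form needed, namely that a quasisymmetric homeomorphism between two Schottky subsets of $\OC$ whose source has spherical measure zero is the restriction of a M\"obius transformation. The existence half, by contrast, is almost a direct transcription of Theorem~\ref{thm:simulunif}, with the only points to verify being the topological identification of $F(T)$ as a round Sierpi\'nski carpet and the passage from global quasiconformality to quasisymmetry on the restriction.
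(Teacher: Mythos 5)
Your proposal is correct and follows essentially the same route as the paper: existence via Theorem~\ref{thm:simulunif} applied to the representation of $T$ as the complement of pairwise disjoint closed Jordan regions, and uniqueness via the rigidity of measure-zero Schottky sets from \cite[Thm.~1.1]{BKM} (the paper phrases this as rigidity of round carpets of measure zero, which is the same fact since round carpets are Schottky sets). The one point you flag as needing confirmation — that the Schottky rigidity applies in exactly this form — is indeed how the paper uses it, so there is no gap.
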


In particular, this corollary applies  to the standard Sierpi\'nski carpet $T$ (see Section~\ref{s:carpet}). For this special case the existence part of the statement was   proved earlier by methods different from the ones used in this paper in unpublished  joint work by B.~Kleiner and the author.   

Corollary~\ref{cor:carpetunif}
is an analog of a classical  uniformization theorem due to Koebe. It states 
that every finitely connected region  
in $\Sph$ can be mapped to  a {\em circle domain}
 (a region  whose complementary components are 
closed, possibly degenerate disks) by a conformal map. Moreover, this map is unique up to post-composition with  an 
orientation-preserving M\"obius transformation. Actually, we will use Koebe's theorem in the proof of Theorem~\ref{thm:simulunif}. 

Our investigations were partly motivated by a problem in Geometric Group Theory, the Kapovich-Kleiner conjecture. This conjecture predicts that if a   Gromov hyperbolic group  $G$ has a boundary  at infinity $\partial_\infty G$ that is a Sierpi\'nski carpet, then $G$ should arise from a standard situation in hyperbolic geometry. More precisely, $G$ should admit an action on a convex subset of hyperbolic $3$-space $\H^3$ with non-empty totally geodesic boundary where the action is  isometric,
properly discontinuous, and cocompact \cite{KK}.  
If $G$ admits  such an action on $\H^3$, 
then $\partial_\infty G$ can be identified with a round 
Sierpi\'nski 
carpet. The Kapovich-Kleiner conjecture is equivalent to the following 
uniformization conjecture for metric Sierpi\'nski carpets arising 
as boundaries of hyperbolic groups. 

\begin{conjecture} \label{coni:KK}
Suppose that $G$ is a Gromov hyperbolic group
such that $\partial_\infty G$ is a Sierpi\'nski carpet.
Then $\partial_\infty G$ is quasisymmetrically 
equivalent to a round Sierpi\'nski carpet $T\sub \Sph$. 
\end{conjecture}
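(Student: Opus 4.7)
The plan is to reduce Conjecture~\ref{coni:KK} to Corollary~\ref{cor:carpetunif}. Concretely, one aims to produce a quasisymmetric embedding $\varphi$ of $\partial_\infty G$ (equipped with a visual metric) into $\Sph$ whose image is a Sierpi\'nski carpet satisfying the hypotheses of that corollary; composing with the quasisymmetric map to a round carpet provided by the corollary finishes the proof. Fix $\epsilon>0$ small and a visual metric $\varrho$ on $\partial_\infty G$. Known properties of boundaries of hyperbolic groups give that $(\partial_\infty G,\varrho)$ is compact, doubling, linearly locally connected, and Ahlfors $Q$-regular for some $Q\ge 1$.

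The peripheral circles $\{S_i\}$ of the carpet $\partial_\infty G$ are limit sets of the virtual surface subgroups $H_i<G$ that stabilize the complementary Jordan domains. Using the uniform convergence action of $G$ on $\partial_\infty G$ and cocompactness of the action on the space of distinct triples, one first checks on a single representative peripheral circle that it is a quasicircle in $(\partial_\infty G,\varrho)$ and then propagates the estimate by $G$-equivariance to obtain a uniform constant $k$ valid for all $S_i$. The same dynamical input, applied to pairs $(S_i,S_j)$ and combined with the fact that distinct peripheral circles are disjoint compacta moved to a controlled configuration by a group element, yields uniform relative separation \eqref{eq:unifsep}. This step is essentially a ``doubling plus dynamics'' argument, an analog of the verification of these properties for the standard Sierpi\'nski carpet, carried out intrinsically on $\partial_\infty G$.

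To construct $\varphi$, fill in each complementary Jordan domain of $\partial_\infty G$ by an abstract closed disk $D_i$ equipped with a metric whose boundary restricts to $\varrho|_{S_i}$, and glue along the $S_i$ to form a topological $2$-sphere $X$ with metric $d$ extending $\varrho$. If the fillings can be chosen so that $(X,d)$ is Ahlfors $2$-regular and linearly locally connected, then the Bonk--Kleiner quasisymmetric uniformization theorem for metric $2$-spheres supplies a quasisymmetric homeomorphism $\psi\: X\to\Sph$, and $\varphi:=\psi|_{\partial_\infty G}$ is the required embedding. Under $\varphi$ the family $\{\varphi(S_i)\}$ inherits the uniform quasicircle and uniform relative separation properties (these are quasisymmetry-invariant in the ambient sphere, given the embedding data), so Corollary~\ref{cor:carpetunif} applies to $\varphi(\partial_\infty G)$ and the composition with $\varphi$ is the desired quasisymmetric map to a round carpet.

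The main obstacle is the filling step: one must arrange that the glued sphere $(X,d)$ has Hausdorff dimension exactly $2$ while remaining linearly locally connected. This forces the Ahlfors regular conformal dimension of $\partial_\infty G$ to equal $2$, which is the carpet analog of Cannon's conjecture and is itself a deep open problem. Thus the plan above does \emph{not} yield an unconditional proof of Conjecture~\ref{coni:KK}; it reduces it to a conformal-dimension statement that is currently inaccessible. A genuine attack would likely require new input, for example a combinatorial Loewner or discrete-modulus argument on the Cayley graph of $G$ (in the spirit of Cannon's combinatorial Riemann mapping theorem) to upgrade $\varrho$ to an optimal gauge on $\partial_\infty G$ in which the Ahlfors regular conformal dimension $2$ is attained, and a controlled construction of the fillings $D_i$ compatible with that gauge.
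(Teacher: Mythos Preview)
The statement you were asked to prove is \emph{Conjecture}~\ref{coni:KK}: the paper does not prove it and explicitly presents it as open. So there is no ``paper's own proof'' to compare against, and your honest conclusion that the plan ``does \emph{not} yield an unconditional proof'' is correct and matches the paper.

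Your reduction is exactly the one the paper records. The first two paragraphs of your proposal amount to Proposition~\ref{prop:groupcarpets} (peripheral circles of $\partial_\infty G$ are uniformly relatively separated uniform quasicircles; the paper proves this via the uniformly quasi-M\"obius, cocompact-on-triples action, just as you sketch) together with Corollary~\ref{cor:carpetunif}. The paper then states, right after Proposition~\ref{prop:groupcarpets}, that this reduces Conjecture~\ref{coni:KK} to producing a quasisymmetric embedding of $\partial_\infty G$ into $\Sph$ as a carpet---precisely the step you isolate as the obstacle.

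Your ``filling'' approach to that remaining step is a reasonable heuristic, and the paper points in a compatible direction: it cites \cite{BK} for the result that a metric carpet with uniformly relatively separated uniform-quasicircle peripheral circles is quasisymmetric to a round carpet \emph{provided its Ahlfors regular conformal dimension is less than $2$}. Note the discrepancy with your formulation: you write that the filled sphere forces the conformal dimension of $\partial_\infty G$ to \emph{equal} $2$, but the known sufficient condition is strictly less than $2$ (dimension $<2$ lets the carpet sit as a null set inside an Ahlfors $2$-regular sphere, which is what Bonk--Kleiner uniformization needs). The genuinely open case, and the real content of the conjecture, is to remove any conformal-dimension hypothesis.
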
 

Here  the set $\partial_\infty G$ can be considered as a metric space in a natural way by equipping it with 
a ``visual" metric. Though in general there is no unique choice of such a metric, these metrics are quasisymmetrically equivalent by the identity map. 

For Gromov hyperbolic groups $G$ with  
Sierpi\'nski carpet boundary 
$\partial_\infty G$ one can show 
the following properties 
 of the collection of peripheral circles of $\partial_\infty G$. 
 
 \begin{proposition} \label{prop:groupcarpets}
 Let $G$ be a Gromov hyperbolic group
 such that $\partial_\infty G$ is a Sierpi\'nski carpet, 
 and let $\mathcal{S}$ be the collection of peripheral circles of $\partial_\infty G$. Then $\mathcal{S}$
 consists of uniform quasicircles that are uniformly 
 relatively separated and occur on all locations and scales. 
 \end{proposition}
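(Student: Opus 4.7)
The plan is to exploit the fact that $G$ acts on $\geo G$, equipped with a fixed visual metric, by uniformly quasi-M\"obius self-homeomorphisms, and that this action is cocompact on the space of distinct triples of $\geo G$. Since each $g\in G$ is a homeomorphism, it permutes the complementary Jordan domains of the carpet $\geo G$ and hence acts on $\mathcal{S}$. I would invoke the structural fact for hyperbolic groups with one-dimensional boundary that $\mathcal{S}$ falls into only finitely many $G$-orbits, corresponding to the finitely many conjugacy classes of stabilizers of peripheral circles (these stabilizers are quasi-convex subgroups of $G$).

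To obtain the uniform quasicircle property, pick a representative $S_0$ in each $G$-orbit. Its stabilizer $G_{S_0}$ acts on $S_0$ as a convergence group on a topological circle, so by the Tukia--Gabai--Casson--Jungreis theorem it is virtually Fuchsian, and $S_0$ is quasisymmetrically equivalent to a round circle with respect to the intrinsic visual metric of $G_{S_0}$. Since $G_{S_0}$ is quasi-convex in $G$, this intrinsic metric is quasisymmetrically equivalent to the restriction of the ambient visual metric on $\geo G$, so $S_0$ is a quasicircle in $\geo G$. Every other peripheral circle is the image of one of these finitely many $S_0$ under an element of $G$, and since the $G$-action is by uniformly quasi-M\"obius (hence, on a bounded space, uniformly quasisymmetric) maps, the quasicircle constant is uniform over $\mathcal{S}$.

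For the ``all locations and scales'' property, fix a triple of distinct points $p,q,z$ on a representative peripheral circle $S_0$. Given $x\in\geo G$ and $0<r\leq \diam(\geo G)$, cocompactness of the $G$-action on triples yields $g\in G$ mapping $(p,q,z)$ to a triple near $x$ with pairwise distances of order $r$. Uniform quasisymmetry of the $G$-action then forces $g(S_0)\in\mathcal{S}$ to lie within a controlled multiple of $r$ from $x$ with diameter comparable to $r$.

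The uniform relative separation is the step I expect to be the main technical point. I would argue by contradiction: if a separation estimate analogous to \eqref{eq:unifsep} fails along a sequence of pairs $(S_n,S_n')\in\mathcal{S}\times\mathcal{S}$, renormalize by $g_n\in G$ (again using cocompactness on triples) so that $g_n(S_n)$ and $g_n(S_n')$ both have diameters of order $1$, and extract Hausdorff subsequential limits $S$ and $S'$. The obstacle is to show that $S$ and $S'$ are again \emph{genuine} peripheral circles rather than arcs or degenerate points; this uses the uniform quasicircle bound just established (which rules out collapse in the limit) together with the topological closedness of the family of boundary circles of complementary components of the carpet. Once $S,S'\in\mathcal{S}$, the relation $\dist(S,S')=0$ contradicts the pairwise disjointness of the closed Jordan regions bounded by distinct peripheral circles if $S\ne S'$, and contradicts $S_n\ne S_n'$ if $S=S'$.
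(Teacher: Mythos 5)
Your route to the uniform quasicircle property is genuinely different from the paper's. You invoke the Kapovich--Kleiner structure theory (finitely many $G$-orbits of peripheral circles, quasiconvex virtually Fuchsian stabilizers, the convergence group theorem) and then propagate the quasicircle constant by the uniformly quasi-M\"obius action. This works, but it imports substantially heavier machinery than necessary. The paper's argument is self-contained: since $\partial_\infty G$ is doubling, by Proposition~\ref{prop:metricqcirc} it suffices to bound cross-ratios of cyclically ordered quadruples from below; one argues by contradiction, renormalizes a degenerating quadruple by cocompactness on triples, and exploits the elementary fact that a carpet has only \emph{finitely many} peripheral circles of diameter exceeding any fixed threshold, so the renormalized quadruples live on a single circle $J$ and one extracts a convergent subsequence of points there. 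No orbit or stabilizer structure is needed. The same finiteness observation is what rescues your separation argument: you worry about Hausdorff limits of $g_n(S_n)$, $g_n(S_n')$ degenerating or failing to be peripheral circles, but once the renormalized circles have diameter bounded below (which takes a small extra cross-ratio computation for the \emph{second} circle of each pair, since cocompactness only spreads out three of the four chosen points), they range over a finite set, so no limit extraction is needed at all --- finitely many pairs with positive relative distance already gives a uniform lower bound.

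The genuine gap is in your argument for ``all locations and scales.'' Cocompactness on triples asserts that \emph{every} distinct triple can be spread out to mutual distance $\ge \eps_0$ by some $g\in G$; it does not let you map a \emph{fixed} triple $(p,q,z)\sub S_0$ onto a prescribed triple near $x$ at scale $r$. The quantifiers go the wrong way, and cocompactness alone does not give density of the orbit of a fixed triple in the space of triples, let alone at a prescribed scale. The argument must be inverted: choose a triple $x_1=x,x_2,x_3\in B(x,r/\la)$ with mutual distances $\asymp r/\la$, expand it by $g$ to the top scale, show (via the uniform quasi-M\"obius distortion estimate) that for $\la$ large the complement $\partial_\infty G\setminus g(B(x,r))$ has diameter $<\eps_0/2$, locate a peripheral circle of definite diameter inside one of the balls $B(g(x_i),\eps_0/4)$ that consequently misses this complement, and pull it back by $g^{-1}$. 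This also explains why the circle you produce need not be in the orbit of your chosen $S_0$ --- a further reason the fixed-triple formulation cannot work as stated. Note too that even the containment $g(S_0)\sub B(x,r)$ (rather than mere proximity to $x$) and the existence of a peripheral circle of definite diameter in every ball of radius $\eps_0/4$ require separate arguments, the latter by a compactness argument on the carpet.
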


This proposition will not be a surprise  to experts, but it cannot  be found in the literature.  We will record a proof 
in Section~\ref{s:hypgroups} where we also explain the terminology used  in the statement.  If one combines this proposition with  Corollary~\ref{cor:carpetunif}, then Conjecture~\ref{coni:KK} is reduced to showing that  every  Sierpi\'nski carpet $\partial_\infty G$ arising as the boundary at infinity 
 of a Gromov hyperbolic group $G$ can be mapped to a  Sierpi\'nski carpet $T\sub \oC$ by a quasisymmetry.

In view of Proposition~\ref{prop:groupcarpets}, 
one can ask the more general question whether any metric 
Sierpi\'nski carpet $T$ whose  peripheral circles are  uniformly relatively separated uniform  quasicircles   is quasisymmetrically equivalent to a round Sierp\'inski 
carpet in $\OC$.  This is  not  true in general, but 
in \cite{BK}  Corollary~\ref{cor:carpetunif}
is used to show that 
this holds under the additional assumption that 
$T$ has Ahlfors regular conformal dimension less than $2$.

Corollary~\ref{cor:carpetunif} is an instance of a new  phenomenon that can be formulated as a heuristic principle:
{\em Quasisymmetric maps on Sierpi\'nski carpets of measure zero behave similarly  as conformal maps on regions in $\OC$.}

The main point here is that we have analogies of quasisymmetric maps on Sierpi\'n\-ski carpets with {\em conformal}  maps, and not as 
expected, and less surprising, with 
quasiconformal maps on regions.

The following fact supports  our  heuristic principle. If $\Ga$ is a path family in $\OC$ and $T$ is a 
Sierpi\'nski carpet, then one can assign a type of conformal 
modulus, the {\em carpet modulus} $\M_T(\Ga)$, to $\Ga$ that is  preserved under quasiconformal maps (and not only quasi-preserved as expected).  See Section~\ref{s:carpet} for the details. This notion corresponds to the classical modulus of path families that is preserved 
under conformal maps.  Applications of the carpet modulus
to proving rigidity statements for Sierpi\'nski carpets
are studied in \cite{BM}. 

Corollary~\ref{cor:carpetunif} in combination with the main result 
in \cite{BKM} leads to surprising uniqueness results. Here is an example.

\begin{theorem}[Three-Circle Theorem]\label{thm:3circ}
Suppose that $T\sub \Sph$ is a Sier\-pi\'nski 
carpet of spherical measure zero whose peripheral circles are uniformly relatively separated uniform quasicircles. Let  $f\: T\ra T$ be an orientation-preserving quasisymmetric homeomorphism of $T$ onto itself. 

If there exist three distinct peripheral circles $S_1$, $S_2$, $S_3$ of $T$ with $f(S_i)=S_i$ for $i=1,2,3$, or if there exist three distinct fixed points of $f$ in $T$,   then $f$ is the identity map on $T$.
\end{theorem}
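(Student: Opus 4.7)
The plan is to use Corollary~\ref{cor:carpetunif} to pass to a round Sierpi\'nski carpet, apply the Schottky-set rigidity theorem \cite[Thm.~1.1]{BKM} to extend the induced self-map to a M\"obius transformation of $\oC$, and conclude by a short geometric argument about M\"obius transformations preserving three pairwise disjoint disks.

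First I would apply Corollary~\ref{cor:carpetunif} to obtain an orientation-preserving quasisymmetry $\varphi\co T\ra T'$ with $T'\sub\oC$ a round Sierpi\'nski carpet. By Theorem~\ref{thm:simulunif}, $\varphi$ extends to a quasiconformal self-homeomorphism of $\oC$, and since quasiconformal maps of $\oC$ preserve sets of spherical measure zero, $T'$ itself has spherical measure zero. The extension carries the complementary disks of $T$ to those of $T'$, so $\varphi$ bijects the peripheral circles of $T$ onto those of $T'$. Set $g:=\varphi\circ f\circ\varphi^{-1}\co T'\ra T'$. Then $g$ is an orientation-preserving quasisymmetric self-homeomorphism of the Schottky set $T'$ of spherical measure zero; by \cite[Thm.~1.1]{BKM}, $g$ is the restriction to $T'$ of an orientation-preserving M\"obius transformation $M$ of $\oC$. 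It suffices to show $M=\id$, for then $g=\id_{T'}$ and hence $f=\id_T$.

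If $f$ has three distinct fixed points in $T$, then $g$ has three distinct fixed points in $T'$, so $M$ has three fixed points on $\oC$, forcing $M=\id$. If instead $f$ fixes three distinct peripheral circles $S_1,S_2,S_3$ of $T$ setwise, then $M$ fixes each of the three pairwise disjoint round circles $C_i:=\varphi(S_i)$ setwise. Since $M$ permutes the complementary components of $T'$ and each $C_i$ bounds such a component $D_i$, we have $M(D_i)=D_i$ for $i=1,2,3$. The geometric endgame is to show that a M\"obius transformation preserving three pairwise disjoint round open disks must be the identity. A non-identity M\"obius transformation preserving a round disk has its (at most two) fixed points either both on the bounding circle (parabolic or loxodromic case) or one inside the disk and one outside its closure (elliptic case). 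In the parabolic or loxodromic case, every fixed point of $M$ would have to lie on each of the three pairwise disjoint circles $C_i$, which is absurd. In the elliptic case, each of the three pairwise disjoint disks $D_i$ would contain a fixed point of $M$, so $M$ would have at least three fixed points on $\oC$; again absurd. Hence $M=\id$.

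The main obstacle is the promotion of the self-homeomorphism $g$ of the carpet to a global M\"obius transformation on $\oC$; this is precisely what the Schottky-set rigidity \cite[Thm.~1.1]{BKM} provides, after which the remainder of the argument reduces to classical M\"obius geometry.
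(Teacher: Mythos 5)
Your proposal is correct and follows essentially the same route as the paper: uniformize to a round carpet of measure zero via Corollary~\ref{cor:carpetunif} (resp.\ the quasiconformal map from Theorem~\ref{thm:simulunif}), conjugate $f$ to a self-map of the resulting Schottky set, apply the rigidity theorem of \cite{BKM} to obtain a M\"obius transformation, and finish with elementary M\"obius geometry. The only difference is that you spell out the final classification argument (parabolic/hyperbolic versus elliptic fixed-point configurations for a disk-preserving M\"obius map) which the paper leaves implicit, and that fleshed-out step is sound.
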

 
 This theorem is in complete contrast to the topological 
 flexibility of Sier\-pi\'nski 
carpets: if $T$ is a carpet, $ \{S_i: i=1, \dots,n\}$ a family of distinct peripheral circles  of $T$, and $\{S_i': i=1,\dots, n\}$ another such family, then there exists an homeomorphism 
$f\: T\ra T$ such that $f(S_i)=S_i'$ for $i=1, \dots, n$ (the author was unable to locate this result in the literature, but it can easily be established by using  the methods in \cite{Why}.)    
 
 We will prove another uniformization theorem for Sierpi\'nski 
 carpets that has an application to an extremal problem for carpet modulus.
 
 \begin{theorem} \label{thm:cylunif0} 
 Suppose that 
 $$T=\Sph\setminus \bigcup_{i\in \N_0} \inte(D_i),$$ is a Sierpi\'nski carpet, where the sets $D_i$, $i\in \N_0$,  are pairwise disjoint closed Jordan  regions, and  that the collection $\partial D_i$,  $i\in \N_0$, of peripheral circles of $T$ consists of uniformly 
relatively separated uniform quasicircles. 

Then there exists a finite $\C^*$-cylinder
$A$, pairwise disjoint $\C^*$-squares $Q_i\sub A$ for  $i\ge 2$,  and a  quasisymmetric   map  
$$f\: T\ra T':=\overline A\setminus   \bigcup_{i\ge 2} \inte(Q_i) $$ such that $f(\partial D_0)=\partial_iA$, $f(\partial D_1)=\partial_oA$, and $f(\partial D_i)= \partial Q_i$ for $i\ge 2$. 
 \end{theorem}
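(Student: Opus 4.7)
The plan is to reduce to the round-circle setting via Theorem~\ref{thm:simulunif} and then to run a cylindrical analog of that theorem whose target boundary shape is $\C^*$-squares rather than round circles. First, apply Theorem~\ref{thm:simulunif} to obtain a quasiconformal map $\varphi\colon\Sph\to\Sph$ making every $\varphi(\partial D_i)$ a round circle. Since any two disjoint closed round disks in $\Sph$ can be sent to concentric disks by a M\"obius transformation, I may post-compose so that $\varphi(D_0)=\{|z|\le r\}$ and $\varphi(D_1)=\{|z|\ge R\}\cup\{\infty\}$ for some $0<r<R$. Then $A:=\{r\le|z|\le R\}$ is a finite $\C^*$-cylinder with inner boundary $\partial_i A=\varphi(\partial D_0)$ and outer boundary $\partial_o A=\varphi(\partial D_1)$, and the sets $\varphi(D_i)$ for $i\ge 2$ are pairwise disjoint round disks in the interior of $A$ whose boundaries inherit uniform relative separation in the Euclidean metric on the compact set $A$.

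It remains to construct a quasisymmetric map $\psi\colon\varphi(T)\to T'$ that is the setwise identity on $\partial_i A\cup\partial_o A$ and carries each round circle $\partial\varphi(D_i)$, $i\ge 2$, to the boundary of a $\C^*$-square. This is a cylindrical analog of Theorem~\ref{thm:simulunif}, and I would prove it by the same overall strategy. The base case treats a finite subcollection $\{\varphi(D_i)\colon 2\le i\le n\}$: fill the remaining complementary disks in to obtain a finitely connected annular domain, and appeal to a classical uniformization result producing a conformal equivalence onto a finite $\C^*$-cylinder with $n-1$ disjoint $\C^*$-square holes, essentially unique up to rotation of the cylinder. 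This canonical-domain result is the cylindrical analog of Koebe's circle-domain theorem and can be established by an extremal-length argument parallel to the ones used for square packings of multiply connected planar domains. Given the base case, one exhausts the countable collection by finite subcollections, extracts approximating quasisymmetric maps $\psi_n$, normalizes them (say by fixing a marked point on $\partial_i A$ and the modulus of the target cylinder), and passes to a limit using a normal family argument.

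The main obstacle is establishing uniform quasiconformal distortion bounds on the approximating maps $\psi_n$, paralleling the analogous difficulty in the proof of Theorem~\ref{thm:simulunif}. The uniform relative separation and uniform quasicircle hypotheses must be converted into uniform modulus estimates for suitable path families in the approximating cylinders, which in turn control the eccentricities of the resulting $\C^*$-squares and the global geometry of $\psi_n$. A secondary technical point is ensuring that the moduli of the approximating cylinders stay both bounded above and bounded away from zero, so that the limit $\C^*$-cylinder is finite and nondegenerate; this again reduces to uniform modulus comparisons driven by the separation hypothesis, exactly in the spirit of the quantitative refinement of Theorem~\ref{thm:simulunif} indicated in the remark following its statement.
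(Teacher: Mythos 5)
Your overall route is the one the paper follows: reduce to the finite case via a conformal uniformization onto a finite $\C^*$-cylinder with $\C^*$-square holes (Theorem~\ref{thm:squareunif} and Corollary~\ref{thm:cylunif}, proved by a Schramm-type degree argument rather than pure extremal length), establish uniform quasisymmetry of these conformal maps by transboundary modulus estimates (Proposition~\ref{thm:cylqs}, run exactly as in Theorem~\ref{thm:unifqs}), and then exhaust by finite subcollections and pass to a sublimit with Lemma~\ref{lem:subcon}. Two adjustments are needed. First, the preliminary application of Theorem~\ref{thm:simulunif} is a harmless but unnecessary detour: rounding the holes does not shortcut any of the modulus work, since the uniform quasisymmetry of the cylindrical uniformizing maps must still be proved from scratch, and the hypotheses you need are exactly those already preserved under that first map. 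Second, you cannot require $\psi$ to be the setwise identity on $\partial_iA\cup\partial_oA$: once the finite approximations are conformal, the height of the target cylinder is an invariant (it equals $2\pi/M$ where $M$ is the transboundary modulus of the connecting path family, cf.\ Proposition~\ref{prop:transcyl} and Corollary~\ref{cor:uniqsquare}), and there is no reason it coincides with $\log(R/r)$ for your chosen normalization of $\varphi$; the theorem only asks that the two distinguished curves go to the boundary components of \emph{some} finite cylinder, and that is all you should impose. Finally, note that the distortion bounds you defer are where the real content lies: besides the Loewner-type lower bounds (Propositions~\ref{thm:lowmod} and~\ref{prop:lowtrans}), one needs an upper transboundary bound in the flat metric of $\C^*$ after discarding a bounded number of squares (Proposition~\ref{prop:uptrans2}), the connectivity properties of the square-complement (Lemma~\ref{lem:LLCsq}) to run the weak-quasisymmetry argument, and the bounds $C_1\le h_{A_n}\le C_2$ and $\ell(Q_i)\le 2\pi-\eps_0$ of Lemma~\ref{lem:cylqs}; moreover the resulting distortion function unavoidably depends on a lower bound for $\diam(\partial D_0)\wedge\diam(\partial D_1)$, which is why the normalization of the exhausting sequence must be chosen with care.
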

 
 See the discussion after Theorem~\ref{thm:Koebunif} for the terminology employed here.  One can show that    if $T$ has spherical measure zero, then $f$ is unique up to a Euclidean similarity fixing the origin in $\C$  (this follows from 
 \cite[Thm.~1.5] {BM}).
 
 To formulate the mentioned application  of this theorem to an extremal problem,   let   $\Ga$   be the family of all open   paths $\ga$ in $\OC\setminus(D_0\cup D_1)$ connecting  $\partial D_0$ and $\partial D_1$ (see the end of Section~\ref{s:mod} for the precise definition of such paths).   If $A=\{z\in \C: r<|z|<R\}$ we denote by 
 $h_A=\log(R/r)$ the   height of $A$. Then  the carpet modulus 
 of $\Gamma$ with respect to $T$ is given by 
 $$ \M_T(\Ga)=\frac{2\pi}{h_A}.$$
 See   Corollary~\ref{cor:annmod}, where  we will also identify   the unique 
 extremal weight sequence   for  $ \M_T(\Ga)$.  
 
 Theorem~\ref{thm:cylunif0} and Corollary~\ref{cor:annmod} can be considered  as   limiting cases of statements  in   classical uniformization (see Theorem~\ref{thm:squareunif}, Corollary~\ref{thm:cylunif}, and Proposition~\ref{prop:transcyl}) or  of combinatorial facts related to square tilings (see \cite{CFP, oS93}).  Our proof of Theorem~\ref{thm:cylunif0} relies on the corresponding uniformization statement  Corollary~\ref{thm:cylunif} and a limiting argument. It would be very interesting to find a different 
 proof that proceeds from   results on  square tilings as a starting point.

We will now give an outline of the proof of Theorem~\ref{thm:simulunif}. The main point is to find a quasisymmetric map $g$ of the set $T$ in \eqref{eq:T} 
 onto a set $T'\sub \OC$ whose complementary components are round disks. 
The desired quasiconformal map  $f$ is then found by ``filling the holes" (see Proposition~\ref{prop:extend}).    
Finding this extension $f$ of $g$ involves some subtleties, but  can be derived from the classical Beurling-Ahlfors extension theorem (see Theorem~\ref{thm:AB}) without too much trouble.

The construction of  $g$ is based on the obvious idea 
to obtain this map as a sublimit of conformal maps   that 
map finite approximations of $T$ to circle domains; more precisely, assuming $I=\N$ we let 
$$ T_n= \OC\setminus \bigcup_{i=1}^n\inte(D_i)$$ 
and invoke Koebe's Uniformization Theorem to  find a map $g_n$ 
for each $n\in \N$  that is suitably normalized and conformally maps the interior of $T_n$  to a circle domain.  We then  show that these maps $g_n$ are {\em uniformly quasisymmetric} 
(see Theorem~\ref{thm:unifqs}) and hence have a sublimit $g$ with the desired properties (see the  proof of Theorem~\ref{thm:simulunif} in Section~\ref{s:proof1}).  

The proof of the uniform quasisymmetry of the maps $g_n$ is 
the main difficulty. The standard method for establishing  distortion estimates as required for the  quasisymmetry of a map are modulus estimates. In our situation one cannot expect that this method gives  the required uniform bounds.  The reason is that 
by removing more and more of the sets $\inte(D_i)$ from $\OC$, 
the remaining  sets $T_n$  may carry smaller and smaller path families. In particular,  if 
$T$ has measure zero, then every path family in $T$ has vanishing modulus and it is unlikely that classical modulus will lead to the desired bounds.  

To overcome these obstacles we use {\em transboundary modulus} (see Section~\ref{s:mod}). This concept (under the different name ``transboundary extremal length") was  introduced by O.~Schramm \cite{oS95} and is a variant of classical conformal 
modulus. Since in its definition transboundary modulus  uses the ``holes" (i.e., the complementary components) of a region,  we can hope to  get  uniform
   positive lower bounds for the transboundary modulus of  path families  that are relevant for desired distortion estimates of the maps $g_n$ (see Proposition~\ref{prop:lowtrans} for a general statement in this direction). 

Unfortunately, while classical modulus is too small for our 
purpose, transboundary modulus will be too large in general.  Essentially,  one wants a quantity that is not too small in the source domain, but not too large  in the target. 
Subtle considerations are neccessary to navigate around this problem: one only considers the transboundary modulus of path families  in the complement of a controlled number of carefully selected holes of the target domain (see Proposition~\ref{thm:uptrans} and the further discussion following the statement of this proposition).    This will lead to the right balance  of modulus estimates 
 for source and target. 
 Carrying out the  details involves  substantial technicalities. The key steps in the proof are Propositions~\ref{thm:lowmod}, 
 \ref{prop:lowtrans}, and \ref{thm:uptrans}. They enter  the proof of the uniform quasisymmetry statement Theorem~\ref{thm:unifqs} from which Theorem~\ref{thm:simulunif} can easily be derived.

The paper is organized as follows. We fix notation and some terminology in Section~\ref{s:nt}. In Section~\ref{s:qc} we review quasiconformal and related maps, and in Section~\ref{s:quasicircles} relevant facts about quasicircles. Most of this material is  standard, but we have  included many details in order to make the paper as self-contained as possible.  The extension result Proposition~\ref{prop:extend}, already mentioned in the outline of the proof of Theorem~\ref{thm:simulunif}, is proved in the next Section~\ref{s:ext}. 
Classical and transboundary modulus appear 
in Section~\ref{s:mod}.  In Section~\ref{s:loewner} we discuss     Loewner domains and  establish  Proposition~\ref{thm:lowmod} which is  used in the proof of our main result.  

 Section~\ref{s:lowtrans} is devoted to estimates for transboundary modulus. The main results are 
the rather technical Propositions~\ref{prop:lowtrans}~and~\ref{thm:uptrans}, the former giving a lower and the latter  an upper bound for transboundary modulus. They are crucial  in the proof of Theorem~\ref{thm:simulunif}.  Proposition~\ref{thm:weakuptrans} is later applied  in Section~\ref{s:square}.

Results on classical uniformization are discussed in Section~\ref{s:unifcyl}. Apart from Koebe's Uniformization Theorem and some rather 
standard results on boundary extension of conformal maps, none of this material is used in the proof of Theorem~\ref{thm:simulunif}. The main results in this section are   Theorem~\ref{thm:squareunif} and Corollary~\ref{thm:cylunif}. 
This corollary  is  later  invoked  in the proof of Theorem~\ref{thm:cylunif0}. Theorem~\ref{thm:squareunif} 
can be derived from results by Schramm \cite{oS96}, but we decided to present  the details for the convenience of the reader. 

All the preparation is wrapped up in Section~\ref{s:proof1}, where a proof of Theorem~\ref{thm:simulunif} is finally given. It is based on Theorem~\ref{thm:unifqs} which  is of independent interest. Examples~\ref{ex:nonunif} and \ref{ex:nested}
 in this section show that one can neither  drop the assumption of uniform relative separation in our main theorem, nor the assumption that the quasicircles $S_i$ bound pairwise disjoint closed Jordan  regions. 
 
 In Section~\ref{s:square} we solve an extremal problem for transboundary modulus (Proposition~\ref{prop:transcyl}). As an application  we  prove a uniqueness statement for conformal maps   (Corollary~\ref{cor:uniqsquare}). We also  prepare and give  the proof of  Theorem~\ref{thm:cylunif1a} which is  a slightly more general version of   Theorem~\ref{thm:cylunif0}. In Section~\ref{s:carpet} we recall the definition of the standard Sierpi\'nski carpet and some related topological facts. 
In this section we  prove Corollary~\ref{cor:carpetunif},   Theorem~\ref{thm:3circ}, and Theorem~\ref{thm:cylunif0}, and define the concept of carpet modulus of a curve family. In the final Section~\ref{s:hypgroups} we establish Proposition~\ref{prop:groupcarpets}.

\medskip\no  {\bf Acknowledgements.} The author   thanks  
Pietro Poggi-Corradini and the anonymous referee for many useful comments on this paper. He   is greatly indebted to Bruce Kleiner, Sergei Me\-ren\-kov, and the late Juha Heinonen for many fruitful discussions. The existence part of Corollary~\ref{cor:carpetunif} was conjectured by Bruce Kleiner 
in a seminar talk at the University of Michigan in the spring of 2004.  Sergei Merenkov shared his  knowlegde  
on Schramm's papers on conformal uniformization with the author and contributed important ideas that led to the proof of Theorem~\ref{thm:cylunif0} and the notion of carpet modulus.  Discussions with Juha Heinonen were crucial for  clarifying  this  modulus concept.

\section{Notation and terminology}\label{s:nt}
\no 
For $a,b\in \R$ we set 
$a\wedge b=\min\{a,b\}$ and $a\vee b=\max\{a,b\}$. 
We let 
$\D=\{z\in \C: |z|<1\}$, and  $\C^*=\{z\in \C: z\ne 0\}$. The symbol $\iu$ stands for the imaginary unit in the complex plane $\C$. 
 
The chordal metric on $\OC$ is denoted by $\sig$ (see \eqref{eq:chmet}). We will  use the letter $\Sig$ to denote the spherical measure 
on $\OC$. So if $M\sub \C$ is measurable, then 
$$\Sig(M)=\int_M\frac{4\, dm_2(z)}{(1+|z|^2)^2}, $$
where integration is with respect to Lebesgue measure 
$m_2$ on $\C\cong \R^2$.  Integrals will be extended over $\oC$ unless otherwise indicated. We say that  a measurable set $M\sub \oC$ has  {\em
(spherical)  measure zero} if $\Sigma(M)=0$.

Let $(X,d)$ be  a metric space. If  $a\in X$ and $r>0$, we denote by $$B(a,r)=\{y\in X: d(y,x)<r\}$$ the open and by 
$$\overline B(a,r)=\{y\in X: d(y,x)\le r\}$$ the closed ball 
of radius $r$ centered at $a$.  If $A\sub X$, then we  write $\overline A$ for the closure, $\inte(A)$ for the interior,  $\partial A$ for the topological boundary, and $\diam(A)$ for the diameter of the set $A$.  

For sets  $A,B\sub X$ we write 
$$ \dist(A,B)=\inf\{d(x,y): x\in A, \, y\in B\}$$ 
for  their  distance, and $$\Delta(A,B)=\frac{\dist(A,B)}{\diam(A)\wedge \diam(B)} $$
for their {\em relative distance} if in addition 
$\diam(A)>0$ and $\diam(B)>0$. 
 For $x\in X$ and $A\sub X$, we set 
$\dist(x,A)=\dist(\{x\},A)$. If  $\eps>0$ we denote by 
$$N_{\eps}(A)=\{x\in X: \dist(x,A)<\eps\}$$ 
the open $\eps$-neighborhood of $A$ in $X$.

Mostly, it will be clear from the context what metric $d$ we are using. If necessary  we put the 
symbol for the metric as subscript on metric notation. For example,  $B_d(x,r)$ denotes the open ball with respect to the metric $d$, etc.    By default, all sets in $\Sph$ carry the restriction of the chordal metric $\sigma$. We sometimes  use the {\em spherical metric} on $\oC$, i.e., the Riemannian metric with length element 
$$ ds=\frac{2|dz|}{1+|z|^2}. $$ For sets in $\C$ we also  use the 
Euclidean metric $d_\C$ defined by $d_\C(x,y)=|x-y|$ for $x,y\in 
\C$, and for sets in $\C^*$ the flat metric $d_{\C^*}$ (see Section~\ref{s:unifcyl} for its definition).   To distinguish metric notions 
that refer to $d_\C$ or $d_{\C^*}$ from their counterparts with respect to the metric $\sigma$, we use the subscript $\C$ or $\C^*$. For example, we  denote by $\diam_\C(A)$  the Euclidean diameter of a set $A\sub\C$, by $\length_{\C^*}(\ga)$ the length 
of a path $\ga$ (see below)  in $\C^*$ with respect to the metric $d_{\C^*}$, etc.  

A {\em circle} in $\OC$ is a set of the form 
$$S(x,r):=\{y\in \OC: \sigma(y,x)=r\}, $$
where $x\in \OC$ and $0<r<\diam(\OC)=2$. Sometimes we call
these sets also {\em round circles} to emphasize their distinction from quasicircles or metric circles (see Section~\ref{s:quasicircles}).   Similarly, a {\em round disk} is a (closed or open) metric ball in $\oC$ with respect to the metric $\sigma$.   

If  $f\:X\ra Y$ is a map between two sets $X$ and $Y$, and $M\sub X$, then $f|M$ denotes the restriction of  $f$ to $M$. 

A {\em path} in a metric space $(X,d)$ is a continuous map  $\ga\: I\ra X$
 of an interval $I\sub \R$ (i.e, a non-empty connected subset 
 of $\R$) into $X$.  If now confusion can arise, we will also denote by $\ga$ the image set $\ga(I)\sub X$ of a path $\ga$.
 We denote by $\length (\ga)\in [0,\infty]$ the length of $\ga$.
 The  path $\ga\:I\ra X$ is {\em rectifiable}  if  $\length (\ga)<\infty$, and {\em locally 
 rectifiable}  if $\length(\ga|J)<\infty$ for each compact subinterval 
 $J\sub I$.

A {\em region}   $\Om$ in $\Sph$ is an open and connected set. 
A {\em Jordan  curve}   $S$ in $\Sph$  is a subset
of $\Sph$ homeomorphic to the unit circle $\partial \D=\{z\in \C:|z|=1\}$. A {\em closed Jordan  region} in $\Sph$ is a set homeomorphic to the closed unit disc $\overline \D$, and an {\em open Jordan  region}
a set in $\oC$ that is the interior of  a closed Jordan  region.   According to the
Sch\"onflies theorem for each 
Jordan  curve $S\sub \Sph$ there exists a homeomorphism 
$F\: \Sph\ra \Sph$ such that $F(\partial \D)=S$. In particular, 
every Jordan  curve $S\sub \Sph$ has two complementary components in $\Sph$ both of which are open Jordan  regions.

In this paper it is very important to keep track of the dependence of constants and functions on parameters (i.e., other constants and functions). We will write $C=C(a,b,\dots)$ if the constant $C$ can be chosen only depending 
on the parameters  $a,b,\dots$,  and $A\le C(a,b,\dots)$ if 
the quantity $A$ is bounded by a constant only depending on $a,b,\dots$  For the dependence of functions from parameters 
we use subscripts to distinguish this dependence from 
the evaluation of the function on elements of its domain  of definition; so $\phi=\phi_{a,b,\dots}$ means that 
$\phi$ is a function that can be chosen only  depending on the parameters $a,b,\dots$ 

Sometimes  a property of a space, function, etc.\ depending  on some parameters $a,b, \dots$ implies another  property depending on 
other parameters $a',b',\dots$ If we can choose the parameters $a',b',\dots$  as fixed functions of $a,b,\dots$, that is, only depending on $a,b,\dots$, then we say that the first property implies the second one  
{\em quantitatively}.  If we have implications of this type in both directions, we call the properties {\em quantitatively equivalent}.  See the remark after Proposition~\ref{prop:inter} for the discussion of a specific example. 

We  always assume that the  parameters are in their natural range or of appropriate type. So, for example,  in  the phrase ``the family 
 $\{S_i: i\in I\}$ is $s$-relatively separated" it is understood that $s>0$,  and in  ``$f$ is an $\eta$-quasisymmetry" (see Section~\ref{s:qc})  that $\eta$ 
 is a distortion function with the right properties, i.e., a homeomorphism $\eta\:[0,\infty)\ra [0,\infty)$.

 We also often omit  quantifying statements (``there exists" or ``for all") related to these  parameters  for ease of formulation if the intended meaning is clear.
For example, we say ``$J$ is a $k$-quasicircle" instead of 
 ``there exists $k\ge 1$ such that $J$ is a $k$-quasicircle" (cf.\ statement (i) in Proposition~\ref{prop:planeqcirc}) or ``the maps $f_n$ are $\eta$-quasisymmetries for $n\in \N$" 
  instead of ``there exists a homeomorphism $\eta\: [0,\infty)\ra [0,\infty)$  such that the maps $f_n$  are $\eta$-quasisymmetries for all $n\in \N$" 
  (cf.\  Lemma~\ref{lem:subcon}).

\section{Quasiconformal and related maps} \label{s:qc}

\no In this section we summarize  
basic facts on  quasiconformal and related maps (see \cite{He}, \cite{LV}, and \cite {Va} for general background).
Let $f\: \Sph\ra \Sph$ be a homeomorphism, and for $x\in \Sph$ 
and   small $r>0$ 
define 
\begin{equation}\label{Lf}
L_f(r,x)=\sup\{\sig(f(y),f(x))\co y\in \Sph \text{ and } \sig(y,x)=r\}, 
\end{equation}
\begin{equation}\label{lf}
l_f(r,x)=\inf\{\sig(f(y),f(x))\co y\in \Sph \text{ and } \sig(y,x)=r\},   
\end{equation}
and  
\begin{equation}\label{Hf}
H_f(x)=\limsup_{r\to 0}\frac{L_f(x,r)}{l_f(x,r)}. 
\end{equation}
The map $f$ is called 
{\em  quasiconformal} if
$$
\sup_{x\in\Sph}H_f(x)<\infty. 
$$
A quasiconformal map $f$ is called  $H$-{\em quasiconformal}, $H\ge 1$,
if 
$$ H_f(x)\le H \quad \text{
for almost every } x\in \Sph. $$  We refer to $H$ as the {\em dilatation} of the map $f$. 

Quasiconformality can be defined similarly in other  settings, for example for homeomorphisms between regions in $\Sph$ or $\R^n$, or between  Riemannian manifolds.

The composition of an $H$-quasiconformal and an $H'$-quasi\-conformal 
map   is an $(HH')$-quasiconformal map.  
If a homeomorphism  $f$  on  $\Sph$ is  $1$-quasiconformal, then  $f$ is a {\em M\"obius transformation}, i.e., a conformal or anti-conformal map on $\oC$, and so a fractional linear 
transformation, or the complex conjugate of such a map. Note 
that our definition of a  M\"obius transformation is slighly non-standard in complex analysis as we allow anti-conformal maps. 

Let $f\:X\ra Y$ be a homeomorphism between metric spaces $(X,d_X)$ and $(Y,d_Y)$.
The map $f$ is  called $\eta$-{\em quasisymmetric} or an 
$\eta$-{\em quasisymmetry}, 
where $\eta\:[0,\infty)\ra [0,\infty)$ is  a homeomorphism,  if
\begin{equation}\label{qsdef}
\frac{d_Y(f(x),f(y))}{d_Y(f(x),f(z))}
\leq \eta\left (\frac{d_X(x,y)}{d_X(x,z)}\right)
\end{equation}
for all $x,y,z\in X$ with  $x\ne z$. 
The map $f$ is called {\em quasisymmetric} or an {\em quasisymmetry} if it is 
 $\eta$-{\em quasisymmetric} for some distortion function $\eta$. If $f\:X\ra Y$ is a homeomorphism of $X$ onto a subset of 
$Y$ and satisfies the distortion condition \eqref{qsdef},
then $f$ is called an ${\eta}$-{\em quasisymmetric embedding}. Two metric spaces $X$ and $Y$ are called 
{\em quasisymmetrically equivalent} if there exists a quasisymmetry $f\: X\ra Y$.

If $x_1,x_2,x_3,x_4$ are four  distinct points  
in a metric space $(X,d)$, then   their  {\em cross-ratio} 
is the quantity 
$$[x_1,x_2,x_3,x_4]=\frac{d(x_1,x_3)d(x_2,x_4)}{d(x_1,x_4)d(x_2,x_3)}. $$

Let $\eta\:[0,\infty)\ra[0,\infty)$
be a homeomorphism, 
 and  $f\:X\ra Y$ a homeomorphism between metric spaces $(X,d_X)$ and $(Y,d_Y)$.
The map $f$ is  (an) {\em $\eta$-quasi-M\"obius  (map)} if
$$[f(x_1),f(x_2),f(x_3),f(x_4)]\leq \eta([x_1,x_2,x_3,x_4]).$$
for every $4$-tuple $(x_1,x_2,x_3,x_4)$ of distinct
points in $X$. For these maps we use terminology very similar as for quasisymmetry maps. For example, a  {\em quasi-M\"obius 
embedding}  $f\:X\ra Y$ is a quasi-M\"obius map of $X$ onto  
a subset of $Y$.

Note that a M\"obius transformation on $\Sph$ preserves cross-ratios of points. As a consequence every pre- or post-composition of an $\eta$-quasi-M\"obius map $f\:\Sph\ra \Sph$ with a M\"obius transformation is $\eta$-quasi-M\"obius.

The following proposition records interrelations between  the classes of maps we discussed (see
\cite{Va2} for the proof of the statements).

\begin{proposition}\label{prop:inter}
\begin{itemize}

\smallskip
\item[(i)]  
Every $H$-quasiconformal map $f\:\Sph\ra \Sph$
is $\eta$-quasi-M\"obius
with $\eta$ depending only on  $H$. Conversely, 
every $\eta$-quasi-M\"obius map $f\:\Sph\to\Sph$ is $H$-quasiconformal with 
$H$ depending only on $\eta$.

\smallskip
\item[(ii)] An $\eta$-quasisymmetric map between metric spaces is $\tilde \eta$-quasi-M\"obius
 with $\tilde \eta$ depending only on $\eta$.

\smallskip
\item[(iii)] Let $(X,d_X)$ and $(Y,d_Y)$ be bounded metric spaces,
     $f\: X\ra Y$ an  $\eta$-quasi-M\"obius map, $\la \ge 1$, 
and $x_1,x_2,x_3 \in X$. 
Set 
$y_i=f(x_i)$, and suppose that 
 \begin{equation}\label {sep1}
 d_X(x_i, x_j)\ge \diam(X)/\la
 \end{equation}
  and  
  \begin{equation}\label {sep2}
d_Y(y_i, y_j)\ge \diam(Y)/\la
\end{equation}
 for  $i,j=1,2,3$, $i\ne j$.  
Then $f$ is $\tilde  \eta$-quasisymmetric with $\tilde  \eta$
depending only on $\eta$ and $\la$.
\end{itemize} 
\end{proposition}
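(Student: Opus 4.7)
All three assertions are standard facts due to V\"ais\"al\"a; I would prove them in the order (ii), (iii), (i), since (i) uses (iii).

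For (ii), I would begin from the identity
\begin{equation*}
[f(x_1),f(x_2),f(x_3),f(x_4)] = \frac{d_Y(f(x_1),f(x_3))}{d_Y(f(x_1),f(x_4))} \cdot \frac{d_Y(f(x_2),f(x_4))}{d_Y(f(x_2),f(x_3))}
\end{equation*}
and apply the $\eta$-quasisymmetric condition to each of the two ratios, obtaining an upper bound of $\eta(a)\eta(b)$ where $a = d_X(x_1,x_3)/d_X(x_1,x_4)$, $b=d_X(x_2,x_4)/d_X(x_2,x_3)$, and $ab=[x_1,x_2,x_3,x_4]$. The cosmetic issue is that $(a,b)\mapsto \eta(a)\eta(b)$ is not a function of $ab$; I would handle it by splitting into two cases (whether $a\le 1$ or $a>1$) and, in the second case, interchanging the pairing to swap a source distance with its reciprocal. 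This yields $\tilde\eta$ depending only on $\eta$.

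For (iii), for any $x,y,z\in X$ with $x\ne z$ and any auxiliary point $w\in X$ the algebraic identity
\begin{equation*}
\frac{d_Y(f(x),f(y))}{d_Y(f(x),f(z))} = [f(x),f(w),f(y),f(z)] \cdot \frac{d_Y(f(w),f(y))}{d_Y(f(w),f(z))}
\end{equation*}
reduces the task to controlling the second factor and the source analogue of the first. By a pigeonhole argument, among three $(\diam(X)/\la)$-separated points $x_1,x_2,x_3$ at least one, call it $x_i$, simultaneously satisfies $d_X(x_i,y)\ge c\diam(X)$ and $d_X(x_i,z)\ge c\diam(X)$ for some $c=c(\la)>0$; taking $w=x_i$ makes the source ratio $d_X(w,y)/d_X(w,z)$ comparable to $1$ in terms of $\la$ alone. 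By inspecting the (at most three) eligible indices $i$ and invoking \eqref{sep2}, one can arrange that the same $x_i$ also ensures that the target ratio $d_Y(f(w),f(y))/d_Y(f(w),f(z))$ is comparable to $1$ in terms of $\la$. Combined with the quasi-M\"obius estimate on $[f(x),f(w),f(y),f(z)]$, this yields the desired $\tilde\eta$. The bookkeeping in making the pigeonhole argument work on both sides simultaneously is the main technical step.

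For (i), the implication quasisymmetric $\Rightarrow$ quasiconformal is immediate from the pointwise bound $H_f(x)\le \eta(1)$. To show that $\eta$-quasi-M\"obius implies $H$-quasiconformal on $\Sph$, I would exploit that pre- and post-composition with M\"obius transformations preserves cross-ratios and hence the quasi-M\"obius distortion function: after composing $f$ with suitable M\"obius transformations I may assume that $f$ sends three fixed well-separated points of $\Sph$ to another such triple, and then (iii), applied with absolute $\la$, promotes $f$ to an $\tilde\eta$-quasisymmetry of $\Sph$, in particular to an $H$-quasiconformal map. The substantive direction is the converse, $H$-quasiconformal $\Rightarrow$ $\eta$-quasi-M\"obius on $\Sph$. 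Here I would invoke quasi-invariance of conformal modulus under $H$-quasi\-conformal maps, together with the classical Teichm\"uller/Mori bound expressing the cross-ratio of four points on $\Sph$ as a monotone function of the modulus of the Teichm\"uller ring they determine; the modulus distortion estimate then translates into the required cross-ratio bound with $\eta$ depending only on $H$. This last step is where the nontrivial classical machinery enters and I expect it to be the main obstacle; the remainder is a matter of careful accounting of constants to keep each implication quantitative.
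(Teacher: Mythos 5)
The paper offers no proof of this proposition --- it is quoted from V\"ais\"al\"a [Va2] --- so there is nothing internal to compare against, and your task is really to supply the standard arguments. Your architecture for (i) and (iii) is the standard one and is sound: in (iii), the identity reducing $d_Y(f(x),f(y))/d_Y(f(x),f(z))$ to a cross-ratio times a ratio anchored at a well-chosen $w=x_i$ is correct, and the selection of $x_i$ is exactly Lemma~\ref{lem:3norm}; note that you only need the \emph{two} conditions $d_X(w,y)\ge \diam(X)/(2\la)$ and $d_Y(f(w),f(z))\ge \diam(Y)/(2\la)$ (the other distances are bounded above by the diameters for free), and two conditions on three candidate points is precisely what makes the pigeonhole close --- your stronger three-condition requirement would not obviously survive the count. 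For (i), normalizing by M\"obius transformations (which preserve chordal cross-ratios and are $1$-quasiconformal) and invoking (iii) with an absolute $\la$ is correct, and the converse via modulus quasi-invariance and the Teichm\"uller ring estimate is the classical route.

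The one genuine gap is in (ii), and the difficulty you dismiss as cosmetic is the heart of the proof: the case split ``$a\le 1$ versus $a>1$'' does not resolve it. Take $x_1=0$, $x_3=\eps$, $x_2=2\eps$, $x_4=1$ on the real line: then $a=d(x_1,x_3)/d(x_1,x_4)=\eps\le 1$ while $b=d(x_2,x_4)/d(x_2,x_3)\approx 1/\eps$, so $t=ab\approx 1$ but the bound $\eta(a)\eta(b)=\eta(\eps)\eta(1/\eps)$ is unbounded as $\eps\to 0$; thus your ``safe'' case already fails. What actually saves the argument is that the two pairings cannot both degenerate: if one factor of a pairing greatly exceeds $1\vee t$, the triangle inequality forces both factors of the other pairing to be comparable to $1\vee t$ (in the example the alternative pairing gives $a'=b'\approx 1$), and this triangle-inequality step must be made explicit. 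A cleaner repair is available inside this paper: prove the distortion estimate for the modified cross-ratio of Lemma~\ref{lem:modcr}. Assuming without loss of generality that $d(x_1,x_3)\le d(x_2,x_4)$, the definition of $s=\langle x_1,x_2,x_3,x_4\rangle$ gives $d(x_1,x_3)\le s\,d(x_1,x_4)$ and $d(x_1,x_3)\le s\,d(x_2,x_3)$, and two applications of quasisymmetry (based at $x_1$ and at $x_3$) yield $d(f(x_1),f(x_3))\le \eta(s)\,(d(f(x_1),f(x_4))\wedge d(f(x_2),f(x_3)))$, hence $\langle f(x_1),f(x_2),f(x_3),f(x_4)\rangle\le \eta(s)$; Lemma~\ref{lem:modcr} then converts this into the quasi-M\"obius inequality with $\tilde\eta=\eta_1^{-1}\circ\eta\circ\eta_2$.
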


The first statement (i) says that  for a homeomorphism $f\:\Sph\ra \Sph$
the properties  of being a quasiconformal map and of being a quasi-M\"obius map are quantitatively equivalent, or,   more informally,  that $f$ is a quasiconformal map if and only if $f$ is   a  quasi-M\"obius map, quantitatively. 

Statements (ii) and (iii) of the previous proposition imply  that 
a homeomorphism $f\:\Sph\ra \Sph$ is a quasisymmetric map if and only $f$ is a quasi-M\"obius map. This statement is not quantitative. For if $f$ is $\eta$-quasi-M\"obius, then $f$ is $\tilde \eta$-quasisymmetric,  
but  we cannot choose $\tilde \eta$ just to depend on $\eta$. If one wants 
a quantitative implication for this direction, one has to introduce additional parameters (such as the parameter $\lambda$ in (iii)).

\medskip
A metric space  $(X,d)$ is called $N$-{\em doubling}, where 
$N\in \N$, if every ball of radius $r>0$ in $X$ can be covered by 
at most $N$ balls in $X$ of radius $r/2$.  Every subset of a doubling 
metric space is also doubling, quantitatively. 

A homeomorphism $f\: X\ra Y$ between metric  spaces
$(X,d_X)$ and $(Y,d_Y)$ is called $H$-{\em weakly quasisymmetric}, $H\ge 1$,  if for all $x,y,z\in X$ the following implication holds:
$$ d_X(x,y)\le d_X(x,z) \Rightarrow  
d_Y(f(x),f(y))\le H d_Y(f(x),f(z)).
$$

Under mild extra assumptions on the spaces weak quasisymmetry of a map implies its quasisymmetry (\cite[Thm.~10.19]{He}).

\begin{proposition} \label{prop:wkqs}
Let $(X,d_X)$ and $(Y,d_Y)$ be metric spaces, and $f\:X\ra Y$ be weakly $H$-quasisymmetric. If $X$ and $Y$ are connected and $N$-doubling, then $f$ is $\eta$-quasisymmetric  with $\eta$ only depending on $N$ and $H$.
\end{proposition}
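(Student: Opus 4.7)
\textbf{Plan for the proof of Proposition~\ref{prop:wkqs}.}

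My plan is to establish the quantitative quasisymmetry bound by controlling the worst-case distortion function
\[
\tau(t) = \sup\left\{\frac{d_Y(f(x), f(y))}{d_Y(f(x), f(z))} : x,y,z\in X,\ x\ne z,\ d_X(x,y)\le t\, d_X(x,z)\right\},
\]
and to prove that $\tau(t)<\infty$ for every $t>0$ and $\tau(t)\to 0$ as $t\to 0^+$, with quantitative bounds depending only on $H$ and $N$. From any such $\tau$ a homeomorphism $\eta\:[0,\infty)\to[0,\infty)$ majorizing $\tau$ is readily assembled, giving the $\eta$-quasisymmetry of $f$ with $\eta=\eta_{H,N}$.

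For the regime $t\ge 1$ I would fix a triple $(x,y,z)$ with $d_X(x,y)\le t\,d_X(x,z)$ and use connectedness of $X$ to construct a chain $z=z_0,z_1,\dots,z_m$ with $d_X(x,z_k)\approx 2^k d_X(x,z)$, terminating once $d_X(x,z_m)\ge d_X(x,y)$; such chains exist because the continuous function $w\mapsto d_X(x,w)$ on a connected metric space takes all intermediate values in $[0,\diam(X))$, while doubling of $X$ controls the chain length by $m\le C\log_2(2+t)$. Iterating weak quasisymmetry along consecutive terms of the chain, together with a final comparison of $f(z_m)$ with $f(y)$, produces the polynomial majorant $\tau(t)\le C'(1+t)^{\alpha}$ with constants $C',\alpha$ depending only on $H$ and $N$.

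The principal obstacle is the small-$t$ regime, where direct weak quasisymmetry yields only the trivial bound $\tau(t)\le H$ for $t\le 1$, which does not decay. To upgrade this I argue by contradiction: assume there exist $\delta>0$ and triples $(x,y,z)$ with $t=d_X(x,y)/d_X(x,z)$ arbitrarily small and $d_Y(f(x),f(y))\ge \delta\, d_Y(f(x),f(z))$. Using connectedness of $X$, I build an $\eps$-chain from $y$ to $z$ in $X$ at scale $\eps\approx d_X(x,y)$; its length grows like $1/t$. Combined with $N$-doubling of $X$, this produces a family of $K$ points in the ball $\overline B(x,d_X(x,z))$ that are pairwise separated at scale $\approx d_X(x,y)$, with $K\to\infty$ as $t\to 0$. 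Weak quasisymmetry applied to triples of the form $(x,w,z)$ forces all their images into $\overline B(f(x),H\,d_Y(f(x),f(z)))$; a further application to triples pairing these source points with $x$ (and leveraging the assumed lower bound transported along the chain) keeps the images uniformly separated at scale proportional to $\delta\, d_Y(f(x),f(z))$. This violates the $N$-doubling of $Y$ once $K$ is large enough, and quantifying the obstruction yields a decay rate of $\tau$ near $0$ controlled solely by $H$ and $N$.

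Combining the two regimes, the distortion function $\eta=\eta_{H,N}$ can be assembled and $f$ is $\eta$-quasisymmetric as claimed. The technical heart of the argument is the small-$t$ analysis: the tension between producing many well-separated source points (via connectedness plus source doubling) and the impossibility of packing many well-separated image points inside a bounded region of $Y$ (by target doubling) is what forces the decay of $\tau$ at $0$, the quantitative weak-quasisymmetry constant $H$ serving as the coupling between the two spaces.
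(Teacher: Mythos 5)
The paper does not actually prove Proposition~\ref{prop:wkqs}; it cites \cite[Thm.~10.19]{He}, and the classical proof (due to Tukia--V\"ais\"al\"a) is exactly the two-regime scheme you describe: the trivial bound $\tau(t)\le H$ for $t\le1$ plus a chain iteration for $t\ge1$, and a packing argument against the doubling of $Y$ to force $\tau(t)\to0$ as $t\to0$. Your large-$t$ regime is fine in outline; the factor-of-two issues in iterating weak quasisymmetry along a dyadic chain are handled by inserting intermediate points via connectedness and are routine.

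The genuine gap is in the small-$t$ regime, at the sentence ``a further application \dots keeps the images uniformly separated.'' Your separated family consists of $K\approx 1/t$ points that are pairwise separated at the \emph{uniform} scale $\eps\approx d_X(x,y)$ but may sit anywhere in $\overline B(x,d_X(x,z))$, in particular at distance $\approx d_X(x,z)\gg\eps$ from $x$. For such a pair $w,w'$, weak quasisymmetry at $w$ only gives $d_Y(f(w),f(u))\le H\,d_Y(f(w),f(w'))$ for points $u$ with $d_X(w,u)\le d_X(w,w')\approx\eps$; to turn this into a lower bound on $d_Y(f(w),f(w'))$ you need a point $u$ within $\eps$ of $w$ whose image is already known to be far from $f(w)$. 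The only anchor you have is the pair $(x,y)$, and it cannot be ``transported along the chain'': weak quasisymmetry is a one-sided, pointwise condition and does not prevent $f$ from collapsing an entire $\eps$-cluster located far from $x$. The standard repair is to take only $k\approx\log_2(1/t)$ points $a_j$ with $d_X(x,a_j)=2^j d_X(x,y)$, which exist by connectedness and the intermediate value theorem applied to $w\mapsto d_X(x,w)$. Then $d_X(a_i,a_j)\ge d_X(a_i,x)$ for $i<j$, so weak quasisymmetry at $a_i$ gives $d_Y(f(a_i),f(a_j))\ge d_Y(f(a_i),f(x))/H\ge d_Y(f(x),f(y))/H^2\ge(\delta/H^2)\,d_Y(f(x),f(z))$, while all the $f(a_j)$ lie in $\overline B(f(x),H\,d_Y(f(x),f(z)))$. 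Doubling of $Y$ then bounds $k$, hence $\log_2(1/t)$, by a constant depending only on $N$, $H$, $\delta$, which yields the contradiction and, made quantitative, the decay of $\tau$ at $0$. These are far fewer points than your $1/t$, but they carry the geometric structure (mutual separation dominating the distance to the base point) that makes the weak-quasisymmetry comparisons actually bite.
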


A metric space is called {\em proper} if every closed ball in the space is compact.  The following lemma will allow  us to extract sublimits of a sequence of quasisymmetric embedding into a proper metric space. 
 
\begin{lemma}[Subconvergence lemma]
\label{lem:subcon}
Let $(X, d_X)$ and $(Y,d_Y)$ be   
 metric spaces such  that $X$ is compact and  $Y$ is proper,   and let 
$f_n\:X\ra Y$  be   
$\eta$-quasisymmetric embeddings for $n\in \N$. 
Suppose that there exists a constant $c>0$, a set $A\sub X$, and a compact set $K\sub Y$ 
  such that 
$$ \diam(f_n(A)) \ge c \quad \text{and}\quad 
f_n(A)\sub K$$
for all $n\in \N$.  
Then the sequence $(f_n)$ subconverges uniformly 
 to an $\eta$-quasisymmetric embedding $g\:X\ra Y$, i.e., there exists an 
increasing  sequence $(n_l)$ in $\N$  such that 
$$ \lim_{l\to \infty} \sup_{x\in X} d_Y( f_{n_l}(x),g(x)) =0.  $$ 
\end{lemma}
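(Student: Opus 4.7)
The plan is a standard Arzel\`a--Ascoli type argument: I would first show that $\{f_n\}$ is equicontinuous and maps $X$ into a common compact subset of $Y$, then extract a uniformly convergent subsequence, and finally check that the limit $g$ inherits $\eta$-quasisymmetry from the $f_n$.

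To set up the uniform boundedness and equicontinuity I would exploit that $\alpha:=\diam(A)>0$; otherwise $f_n(A)$ would be a single point, contradicting $\diam(f_n(A))\ge c>0$. Fix once and for all a pair $a,b\in A$ with $d_X(a,b)\ge \alpha/2$. Since $f_n(a),f_n(b)\in K$ one has $d_Y(f_n(a),f_n(b))\le \diam(K)$, and applying the $\eta$-quasisymmetry inequality to the triple $(a,x,b)$ yields
\begin{equation*}
d_Y(f_n(a),f_n(x))\le \eta(2\diam(X)/\alpha)\diam(K)=:M \foral x\in X.
\end{equation*}
Thus $f_n(X)$ lies in the closed $M$-neighborhood $K'$ of $K$, which is closed and bounded in the proper space $Y$ and therefore compact. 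For equicontinuity, the triangle inequality in $X$ provides, for each $x\in X$, a point $p(x)\in\{a,b\}$ with $d_X(x,p(x))\ge \alpha/4$; applying quasisymmetry to $(x,y,p(x))$ gives
\begin{equation*}
d_Y(f_n(x),f_n(y))\le \eta(4d_X(x,y)/\alpha)\cdot d_Y(f_n(x),f_n(p(x)))\le M\eta(4d_X(x,y)/\alpha),
\end{equation*}
which is a modulus of continuity independent of $n$, since $\eta$ is continuous with $\eta(0)=0$.

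With these two ingredients in hand, the Arzel\`a--Ascoli theorem on the compact space $X$ produces an increasing subsequence $(n_l)$ and a continuous map $g\: X\to K'$ such that $f_{n_l}\to g$ uniformly on $X$. Passing to the limit in the inequality $d_Y(f_{n_l}(x),f_{n_l}(y))/d_Y(f_{n_l}(x),f_{n_l}(z))\le \eta(d_X(x,y)/d_X(x,z))$ is immediate whenever $g(x)\ne g(z)$. The sole remaining---and main---obstacle is the injectivity of $g$: if $g(x)=g(z)$ for some distinct $x,z\in X$, then $d_Y(f_{n_l}(x),f_{n_l}(z))\to 0$, and the quasisymmetry inequality applied to $(x,y,z)$ for arbitrary $y\in X$ forces $d_Y(f_{n_l}(x),f_{n_l}(y))\to 0$, so $g$ would be constant on $X$; but $\diam(g(A))=\lim_l\diam(f_{n_l}(A))\ge c>0$, a contradiction. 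Hence $g$ is an injective continuous map on compact $X$, thus a homeomorphism onto its image, and the limiting quasisymmetry inequality then makes it the required $\eta$-quasisymmetric embedding.
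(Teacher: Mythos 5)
Your argument is correct and follows essentially the same route as the paper's proof: uniform boundedness and equicontinuity from the quasisymmetry inequality applied to a fixed well-separated pair in $A$, Arzel\`a--Ascoli, and then the dichotomy that the limit is either injective or constant, with the constant case excluded by $\diam(g(A))\ge c$. The only quibble is a harmless constant: when $p(x)=b$ you should bound $d_Y(f_n(x),f_n(p(x)))$ by $M+\diam(K)$ (via the triangle inequality through $f_n(a)$) rather than by $M$, which does not affect the equicontinuity conclusion.
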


As  discussed  at the end of Section~\ref{s:nt},     the intended meaning of  the phrase  ``let $f_n\:X\ra Y$  be   
$\eta$-quasisymmetric embeddings for $n\in \N$"  in this lemma is that the maps $f_n$ are $\eta$-quasisymmetric embeddings with the same distortion function $\eta$ for {\em all} $n$.

\medskip 
\no{\em Proof.} We may assume that $K=
\overline B(y_0,R)$ for some $y_0\in Y$ and $R>0$.  
 
We claim  that the family $(f_n)$ is uniformly bounded (i.e., there exists $R'>0$ such that $f_n(X)\sub \overline B(y_0, R')$ for all $n$) and that  it is  equicontinuous. Let $u,v\in X$ be  arbitrary. We have to show    $d_Y(y_0, f_n(u))$ is uniformly bounded, and that if $d_X(u,v)$ is small,
then $d_Y(f_n(u),f_n(v))$ is uniformly small. 
To see this we consider a fixed map $f=f_n$. For  ease of notation we drop the subscript $n$.

Obviously, $A$ must contain more than one point; so $\diam(A)=a>0$.  
There exist points $x_1,x_2\in A$ such that $ d_X(x_1, x_2)\ge a/2.$ 
We  can pick $x\in \{x_1, x_2\}$ such that 
$d_X(x, u)\ge a/4$. Let $x'$ be the other point in $\{x_1,x_2\}$. 
Note that $f(x), f(x')\in K=\overline B(y_0, R)$ and so $d_Y(f(x'),f(x))\le 2R$. 

Since $f$ is an $\eta$-quasisymmetric embedding, this    implies
\begin{eqnarray*} d_Y(f(u),f(x)) &\le &d_Y(f(x'),f(x)) 
\eta\left(\frac{d_X(u,x)}{d_X(x',x)} \right)\\
&\le & 2R\eta(2\diam(X)/a),
\end{eqnarray*}
and so $ f(u)\in \overline B(y_0,R'), $
where $R'=R(1+2\eta(2\diam(X)/a))$. The uniform boundedness of the sequence $(f_n)$ follows. 

Moreover, 
\begin{eqnarray*} d_Y(f(u),f(v)) &\le &d_Y(f(u),f(x)) 
\eta\left(\frac{d_X(u,v)}{d_X(u,x)} \right)\\
&\le & 2R'\eta(4d_X(u,v)/a).
\end{eqnarray*}
Since $\eta(t)\to 0$ as $t\to 0$ this gives the desired  bound for $d_Y(f(u),f(v))$ that is uniformly small if $d_X(u,v)$ is small. The 
equicontinuity of the sequence $(f_n)$ follows.

By the compactness theorem of Arzel\`a-Ascoli the sequence 
$(f_n)$ subconverges to a continuous map $g\: X\ra Y$ uniformly on $X$. 
Since all the maps $f_n$  are $\eta$-quasisymmetric embeddings, the map 
$g$ satisfies the inequality
$$ d_Y(g(u),g(v))\le d_Y(g(u),g(w))\eta\left(\frac{d_X(u,v)}{d_X(u,w)}\right),$$
whenever $u,v,w\in X$, $u\ne w$.  This inequality implies that 
$g$ is injective and hence a quasisymmetric embedding, or a constant map; but the latter possibility is ruled out, because  a limiting argument shows that $ \diam (g(\overline A))\ge c>0$. 
The proof is complete. \qed  \medskip

\begin{lemma} \label{lem:3norm}
Let $a,b>0$, and $(X,d_X)$ and $(Y,d_Y)$
be metric spaces.  Suppose that $x_1,x_2,x_3\in X$ and $y_1,y_2,y_3\in Y$ are points such that 
  \begin{equation*}
 d_X(x_i, x_j)\ge a \quad \text{and}\quad 
  d_Y(y_i, y_j)\ge b \quad \text{for} \quad i,j=1,2,3,\   i\ne j.
\end{equation*}
  Then for all $x\in X$ and $y\in Y$ there exists an index $l\in \{1,2,3\}$ such that 
 $d_X(x,x_l)\ge a/2$ and $d_X(y,y_l)\ge b/2$.
\end{lemma}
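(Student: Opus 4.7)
The plan is a short pigeonhole argument, based on the observation that at most one of the three distinguished points in each space can be very close to a given point.

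First I would introduce the ``bad'' index sets
\[
I_X = \{\, i \in \{1,2,3\} : d_X(x, x_i) < a/2 \,\} \quad\text{and}\quad
I_Y = \{\, i \in \{1,2,3\} : d_Y(y, y_i) < b/2 \,\}.
\]
The goal is to show $I_X \cup I_Y \subsetneq \{1,2,3\}$, because any $l$ outside this union will satisfy both inequalities in the conclusion.

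The main step is to check $|I_X| \le 1$ and $|I_Y| \le 1$. For $I_X$, suppose for contradiction that two distinct indices $i,j \in I_X$ exist. Then the triangle inequality gives
\[
d_X(x_i, x_j) \le d_X(x_i, x) + d_X(x, x_j) < a/2 + a/2 = a,
\]
contradicting the hypothesis $d_X(x_i,x_j) \ge a$. The argument for $I_Y$ is identical with $b$ in place of $a$.

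Hence $|I_X \cup I_Y| \le |I_X| + |I_Y| \le 2 < 3$, so there exists $l \in \{1,2,3\} \setminus (I_X \cup I_Y)$, and this $l$ satisfies $d_X(x,x_l) \ge a/2$ and $d_Y(y,y_l) \ge b/2$ as required. There is no real obstacle here; the only thing to be careful about is applying the triangle inequality in the correct direction to derive the contradiction.
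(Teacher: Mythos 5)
Your proof is correct and uses essentially the same pigeonhole argument as the paper: at most one of the $x_i$ can lie in $B(x,a/2)$ and at most one of the $y_i$ in $B(y,b/2)$, so some index survives both exclusions. The paper phrases this sequentially rather than via the union bound on bad index sets, but the content is identical.
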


\no{\em Proof.} At most one of the points $x_i$ can lie 
in the ball $B(x,a/2)$. So there are at least two of the points 
$x_i$, say $x_1$ and $x_2$,  that have distance $\ge a/2$ to $x$. At most one of the points $y_1$ and $y_2$ can lie in   $B(y,b/2)$; so one, say $y_1$, has to lie outside this ball.
Then $l=1$ is an index as desired.\qed \medskip
 




%



\section{Quasicircles} \label{s:quasicircles}

\no A Jordan  curve $J\sub \Sph$ is called a {\em $k$-quasicircle} for  $k\ge 1$ if it 
satisfies condition \eqref{eq:qcirc}, that is, whenever $x,y\in J$,
$x\ne y$,  are arbitrary, then 
$$\diam(\ga)\le k  \sig(x,y)$$
for one of the subarcs $\ga$ of $J$ with endpoints $x$ and $y$. The curve $J$ is called a {\em quasicircle} if it is a $k$-quasicircle for 
some $k\ge 1$. A family $\mathcal{S}=\{S_i: i\in I\}$ of Jordan  curves $S_i$ in $\oC$ is said to consist 
of {\em uniform quasicircles} if there exists $k\ge 1$ such that $S_i$ is a $k$-quasicircle for each $i\in I$.

Various equivalent characterizations  of quasicircles and {\em quasidisks} (Jordan domains bounded by quasicircles)  are known  (see, for example, 
\cite{Geh, Geh2}). 
Up to bi-Lipschitz equivalence all quasicircles can be constructed by a procedure similar to the one used in the definition of the von Koch snowflake curve \cite{Roh}.

The  following proposition is essentially due to 
Ahlfors \cite{Ahl}. See  \cite[Ch.~II, \S8]{LV} for a discussion of related facts. 

\begin{proposition}\label{prop:planeqcirc} 
Suppose that $J\sub \Sph$ is a Jordan  curve.  Then the following conditions are quantitatively equivalent: 

\begin{itemize} 

\smallskip
\item[{(i)}] $J$ is a $k$-quasicircle, 

\smallskip
\item[{(ii)}] $J$ is  the image of a round circle under an
 $H$-quasiconformal map $f\: \Sph\ra \Sph$, 

\smallskip
\item[{(iii)}]  $J$ is  the image of a round circle under an
 $\eta$-quasi-M\"obius map $f\: \Sph\ra \Sph$. 

\end{itemize}
\end{proposition}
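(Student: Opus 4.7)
The scheme is to establish (ii)$\Leftrightarrow$(iii) directly and then to prove the two implications (iii)$\Rightarrow$(i) and (i)$\Rightarrow$(ii). The equivalence (ii)$\Leftrightarrow$(iii) is immediate from Proposition~\ref{prop:inter}(i), which identifies the classes of quasiconformal and quasi-M\"obius self-maps of $\Sph$ quantitatively.

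For (iii)$\Rightarrow$(i), the plan is to upgrade the quasi-M\"obius hypothesis on $f\:\Sph\to\Sph$ to quasisymmetry of the restriction $f|_C\:C\to J$ and then verify (i) by an elementary geometric check on the round circle $C$. Because $J$ is a Jordan curve, an intermediate value argument along its two subarcs (starting from a pair of points realizing $\diam(J)$, comparing signed distance to these points along each arc) yields three points $d_1,d_2,d_3\in J$ with pairwise chordal distance at least $\diam(J)/2$. Since the subgroup of M\"obius transformations of $\Sph$ preserving $C$ acts $3$-transitively on $C$, we can choose $\mu$ in this subgroup so that $\tilde f:=f\circ\mu^{-1}$ is still $\eta$-quasi-M\"obius, maps $C$ onto $J$, and carries three fixed equidistributed points $c_1,c_2,c_3\in C$ onto $d_1,d_2,d_3$. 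The separation conditions \eqref{sep1}--\eqref{sep2} of Proposition~\ref{prop:inter}(iii) are then satisfied with an absolute $\la$, and we obtain a distortion function $\tilde\eta=\tilde\eta_\eta$ making $\tilde f|_C\:C\to J$ $\tilde\eta$-quasisymmetric. A direct computation on a round circle shows that for any $p,q\in C$ one of the two subarcs $\beta$ between $p$ and $q$ satisfies $\diam(\beta)\le\sig(p,q)$; applying this to $p=\tilde f^{-1}(x), q=\tilde f^{-1}(y)$ and invoking quasisymmetry gives $\sig(\tilde f(b),x),\sig(\tilde f(b),y)\le\tilde\eta(1)\sig(x,y)$ for every $b\in\beta$, so $\tilde f(\beta)$ is a subarc of $J$ between $x$ and $y$ of diameter at most $2\tilde\eta(1)\sig(x,y)$. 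This verifies (i) with $k=2\tilde\eta(1)$.

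For (i)$\Rightarrow$(ii), I would follow the classical Ahlfors welding construction. The Jordan curve $J$ bounds two complementary open Jordan regions $D_1,D_2\sub\Sph$, and the Riemann mapping theorem supplies conformal maps $\phi_1\:\D\to D_1$ and $\phi_2\:\Sph\setminus\overline\D\to D_2$ that extend to homeomorphisms of the closures by Carath\'eodory's theorem. The resulting welding homeomorphism $h:=\phi_2^{-1}\circ\phi_1\:\partial\D\to\partial\D$ is shown to be $\eta$-quasisymmetric with $\eta=\eta_k$ via Ahlfors' classical characterization: the three-point condition on $J$ with constant $k$ forces quantitative control of the conformal moduli of topological quadrilaterals in $D_1$ and $D_2$, which in turn bounds the distortion of $h$ on $\partial\D$. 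Once the quasisymmetry of $h$ is in hand, the Beurling-Ahlfors extension (Theorem~\ref{thm:AB}) produces a quasiconformal $H\:\Sph\setminus\D\to\Sph\setminus\D$ extending $h$, and gluing $\phi_1$ on $\overline\D$ with $\phi_2\circ H$ on $\Sph\setminus\D$ yields a homeomorphism $\Sph\to\Sph$ that matches across $\partial\D$ (since $\phi_2\circ h=\phi_1$), is conformal on $\D$, and is quasiconformal on $\Sph\setminus\D$; it is therefore quasiconformal globally because $\partial\D$ has measure zero. The main obstacle throughout is this welding-quasisymmetry step, where the purely metric three-point condition on $J$ must be converted into conformal-invariant control on $D_1$ and $D_2$; the remaining pieces reduce to standard normalizations, elementary geometric estimates on round circles, and the Beurling-Ahlfors extension.
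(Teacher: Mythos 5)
Your proposal is correct in outline, and for the equivalence (ii)$\Leftrightarrow$(iii) it coincides with the paper, which also just invokes Proposition~\ref{prop:inter}~(i). The difference is in how the remaining equivalence is handled: the paper does not prove (i)$\Leftrightarrow$(ii) at all but cites Ahlfors' paper for it, whereas you split it into (iii)$\Rightarrow$(i) and (i)$\Rightarrow$(ii). Your (iii)$\Rightarrow$(i) argument is a clean, self-contained addition: the three-point normalization on $J$ (via the intermediate value argument, which does produce points with pairwise distance $\ge \diam(J)/2$), the $3$-transitivity of the M\"obius stabilizer of $C$, and Proposition~\ref{prop:inter}~(iii) correctly upgrade the restriction to a quasisymmetry with absolute $\la$, and the diameter estimate on subarcs of a round circle then yields the three-point condition with $k=2\tilde\eta(1)$. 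This is essentially the same normalization device the paper uses elsewhere (e.g., in the proofs of Proposition~\ref{prop:qround} and Lemma~\ref{lem:collar}), so it fits the paper's toolkit well. For (i)$\Rightarrow$(ii) you reproduce the classical Ahlfors welding construction; the decisive step --- that the $k$-three-point condition forces the welding homeomorphism $h=\phi_2^{-1}\circ\phi_1$ to be $\eta_k$-quasisymmetric --- is left as a black box. Since that step \emph{is} the content of the reference the paper itself relies on, this is acceptable as a matter of exposition, but be aware that your proof is not self-contained precisely there, and that this is by far the hardest part of the proposition.

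One genuine inaccuracy: in the gluing step you assert that the assembled homeomorphism is quasiconformal ``because $\partial\D$ has measure zero.'' Measure zero is not a sufficient condition for removability of a set under quasiconformal homeomorphisms; there are compact sets of measure zero that are not removable. The correct justification is that a round circle has $\sigma$-finite linear measure (or, alternatively, is itself a quasicircle), and such sets are removable for quasiconformal maps --- this is \cite[Thm.~35.1]{Va}, in the same spirit as the removability of points invoked in the proof of Proposition~\ref{prop:qcircext}. With that citation in place of the measure-zero claim, the gluing argument is sound.
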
 

As discussed before, ``quantitative" equivalence  here  means that if condition (i) is true, then $H$ in condition (ii) can be chosen only depending on $k$ in (i), etc. 
The equivalence $\text {(i)}\Leftrightarrow \text{(ii)} $ is contained in \cite{Ahl}, while $\text {(ii)}\Leftrightarrow \text{(iii)} $ follows from Proposition~\ref{prop:inter}~(i). An immediate consequence of Proposition~\ref{prop:planeqcirc} is the following fact: if $D\sub \OC$ is a closed Jordan region whose boundary $\partial D$ is a $k$-quasicircle, then there exists an $\eta$-quasi M\"obius map $f\: \OC \ra \OC$ with $\eta=\eta_k$ such that $f(\overline {\D})=D$. 
\medskip

The following lemma shows that the diameter of every Jordan curve in $\oC$ is equal to  the diameter of one of the closed Jordan regions whose boundary it is.

\begin{lemma}\label{lem:Jordiam} Let $D\sub \oC$ be a closed Jordan region. Then $\diam (D)= \diam(\partial D)$, or  $\diam(D)=2$ and 
$\diam(\oC\setminus \inte(D))=  \diam(\partial D)$. 
\end{lemma}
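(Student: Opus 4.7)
The inclusion $\partial D\sub D$ gives $\diam(\partial D)\le\diam(D)$ for free, so the substance of the lemma is to produce the reverse inequality on $\partial D$ (or, exceptionally, on $D':=\oC\setminus\inte(D)$, whose boundary coincides with $\partial D$). Fix $p,q\in D$ with $\sig(p,q)=\diam(D)$. I would show that either both points can be placed on $\partial D$, or we are forced to have $\diam(D)=2$, after which a symmetric application of the same argument to $D'$ handles all but one remaining case.

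\textbf{Pushing step and its consequences.} Assume $p\in \inte(D)$ and parametrize a great-circle arc $\ga\:[0,\pi]\ra\oC$ by $\ga(0)=q$, $p=\ga(\theta_0)$ for some $\theta_0\in(0,\pi]$, and $\ga(\pi)=-q$; then (identifying $\oC$ with the unit sphere) $\sig(\ga(t),q)=2\sin(t/2)$ is strictly increasing on $[0,\pi]$. Set $t^*:=\sup\{t\in[\theta_0,\pi]\:\ga([\theta_0,t])\sub D\}$. Since $p\in\inte(D)$, one has $t^*>\theta_0$, and by compactness $\ga(t^*)\in D$. If $t^*<\pi$, then $\ga(t^*)\in\partial D$ with $\sig(\ga(t^*),q)>\sig(p,q)=\diam(D)$, which is impossible; so $t^*=\pi$, giving $-q\in D$ and $\diam(D)=\sig(-q,q)=2$. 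Consequently, if $\diam(D)<2$ then $p,q\in\partial D$ and $\diam(D)\le\diam(\partial D)$, the first alternative of the lemma; if $\diam(D)=2$ but $\diam(D')<2$, the identical argument applied to $D'$ (using $\partial D'=\partial D$) yields $\diam(D')=\diam(\partial D)$, the second alternative.

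\textbf{Residual case $\diam(D)=\diam(D')=2$ (the main obstacle).} Here the pushing step adds nothing, because $p$ is already antipodal to $q$, and a separate argument is needed to conclude $\diam(\partial D)=2$. Let $\alpha\:\oC\ra\oC$ denote the antipodal map and suppose for contradiction that $\alpha(\partial D)\cap\partial D=\emptyset$. By connectedness of $\partial D$ and continuity of $\alpha$, the image $\alpha(\partial D)$ lies in one of the disjoint open sets $\inte(D)$, $\inte(D')$; say $\alpha(\partial D)\sub\inte(D)$. The complementary regions of the Jordan curve $\alpha(\partial D)$ in $\oC$ are $\alpha(\inte(D))$ and $\alpha(\inte(D'))$. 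The option $\alpha(\inte(D))\sub\inte(D)$ forces (by applying $\alpha$ once more) $\alpha(\inte(D))=\inte(D)$ and then $\alpha(\partial D)=\partial D$, contradicting $\alpha(\partial D)\sub\inte(D)$; hence $\alpha(\inte(D'))\sub\inte(D)$. Pick antipodal $q,-q\in D'$ (available since $\diam(D')=2$); the assumption $\alpha(\partial D)\cap\partial D=\emptyset$ rules out both lying in $\partial D$, so (say) $q\in\inte(D')$, and then $-q=\alpha(q)\in\alpha(\inte(D'))\sub\inte(D)$, contradicting $-q\in D'$. The subcase $\alpha(\partial D)\sub\inte(D')$ is symmetric, so $\partial D$ must contain an antipodal pair, whence $\diam(\partial D)=2$ and the first alternative of the lemma holds.
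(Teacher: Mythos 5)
Your proof is correct and follows essentially the same route as the paper: the great-circle "pushing" step is a minor variant of the paper's argument that a minimizing geodesic ending at an interior diameter-realizing point must have length $\pi$, and your residual case is exactly the paper's opening claim that a Jordan curve whose two complementary closed regions each contain an antipodal pair must itself contain one, proved by the same involution/containment contradiction. The only difference is the order of presentation (the paper establishes the antipodal-pair claim first), which is immaterial.
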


\no{\em  Proof.} We first prove an  elementary  geometric fact. To state it we identify $\oC$ with the unit sphere in $\R^3$ by stereographic 
projection, and  denote by $A\: \oC\ra \oC$ the involution that assigns to each point $p\in \oC$ its antipodal point (so $A$  is the conjugate of the map $p\mapsto -p$  on the unit sphere by stereographic projection).  We say that $p,q\in \oC$ form a   {\em pair 
of antipodal points} in $\oC$ if  $q=A(p)$ (then also $p=A(q)$). 
Now suppose $J\sub \oC$ is a Jordan curve, and $U$ and $V$ are the the closures of the two components  of $\oC\setminus J$. We claim that if each of the sets  $U$ and $V$ contains a  pair of antipodal points,  then $J$ also contains such a pair.

To see this we argue by contradiction and assume that  $J$ contains no such pair. In this case  $J\cap A(J)=\emptyset$, and so   the Jordan curve 
$A(J)$ must be contained   in one of the  closed Jordan regions 
$U$ and $V$, say $A(J)\sub U$.  Then  $U$   must contain one  of the closed Jordan regions  $A(U)$ and $A(V)$ bounded by $A(J)$.  

Suppose that  $A(U)\sub U$. Since $A$ is an involution, this implies $U\sub A(U)$, and so $A(U)=U$. Then we  have $A(J)=A(\partial U)=\partial U=J$, which contradicts our assumption 
$J\cap A(J)=\emptyset$. 

So we must have $A(V)\sub U$.  By  our hypotheses there exists an
antipodal pair $\{v,A(v)\}\sub V$.   Then $\{v,A(v)\}\sub A(V)\sub U$, and so $\{v,A(v)\}\sub U\cap V=J$. This contradicts our assumption that $J$ contains no antipodal pair. The claim follows. 

Now let $D\sub \oC$ be an arbitrary closed Jordan region. Then there exist points $x,y\in D$ with $\sigma(x,y)=\diam(D)$. If $x,y\in \partial D$, 
then  $\diam (D)\le \diam(\partial D)$, and so $\diam(D)=\diam(\partial D)$.   

In the other case when  $x,y$ do not both belong to $\partial D$,  one of these points must be an interior point of $D$, say $y\in \inte(D)$. Consider a minimizing spherical geodesic segment joining $x$ and  $y$. If we were able to  slightly extend this  geodesic segment  beyond  $y$ to a minimizing geodesic segment, then we would obtain a point $y'\in D$ near $y$  whose spherical to $x$ is strictly 
larger than the distance of $y$ to $x$. Since there is a strictly monotonic relation between spherical and chordal distance (spherical distance $s\in [0,\pi]$ corresponds to chordal distance $2\sin(s/2)$), we would also have $\sigma(x,y')>\sigma(x,y)$. This is impossible, since 
$x,y'\in D$ and 
$\sigma(y,x)=\diam(D)$. So the  geodesic segment between $x$ and $y$ is not extendible as a minimizing geodesic segment and it must have length $\pi$. Then  $x$ and $y$ form  a pair of antipodal points which implies $\diam(D)=\sigma(x,y)=2$.   

It now follows that  $\diam(D')= \diam(\partial D)$ if $D'=\oC\setminus \inte(D)$  denotes   the other Jordan region  bounded by $\partial D$. Indeed, by applying the first part of the argument also to $D'$, we see that the only case where this may possibly fail  is if  both $D$ and $D'$  contain a pair of antipodal points. By our claim established  in the beginning of the proof, $\partial D$ then contains such a pair as well and we get  the desired relation  $\diam(D')=2=\diam(\partial D)$  anyway. \qed \medskip 

The following proposition is a standard fact. We record a proof for the sake of completeness.

\begin{proposition}\label{prop:qround}
Suppose $D\sub \Sph$ is a closed Jordan  region 
whose boundary $\partial D$ is a  $k$-quasicircle. Then there exists $\la=\la(k)\ge 1$, $x_0\in D$, and $r\in(0,2]=(0,\diam(\oC)]$  such that 
\begin{equation}
\label{eq:qball}
\overline B(x_0, r/\la) \sub D\sub \overline B(x_0,r).
\end{equation}
\end{proposition}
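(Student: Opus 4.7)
My plan is to convert the quasi-M\"obius uniformization of $D$ coming from Proposition~\ref{prop:planeqcirc} into a quasisymmetric one by a single M\"obius pre-composition, and then to read off the required concentric chordal sandwich from the three-point distortion inequality. First, by Proposition~\ref{prop:planeqcirc}, I select an $\eta$-quasi-M\"obius homeomorphism $F\:\oC\ra\oC$ with $F(\overline\D)=D$ and $\eta=\eta_k$. I then fix once and for all the three points $a_1=1$, $a_2=-1$, $a_3=\iu$ in $\oC$, which are pairwise at chordal distance $\ge\sqrt 2$, i.e.\ $\sqrt 2$-separated relative to $\diam(\oC)=2$. Using the $3$-transitivity of the M\"obius group of $\oC$, I pick a M\"obius transformation $g\:\oC\ra\oC$ with $g(F^{-1}(a_j))=a_j$ for $j=1,2,3$, and set $\tilde F:=F\circ g^{-1}$. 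The new map $\tilde F$ is still $\eta$-quasi-M\"obius (M\"obius maps preserve chordal cross-ratios) and fixes each $a_j$ by construction. Proposition~\ref{prop:inter}(iii), applied with both source and image triples equal to $\{a_1,a_2,a_3\}$, then upgrades $\tilde F$ to a $\tilde\eta$-quasisymmetric self-map of $\oC$ with $\tilde\eta=\tilde\eta_k$.

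The key geometric observation is that M\"obius transformations of $\oC$ send chordal balls (spherical caps) to chordal balls, so $\Delta:=g(\overline\D)$ is itself a closed chordal ball, say $\Delta=\overline B(\tilde c,\tilde r)$ with chordal center $\tilde c\in\inte\Delta$ and radius $\tilde r\in(0,2)$; moreover $\tilde F(\Delta)=F(\overline\D)=D$. Setting $x_0:=\tilde F(\tilde c)\in\inte D$, picking any reference point $w_0\in\partial\Delta$, and writing $s:=\sigma(x_0,\tilde F(w_0))>0$, the inequality $\sigma(\tilde c,z)\le\tilde r=\sigma(\tilde c,w_0)$ valid for every $z\in\Delta$ (with equality exactly on $\partial\Delta$), together with the $\tilde\eta$-quasisymmetric estimate at the base point $\tilde c$, will give
\[
\sigma(x_0,\tilde F(z))\le \tilde\eta(1)s \foral z\in\Delta,
\]
and, by exchanging the roles of $z$ and $w_0$ in the quasisymmetry estimate, $\sigma(x_0,\tilde F(z))\ge s/\tilde\eta(1)$ for every $z\in\partial\Delta$. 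Consequently $D=\tilde F(\Delta)\sub\overline B(x_0,\tilde\eta(1)s)$ while $\partial D=\tilde F(\partial\Delta)$ is disjoint from the open ball $B(x_0,s/\tilde\eta(1))$; since the latter contains $x_0\in\inte D$, connectedness forces $\overline B(x_0,s/\tilde\eta(1))\sub D$. Taking $r:=\min\{\tilde\eta(1)s,2\}\in(0,2]$ and $\la:=\tilde\eta(1)^2$ then produces the desired $\overline B(x_0,r/\la)\sub D\sub\overline B(x_0,r)$ with $\la=\la(k)$; the clipped case $r=2$ is harmless because $\tilde\eta(1)s>2$ forces $s/\tilde\eta(1)>2/\la$.

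The principal subtlety is the decision to pre-compose by a M\"obius of the ambient sphere rather than by a self-map of $\overline\D$: the latter group has only three real parameters and cannot in general normalize three prescribed points, whereas the full six-dimensional M\"obius group of $\oC$ is simply $3$-transitive. The price of this ambient normalization is that the new source region $\Delta$ is a round disk different from $\overline\D$, and in particular the chordal center $\tilde c$ of $\Delta$ need not be the $g$-image of the chordal center of $\overline\D$; the chordal-center/chordal-radius description of $\Delta$ is precisely what makes the sandwich argument work uniformly, independently of which round disk $g$ happens to produce.
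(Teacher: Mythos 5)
Your proof is correct, and it takes a genuinely different route from the paper's. The paper splits into two cases according to whether $\diam(D)>2\diam(\partial D)$: the ``large'' case is handled by an elementary antipodal argument (with $r=2$, $\la=2$), and in the remaining case the author runs a direct cross-ratio chain, choosing three points on $\partial \D$ whose images on $\partial D$ are $\diam(\partial D)/2$-separated, normalizing them to the third roots of unity, and estimating $\sig(u',z')/\sig(v',z')$ by hand via Lemma~\ref{lem:3norm}; the case split is forced because the three image points can only be separated at the scale of $\diam(\partial D)$, which may be much smaller than $\diam(D)$. You instead normalize in the \emph{target}: fixing three $\sqrt2$-separated points of $\oC$ and precomposing with an ambient M\"obius transformation so that $\tilde F$ fixes them, you invoke Proposition~\ref{prop:inter}\,(iii) to upgrade the quasi-M\"obius map to a globally $\tilde\eta$-quasisymmetric self-map of $\oC$ with $\tilde\eta=\tilde\eta_k$, and then exploit that $g(\overline\D)$ is again a chordal ball so that the sandwich follows from one application of the quasisymmetry inequality at its chordal center. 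This eliminates the case distinction and most of the computation; the price is reliance on Proposition~\ref{prop:inter}\,(iii), which the paper quotes from the literature without proof, whereas the paper's argument is self-contained modulo Lemma~\ref{lem:3norm}. Your normalization device is in fact the same one the paper deploys later (e.g.\ at the start of the proof of Proposition~\ref{prop:extend}), and your cautions about needing the full M\"obius group of $\oC$ (rather than automorphisms of $\D$) and about the chordal center of $g(\overline\D)$ not being $g(0)$ are exactly the right ones; the bookkeeping for the clipped case $r=2$ is also handled correctly.
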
 

\no 
{\em Proof.}  Let $d=\diam(\partial D)$. 
We first consider the case where 
\begin{equation}\label{cas1}
\diam(D)>2 d.
\end{equation} 
Since $\diam (D)\le \diam(\Sph)=2$, this implies 
$d<1.$
Pick a point $p\in \partial D$, and let $x_0$ be the antipodal point of $p$ on $\Sph$ (considered as the unit sphere in $\R^3$). Then $\partial D\sub \overline B(p, d)$. Therefore, 
the connected set $B(x_0, 2-d)\sub  \Sph\setminus 
\overline B(p, d)$ does not meet $\partial D$ and must hence be contained in one of the two closed Jordan  regions bounded by  $\partial D$.
The other Jordan  region must be contained in 
$\overline B(p, d) $, and so has  diameter 
$\le 2d$. By our assumption \eqref{cas1} this cannot be $D$. Hence $B(x_0, 2-d)\sub D$. 
Note that $2-d> 1$.
Picking $r=2$ and $\lambda=2$ we see that we get the desired inclusion
$$  \overline B(x_0, r/2) =\overline B(x_0, 1) \sub B(x_0, 2-d) \sub D\sub \Sph =\overline B(x_0,r).$$

In the remaining case we have
\begin{equation}\label{cas2}
\diam(D)\le2 d.
\end{equation} 
The set 
$D$ is the image of the closed unit disk
$$\overline \D=\{z\in \C: |z|\le1\}$$ under an $\eta$-quasi-M\"obius map $f\: \Sph \ra \Sph$, where $\eta$ only depends on $k$ (see the remark after Proposition~\ref{prop:planeqcirc}). 
We use a prime to denote image points under $f$, i.e., 
$x'=f(x)$ for $x\in \Sph$.

We can pick points $x_1,x_2, x_3\in \partial \D$
such that for their image points we have 
\begin{equation}
\sig(x_i',x_j')\ge d/2  \forr i\ne j.
\end{equation}
By pre-composing $f$ with a M\"obius transformation if necessary, we may assume  the points $x_i$ are the third roots of unity.
Then 
 $$ \sigma(x_i,x_j)=\sqrt 3  \forr i\ne j.$$

Define  $z=0$, and  let $u\in \overline \D$ and 
 $v\in \partial \D$ be arbitrary. By Lemma~\ref{lem:3norm} there exists 
 $w\in \{x_1, x_2,x_3\}$ such that 
 $$ \sigma(u,w)\ge \sqrt 3/2 \ge 1/2 \quad \text{and} \quad 
 \sigma(v',w')\ge d/4. $$
 We also have the inequalities $\sig(u,z)\le \sig(v,z)$,  
 $$\sig(v,w)\le \diam(\overline \D)= 2,$$ and 
 $$\sig(u',w')\le \diam( D)\le 2d.$$ 
 Since $f$ is $\eta$-quasi-M\"obius, we obtain   
 \begin{eqnarray}
 \label{crest10}
 \frac {\sig(u',z')} {\sig(v',z')}  &\le &
 \eta\biggl( 
  \frac{\sig(u,z)\sig(v,w)}
  {\sig(v,z)\sig(u,w)}  \biggr)
\frac {\sig(u',w')}{\sig(v',w')} \nonumber \\
&\le & \eta\biggl( 
  \frac{\sig(v,w)}
  {\sig(u,w)}  \biggr)
\frac {\sig(u',w')}{\sig(v',w')} \\
&\le &8 \eta(4)=:\lambda.  \nonumber
\end{eqnarray}
Since $\eta$ only 
depends on $k$, the same is true for $\lambda$ defined in the last line. 

Since $u\in 
\overline  \D$ and $v\in \partial \D$ in  \eqref{crest10} were arbitrary,  we conclude that 

\begin{equation}\label{crux}
 \sup_{x\in D}\sig(x, z')\le\la \inf_{x\in\partial D}\sig(x, z').
\end{equation}
 Now define $x_0=z'=f(0)\in \inte(D)$ and $r=\sup_{x\in D}\sig(x, z')\in (0,2]$.
 Then $D\sub \overline B(x_0, r)$ by definition of $x_0$ and $r$. 
 Moreover, by \eqref{crux} the set $B(x_0, r/\lambda)$
 is disjoint from $\partial D$. So  this disk must be contained in one of the open Jordan  regions bounded by $\partial D$. Since its center is contained in $\inte(D)$,  it follows
 that $B(x_0, r/\lambda)\sub \inte(D). $ Passing to closures we get the desired inclusion  \eqref{eq:qball}.
\qed
 
\medskip
  A  {\em metric circle} $S$ (that is, a metric space homeomorphic to a circle) is called a {\em (metric) quasicircle} if there exists a quasisymmetry $f\: \partial \D\ra S$ of the unit circle  
  $\partial \D\sub \OC$ onto $S$.   
  Four distinct points  $x_1,x_2,x_3,x_4$ on a metric circle $S$ are in {\em cyclic order}  if $x_2$ and $x_4$ lie in different components of $S\setminus\{x_1, x_3\}$.

  Similarly as for quasicircles 
 in $\Sph$,  metric quasicircles  admit various characterizations. We will record some of them in the next proposition. 
\begin{proposition}\label{prop:metricqcirc} 
Suppose $(S,d)$ is a metric space homeomorphic 
to a circle. Then the following conditions are quantitatively equivalent: 

\begin{itemize} 

\smallskip
\item[{(i)}] there exists an $\eta$-quasisymmetric map 
$f\:\partial \D\ra S$, 

\smallskip
\item[{(ii)}] there exists a round circle $S'\sub \OC$ and an $\tilde \eta$-quasi-M\"obius map $g\: S'\ra S$,  

\smallskip
\item[{(iii)}] $S$ is $N$-doubling and there exists $k\ge 1$ such that 
$$ \diam(\ga) \le k d(x,y)$$ for one of the subarcs of $S$ with endpoints $x$ and $y$, whenever $x,y\in S$, $x\ne y$,

\smallskip
\item[{(iv)}] $S$ is $\widetilde N$-doubling and there exists $\de >0$ such for all
points $x_1,x_2,x_3,x_4\in S$ in cyclic order on $S$ we have $[x_1,x_2,x_3,x_4]\ge \delta>0$.  
\end{itemize}
\end{proposition}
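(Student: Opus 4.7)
My plan is to establish the cycle $(i)\Leftrightarrow(ii)$, $(i)\Rightarrow(iii)\Leftrightarrow(iv)$, and $(iii)\Rightarrow(i)$, with the last step being the only serious obstacle; the first three groups use only Propositions~\ref{prop:inter} and~\ref{prop:wkqs} together with elementary cross-ratio manipulations, while the last is a Tukia--V\"ais\"al\"a-type parametrization theorem.

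For $(i)\Leftrightarrow(ii)$: Proposition~\ref{prop:inter}(ii) gives $(i)\Rightarrow(ii)$ with $S'=\partial\D$. For the converse, given a quasi-M\"obius $g\colon S'\to S$, I pick three points $y_1,y_2,y_3\in S$ at pairwise distance $\geq\diam(S)/2$, and pre-compose $g$ with a M\"obius self-homeomorphism $\phi$ of $S'$ sending an equilateral triple on $S'$ to $g^{-1}(\{y_1,y_2,y_3\})$; since $\phi$ is $1$-quasi-M\"obius, $g\circ\phi$ is again quasi-M\"obius with the same control, and Proposition~\ref{prop:inter}(iii) upgrades it to an $\eta$-quasisymmetry, so condition $(i)$ follows after composing with a round parametrization of $S'$ by $\partial\D$.

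For $(i)\Rightarrow(iii)$: doubling is quantitatively preserved under quasisymmetries, and bounded turning follows from the bounded turning of $\partial\D$ (with constant $\pi/2$) via the quasisymmetry inequality applied to points realizing the diameter of the image subarc. The equivalence $(iii)\Leftrightarrow(iv)$ is also elementary: for $(iii)\Rightarrow(iv)$, applied to a cyclic quadruple $(x_1,x_2,x_3,x_4)$, bounded turning of the two subarcs from $x_1$ to $x_3$ and from $x_2$ to $x_4$ yields, after a short case analysis on which of the two subarcs in each pair is the ``short'' one, a lower bound $\delta\geq 1/(3k^2)$ on $[x_1,x_2,x_3,x_4]$. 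For $(iv)\Rightarrow(iii)$, if both arcs from $x$ to $y$ had diameter much larger than $d(x,y)$, choosing $z_j$ on the $j$-th arc with both $d(x,z_j)$ and $d(y,z_j)$ comparable to the arc's diameter would produce a cyclic quadruple $(x,z_1,y,z_2)$ of arbitrarily small cross-ratio, contradicting $(iv)$.

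The main difficulty is $(iii)\Rightarrow(i)$, where I must construct a quasisymmetric parametrization $f\colon\partial\D\to S$ out of abstract combinatorial data. The plan is to build inductively a nested sequence of finite sets $E_0\subset E_1\subset\cdots\subset S$ so that $E_n$ cuts $S$ into subarcs of diameter comparable to $2^{-n}\diam(S)$, with the comparability constants depending only on $k$ and $N$; bounded turning is what enables splitting each arc into two subarcs of comparable diameter (via a ``good midpoint'' argument that picks a point on the arc realizing roughly half its diameter on each side), while doubling controls the combinatorial size $|E_n|\asymp 2^n$. The map $f$ is then defined by sending the $2^n$-th roots of unity on $\partial\D$ to $E_n$ in cyclic order and extending by continuity; the combinatorics force $f$ to be weakly quasisymmetric, and Proposition~\ref{prop:wkqs} promotes this to $\eta$-quasisymmetry since $\partial\D$ and $S$ are both connected and doubling. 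The principal technical hurdle is carrying out the inductive subdivision with uniform control, which is precisely where bounded turning is used in an essential way.
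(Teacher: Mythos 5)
Your treatment of $(i)\Leftrightarrow(ii)$, $(i)\Rightarrow(iii)$, and $(iii)\Leftrightarrow(iv)$ is correct and, for the last equivalence, essentially the argument the paper gives (the paper routes it through the modified cross-ratio $\langle\cdot\rangle$ of Lemma~\ref{lem:modcr}, which packages your case analysis; your direct estimates amount to the same thing). Note, however, that the paper does not prove $(iii)\Rightarrow(i)$ at all: it cites Tukia--V\"ais\"al\"a \cite{TV} for the quantitative equivalence of (i)--(iii) and only supplies the proof of $(iii)\Leftrightarrow(iv)$. So the bulk of your proposal is an attempt to reprove the Tukia--V\"ais\"al\"a parametrization theorem, and it is there that the proposal has a genuine gap.

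The gap is in the stated inductive construction: you claim to produce nested sets $E_n$ with $\#E_n\asymp 2^n$ cutting $S$ into arcs of diameter comparable to $2^{-n}\diam(S)$, with constants depending only on $k$ and $N$. This is impossible in general. Take $S$ to be a von Koch snowflake curve with the Euclidean metric: it is doubling and of bounded turning, but any decomposition into arcs of diameter $\asymp 2^{-n}\diam(S)$ requires $\asymp 2^{nd}$ arcs with $d=\log 4/\log 3>1$, so the two requirements $\#E_n\asymp 2^n$ and $\diam\asymp 2^{-n}\diam(S)$ are incompatible. The workable version of your scheme (which is close to what Tukia and V\"ais\"al\"a actually do) bisects each arc at a point where the two subarcs have \emph{equal} diameters, yielding exactly $2^n$ arcs at level $n$ whose diameters are \emph{not} uniformly comparable to any fixed $\lambda^n$; one then has to prove (a) that combinatorially adjacent level-$n$ arcs have comparable diameters, (b) via bounded turning, that the endpoints of each arc are separated by a definite fraction of its diameter, and (c) via doubling, that diameters decay by a definite factor after a bounded number of generations, so that the union of consecutive level-$n$ arcs between two points has diameter comparable to the largest member. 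Only with (a)--(c) does the dyadic correspondence give weak quasisymmetry, after which your appeal to Proposition~\ref{prop:wkqs} is fine. As written, the key quantitative claim driving your verification is false, so either repair the construction along these lines or simply cite \cite{TV} as the paper does.
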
 

Note that every subset of $\OC$ is $N$-doubling for  a universal constant $N$. So 
condition (iii) in the  proposition implies  that a  Jordan curve $S\sub \OC$ equipped with the chordal metric is a metric quasicircle if and only if  it is a quasicircle as defined in the beginning of this section. So for Jordan curves $S\sub \OC$
the notions of metric quasicircle and quasicircle agree, quantitatively.

We will prove  Proposition~\ref{prop:metricqcirc} below. It 
 goes back to  
 Tukia and V\"ais\"al\"a \cite{TV} whose work implies the equivalence of the first three conditions. For the fourth equivalence  
 it is convenient to introduce a  quantity that is quantitatively equivalent to the
cross-ratio and  is somewhat more manageable (see \cite[Sec.~2]{BK02}).

If $(x_1,x_2,x_3,x_4)$ is a $4$-tuple of distinct points in a metric space
$(X,d)$ define
\begin{equation} \label{cr1a} \langle x_1,x_2,x_3,x_4\rangle
:= \frac{d(x_1,x_3)\wedge d(x_2,x_4)}{d(x_1,x_4)\wedge
d(x_2,x_3)}.
\end{equation}
 Then the following is true (this is essentially \cite[Lem.~2.2]{BK02}; we include a proof for the convenience of the reader). 

\begin{lemma}\label{lem:modcr}
\label{cr2} Let $(X,d)$ be a metric space,  and define 
 $\eta_1(t)=\frac 13(t\wedge \sqrt t)$  and  $\eta_2(t)=3(t\vee \sqrt t)$ for $t> 0$.
Then whenever  $x_1,x_2,x_3,x_4$ are  distinct points in
$X$ we have
\begin{equation}  \label{cr3}
\eta_1([x_1,x_2,x_3,x_4])\le \langle x_1,x_2,x_3,x_4\rangle  \le \eta_2([x_1,x_2,x_3,x_4]).
\end{equation}
\end{lemma}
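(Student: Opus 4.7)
The plan is to reduce via the obvious symmetries to a normalized configuration and then exploit a tetrahedral form of the triangle inequality. Write $a=d(x_1,x_3)$, $b=d(x_2,x_4)$, $c=d(x_1,x_4)$, $e=d(x_2,x_3)$, so $t:=[x_1,x_2,x_3,x_4]=ab/(ce)$ and $s:=\langle x_1,x_2,x_3,x_4\rangle=(a\wedge b)/(c\wedge e)$. Both $s$ and $t$ depend only on the unordered pairs $\{a,b\}$ and $\{c,e\}$, so I may label them so that $a\le b$ and $c\le e$, giving $s=a/c$. The swap $x_1\leftrightarrow x_2$ sends $(a,b,c,e)\mapsto (e,c,b,a)$, interchanging the pairs and replacing $(s,t)$ by $(1/s,1/t)$; since $\eta_1(\tau)=1/\eta_2(1/\tau)$, the claim is invariant under this swap and I may also assume $t\le 1$, i.e.\ $ab\le ce$. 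Under these normalizations the desired double inequality reduces to the two concrete estimates $b\le 3e$ and $ae\le 9bc$.

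The essential geometric input is the ``tetrahedral" triangle inequality $b\le a+c+e$, obtained by combining $b\le d(x_3,x_4)+e$ with $d(x_3,x_4)\le a+c$; the analogue $e\le a+b+c$ holds by the same reasoning. For the lower bound I observe that $a\le b$ and $ab\le ce\le e\cdot e$ (using $c\le e$) force $a\le e$; combined with $c\le e$, the tetrahedral inequality then yields $b\le a+c+e\le 3e$, which is exactly $s\ge t/3$.

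For the upper bound I split on the comparison of $a$ with $c$. If $a\le c$, then $ae\le a(a+b+c)=a^2+ab+ac$ and each of the three summands is at most $bc$: $a^2\le ac\le bc$ since $a\le c$ and $a\le b$; $ab\le bc$ since $a\le c$; and $ac\le bc$ since $a\le b$. Hence $ae\le 3bc\le 9bc$. If instead $a>c$, then $c<a\le b$, so the tetrahedral inequality gives $e\le a+b+c\le 3b$; substituting this into $ab\le ce$ produces $ab\le 3bc$, whence $a\le 3c$. Multiplying, $ae\le (3c)(3b)=9bc$, as desired.

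The main difficulty is that no single triangle inequality simultaneously controls the $a^2$- and $ae$-type terms appearing in the upper bound, so a case split on the sign of $a-c$ seems unavoidable; the symmetry reduction also needs a bit of care, because $\eta_1$ and $\eta_2$ are individually asymmetric in the $t\le 1$ and $t\ge 1$ regimes, and it is only through the identity $\eta_1(\tau)=1/\eta_2(1/\tau)$ that the assumption $t\le 1$ becomes harmless.
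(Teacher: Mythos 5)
Your proof is correct; I checked each step. The normalization is legitimate: both $[x_1,x_2,x_3,x_4]=ab/(ce)$ and $\langle x_1,x_2,x_3,x_4\rangle=(a\wedge b)/(c\wedge e)$ depend only on the unordered pairs $\{a,b\}$ and $\{c,e\}$, and the ``tetrahedral'' inequality (each of the four distances is at most the sum of the other three, by passing through an intermediate vertex and then through $d(x_3,x_4)$ or $d(x_1,x_2)$) holds in every form you need, so the relabeling costs nothing. The reductions $b\le 3e$ and $ae\le 9bc$ are exactly $s\ge t/3$ and $s\le 3\sqrt t$, which are the two bounds in the regime $t\le 1$, and the case analysis for the upper bound is airtight.

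Your route differs from the paper's in organization, though both rest on the same two ingredients: chains of triangle inequalities among the four points, and the reciprocal symmetry coming from swapping $x_1\leftrightarrow x_2$ together with the identity $\eta_1(\tau)=1/\eta_2(1/\tau)$. The paper proves the \emph{upper} bound for all values of the cross-ratio by contradiction: assuming $\langle\,\cdot\,\rangle>\eta_2(t_0)$ it derives $d(x_1,x_4)\vee d(x_2,x_3)\le(2+1/\eta_2(t_0))\,d(x_2,x_4)$ and lands on the inequality $t_0\ge\eta_2(t_0)^2/(1+2\eta_2(t_0))>t_0$; the lower bound is then deduced by the symmetry. You instead use the symmetry to normalize to $t\le1$, where $\eta_1(t)=t/3$ and $\eta_2(t)=3\sqrt t$ simplify, and verify both bounds directly, at the price of the case split on the sign of $a-c$. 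The direct version is arguably easier to check line by line, while the paper's contradiction argument avoids the case analysis and handles both regimes of $t$ in one pass; neither is more general than the other.
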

 
 The point of the lemma is that it shows that the cross-ratio $[x_1,x_2,x_3,x_4]$ is small if and only if the ``modified" cross-ratio $\langle x_1,x_2,x_3,x_4\rangle$ is small, quantitatively.

\proof  We first prove the second inequality. Suppose that there exist distinct points $x_1,x_2,x_3,x_4$ in $X$ 
 for which $$\langle x_1,x_2,x_3,x_4\rangle  > \eta_2([x_1,x_2,x_3,x_4]).$$
  Let $t_0=[x_1,x_2,x_3,x_4]$.
 We may assume $d(x_1,x_3)\le d(x_2,x_4)$.
 Then our assumption implies 
 $$d(x_1,x_4) \wedge d(x_2,x_3)< \frac1{\eta_2(t_0)}d(x_2,x_4). $$

Moreover, we have
\begin{eqnarray*}
d(x_1,x_4) & \le & d(x_1,x_3) + d(x_3, x_2) + d(x_2,x_4) \\
           & \le & 2d(x_2,x_4)+ d(x_2,x_3).
\end{eqnarray*}
Similarly, $ d(x_2,x_3)\le 2d(x_2,x_4) + d(x_1,x_4)$, and so $$ |d(x_1,x_4)-d(x_2,x_3)|\le 2d(x_2,x_4), $$
which implies  
$$d(x_1,x_4) \vee d(x_2,x_3) \le
 2 d(x_2,x_4) + d(x_1,x_4) \wedge d(x_2,x_3). $$

Hence
\begin{eqnarray*}
 d(x_1,x_4) \vee d(x_2,x_3) &\le&
 2 d(x_2,x_4) + d(x_1,x_4) \wedge d(x_2,x_3)\\
&\le & 
\left(2+\frac 1{\eta_2(t_0)}\right)  d(x_2,x_4),
\end{eqnarray*}
and so
\begin{eqnarray*}
 t_0=[x_1,x_2,x_3,x_4] &=&
\frac{d(x_1,x_3) d(x_2,x_4)}
{(d(x_1,x_4) \wedge d(x_2,x_3))(d(x_1,x_4)\vee d(x_2,x_3))}\\
&\ge &   
\frac {d(x_1,x_3)\eta_2(t_0)} {(d(x_1,x_4)\wedge d(x_2,x_3))( 1+2 \eta_2(t_0))}
\ge \frac{ \eta_2(t_0)^2}
{ 1+2 \eta_2(t_0)}> t_0.
\end{eqnarray*}
Here the last inequality follows from  a simple computation based on the cases $0< t_0\le 1$ and $t_0>1$ which is left to the reader. In conclusion, we obtain a contradiction showing 
the second inequality in \eqref{cr3}. 

The first inequality in  \eqref{cr3} follows from the second, if one uses the symmetry relations
$$ [x_2,x_1,x_3,x_4]= 1/[x_1,x_2,x_3,x_4] \quad \text{and}\quad
\langle x_2,x_1,x_3,x_4\rangle = 1/\langle x_1,x_2,x_3,x_4 \rangle, $$ and the fact that $\eta_1(t)=1/\eta_2(1/t)$ for $t>0$.
 \qed

\medskip\no 
{\em Proof of Proposition~\ref{prop:metricqcirc}.} The quantitative equivalence of the first three conditions is contained in \cite{TV}. 

To finish the proof it is enough to show that (iii) and (iv) are quantitatively equivalent. 

$\text{(iii)}\Rightarrow\text{(iv)}$: Let $x_1,x_2,x_3,x_4$ be four distinct points in cyclic order on $S$.   We may assume 
$d(x_1,x_3)\le d(x_2,x_4)$. Denote by  $\ga_1$ and $\ga_2$ the subarcs of $S$ with endpoints $x_1$ and $x_3$ that contain 
the points $x_2$ and $x_4$, respectively. Condition (iii) gives us 
the inequality
$$ d(x_2,x_3)\wedge d(x_1,x_4)\le \diam(\ga_1)\wedge \diam (\ga_2)
\le k d(x_1,x_3). $$
Hence 
$$ \langle x_1,x_2,x_3,x_4\rangle = \frac{d(x_1,x_3)}
{d(x_2,x_3)\wedge d(x_1,x_4)}\ge \frac 1k.$$ 
By Lemma~\ref{lem:modcr} this implies that $[x_1,x_2,x_3,x_4]\ge \de$, where $\de=\de(k)>0$ only depends on $k$. 
 
 $\text{(iv)}\Rightarrow\text{(iii)}$: Let $x,y\in S$ with 
 $x\ne y$ be arbitrary, and denote by $\ga_1$ and $\ga_2$ 
 the two subarcs of $S$ with endpoints $x$ and $y$. 
 Define $x_1:=x$ and $x_3:=y$. There exists a point 
 $x_2\in \ga_1\setminus \{x_1,x_3\}$  such that 
 $$d(x_2,x_3)\ge \frac 13 \diam(\ga_1). $$
 For otherwise, $\ga_1$ would be contained in the closed ball 
 of radius $\frac13 \diam(\ga_1)$ centered at $x_3$ which is impossible.
 
 Similarly, there exists a points $x_4\in \ga_2\setminus \{x_1,x_3\}$  such that 
 $$d(x_1,x_4)\ge \frac 13 \diam(\ga_2). $$
 
 The points $x_1,x_2,x_3,x_4$ are in cyclic order on $S$. Hence $[x_1,x_2,x_3,x_4]\ge \de>0$ by our  hypothesis (iv), and so  
 Lemma ~\ref{lem:modcr}  implies that 
 $\langle x_1,x_2,x_3,x_4\rangle \ge \eps_0$, where 
 $\eps_0=\eps_0(\de)>0$ only depends on $\de$.
It follows that 
\begin{eqnarray*} \diam (\ga_1)\wedge \diam(\ga_2) &\le & 
3 d(x_2,x_3)\wedge d(x_1,x_4)\\ &\le& \frac{3}{\eps_0}  
d(x_1,x_3)\wedge d(x_2,x_4)\le kd(x_1,x_3),
\end{eqnarray*}
where $k=k(\de)=3/\eps_0.$ This inequality shows that (iii) is true. 
\qed

\medskip 
 We will give another application of the modified cross-ratio defined in \eqref{cr1a}.  We require the following fact. 
 
 \begin{lemma} \label{lem:crrelsep}
 Let $(X,d)$ be a metric space, and $E$ and $F$ disjoint continua   in $X$.
 Define 
 $$D(E,F)=\inf_{x_1,x_4\in E,\, x_2,x_3\in F}\langle x_1,x_2,x_3,x_4\rangle.
 $$
 Then
 \begin{equation}\label{reldcr}  \Delta(E,F)
 \le D(E,F) \le 2\Delta(E,F).
\end{equation}
\end{lemma}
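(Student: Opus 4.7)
The plan is to establish the two inequalities in \eqref{reldcr} separately, each by a direct argument using the compactness of the continua $E$ and $F$. Write $\delta=\dist(E,F)$ and $D=\diam(E)\wedge\diam(F)$, so that $\Delta(E,F)=\delta/D$; we may assume $D>0$ (else the statement is degenerate or vacuous).

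For the lower bound $\Delta(E,F)\le D(E,F)$, take any $x_1,x_4\in E$ and $x_2,x_3\in F$. Since $x_1,x_3$ and $x_2,x_4$ both consist of one point in $E$ and one in $F$, we have $d(x_1,x_3)\ge \delta$ and $d(x_2,x_4)\ge \delta$, hence the numerator of $\langle x_1,x_2,x_3,x_4\rangle$ is at least $\delta$. On the other hand $d(x_1,x_4)\le\diam(E)$ and $d(x_2,x_3)\le\diam(F)$, so the denominator is at most $D$. Therefore $\langle x_1,x_2,x_3,x_4\rangle\ge \delta/D=\Delta(E,F)$, and taking the infimum gives the first inequality.

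For the upper bound $D(E,F)\le 2\Delta(E,F)$, exhibit a specific $4$-tuple witnessing it. By compactness of the continua $E$ and $F$, choose $x_1\in E$ and $x_3\in F$ with $d(x_1,x_3)=\delta$. Next, pick $p,q\in E$ with $d(p,q)=\diam(E)$; the triangle inequality $d(x_1,p)+d(x_1,q)\ge d(p,q)$ forces one of them, call it $x_4$, to satisfy $d(x_1,x_4)\ge \diam(E)/2$. The same argument in $F$ produces $x_2\in F$ with $d(x_2,x_3)\ge\diam(F)/2$. Then $d(x_1,x_4)\wedge d(x_2,x_3)\ge D/2$, while $d(x_1,x_3)\wedge d(x_2,x_4)\le d(x_1,x_3)=\delta$, which yields
\[ D(E,F)\le \langle x_1,x_2,x_3,x_4\rangle \le \frac{\delta}{D/2}=2\Delta(E,F). \]

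There is essentially no obstacle here beyond bookkeeping; the only mild subtlety is noticing that the numerator of $\langle\cdot\rangle$ involves a \emph{minimum} of two cross-distances (both of which are still bounded below by $\dist(E,F)$), and that one may freely select the far points $x_2,x_4$ without worrying about $d(x_2,x_4)$, since the minimum in the numerator is already controlled by $d(x_1,x_3)$.
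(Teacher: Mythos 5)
Your proof is correct and follows essentially the same route as the paper: the first inequality by bounding the numerator of $\langle\cdot\rangle$ below by $\dist(E,F)$ and the denominator above by $\diam(E)\wedge\diam(F)$, and the second by choosing $x_1,x_3$ realizing $\dist(E,F)$ and then far points $x_4\in E$, $x_2\in F$ at distance at least half the respective diameters. Your triangle-inequality justification for the existence of the half-diameter points is a detail the paper leaves implicit, and note that $D>0$ holds automatically since continua contain more than one point.
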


Recall that a {\em continuum} (in a metric space) is a compact connected set consisting of more than one point. 
The  inequality in the lemma shows that the relative separation $\Delta(E,F)$ of two continua  $E$ and $F$ is small if and only if $D(E,F)$ is small, quantitatively. 

\medskip
\no{\em Proof.} It follows from the definitions that 
\begin{eqnarray*} 
 \langle x_1,x_2,x_3,x_4\rangle &=& \frac{d(x_1,x_3)\wedge d(x_2,x_4)}
 {d(x_1,x_4)\wedge d(x_2,x_3)} \\
 &\ge & \frac{\dist(E,F)}{\diam(E)\wedge \diam(F)}
 \end{eqnarray*}
 whenever $x_1,x_4\in E$ and $x_2,x_3\in F$. The first inequality in 
 \eqref{reldcr} follows.
 
 For the second inequality choose $x_1\in E$ and $x_3\in F$ such that 
 $d(x_1,x_3)=\dist (E,F)$. Then we can select  points $x_4\in E$ and $x_2\in F$ such that $d(x_1,x_4)\ge \frac 12 \diam (E)$ and 
  $d(x_2,x_3)\ge \frac 12 \diam (F)$. Hence 
\begin{eqnarray*}    D(E,F)& \le& \langle x_1,x_2,x_3,x_4\rangle
  \,\le \, \frac{d(x_1,x_3)\wedge d(x_2,x_4)}
 {d(x_1,x_3)\wedge d(x_2,x_4)}\\
 &\le & 2\frac{\dist(E,F)}{\diam(E)\wedge \diam(F)}\,= \,2\Delta(E,F).
\end{eqnarray*}
The second inequality follows. 
\qed

\medskip 
Let $(X,d)$ be a metric space, and $\mathcal{S}=\{S_i:i\in I\}$ be a collection of pairwise disjoint continua in $X$.  We say that the sets in 
$\mathcal{S}$ are {\em $s$-relatively separated} for $s>0$ if 
$$ \Delta(S_i,S_j)\ge s$$
whenever $i,j\in I$, $i\ne j$. The sets in  $\mathcal{S}$ are  said to be  {\em uniformly relatively separated}  if they are  $s$-relatively separated for some $s>0$. 

\begin{corollary} \label{cor:qmobinvcond}
Let $\mathcal{S}=\{S_i: i\in I\}$ be a family of $s$-relatively separated 
$k$-quasicircles in $\Sph$, and $f\: \Sph \ra \Sph$ be an $\eta$-quasi-M\"obius map. Then the image family $\mathcal{S}'=\{f(S_i): i\in I\}$ 
consists  of $s'$-relatively separated $k'$-quasicircles, where 
$s'=s'(\eta, s)>0$ and $k'=k'(\eta,k)\ge 1.$
\end{corollary}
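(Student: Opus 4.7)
The plan is to prove the two assertions separately: that each $f(S_i)$ is a $k'$-quasicircle with $k'=k'(\eta,k)$, and that the collection $\{f(S_i):i\in I\}$ is $s'$-relatively separated with $s'=s'(\eta,s)$.

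For the first assertion, I will use Proposition~\ref{prop:planeqcirc} together with the fact that compositions of quasi-M\"obius maps are quasi-M\"obius with controlled distortion. Since $S_i$ is a $k$-quasicircle, by the remark following Proposition~\ref{prop:planeqcirc} there is an $\eta_0$-quasi-M\"obius map $g_i\:\Sph\to\Sph$ with $\eta_0=\eta_{0,k}$ such that $g_i(\partial\D)=S_i$. A straightforward check of the definition shows that $f\circ g_i$ is $(\eta\circ\eta_0)$-quasi-M\"obius, and it sends the round circle $\partial\D$ onto $f(S_i)$. Applying Proposition~\ref{prop:planeqcirc} in the direction (iii)$\Rightarrow$(i) yields that $f(S_i)$ is a $k'$-quasicircle with $k'$ depending only on $\eta$ and $k$.

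For the second assertion I will pass through the modified cross-ratio $\langle\cdot,\cdot,\cdot,\cdot\rangle$ from Lemma~\ref{lem:modcr}. Fix $i\ne j$ and set $E=S_i$, $F=S_j$, $E'=f(E)$, $F'=f(F)$. By the second inequality of Lemma~\ref{lem:crrelsep} we need a lower bound on $D(E',F')$, so consider arbitrary $y_1,y_4\in E'$ and $y_2,y_3\in F'$, and let $x_m=f^{-1}(y_m)$. The hypothesis $\Delta(E,F)\ge s$ combined with the first inequality of Lemma~\ref{lem:crrelsep} gives $\langle x_1,x_2,x_3,x_4\rangle\ge D(E,F)\ge s$, and the second inequality of Lemma~\ref{lem:modcr} then yields $[x_1,x_2,x_3,x_4]\ge \eta_2^{-1}(s)$.

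Now I transport this bound through $f$. The quasi-M\"obius inequality applied to the permuted tuple $(x_2,x_1,x_3,x_4)$ gives $[y_2,y_1,y_3,y_4]\le\eta([x_2,x_1,x_3,x_4])$, and using the symmetry $[x_2,x_1,x_3,x_4]=1/[x_1,x_2,x_3,x_4]$ this rearranges to
\[
[y_1,y_2,y_3,y_4]\ge \frac{1}{\eta(1/[x_1,x_2,x_3,x_4])}\ge \frac{1}{\eta(1/\eta_2^{-1}(s))}=:t_1>0,
\]
a positive quantity depending only on $\eta$ and $s$. Applying the first inequality of Lemma~\ref{lem:modcr} gives $\langle y_1,y_2,y_3,y_4\rangle\ge \eta_1(t_1)$, hence $D(E',F')\ge \eta_1(t_1)$. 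Finally, the second inequality of Lemma~\ref{lem:crrelsep} gives $\Delta(E',F')\ge \eta_1(t_1)/2=:s'$, which depends only on $\eta$ and $s$, completing the proof.

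The only mildly subtle step is remembering to apply the quasi-M\"obius condition to a permuted $4$-tuple in order to convert the default upper bound on cross-ratios of images into a lower bound; everything else is bookkeeping with the two lemmas from the preceding pages.
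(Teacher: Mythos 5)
Your proposal is correct and follows essentially the same route as the paper: the relative-separation bound is transported through the cross-ratio via Lemmas~\ref{lem:modcr} and \ref{lem:crrelsep} (you helpfully make explicit the permutation trick the paper leaves implicit in "distorts cross-ratios quantitatively"), and the quasicircle bound comes from composing with the quasi-M\"obius parametrization of $S_i$ by a round circle and invoking the equivalence of Proposition~\ref{prop:planeqcirc} (the paper phrases this via Proposition~\ref{prop:metricqcirc}, which is the same fact for Jordan curves in $\Sph$).
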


\no{\em Proof.} It follows from Lemmas~\ref{lem:modcr} and \ref{lem:crrelsep} that there 
exists a constant $s_1=s_1(s)>0$ such that 
$[x_1,x_2,x_3,x_4]\ge s_1$ whenever 
$i,j\in I$, $i\ne j$, $x_1,x_4\in S_i$, and $x_2,x_3\in S_j$.
Since the quasi-M\"obius map $f$  distorts cross-ratios of points  quantitatively  controlled by $\eta$, this implies  that there exists 
$s_2=s_2(\eta,s_1)=s_2(\eta,s)>0$ such that 
$[y_1,y_2,y_3,y_4]\ge s_2,$
whenever 
$i,j\in I$, $i\ne j$, $y_1,y_4\in f(S_i)$, and $y_2,y_3\in f(S_j)$.
Again invoking  Lemmas~\ref{lem:modcr} and \ref{lem:crrelsep} we see that the sets in $\mathcal{S}'$ are 
$s'$-relatively  separated, where 
$s'=s'(s_2)=s_2(\eta,s)>0$. 

By Proposition~\ref{prop:metricqcirc}  there exists $\eta'=\eta'_k$ such that each set in $\mathcal{S}$ is the image of a round circle under
an $\eta'$-quasi-M\"obius map on $\Sph$. Hence each 
set in $\mathcal{S}'$ is the image  of a round circle under
an $\eta''$-quasi-M\"obius map on $\Sph$, where $\eta''=\eta\circ 
\eta'=\eta''_k$. 
Another application of Proposition~\ref{prop:metricqcirc} shows that
the sets in  $\mathcal{S}'$ are $k'$-quasicircles, where 
$k'=k'(\eta'')=k'(\eta,k)$. \qed\medskip

We conclude this section with a lemma that implies that it does not matter in Theorem~\ref{thm:simulunif} whether we assume uniform relative 
separation  for the  
curves in $\mathcal{S}$,  or for the pairwise disjoint Jordan regions that the curves in $\mathcal{S}$ bound.

\begin{lemma}\label{lem:curvsepp} Let $D$ and $D'$ be 
disjoint Jordan regions in $\oC$. Then $\Delta(D,D')=\Delta(\partial D, \partial D')$. 
 \end{lemma}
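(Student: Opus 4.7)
The plan is to prove the two ingredients of the relative distance separately, namely that $\dist(D,D')=\dist(\partial D,\partial D')$ and that $\diam(D)\wedge\diam(D')=\diam(\partial D)\wedge\diam(\partial D')$.

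For the distance equality, the inequality $\dist(\partial D,\partial D')\ge \dist(D,D')$ is trivial since $\partial D\sub D$ and $\partial D'\sub D'$. For the reverse, I would argue that the infimum defining $\dist(D,D')$ is attained on the boundaries: given $x\in D$ and $y\in D'$, consider a minimizing spherical geodesic segment $\gamma$ from $x$ to $y$. Since $D$ and $D'$ are disjoint, $\gamma$ must exit $D$ at some first boundary point $x^*\in \partial D$ and, traversing further, must enter $D'$ at some point $y^*\in \partial D'$; the subarc of $\gamma$ from $x^*$ to $y^*$ has length no greater than $\gamma$ itself, and so $\sig(x^*,y^*)\le \sig(x,y)$. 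Taking infima gives $\dist(\partial D,\partial D')\le \dist(D,D')$.

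For the diameter equality, I would invoke Lemma~\ref{lem:Jordiam}, which says that for any closed Jordan region $E$, either $\diam(E)=\diam(\partial E)$, or else $\diam(E)=2$ and $\diam(\oC\setminus\inte(E))=\diam(\partial E)$. The only way a discrepancy arises is if $\diam(E)=2$ while $\diam(\partial E)<2$. I would do a short case analysis on whether $D$ and $D'$ are ``regular'' (meaning $\diam=\diam(\partial)$) or not. The only nontrivial case is when, say, $D$ is irregular, so $\diam(D)=2$ and $\diam(\partial D)=\diam(\oC\setminus\inte(D))$. The key observation is that $D'\sub \oC\setminus\inte(D)$, hence
\[
\diam(\partial D)=\diam(\oC\setminus\inte(D))\ge \diam(D')\ge \diam(\partial D').
\]
Thus $\diam(\partial D)\wedge\diam(\partial D')=\diam(\partial D')$, and since $\diam(D)=2\ge \diam(D')$, also $\diam(D)\wedge\diam(D')=\diam(D')$. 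A parallel chain of inequalities shows $D'$ must then be regular (otherwise both $D$ and $D'$ would force $\diam(\partial D)\ge 2$, contradicting irregularity), so $\diam(D')=\diam(\partial D')$ and the two minima agree. Symmetric reasoning handles the case in which only $D'$ is irregular, and when both are regular the equality is immediate.

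I do not expect any serious obstacle; the main subtlety is purely bookkeeping, namely ensuring that the antipodal-pair phenomenon behind the second alternative of Lemma~\ref{lem:Jordiam} never produces a mismatch in the minimum of the diameters. Combining the distance identity with the diameter identity gives $\Delta(D,D')=\Delta(\partial D,\partial D')$ directly from the definition of relative distance.
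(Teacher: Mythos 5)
Your proposal is correct and follows essentially the same route as the paper: the distance identity comes from intersecting a minimizing spherical geodesic with both boundaries (the paper adds the explicit remark that spherical and chordal distances are monotonically related, which justifies $\sig(x^*,y^*)\le\sig(x,y)$), and the diameter identity rests on Lemma~\ref{lem:Jordiam} together with the inclusion $D'\sub\oC\setminus\inte(D)$. The paper merely packages your case analysis into the single chain $\diam(\partial D)\ge\diam(D)\wedge\diam(\oC\setminus\inte(D))\ge\diam(D)\wedge\diam(D')$, applied symmetrically to both regions.
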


An immediate consequence is that if   $\{D_i: i\in I\}$ is    a family of pairwise disjoint closed Jordan regions in $\oC$,  then this family is
$s$-relatively separated if and only if  the family $\{\partial D_i:i\in I\}$ of boundary curves is 
$s$-relatively separated.\medskip

\no{\em Proof.}   We can pick points $x\in D$ and $y\in D'$ such that 
$\dist(D,D')=\sigma(x,y)$. If we run on a minimizing spherical geodesic 
segment from $x$ to $y$, then we must meet $\partial D$ and $\partial D'$. 
This implies that  the spherical distance between the sets $\partial D$ and $\partial D'$ is no larger than the spherical distance between $D$ and $D'$.  Since spherical distances and chordal distances are monotonically related, it follows that $\dist(\partial D, \partial D')\le  \dist(D,D')$, and so 
\begin{equation}\label{eq:distDp}
\dist(\partial D, \partial D')=  \dist(D,D').
\end{equation}

Moreover, by Lemma~\ref{lem:Jordiam} we have 
$$\diam(\partial D)\ge \diam (D)\wedge \diam(\oC\setminus \inte(D))\ge \diam(D)\wedge \diam (D').$$
We also get the same lower bound for $\diam(\partial D')$, and so 
$$\diam(\partial D)\wedge \diam(\partial D')\ge \diam(D)\wedge \diam(D').$$
The reverse inequality is trivially true, which gives
$$\diam(\partial D)\wedge \diam(\partial D')= \diam(D)\wedge \diam(D'). $$
If we combine this with \eqref{eq:distDp}, the claim follows.\qed  

\section{Extending quasiconformal maps} 
\label{s:ext}

\no 
In this section we will prove the following proposition that will be used in the proof of Theorem~\ref{thm:simulunif}. Its proof is very similar to the considerations in \cite[Sec.~4]{BKM}. 

\begin{proposition}\label{prop:extend}
Suppose  that $\{D_i: i\in I\}$ is a non-empty family of pairwise disjoint closed 
 Jordan  regions  in $\Sph$, and let $f\: 
T=\Sph\setminus \bigcup_{i\in I}\inte(D_i) \ra \Sph$ be an 
$\eta$-quasi-M\"obius embedding. 
If the Jordan  curves  $S_i=\partial D_i$ are  $k$-quasicircles for $i\in I$, then 
there exists an $H$-quasiconformal map 
$F\: \Sph\ra \Sph$ such that $F|T=f$ where $H=H(\eta,k)$.  
\end{proposition}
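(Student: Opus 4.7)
The approach is to extend $f$ across each Jordan region $D_i$ separately using the Beurling--Ahlfors extension theorem (Theorem~\ref{thm:AB}), glue the pieces together, and then verify global quasiconformality. This parallels the argument in \cite[Sec.~4]{BKM}.

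First, I identify target regions. By Corollary~\ref{cor:qmobinvcond} applied to each singleton family $\{S_i\}$, each image curve $J_i := f(S_i)$ is a $k'$-quasicircle with $k' = k'(\eta, k)$. A standard topological argument -- using that $f \co T \to \Sph$ is a topological embedding, and that near $S_i$ the set $T$ lies only on the side of $S_i$ opposite $\inte(D_i)$ -- singles out a unique closed Jordan region $D_i'$ bounded by $J_i$ whose interior is disjoint from $f(T)$, and the $D_i'$ are pairwise disjoint.

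For each $i$, Proposition~\ref{prop:planeqcirc} together with the remark after it supplies quasi-M\"obius homeomorphisms $\phi_i, \psi_i \co \Sph \to \Sph$ with $\phi_i(\overline \D) = D_i$ and $\psi_i(\overline \D) = D_i'$, with distortion depending only on $k$ and $k'$. The boundary conjugate
$$ h_i := \psi_i^{-1} \circ f \circ \phi_i\big|_{\partial \D} \co \partial \D \to \partial \D $$
is quasi-M\"obius with distortion depending only on $\eta$ and $k$. After pre-composing $\phi_i$ (or $\psi_i$) with a M\"obius self-map of $\overline \D$, three fixed equally spaced points of $\partial \D$ may be assumed to map to themselves under $h_i$; Proposition~\ref{prop:inter}~(iii) then promotes $h_i$ to an $\tilde \eta$-quasisymmetry with $\tilde \eta = \tilde \eta_{\eta, k}$. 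Beurling--Ahlfors then yields an $H_0$-quasiconformal extension $\tilde h_i \co \overline \D \to \overline \D$ of $h_i$ with $H_0 = H_0(\eta, k)$, and
$$ F_i := \psi_i \circ \tilde h_i \circ \phi_i^{-1} \co D_i \to D_i' $$
is an $H_1$-quasiconformal homeomorphism extending $f|_{S_i}$, where $H_1 = H_1(\eta, k)$.

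Finally, define $F \co \Sph \to \Sph$ by setting $F|_T = f$ and $F|_{D_i} = F_i$. Disjointness of the $D_i$'s and of the $D_i'$'s together with the matching on each $S_i$ makes $F$ a homeomorphism. The main remaining task -- and the principal technical obstacle -- is to upgrade the piecewise quasiconformality to global $H(\eta, k)$-quasiconformality. On each $\inte(D_i)$ the map $F = F_i$ is $H_1$-quasiconformal, and on $\inte(T)$ (if nonempty) the map $F = f$ is locally quasi-M\"obius and hence $H_2(\eta)$-quasiconformal by Proposition~\ref{prop:inter}~(i). Each $k$-quasicircle $S_i$ has spherical measure zero (being a quasiconformal image of $\partial \D$), and the classical removability theorem for quasicircles ensures quasiconformality of $F$ across each individual $S_i$. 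The remaining points, namely those in $\partial T \setminus \bigcup_i S_i$ where infinitely many $\inte(D_j)$ accumulate, are handled by checking the metric quasiconformality condition \eqref{Hf} directly, combining the quasi-M\"obius estimate for $f|_T$ with uniform diameter control on the $D_i'$ coming from Lemma~\ref{lem:Jordiam} and the quasi-M\"obius distortion of $f$. The details of this last step follow \cite[Sec.~4]{BKM} and produce the uniform bound $H = H(\eta, k)$.
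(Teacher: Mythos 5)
Your overall architecture coincides with the paper's: fill each hole $D_i$ by a Beurling--Ahlfors extension of the boundary map, glue, and verify the metric definition of quasiconformality at points of $T$ following \cite[Sec.~4]{BKM}. The gap is in the crux step, and it has two linked parts. First, the fillings $F_i=\psi_i\circ\tilde h_i\circ\phi_i^{-1}$ you build are only uniformly \emph{quasi-M\"obius} on $D_i$; to promote them to uniformly \emph{quasisymmetric} maps $D_i\to D_i'$ one needs the three-point separation hypotheses of Proposition~\ref{prop:inter}~(iii) in both $D_i$ and $D_i'$, and these fail precisely when $\diam(D_i')\gg\diam(\partial D_i')$ (e.g.\ when some $D_i'$ is the complement of a small disk; the map $z\mapsto \eps/z$ from $\overline\D$ onto $\{|w|\ge\eps\}\cup\{\infty\}$ is quasi-M\"obius but not uniformly quasisymmetric as $\eps\to0$). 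The paper removes this obstruction at the outset by normalizing with M\"obius transformations so that $0,1,\infty$ lie on one curve $\partial D_{i_0}$ and are fixed by $f$; then every $D_i$ and every $D_i'$ omits $\{0,1,\infty\}$, condition \eqref{complbig} of Proposition~\ref{prop:qcircext} holds with $\delta=2$, and the fillings are $\eta'$-quasisymmetric with $\eta'=\eta'_{\eta,k}$. Your proposal contains no such normalization, so the uniform quasisymmetry of the fillings is not available.

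Second, the mechanism you offer for the final verification would not close the argument even with the fillings in hand. ``Uniform diameter control on the $D_i'$'' only yields, for $y\in \inte(D_j)\cap S(x,r)$ and $u\in\partial D_j$ near $x$, the bound $\sig(F(y),F(x))\le \sig(f(u),f(x))+\diam(D_j')$, and $\diam(D_j')\asymp\diam(f(\partial D_j))$ is in no way comparable to $l_F(x,r)$ when $D_j$ is large relative to $r$ --- which is exactly the regime at and near every point of $S_j$. What the pointwise estimate actually requires is the global weak quasisymmetry of $F|D_j$, uniformly in $j$ (so that $\sig(F(y),F(u))\lesssim\sig(F(v),F(u))$ for $y,u,v\in D_j$ with $\sig(y,u)\le\sig(v,u)$), i.e.\ precisely what the missing normalization provides. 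Note also that your appeal to removability of quasicircles does not isolate the difficulty: other holes $D_k$ may accumulate at points of $S_j$ itself, so there need not be any neighborhood of a point of $S_j$ in which $F$ is quasiconformal off $S_j$ alone; once the pointwise $L_F/l_F$ estimate is run at every $x\in T$ (as in the paper), removability is not needed at all. So the route is right, but as written the proof does not go through; inserting the M\"obius normalization and replacing the diameter argument by the uniform quasisymmetry of the fillings repairs it.
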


We need  the classical Beurling-Ahlfors \cite{BA} extension theorem
that can be formulated as follows. 

\begin{theorem}[Beurling-Ahlfors 1956]\label{thm:AB}
Every $\eta$-quasisymmetric   map 
$f\co \R\ra \R$ 
 has an $H$-quasiconformal  extension $F\co \C \ra \C$,  
where $H$ only depends on $\eta$.    
\end{theorem}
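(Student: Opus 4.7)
The plan is to construct $F$ using the classical Beurling--Ahlfors integral extension and then check its quasiconformality directly. Since $f$ is an $\eta$-quasisymmetric self-map of $\R$, it is strictly monotonic; after composing with a reflection if necessary we may assume $f$ is increasing. For $z = x + iy$ with $y > 0$, set
\[
U(x,y) = \tfrac12\int_0^1 [f(x+ty)+f(x-ty)]\,dt, \quad V(x,y) = \tfrac12\int_0^1 [f(x+ty)-f(x-ty)]\,dt,
\]
and define $F(x+iy) = U(x,y)+iV(x,y)$. For $y < 0$, extend by Schwarz reflection through the real axis. Since $V(x,0)=0$ and $U(x,0)=f(x)$, the two halves glue continuously along $\R$ and $F|\R = f$.

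The first task is to show that $F$ is a homeomorphism of $\C$ mapping the upper half-plane $\Hp$ onto itself. Monotonicity of $f$ gives $V(x,y)>0$ for $y>0$, so $F$ maps $\Hp$ into $\Hp$. The $\eta$-quasisymmetry of $f$ forces $f(x)\to\pm\infty$ as $x\to\pm\infty$, which controls the boundary behaviour at $\infty$. A direct computation of the Jacobian $J_F = U_xV_y - U_yV_x$ in terms of differences $f(x+sy)-f(x+s'y)$ shows $J_F > 0$ almost everywhere on $\Hp$, and a standard degree/monotonicity argument then gives injectivity and surjectivity $F\co\Hp\ra \Hp$.

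Next I would verify that $F$ is ACL on $\C$ and estimate its complex dilatation. Differentiating under the integral sign and using $t\cdot(d/dt)f(x\pm ty)=\pm y f'(x\pm ty)$ combined with integration by parts in $t$, the scaled partial derivatives $yU_x, yU_y, yV_x, yV_y$ can all be expressed as linear combinations of
\[
\alpha(x,y)=f(x+y)-f(x), \qquad \beta(x,y)=f(x)-f(x-y),
\]
together with the averages $U(x,y)$ and $V(x,y)$. A short algebraic manipulation bounds the pointwise dilatation $|F_{\bar z}|/|F_z|$ purely in terms of the ratio $\rho(x,y) = \alpha(x,y)/\beta(x,y)$. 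The $\eta$-quasisymmetry hypothesis, applied at the triple $(x-y,x,x+y)$, gives $\rho\le M:=\eta(1)$ and $\rho^{-1}\le M$, which produces a uniform pointwise bound $|\mu_F(z)|\le k(M)<1$ a.e. on $\Hp$. Schwarz reflection transfers the same bound to the lower half-plane.

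The main technical obstacle is the dilatation estimate itself: one must carry out the algebraic reduction of all four partial derivatives to $\alpha$ and $\beta$ cleanly, and then optimise the resulting expression for $|\mu_F|$ over all admissible values of $\rho\in[M^{-1},M]$, so that $k$ depends \emph{only} on $\eta$ and not on any smoothness of $f$. Once this bound is established, the ACL property together with the pointwise dilatation bound yields, via the analytic definition of quasiconformality, an $H$-quasiconformal extension $F\co\C\ra\C$ with $H = (1+k)/(1-k)$ depending only on $\eta$, as required.
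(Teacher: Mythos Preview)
The paper does not prove this theorem at all: it is quoted as the classical Beurling--Ahlfors result, with a reference to \cite{BA} and to \cite[p.~83, Thm.~6.3]{LV} for a proof. Your sketch is precisely the standard Beurling--Ahlfors integral extension that those references carry out, so in spirit you are reproducing what the paper cites rather than offering an alternative route.

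One imprecision is worth flagging. You assert that the pointwise dilatation at $(x,y)$ can be bounded ``purely in terms of the ratio $\rho(x,y)=\alpha(x,y)/\beta(x,y)$''. That is not quite how the estimate works: the partial derivatives $yU_y$ and $yV_y$ involve the averages $U(x,y)$ and $V(x,y)$ themselves, and to compare these with $\alpha(x,y),\beta(x,y)$ one has to invoke the $M$-condition at \emph{all} intermediate scales $ty$, $0<t\le 1$ (typically via a dyadic iteration), not just at the single triple $(x-y,x,x+y)$. You do have this, since $\eta$-quasisymmetry gives the $M$-condition uniformly in $x$ and $t$ with $M=\eta(1)$; but the dependence is on the global bound $M$, not on the single pointwise ratio $\rho(x,y)$. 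With that correction the outline is the standard one and goes through.
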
  
Here $\R$ and $\C$ are  equipped with the Euclidean metric.
See \cite[p.~83, Thm.~6.3]{LV} for a streamlined proof of an  equivalent version of this theorem. 

The next proposition is a  consequence of this result. 

\begin{proposition} \label{prop:qcircext}
 Let $D$ and $D'$ be closed Jordan  regions in $\Sph$,  
and $f\:\partial D\ra \partial D'$  be a homeomorphism. Suppose that the Jordan  curve 
$\partial D$ is a $k$-quasicircle.

\begin{itemize} 

\item[\textrm(i)]
If $f$ is $\eta$-quasi-M\"obius map, then it 
can be extended to 
an $\eta'$-quasi-M\"obius  map $F\:\Sph \ra \Sph$ with $F(D)=D'$,
where $\eta'$ only depends on $\eta$ and $k$.

\smallskip 
\item[\textrm(ii)]
If  $f$ is  $\eta$-quasisymmetric and 
\begin{equation} \label{complbig}
\diam(\Sph\setminus D)\wedge \diam(\Sph\setminus D')\ge \delta>0, 
\end{equation}  
then $f$ can be extended to an   $\eta'$-quasisymmetric map 
 $F\: D \ra D'$,
where $\eta'$ only depends on $\delta$, $k$ and $\eta$. 

\end{itemize} 
\end{proposition}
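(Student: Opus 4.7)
The strategy is to reduce both parts to the Beurling--Ahlfors extension theorem (Theorem~\ref{thm:AB}) applied to a suitably normalized self-homeomorphism of $\R$.

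For part (i), I first observe that Corollary~\ref{cor:qmobinvcond}, applied to the one-element family $\{\partial D\}$, shows that $\partial D'=f(\partial D)$ is a $k'$-quasicircle with $k'=k'(\eta,k)$. By Proposition~\ref{prop:planeqcirc} and the remark after it, there exist $H_0$-quasiconformal maps $g,h\:\Sph\ra\Sph$ with $g(\overline\D)=D$, $h(\overline\D)=D'$, and $H_0=H_0(\eta,k)$. Set $\phi:=h^{-1}\circ f\circ g|_{\partial\D}\:\partial\D\ra\partial\D$; by Proposition~\ref{prop:inter}(i) the map $\phi$ is $\eta_1$-quasi-M\"obius with $\eta_1=\eta_1(\eta,k)$. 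Now let $a_1,a_2,a_3$ be the cube roots of unity, let $\alpha$ be the unique M\"obius self-map of $\D$ sending $\phi(a_j)\mapsto a_j$ for $j=1,2,3$, and set $\tilde\phi:=\alpha\circ\phi$. Then $\tilde\phi$ is still $\eta_1$-quasi-M\"obius and fixes $a_1,a_2,a_3$, so Proposition~\ref{prop:inter}(iii) yields that $\tilde\phi$ is $\eta_2$-quasisymmetric on $\partial\D$ with $\eta_2=\eta_2(\eta_1)$. Conjugate by a M\"obius transformation $\tau$ of $\Sph$ that sends $\partial\D$ to $\R\cup\{\infty\}$, $\D$ to $\H$, and $a_1\mapsto\infty$; then $\psi:=\tau\circ\tilde\phi\circ\tau^{-1}$ is quasi-M\"obius, fixes $\infty$, and hence restricts to a quasisymmetric self-map of $\R$. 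Theorem~\ref{thm:AB} supplies an $H_1$-quasiconformal extension $\Psi\:\C\ra\C$ with $H_1=H_1(\eta,k)$, which extends to $\Sph$ by $\Psi(\infty)=\infty$. Undoing the conjugations,
$$F:=h\circ\alpha^{-1}\circ\tau^{-1}\circ\Psi\circ\tau\circ g^{-1}\:\Sph\ra\Sph$$
is an $H$-quasiconformal extension of $f$ with $F(D)=D'$ and $H=H(\eta,k)$; another application of Proposition~\ref{prop:inter}(i) gives the required $\eta'$-quasi-M\"obius bound.

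For part (ii), since $f$ is $\eta$-quasisymmetric it is $\tilde\eta$-quasi-M\"obius by Proposition~\ref{prop:inter}(ii), so part (i) provides a $\tilde\eta'$-quasi-M\"obius extension $F\:\Sph\ra\Sph$ with $F(D)=D'$ and $\tilde\eta'=\tilde\eta'(\eta,k)$. To upgrade $F|_D$ to a quasisymmetry via Proposition~\ref{prop:inter}(iii) I must exhibit three points $x_1,x_2,x_3\in D$ with pairwise chordal distances $\ge\diam(D)/\lambda$ whose $F$-images enjoy analogous bounds relative to $\diam(D')$. Choose $x_1,x_2\in\partial D$ realizing $\diam(\partial D)$ and, on one of the two subarcs of $\partial D$ joining them, pick $x_3$ at which the continuous function $z\mapsto\sigma(z,x_1)-\sigma(z,x_2)$ vanishes; by the triangle inequality $\sigma(x_i,x_j)\ge\diam(\partial D)/2$ for all $i\ne j$. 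Lemma~\ref{lem:Jordiam} together with \eqref{complbig} gives $\diam(\partial D)\ge\min(\diam(D),\delta)\ge(\delta/2)\diam(D)$ (since $\diam(D)\le 2$), and likewise for $D'$. Writing $y_i=f(x_i)$ and applying the $\eta$-quasisymmetry of $f$ to the triple $(x_i,z,x_j)$ where $z\in\partial D$ is chosen so that $\sigma(y_i,f(z))\ge\diam(\partial D')/2$, one estimates $\sigma(x_i,z)/\sigma(x_i,x_j)\le 2$, whence $\sigma(y_i,y_j)\ge\diam(\partial D')/(2\eta(2))$. Combining these bounds, the triples $\{x_i\}$ and $\{y_i\}$ are $\lambda$-spread in $D$ and $D'$ respectively with $\lambda=\lambda(\eta,\delta)$, so Proposition~\ref{prop:inter}(iii) yields the desired $\eta'$-quasisymmetric extension with $\eta'=\eta'(\eta,k,\delta)$.

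The principal difficulty is the quantitative book-keeping in part (i)---tracking the distortion bounds through the three-point normalization of $\phi$ and the M\"obius conjugation that sends one fixed point to $\infty$, in order to ensure that the hypothesis of Theorem~\ref{thm:AB} is met with constants depending only on $\eta$ and $k$. Once the problem is reduced to a quasisymmetric self-homeomorphism of $\R$, the rest is a routine assembly of Theorem~\ref{thm:AB}, Proposition~\ref{prop:inter}, and Corollary~\ref{cor:qmobinvcond}.
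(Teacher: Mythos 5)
Your argument is correct and follows essentially the same route as the paper: for (i), reduce to a quasisymmetric self-homeomorphism of $\R$ by quasi-M\"obius normalization and apply the Beurling--Ahlfors theorem, and for (ii), verify the three-point hypothesis of Proposition~\ref{prop:inter}~(iii) using Lemma~\ref{lem:Jordiam} together with \eqref{complbig}. The only point worth adding in (i) is that, to guarantee $F(D)=D'$, you should post-compose the Beurling--Ahlfors extension $\Psi$ with the reflection in $\R$ if necessary so that $\Psi(\H)=\H$, and observe that $\infty$ is a removable singularity when passing from $\C$ to $\Sph$.
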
 

For a related  result with a similar proof see   \cite[Prop.~4.3]{BKM}. 
\medskip 

\no
{\em Proof.} We first prove (i). In this case $\partial D'$ is the
 image of a $k$-quasicircle under an $\eta$-quasi-M\"obius map. Hence by the quantitative equivalence of conditions~(ii) and (iii) in  
 Proposition~\ref{prop:metricqcirc},  the curve 
 $\partial D'$ is a $k'$-quasicircle with $k'=k'(\eta, k)$. It follows from Proposition~\ref{prop:planeqcirc} that 
 there exist $\tilde \eta$-quasi-M\"obius maps  on $\Sph$ with 
$\tilde \eta=\tilde \eta_{k,\eta}$  that map $\partial D$ and $\partial D'$ to
  $\widehat \R=\R\cup\{\infty\}\sub \OC$  and the sets  $D$ and $D'$ to the closed upper half-plane 
  $U=\{z\in \C: \text{Im}\, z\ge0\} \cup\{\infty\} $ in $\oC$.  
  So we are reduced to the case where $D=D'=U$,
   and $f$ is an $\eta$-quasi-M\"obius  homeomorphism on $\widehat \R$.
      By pre-  and  post-composing $f$ with suitable M\"obius transformations, which does not change the  distortion function $\eta$ of the map, we may further assume that 
      $f(\infty)=\infty.$
      
  Note that here $\widehat\R$ 
has to be considered as equipped with the chordal metric. Cross-ratios for points in $\R$ are  the same 
if we take the chordal metric or the Euclidean metric.  
It follows that $ f|\R \: \R \ra \R$ 
is  $\eta$-quasi-M\"obius if $\R$ is equipped with the 
Euclidean metric.  Since $f(\infty)=\infty$ 
a limiting argument shows that $f|\R\: \R\ra \R$ 
is also  $\eta$-quasisymmetric when $\R$ carries this metric.
By  the Beurling-Ahlfors  Theorem~\ref{thm:AB} the map 
$f|\R$ has an $H$-quasi\-conformal extension $F\:\C \ra \C$ where 
$H=H(\eta)$.   Post-composing $F$ with the reflection of $\OC$ in $\overline \R$ if necessary, we 
may assume that this $H$-quasi\-conformal extension $F$ of $f$ satisfies
$F(U)=U$.

Letting $F(\infty)=\infty$ we get an $H$-quasiconformal 
mapping $ F\: \OC\ra \OC$ that extends $ f$. Note that points are ``removable singularities" for quasiconformal maps \cite[Thm.~17.3]{Va}. Moreover, the dilatation of $ F$ 
 does not change by the passage 
from the Euclidean metric on $\C$ to the chordal metric on $ \C \sub
\OC$, because these metrics are ``asymptotically" conformal, i.e., the identity map from $\C$ equipped with the Euclidean metric to $\C$ equipped with the chordal metric is $1$-quasiconformal.     
Then $F$ will be $\eta'$-quasi-M\"obius with $\eta'$ only depending 
on  $H$ and hence only on $\eta$.  So the map $F$ is an extension of $f$ with the desired properties.

To prove  part (ii) we first show that our assumption 
$\diam(\Sph\setminus D)\ge \delta$ implies that 
$$\diam(D)\le \frac 2\delta \diam(\partial D). $$ Indeed, note that 
$\delta \le \diam(\Sph\setminus D)\le \diam(\oC)=2$. Hence by 
Lemma~\ref{lem:Jordiam} we have that
$$ \frac 2\delta \diam(\partial D)\ge \frac 2\delta
 (\diam(D)\wedge \diam (\Sph\setminus D))\ge \diam(D)\wedge 2=\diam(D)$$ as desired.  
Similarly, 
$$\diam(D')\le \frac 2\delta  \diam(\partial D'). $$

Now suppose that $f\: \partial D \ra \partial D'$ is $\eta$-quasisymmetric.
Since quasisymmetric maps 
are quasi-M\"obius maps, quantitatively (Proposition~\ref{prop:inter}~(ii)), it follows from the first part 
of the proof that there exists
an $\tilde \eta$-quasi-M\"obius extension $F\: D \ra D'$, 
where  $\tilde \eta$ only depends on $k$ and $\eta$. 
We can pick points $x_1,x_2,x_3 \in \partial D$ such that 
$$ \sig(x_i,x_j)\ge \diam(\partial D)/2 \ge \frac \delta 4\diam(D) \forr i\ne j,$$ 
and define $y_i=F(x_i)=f(x_i)\in \partial D'$.  
Now, since $f$ is an $\eta$-quasisymmetry, we have  
$$ \sig(f(z),f(x_i))\le \eta(2) \sig(f(x_i),f(x_j))$$ 
for arbitrary $i\ne j$ and $z\in \partial D$.  
It follows that
$$  \diam(D')\le \frac 2\delta \diam(\partial D') \le \frac {4\eta(2)}\delta
\sig(y_i,y_j) \forr i\ne j.$$
This shows that $F$ satisfies the conditions \eqref{sep1} and \eqref{sep2} with $X=D$, $Y=D'$ and $\lambda =  \frac {4}\delta(1\vee \eta(2)).$
 Since $\lambda$ only depends on $\delta$ and $\eta$,
  and $F$ is $\tilde \eta$-quasi-M\"obius with  $\tilde \eta$ only depending 
  on $k$ and $\eta$, 
it follows from Proposition~\ref{prop:inter}~(iii)  that  $F$ is $\eta'$-quasisymmetric with $\eta'$ only depending $\delta$, $k$, and $\eta$.
\qed \medskip 

\begin{remark}\label{rem:ext}\textnormal{If  $D$ and $D'$ are  closed Jordan  regions in $\Sph$,  
and $f\:\partial D\ra \partial D'$  is  a homeomorphism,  
then $f$ can be extended to homeomorphism 
 $F\: D \ra D'$.}
 
 \textnormal{Indeed, by the Sch\"onflies theorem this statement can be reduced   to the special  case $D=D'=\overline \D$. Then $F$ is obtained from $f\:\partial \D \ra \partial \D$ by  ``radial" extension, i.e.,
 $$ F(re^{\iu t})=r f(e^{\iu t}) \forr r \in [0,1],\  t\in [0,2\pi]. $$}
\end{remark}

\medskip
\begin{lemma}\label{lem:misc}
Suppose that $\{D_i:i\in I\}$ is a family of pairwise disjoint closed Jordan  regions in $\OC$, where   $I=\{1, \dots, n\}$  with $n\in \N$,   or $I=\N$. If $I=\N$ assume in addition that $\diam (D_i)\to 0$ as $i\to \infty$.  Let $T=\OC\setminus \bigcup_{i\in I}\inte(D_i)$. 

\begin{itemize} 
\smallskip

\item[(i)] Suppose that  we have a set $T'$ with  $T\sub T'\sub\OC$,  and a  map $F\: T'\ra \OC$   such that the restrictions $F|T$ and
$F|T'\cap D_i$, $i\in I$, are continuous. If $I=\N$ assume in addition that 
$\diam (F(T'\cap D_i))\to 0$ as $i\to \infty$. 
Then $F$ is continuous.   

\smallskip
\item[(ii)] The sets   $T$ and $T\setminus \partial D_i$, $i\in I$, are path-connected. 

\smallskip
\item[(iii)] If $f\: T\ra \OC$ is   an embedding,    then the image of $T$ under $f$ can be written as $f(T)=\OC\setminus \bigcup_{i\in I}\inte(D'_i)$, where 
$\{D'_i:i\in I\}$ is a family of pairwise disjoint closed Jordan  regions in $\OC$ with $f(\partial D_i)=\partial D_i'$ for $i\in I$.
Moreover, if $I=\N$, then  we have $\diam (D'_i)\to 0$ as $i\to \infty$.

\smallskip
\item[(iv)] The set $\widetilde T=\OC\setminus \bigcup_{i\in I}D_i=T\setminus \bigcup_{i\in I}\partial D_i $ is non-empty and contains uncountably many elements.

\end{itemize}
\end{lemma}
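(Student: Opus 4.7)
For (i), I argue sequential continuity at every $x\in T'$. Given $x_n\to x$ in $T'$, decompose the tail of the sequence into three classes: those $x_n\in T$, those in $D_j$ for some fixed repeating index $j$, and (only in the case $I=\N$) those in $\inte(D_{j_n})$ with the indices $j_n$ pairwise distinct. The first two are handled by the continuity of $F|T$ and $F|T'\cap D_j$. For the third, pick $y_n\in \partial D_{j_n}\sub T$; since $\diam(D_{j_n})\to 0$, $y_n\to x$ in $T$ and hence $F(y_n)\to F(x)$, and as $F(x_n),F(y_n)\in F(T'\cap D_{j_n})$ with $\diam(F(T'\cap D_{j_n}))\to 0$, we get $F(x_n)\to F(x)$.

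For (ii), given $x,y\in T$ take any continuous path $\gamma\co [0,1]\to \OC$ from $x$ to $y$. The open set $K=\gamma^{-1}\bigl(\bigcup_i\inte(D_i)\bigr)\sub [0,1]$ is disjoint from $\{0,1\}$; writing $K=\bigsqcup_j(a_j,b_j)$, each segment $\gamma((a_j,b_j))$ lies in a single $\inte(D_{i(j)})$ with $\gamma(a_j),\gamma(b_j)\in \partial D_{i(j)}$. Replace $\gamma|_{[a_j,b_j]}$ by an arc $\alpha_j$ of the Jordan curve $\partial D_{i(j)}$ joining these endpoints, chosen to have the smallest possible diameter via the circular parameterization of $\partial D_{i(j)}$; this yields a modified map $\tilde\gamma\co [0,1]\to T$. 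The main obstacle is continuity of $\tilde\gamma$ at a point $t_0\in [0,1]\setminus K$ that is an accumulation point of the intervals $(a_j,b_j)$: for $t_n\to t_0$ with $t_n\in(a_{j_n},b_{j_n})$, either the indices $i(j_n)$ are eventually distinct so that $\diam(D_{i(j_n)})\to 0$ forces $\diam(\alpha_{j_n})\to 0$, or the indices stabilize to a single $i$, in which case $\gamma(a_{j_n}),\gamma(b_{j_n})\to \gamma(t_0)\in \partial D_i$, and the small-diameter choice of arcs still gives $\tilde\gamma(t_n)\to \gamma(t_0)=\tilde\gamma(t_0)$. For $T\setminus \partial D_j$, I repeat the construction starting from a path in the path-connected open Jordan region $\OC\setminus D_j$ (which contains $x,y$) and detouring only around the curves $\partial D_i$ with $i\ne j$.

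For (iii), set $\partial D_i':=f(\partial D_i)$, a Jordan curve since $f$ is an embedding. By (ii), $T\setminus \partial D_i$ is connected, so $f(T\setminus \partial D_i)$ lies in exactly one component of $\OC\setminus \partial D_i'$; define $D_i'$ as the closure of the other component. For $i\ne j$ we have $\partial D_j'=f(\partial D_j)\sub f(T\setminus \partial D_i)\sub \OC\setminus D_i'$ and, by symmetry, $\partial D_i'\sub \OC\setminus D_j'$; these two inclusions force $D_i'\cap D_j'=\emptyset$, since the alternative $D_i'\sub \inte(D_j')$ would place $\partial D_i'$ simultaneously in $\inte(D_j')$ and in its complement. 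For the identity $f(T)=\OC\setminus \bigcup_i\inte(D_i')$, I extend $f$ via Remark~\ref{rem:ext} to homeomorphisms $D_i\to D_i'$ and glue to a map $\bar f\co \OC\to \OC$. Part (i), applied with $T'=\OC$, gives continuity of $\bar f$ provided (in the case $I=\N$) that $\diam(D_i')\to 0$. For this, uniform continuity of $f$ on the compact set $T$ gives $\diam(\partial D_i')\to 0$; by Lemma~\ref{lem:Jordiam} each $D_i'$ has diameter equal to either $\diam(\partial D_i')$ or $2$, and the latter is impossible for all but finitely many $i$ because $f(T)\sub \OC\setminus \inte(D_i')$ would then force $\diam(f(T))\le \diam(\partial D_i')\to 0$, contradicting that $f$ is injective with $\diam(T)>0$. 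Being a continuous bijection of $\OC$ onto itself, $\bar f$ is a homeomorphism, and $\bar f\bigl(\bigcup_i\inte(D_i)\bigr)=\bigcup_i\inte(D_i')$ reads off the claim.

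For (iv), $T$ is a complete metric space (closed in $\OC$) and, by (ii), a connected continuum with more than one point, hence perfect. I show each $\partial D_i$ is nowhere dense in $T$: given $p\in \partial D_i$ and $r>0$, either some $D_j$ with $j\ne i$ meets $B(p,r/2)$, in which case a connectedness argument for $B(p,r)$ (using $p\in B(p,r)\setminus D_j$ and $\inte(D_j)\cap B(p,r)\ne\emptyset$) produces a point of $\partial D_j\cap B(p,r)\sub T\setminus \partial D_i$; or no such $D_j$ exists, and then any point of the non-empty set $B(p,r/2)\setminus D_i$ lies in $T\setminus \partial D_i$. By the Baire category theorem, $\widetilde T=T\setminus \bigcup_i\partial D_i$ is a dense $G_\de$ in $T$, and a dense $G_\de$ in a perfect complete metric space is uncountable.
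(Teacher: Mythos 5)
Your argument is correct, and while (i) and (iii) follow essentially the paper's route, (ii) and (iv) take genuinely different paths. In (ii) the paper punctures each $D_i$ at an interior point $p_i$, retracts $D_i\setminus\{p_i\}$ onto $\partial D_i$, pastes these with the identity to a continuous retraction of the path-connected set $\OC\setminus\{p_i:i\in I\}$ onto $T$ (continuity supplied by part (i)), and takes images of paths; your hands-on surgery --- replacing each excursion of a path into some $\inte(D_{i(j)})$ by a minimal-diameter boundary arc --- reaches the same conclusion but must confront the continuity of the modified path at accumulation points of the replaced intervals directly, which your subsequence dichotomy (indices pairwise distinct after passing to a subsequence, versus constant) together with the fact that the minimal-diameter arc between two points of a fixed Jordan curve converging to a common point shrinks to that point does handle. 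In (iv) the paper runs a path between two distinct peripheral circles and invokes the fact that $[0,1]$ is not a nontrivial countable disjoint union of closed sets; your Baire-category argument (each $\partial D_i$ is nowhere dense in the perfect compact space $T$, so $\widetilde T$ is a dense $G_\delta$, hence uncountable) is more standard and yields the slightly stronger conclusion that $\widetilde T$ is dense in $T$. The one place in (iii) where you assert rather than prove something is the surjectivity of the glued map $\bar f$: that $\bar f(\OC)=\OC$ is exactly the missing inclusion $f(T)\supseteq \OC\setminus\bigcup_{i\in I}\inte(D_i')$, so ``continuous bijection of $\OC$ onto itself'' cannot simply be declared; the paper closes this via invariance of domain --- the image of the injective continuous $\bar f$ on the compact sphere is open and closed, hence all of $\OC$. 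Finally, your use of Lemma~\ref{lem:Jordiam} to get $\diam(D_i')\to 0$ is a clean substitute for the paper's quasi-roundness argument (only finitely many disjoint Jordan regions can contain a disk of radius $1$).
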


\no{\em Proof.} (i) We claim that  $F$ is continuous at each point $x\in T'$. This is clear if $x\in \inte(D_i)\cap T'$ for some $i\in I$. Otherwise, $x\in T$.  Let $\eps>0$ be arbitrary. 
Since the Jordan  curves $\partial D_i\sub T$ are pairwise disjoint, the point $x$ can lie on at most one of them. 

Assume that $x \in \partial D_{i_0}$, where $i_0\in I$.  Then 
$F|T\cup ( D_{i_0}\cap T')$ is continuous and so we can choose 
$\delta>0$  so that 
$\sigma(F(y), F(x))< 
\eps/2$ for all $y\in B(x,\delta)$ that lie in  $T\cup (D_{i_0}\cap T')$. 

We have   $x\notin D_i$ for $i\ne i_0$.  Since  there are only finitely many $i\in I$ with
 $\diam (F(T'\cap D_i))\ge \eps/2$ by our hypothesis,  we can assume that  $\delta>0$ is  so small  
   that $\diam(F(T'\cap D_i))<\eps/2$ whenever $i\in I\setminus \{i_0\}$ and  $D_i\cap B(x,\delta)\ne \emptyset$.

     If $D_i\cap B(x,\delta)\ne \emptyset$ for $i\ne i_0$, then also $ \partial D_i\cap B(x,\delta)\ne \emptyset$, and so  there exists a point $y\in  \partial D_i\cap B(x,\delta)\sub T$. It follows that   
$ \sigma (F(x), F(y))<\eps/2$ and $F(D_i\cap T')\sub B(F(y), \eps/2)$  by choice of $\delta$. This  implies $F(D_i\cap T')\sub  B(F(x), \eps)$. 
We conclude that $F(B(x,\delta)\cap T')\sub B(F(x), \eps)$, and the continuity 
of $F$ at $x$ follows.

 A similar argument shows that $F$ is continuous at $x$ if $x\in T\setminus \bigcup_{i\in I}\partial D_i$.

(ii) Pick a point $p_i\in \inte(D_i)$ for each $i\in I$. Let $P=\{p_i:i\in I\}$ and $T'=\OC\setminus P\supset T$. 

For each $i\in I$ there is a retraction of 
$D_i\setminus\{p_i\}$ onto $\partial D_i$, i.e., a continuous map $D_i\setminus\{p_i\}\ra\partial D_i$
that is the identity on $\partial D_i$. These maps and the identity on $T$ paste together to a map 
$R\:T'\ra T$.  
By (i) the map $R$ is continuous, and so it is a continuous retraction of $T'$ onto $T$. 

Since $P$ is countable,
the set  $T'=\OC\setminus P$ is path-connected. Indeed, to find a 
path between any two points $x,y\in T'$ pick an uncountable family of arcs in $\OC$ with endpoints $x$ and $y$ that have no common interior points. One of these arc will lie in 
$T'$. 

Since $T'$ is path-connected and $R$ is a retraction, the image $T=R(T')$ is also path-connected. 

 For each $i\in I$ the set $T'\setminus D_i$ is path-connected
 as it is homeomorphic to the open unit disk $\D$ with at most countably many points  removed.  Hence $T\setminus \partial D_i=R(T'\setminus D_i)$ is path-connected.

(iii) By (ii)   the set $T\setminus \partial D_{i}$ is connected  for each 
$i\in I$. Hence 
$f(T\setminus \partial D_{i})$ is also connected.
Since this set is non-empty and does not meet the Jordan  curve $f(\partial D_i)$, it must be contained in exactly one of the two  components of $\OC\setminus 
f(\partial D_i)$. Let $D'_i$ be the closure of the other 
complementary component of $\OC\setminus 
f(\partial D_i)$. Then $D'_i$ is a closed Jordan  region with $\partial D'_i=f(\partial D_i)$ for each $i\in I$, and we have 
\begin{equation}\label{incl}
f(T)\sub \OC\setminus \bigcup_{i\in I} \inte(D'_i). 
\end{equation} 
Since the family $\{\partial D'_i=f(\partial D_i):i\in I\}$ of Jordan  curves   consists of  pairwise disjoint sets, the last inclusion implies that the  family  $\{D'_i: i\in I\}$ also consists of pairwise disjoint sets.

In order to prove that we have  equality in \eqref{incl},  we first show  that if $I=\N$, then $\diam(D_i')\to 0$ as $i\to \infty$. 
By our hypotheses we have $\diam(\partial D_i)\to 0$.
Since $T$ is compact, the map $f$ is uniformly continuous on $T$,  and so $\diam(\partial D'_i)=\diam (f(\partial D_i))\to 0$ as $i\to \infty$.  
The argument in the proof of Proposition~\ref{prop:qround} shows that for each $i\in I$ we have 
\begin{equation}\label{dimbd} \diam(D'_i)\le 2\diam (\partial D'_i),
\end{equation}
or else $D'_i$ contains a disk of radius $1$.
Since the Jordan  regions $D'_i$ are pairwise disjoint,   
it follows that inequality \eqref{dimbd} holds for all $i\in I$ with at most  finitely many  exceptions. This implies    the desired statement 
$\diam(D_i')\to 0$ as $i\to \infty$.

By Remark~\ref{rem:ext}  the map 
$f|\partial D_i\: \partial D_i\ra \partial D'_i$ 
extends to a homeomorphism of $D_i$ onto $D_i'$
for each $i\in I$. These extensions  and the map $f$ paste together to an injective map $F\: \OC \ra \OC$ so that   $F|T=f$
and $F|D_i$ is continuous for each $i\in I$.  Moreover, 
$\diam (F(D_i))=\diam (D'_i)\to 0$ as $i\to \infty$ if $I=\N$. Hence by (i) the map $F$ is 
continuous on $\OC$. 

Since $F\: \Sph\ra \Sph$ is injective and continuous, this  map is a homeomorphism onto its image. By ``invariance of domain" this image is open. Since it is also compact, and hence closed, it follows that $F(\OC)=\OC$, and so $F$ is a homeomorphism of $\OC$ onto itself. 
Hence $$f(T)=F(T)=F\biggl(\OC\setminus \bigcup_{i\in I}\inte(D_i)\biggr)=\OC\setminus  \bigcup_{i\in I}F(\inte(D_i))= \OC\setminus \bigcup_{i\in I} \inte(D_i')$$
as desired.

(iv) The statement is  clear if $I$ is a finite set, because then
 $\widetilde T$ has  interior points (for example, points in 
 $\oC\setminus D_i$ sufficiently close to $\partial D_i$ are interior points of $\widetilde T$);  if $I$ is an infinite set, then by (ii) we can find a path $\alpha\:[0,1]\ra T$ 
 with endpoints on different Jordan  curves $\partial D_{i_0}$ and $\partial D_{i_1}$, $i_0,i_1\in I$, $i_0\ne i_1$. We claim that $\alpha\cap \widetilde T$ is an uncountable  set. Otherwise, this set consists of  a countable (possibly empty) collection of distinct points $x_\lambda$, $\lambda\in  \Lambda
 \sub \N$. Then $[0,1]$ is the disjoint union of the countably 
 many closed sets $\alpha^{-1}(\partial D_i)$, $i\in I$, and 
 $\alpha^{-1}(\{x_\lambda\})$, $\lambda\in \Lambda$. 
 Hence $[0,1]$ must be contained in one of these sets (one cannot represent  $[0,1]$ as a countable union of pairwise disjoint closed sets in a non-trivial way; see  \cite[p.~219]{StSe}). 
 This implies that $\alpha$ is contained in  one of the Jordan  curves 
 $ \partial D_i$ or is a constant path. Both alternatives are impossible, since 
 $\alpha$ has one endpoint on $\partial D_{i_0}$, and one on the 
 disjoint set $\partial D_{i_1}$. 
  \qed 

\medskip
\no \emph{Proof of Proposition~\ref{prop:extend}.}  By pre- and post-composing $f$ with  suitable M\"obius transformations, we may assume that 
there exists an index $i_0\in I$ such that $0,1,\infty\in \partial 
D_{i_0}$,  and $f(0)=0$, $f(1)=1$,  $f(\infty)=\infty$.  In this reduction we used 
the fact that both the hypotheses of the proposition and the desired conclusion remain essentially unaffected by applying 
 such auxiliary M\"obius transformations; 
indeed, the  image of a family of uniform quasicircles under a M\"obius
transformation 
consists of uniform quasicircles, quantitatively (this was shown  in the proof of Corollary~\ref{cor:qmobinvcond}), and pre- and post-composition with  M\"obius transformations changes  neither the distortion 
function of a quasi-M\"obius map  nor the dilatation of a quasiconformal map on $\Sph$. 

The map $f$ then satisfies conditions \eqref{sep1} and  \eqref{sep2}
in Proposition~\ref{prop:inter} with  $X=T$, $Y=f(T)$, $x_1=y_1=0$,
$x_2=y_2=1$, $x_3=y_3=\infty$, and 
$\lambda=\sqrt2$. Hence $f$ is $\tilde\eta$-quasisymmetric with 
$\tilde\eta=\tilde\eta_{\eta,k}$. 

The set $I$ is finite,  or countably infinite in which case we may assume that $I=\N$. We show that if $I=\N$, then $\diam(D_i)\to 0$ as $i\to \infty$. Indeed, by our hypotheses and Proposition~\ref{prop:qround}, there exists $\lambda\ge 1$, $r_i>0$,  and points $x_i\in \Sph$  such that 
$$\overline  B(x_i, r_i/\lambda)\sub D_i \sub \overline B(x_i, r_i)\foral i\in I. $$ 
Since the regions $D_i$, $ i\in I$, are pairwise disjoint , the first inclusion shows that $r_i\to 0$ as $i\to \infty$. Hence 
$\diam(D_i)\to 0$ as $i\to \infty$  by the second inclusion, as desired.

By Lemma~\ref{lem:misc} (iii)
 there exist pairwise disjoint closed Jordan  regions $D_i'$ for $i\in I$  such that $\partial D_i'=f(\partial D_i)$ and 
$$T'=f(T)=\Sph\setminus \bigcup_{i\in I} \inte(D'_i). $$ 
Moreover, if $I=\N$ we have $\diam (D'_i)\to 0$ as $i\to \infty$. 
  
 By the normalization imposed in the beginning of the proof,   the complement of each open  Jordan  region $\inte(D_i)$ and $\inte(D'_i)$, $i\in I$,  contains  the points $0,1,\infty$. This implies that condition  \eqref{complbig} in 
Proposition~\ref{prop:qcircext} for $D=D_i$ and $D'=D_i'$ is true with 
$\delta=\diam\{0,1,\infty\}=2$. It follows that for each $i\in I$ we can extend the map $f|\partial D_i\:\partial D_i\ra \partial D_i'$ to an 
$\eta'$-quasisymmetric map from $D_i$ onto $D_i'$, where $\eta'=\eta'_{\tilde \eta, k}=\eta'_{\eta, k}$.

These maps paste together to  a bijection $ F\: \Sph \ra \Sph$ 
whose restriction to $T$ agrees with the $\tilde \eta$-quasisymmetric  map  $f\:T\ra T'=f(T)$ and whose restriction 
to each set $D_i$, $i\in I$,  is an $\eta'$-quasisymmetric  map onto $
D_i'$. 

By Lemma~\ref{lem:misc}~(i) the map $F$ is continuous and hence a homeomorphism. 
We claim that  $F$  
is  $H$-quasiconformal with $H=H(\eta,k)$.  
We need to show that  there exists a constant $H=H(\eta,k)\ge 1$ such that for every
$x\in\Sph$, 
\begin{equation}\label{E:Qc}
\limsup_{r\to 0}\frac{L_F(x,r)}{l_F(x,r)}\leq H,  
\end{equation}
where $L_F$ and $l_F$ are defined as in \eqref{Lf} and \eqref{lf}. 
Below  we will write $a\lesssim b$ for two quantities $a$ and $b$,  
if there exists
a constant $C$ such that $a\leq Cb$ that depends only on
the functions $\eta$ and $\eta'$, and hence only on $\eta$ and $k$.  We will write $a\simeq b$
if both $a\lesssim b$ and $b\lesssim a$ hold.

If $x$ is in one of  complementary components $\inte(D_i)$ of $T$, then  (\ref{E:Qc}) with $H=\eta'(1)$
follows   from the definition of $F$. Thus it is enough 
to only consider  the case $x \in T$.

Since $T$ is 
connected (Lemma~\ref{lem:misc}~(ii)), there exists small $r_0>0$ such that the circle 
$$S(x,r):= \{y\in \Sph\co \sig(y,x)=r\}$$
has non-empty intersection with  $T$ for each $0<r\leq r_0$. Suppose that $r$ is 
in this range. 
Since $F|T=f$ is 
$\tilde \eta$-quasisymmetric, it suffices  to show that for each  $y\in S(x,r)$, there exists a 
point $v \in T \cap S(x,r)$   
such that 
\begin{equation}\label{E:Ineq}
\sig(F(v),F(x))\lesssim\sig (F(y),F(x))
\lesssim\sig(F(v),F(x)).  
\end{equation}
For then $L_F(x,r)/l_F(x,r)$ will be bounded by a quantity 
 comparable to $\tilde \eta(1)$. 

This is trivial if $y$ itself is in $T$. Thus we assume that $y$ is not 
in $T$. Then $y$  lies in one of the complementary components $\inte(D_i)$ of $T$. For simplicity we drop the index $i$ and write $D=D_i$.

Since $S(x,r)$ contains $y\in D$ and points in $T$, and hence in the complement of $\inte(D)$, we have $S(x,r)\cap \partial D  \ne \emptyset$.
 For  $v$ we pick  an arbitrary point in  $S (x,r)\cap \partial D$, and 
let $u$ be a point in the intersection of $\partial D$ and a
minimizing spherical geodesic segment  joining $x$ and $y$. Since
 $\sig(y,u)\leq\sig(v,u)$,
$\sig(u,x)\leq \sig(v,x)$,  and $\sig(v,u)\le 2r=2 \sig(v,x)$, and since 
$\{x, v, u\}\sub T$ and  $\{y, v, u\}\sub D$, we have
\begin{align}
\sig (F(y),F(x))&\leq\sig(F(y),F(u)))+\sig(F(u),F(x)) \notag\\ 
&\lesssim\sig(F(v),F(u))+\sig(F(v),F(x))\lesssim\sig(F(v),F(x)).\notag
\end{align}
This shows the right-hand side of~(\ref{E:Ineq}). To prove the inequality on the  left-hand
side, we choose a point  $u'$ as the  a preimage under $F$ of a point in
the intersection of $F(\dee D)$ with a minimizing  spherical geodesic joining  $F(x)$
and $F(y)$.  
Again, we have 
$\{x, v, u'\}\sub T$ and $\{y, v, u'\}\sub D$.
We need to consider two cases:

 \smallskip \no 
\emph{Case 1.} $\sig(u',x)\geq\frac12 r$. In this case we have 
$r=\sig(v,x)\le2\sig(u',x)$, and therefore
$$
\sig(F(v),F(x))\lesssim\sig(F(u'),F(x))\lesssim\sig(F(y),F(x)).
$$

 \smallskip \no 
\emph{Case 2.} $\sig(u',x)\leq\frac12 r$. Then we have 
$\sig(v,u')\le 2r \le 4 \sig(y,u')$. Spherical distances are additive along minimizing spherical geodesic segments. By choice of $u'$ this gives the inequality 
$$ \sig(F(y),F(u'))
+\sig(F(u'),F(x))\le 2\sig(F(y),F(x))  
$$ for chordal distances, 
and so \begin{align}
\sig(F(v),F(x))&\leq\sig(F(v),F(u'))+\sig(F(u'),F(x))\notag\\
&\lesssim\sig(F(y),F(u'))
+\sig(F(u'),F(x))\lesssim\sig(F(y),F(x)).\notag
\end{align}
This completes the proof of~(\ref{E:Ineq}), and thus of~(\ref{E:Qc})
and the proposition. \qed
\medskip 

\section{Classical and transboundary modulus} 
\label{s:mod} 

\no 
 A {\em density} 
is a non-negative Borel function $\rho\: M\ra [0,\infty]$ defined on some Borel set $M\sub \oC$. 
 Let $\Gamma$ be a family of paths in $\Sph$, and $\rho$ a density on $\oC$. Then $\rho$  is called {\em admissible} (for $\Gamma$) if $$ \int_\ga\rho\, ds\ge 1$$ for all  locally rectifiable paths $\gamma$ in $\Gamma$.  Here integration is with respect to spherical arclength. 
 The  {\em modulus} of the  family
$\Gamma$ is defined as  
\begin{equation*}
\Mod(\Gamma) = \inf_{\rho} \int \rho^2\,d\Sig,
\end{equation*}
where the infimum is taken over all densities 
 $\rho$
that are admissible  and integration is with respect to 
spherical measure $\Sigma$ on $\Sph$. We    refer to the densities $\rho$ over which the infimum is taken here  also as the densities that are {\em admissible 
for $\Mod(\Gamma)$}.
 Note that if $\Gamma$ is a family of paths in a region  $\Om\sub \Sph$, then we can restrict ourselves to considering densities $\rho$ that vanish on $\Sph\setminus \Om$.   A density for which the infimum is attained is called {\em extremal} for $\Mod(\Gamma)$.

\begin{remark} \label{rem:chb}\textnormal{
The modulus of a path family $\Gamma$ in a region $\Om$
does not change if the  the spherical base metric that was used to compute $\int_\ga \rho\,ds $ and $\int\rho^2\, d\Sig$ is changed to a conformally equivalent metric.}

\textnormal{
More precisely, suppose that $\Om$ is a  region in $\Sph$, $\Gamma$ is a path family in $\Om$, and $\lambda\:\Om \ra (0,\infty)$ is a continuous and positive ``conformal factor". 
Consider the conformal metric on $\Om$ with length element
$ds_{\lambda}:=\lambda ds$
and associated area element 
$dA_{\lambda}:=\lambda^2\, d\Sig $.}

\textnormal{
Call a Borel function $\tilde \rho\:\Om\ra [0,\infty]$  admissible  if 
$$\int_{\gamma} \tilde \rho\, ds_{\lambda} \ge 1$$
for all locally rectifiable paths $\ga$ in $\Gamma$, and define 
$$\Mod_{\lambda}(\Gamma)=\inf_{\tilde \rho} \int_{\Om}\tilde \rho\,dA_{\lambda}, $$  where the infimum is taken over all admissible $\tilde \rho$. 
Then $\Mod_{\lambda} (\Gamma)=\Mod(\Gamma)$. 
This follows from the fact that the class of locally rectifiable paths  is the same for the spherical metric and the conformal metric with length element $ds_{\lambda}$ and that 
$\rho \leftrightarrow \tilde \rho=\rho/ \lambda$ gives a bijection between admissible densities for $\Mod(\Gamma)$ and   
$\Mod_{\lambda} (\Gamma)$, respectively, that is mass preserving in the sense that
$$ \int_{\Om}\rho^2\, d\Sig= \int_{\Om}\tilde \rho^2\,dA_{\lambda}. $$}
\end{remark}

\medskip
If $f\: \Om \ra \Om'$ is  a continuous map between sets $\Om$ and $\Om'$  in $\Sph$ and $\Gamma$ is a family of paths in $\Om$, then we denote 
by $f(\Gamma)=\{f\circ \ga: \ga \in \Gamma\}$ the family of image paths. 

Conformal maps do not change the modulus of a path family: if $f\:\Om\ra \Om'$ is  a conformal map between regions $\Om,\Om'\sub\Sph$ and $\Gamma$ is a path family in $\Om$, then $\Mod(\Gamma)=\Mod(f(\Gamma))$. This  is the fundamental property of modulus and easily follows from the previous remark on conformal change of the base metric.  

Quasiconformal maps distort the moduli of  path families in a controlled way (in \cite[Ch.~2]{Va} this is the basis of the definition of a quasiconformal map; it is well-known that it is quantitatively equivalent to our definition \cite[Thm.~34.1 and Rem. 34.2]{Va}).

\begin{proposition}\label{prop:moddist}Let $\Om$ and $\Om'$ be regions  in $\Sph$, 
$\Ga$ be a path family in $\Om$, and $f\: \Om\ra \Om'$ be an $H$-quasiconformal map. Then 
\begin{equation} \label{moddistort}
\frac 1K \Mod(\Ga) \le \Mod(f(\Ga)) \le K \Mod(\Ga),
\end{equation}
where $K=K(H)\ge 1$. 
\end{proposition}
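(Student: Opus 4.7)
The plan is to prove the upper bound $\Mod(f(\Ga)) \le K \Mod(\Ga)$ by pulling admissible densities back through $f$; the matching lower bound then follows at once by applying the same argument to the $H$-quasiconformal inverse $f^{-1}$. By Remark~\ref{rem:chb} we may work with any conformally equivalent base metric, so after localizing to coordinate patches it suffices to carry out the computation with the Euclidean metric, absorbing the smooth spherical conformal factor into the densities.

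The starting point is the analytic characterization of quasiconformality (the equivalence alluded to in the parenthetical remark just before the statement): an $H$-quasiconformal homeomorphism between planar regions is absolutely continuous on lines (ACL), differentiable almost everywhere, and satisfies the pointwise distortion inequality
$$\|Df(x)\|^2 \le H'\, J_f(x) \quad \text{for a.e. } x \in \Om,$$
where $H' = H'(H)$, the symbol $\|Df\|$ denotes the operator norm of the differential, and $J_f$ its Jacobian determinant. A standard Fuglede-type argument further shows that the family of locally rectifiable paths $\ga$ in $\Om$ along which the length-transformation identity $\int_{f \circ \ga} \rho'\, ds = \int_\ga (\rho' \circ f)\,\|Df\|\, ds$ fails is of modulus zero, and such an exceptional family may be discarded without changing $\Mod(\Ga)$.

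Given any density $\rho'$ admissible for $f(\Ga)$, define the pullback density $\rho = (\rho' \circ f)\,\|Df\|$ on $\Om$ (and $\rho = 0$ off the differentiability set). For every $\ga \in \Ga$ outside the exceptional subfamily, the length-transformation identity combined with admissibility of $\rho'$ gives $\int_\ga \rho\, ds = \int_{f \circ \ga} \rho'\, ds \ge 1$, so $\rho$ is admissible for a full-modulus subfamily of $\Ga$. The change-of-variables formula together with the distortion inequality then yields
$$\int_\Om \rho^2\, d\Sig = \int_\Om (\rho' \circ f)^2\, \|Df\|^2\, d\Sig \le H' \int_\Om (\rho' \circ f)^2\, J_f\, d\Sig \le H' \int_{\Om'} (\rho')^2\, d\Sig.$$
Taking the infimum over admissible $\rho'$ gives $\Mod(\Ga) \le H' \Mod(f(\Ga))$, which is the desired inequality with $K = H'$.

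The main obstacle is not the computation, which is routine, but legitimizing it: invoking ACL, almost-everywhere differentiability, and the pointwise distortion inequality requires the equivalence between the metric definition of quasiconformality adopted in this paper and the analytic one used above. This equivalence is classical and is precisely the content of \cite[Thm.~34.1 and Rem.~34.2]{Va} that the paper already cites; combined with the Fuglede lemma for discarding the exceptional modulus-zero family of paths, it reduces the proposition to the three-line calculation above.
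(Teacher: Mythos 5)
Your argument is correct: it is the standard proof of quasi-invariance of modulus, pulling back an admissible density via $\rho=(\rho'\circ f)\,\Vert Df\Vert$, using Fuglede's lemma to discard a modulus-zero exceptional path family, and invoking the analytic characterization of quasiconformality for the distortion inequality and change of variables. The paper itself gives no proof beyond the citation to \cite[Thm.~34.1 and Rem.~34.2]{Va}, where the geometric (modulus) definition is taken as primary, so your write-up is essentially a spelled-out version of the same classical fact the paper relies on, resting on the same equivalence of definitions.
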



Let $\Om$ be a region in $\Sph$ and 
$\mathcal{K}=\{K_i: i\in I\}$ be a finite  collection of pairwise disjoint 
compact subsets of  $\Om$. Here $I$ is a finite index set. 
 Define $K:=\bigcup_{i\in I}K_i$.

Let   $\ga\: J \ra \oC$  be a path   defined on an interval 
  $J\sub \R$.  Since  $\Om\setminus K$ is open, the set   
 $\ga^{-1}(\Om\setminus K)$ is  relatively open  in $J$ and so can it be written as
 $$ \ga^{-1}(\Om\setminus K) =\bigcup_{l\in \Lambda} J_l,$$
 where $\Lambda$ is  a countable (possibly empty) index set, and the sets  $J_l$, $l\in \Lambda$, are pairwise disjoint intervals in $J$. We call  $\ga$ {\em locally rectifiable in $\Om\setminus K$}, if  the path $\ga|J_l$ is  locally rectifiable for each $l\in \Lambda$. 
 
 In this case the path integral $\int_{\ga|J_l}\rho\, ds\in [0,\infty]$ is defined whenever   $\rho\: \Om \setminus K\ra [0,\infty]$ is  a Borel function.   We set   
 $$  \int_{\ga \cap (\Om\setminus K)} \rho\, ds:=
 \sum_{l\in \Lambda }\int_{\ga|J_l}\rho\, ds.$$  

A  {\em transboundary mass distribution} on $\Om$  consists of  a density on $\Om\setminus K$, i.e., a Borel function $\rho\:\Om\setminus K\ra [0,\infty]$, 
and non-negative weights $\rho_i\ge 0$ for $i\in I$ (so  each of the sets $K_i$ has a corresponding weight $\rho_i$). We call 
$$ \int_{\Om\setminus K}\rho^2\, d\Sigma+\sum_{i\in I}\rho_i^2 \in [0,\infty]$$
its {\em total mass}. 
The transboundary  mass distribution is called {\em admissible} with respect to a path family $\Gamma$ in $\oC$
if 
\begin{equation}\label{eq:defadmis}
 \int_{\ga \cap (\Om\setminus K)} \rho\, ds+
\sum_{\ga\cap K_i\ne \emptyset}\rho_i\ge 1,  \end{equation}
 whenever $\ga$ is a path in $\Gamma$ that is locally rectifiable in $\Omega\setminus K$.     Note that we do not require that $\Gamma$ consists of paths  in $\Om$.  
 
The {\em transboundary modulus} of $\Gamma$ 
with respect to $\Omega$ and $\mathcal{K}$ is defined as  
\begin{equation}\label{eq:deftrans}
\Md_{\Om,\mathcal{K}}(\Gamma)=\inf_{\rho} \biggl\{ \int_{\Om\setminus K}\rho^2\, d\Sig +\sum_{i\in I}
\rho_i^2\biggr\},
\end{equation}
where the infimum is taken over all transboundary mass distributions on $\Om$ that are admissible for  
$\Gamma$.  A transboundary mass distribution realizing the infimum is called {\em extremal} for $\Md_{\Om,\mathcal{K}}(\Gamma)$.

The concept of transboundary modulus is due to Schramm. He introduced it in  equivalent  equivalent form as  {\em transboundary extremal length} (the reciprocal of transboundary modulus) in \cite{oS95}. 

 As the next lemma shows, the transboundary modulus of a path family  in $\Om$ is invariant under homeomorphisms that are conformal on $\Omega\setminus K$ 
(see \cite[Lem.~1.1]{oS95} for a similar statement). 

\begin{lemma} [Invariance of transboundary modulus]\label{lem:invtrans}
Let $\Om$ and $\Om'$ be  regions in $\Sph$,  and  
$\mathcal{K}=\{K_i: i\in I\}$  be a finite  collection of pairwise disjoint 
compact sets  in $\Om$. Suppose that $\Ga$ is  a path family in 
$\Om$ and that $f\:\Om \ra \Om'$ is a homeomorphism that is conformal on $\Om\setminus K$. Set $\mathcal{K}'=\{f(K_i): i\in I\}$ and $\Gamma'=f(\Gamma)$. 
Then  
$$ \Md_{\Om,\mathcal{K}}(\Gamma)=\Md_{\Om', \mathcal{K}'}(\Ga').$$
\end{lemma}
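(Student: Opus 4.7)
The plan is to establish the equality by setting up a mass-preserving bijection between the admissible transboundary mass distributions for the two moduli, using the fact that $f$ restricts to a conformal homeomorphism $\Om\setminus K\to\Om'\setminus K'$, where $K=\bigcup_{i\in I}K_i$ and $K'=\bigcup_{i\in I}f(K_i)$. By symmetry between $f$ and $f^{-1}$ it suffices to prove the inequality $\Md_{\Om,\mathcal{K}}(\Ga)\le \Md_{\Om',\mathcal{K}'}(\Ga')$.

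First I would pull back mass distributions. Given an admissible transboundary mass distribution $(\rho',\{\rho'_i\}_{i\in I})$ for $\Ga'$, let $\lambda_f\:\Om\setminus K\to (0,\infty)$ denote the continuous conformal factor of $f$ on $\Om\setminus K$ relative to the spherical metric, characterized by $ds_{\mathrm{sph}}(f(z))\,|df(z)|=\lambda_f(z)\,ds_{\mathrm{sph}}(z)$. Define
\[
\rho(z)=\rho'(f(z))\lambda_f(z)\foral z\in\Om\setminus K,\quad \rho_i=\rho'_i\foral i\in I.
\]
By the standard change-of-variables formulas for conformal maps (exactly as in Remark~\ref{rem:chb}), for any subpath $\alpha$ of a path in $\Om\setminus K$ that is locally rectifiable,
\[
\int_\alpha \rho\,ds=\int_{f\circ\alpha}\rho'\,ds,
\]
and, since $f$ is conformal and a homeomorphism,
\[
\int_{\Om\setminus K}\rho^2\,d\Sig=\int_{\Om'\setminus K'}(\rho')^2\,d\Sig.
\]
Hence the total mass of $(\rho,\{\rho_i\})$ equals the total mass of $(\rho',\{\rho'_i\})$.

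Next I would verify admissibility. Let $\ga\:J\to\OC$ be a path in $\Ga$ that is locally rectifiable in $\Om\setminus K$, and decompose $\ga^{-1}(\Om\setminus K)=\bigcup_{l\in\Lambda}J_l$. Since $f$ is a homeomorphism of $\Om$ onto $\Om'$ sending $K_i$ to $f(K_i)$, the image $\ga'=f\circ\ga\in\Ga'$ satisfies $(\ga')^{-1}(\Om'\setminus K')=\bigcup_{l}J_l$; conformality of $f$ on $\Om\setminus K$ (locally bi-Lipschitz with continuous factor $\lambda_f$) ensures each $\ga'|J_l$ is locally rectifiable, so $\ga'$ is locally rectifiable in $\Om'\setminus K'$. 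Moreover $\ga\cap K_i\ne\emptyset$ iff $\ga'\cap f(K_i)\ne\emptyset$. Combining the length identity on each $J_l$ with admissibility of $(\rho',\{\rho'_i\})$ for $\Ga'$ applied to $\ga'$, we get
\[
\int_{\ga\cap(\Om\setminus K)}\rho\,ds+\sum_{\ga\cap K_i\ne\emptyset}\rho_i=\int_{\ga'\cap(\Om'\setminus K')}\rho'\,ds+\sum_{\ga'\cap f(K_i)\ne\emptyset}\rho'_i\ge 1,
\]
so $(\rho,\{\rho_i\})$ is admissible for $\Ga$. Taking infima yields $\Md_{\Om,\mathcal{K}}(\Ga)\le\Md_{\Om',\mathcal{K}'}(\Ga')$, and the reverse inequality follows by applying the same argument to $f^{-1}$.

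The main point requiring attention is the matching of the path decompositions under $f$ and the treatment of paths that may repeatedly cross into and out of $K$; this is handled automatically because $f^{-1}(\Om'\setminus K')=\Om\setminus K$, so the interval decomposition of $\ga^{-1}(\Om\setminus K)$ transfers exactly to that of $(f\circ\ga)^{-1}(\Om'\setminus K')$, and conformal invariance of length and area applies on each component separately. No analytic subtlety arises beyond the standard conformal change-of-variables, so I do not anticipate serious obstacles.
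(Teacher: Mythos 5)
Your proposal is correct and follows essentially the same route as the paper: pull back an admissible mass distribution for $\Ga'$ via the conformal factor $\|Df\|$ (your $\lambda_f$), keep the discrete weights, check that the length and area integrals and the incidence conditions $\ga\cap K_i\ne\emptyset$ transfer exactly, and obtain the reverse inequality by applying the argument to $f^{-1}$. No gaps.
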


\no
{\em Proof.}  Note that the sets $f(K_i)$, $i\in I$, are pairwise disjoint 
compact subsets of $\Om'$; so  $\Md_{\Om', \mathcal{K}'}(\Ga')$ is defined.

We denote by  $Df(p)\: T_p\Sph\ra T_{f(p)}\Sph$ the differential of $f$ at $p\in \Om\setminus K$. This is a linear map between the tangent spaces $T_p\Sph$ and $T_{f(p)}\Sph$ of $\Sph$ (considered as a smooth manifold) at the points $p$ and $f(p)$, respectively. Using the   Riemannian structure on $\Sph$ induced by the spherical metric on $\Sph$, we can assign  an operator norm $\Vert Df(p)\Vert$ to this map. If $p,f(p)\in \C$, then
$$ \Vert Df(p)\Vert=\frac{(1+|p|^2)|f'(p)|}{1+|f(p)|^2}. $$
Let  $\Vert Df\Vert$  be 
the map $p\mapsto  \Vert Df(p)\Vert$. 

 A transboundary 
mass distribution   on $\Om'$ consisting of the 
Borel function $\rho\: \Om'\setminus K'\ra [0,\infty]$   and the discrete weights $\rho_i\ge 0$, $i\in I$, is admissible for $\Gamma'$ if and only if 
the transboundary mass distribution on $\Om$ consisting of 
the density $(\rho\circ f) \Vert Df\Vert$ on $\Om\setminus K$ 
and the discrete weights $\rho_i$, $i\in I$, is admissible for $\Ga$. Indeed,   in the admissibility conditions the total  contributions 
from the discrete weights are obviously equal; this is also true 
for the contributions from the densities, since we have   
the equation 
$$ \int_{\ga\cap( \Om\setminus K)} (\rho\circ f) \Vert Df\Vert\, ds=
\int_{(f\circ \ga)\cap (\Om'\setminus K')} \rho \,ds$$
valid for all paths in $\Gamma$ that are locally rectifiable in $\Om\setminus K$. Note that a path $\ga\in \Gamma$ is locally rectifiable in $\Om\setminus K$  if and only if the path $f\circ \ga$ is locally rectifiable in $\Om'\setminus K'$. 

Moreover, by the  conformality of $f$ on $\Om\setminus K$ we have 
$$ \int_{\Om\setminus K} (\rho\circ f)^2\Vert Df\Vert^2\, d\Sig =
\int_{\Om'\setminus K'} \rho^2\, d\Sig.$$
This shows that every transboundary mass distribution that is  admissible for $M_{\Om',\mathcal{K}'}(\Gamma')$ gives rise to a mass distribution that is admissible for $M_{\Om,\mathcal{K}}(\Gamma)$ of the same total mass. This implies that  $M_{\Om,\mathcal{K}}(\Gamma)\le M_{\Om',\mathcal{K'}}(\Gamma')$. The reverse inequality follows by applying the same argument to $f^{-1}$.  \qed 
\medskip

Let $\Om\sub \Sph$ be a region  
and $E,F\sub \overline \Om$. We say that $\ga$ is a {\em (closed)  path in $\Om$ connecting $E$ and $F$} if the path  is a continuous map  $\ga\: 
 [a,b]\ra \Sph$ defined on a closed interval 
 $[a,b] \sub \R$ such that $\ga(a)\in E$, $\ga(b)\in F$, and 
 $\ga((a,b))\sub \Om$. So   $\ga$ lies in $\Om$ with the possible exception of its endpoints. We denote  by $\Ga(E,F; \Om)$ 
 the family of  all closed paths $\ga$ in $\Om$ that connect 
 $E$ and $F$.   In Section~\ref{s:carpet} it will be more convenient 
 to consider  {\em open  paths in $\Om$ connecting $E$ and $F$}. By definition these are   paths $\alpha$ for which there  exists a 
path  $\ga\: 
 [a,b]\ra \Sph$ in $\Ga(E,F; \Om)$ such that $\alpha =\gamma|(a,b)$. 
The family of these paths  $\alpha$ is denoted by $\Ga_o(E,F; \Om)$
(so the subscript ``$o$" indicates ``open" paths).

 Let $\mathcal{K}=\{K_i: i\in I\}$ be a finite   collection of pairwise disjoint 
compact subsets of  $\Om$. Set  $K=\bigcup_{i\in I}K_i$.
Note that 
 $ M_{\Om, \mathcal{K}}(\Gamma(E,F; \Om))$ can be different from  
$M_{\Om, \mathcal{K}}(\Gamma_o(E,F; \Om))$.   One can easily obtain 
an example by assuming that  $E$ or $F$ is contained in one of the sets in
$\mathcal{K}$. Setting the discrete weight equal to $1$ on this set
and all the other discrete weights and the density equal to $0$ produces an admissible mass distribution 
for $ M_{\Om, \mathcal{K}}(\Gamma(E,F; \Om))$, but not necessarily for $M_{\Om, \mathcal{K}}(\Gamma_o(E,F; \Om))$. Hence 
$ M_{\Om, \mathcal{K}}(\Gamma(E,F; \Om))\le 1$, but it is not hard to find a situation where $ M_{\Om, \mathcal{K}}(\Gamma_o(E,F; \Om))>1$.

If $\Om'\sub \oC$ is another region, $f\:\overline \Om\ra \overline \Om'$ is a homeomorphism with $f(\Om)=\Om'$, and if we define $E'=f(E)$ and $F'=f(F')$, then 
 $$ f(\Gamma(E,F; \Om))=\Gamma(E',F'; \Om').$$ 
 Moreover, if $\mathcal{K}':=\{f(K_i):i\in I\}$ and $f|\Om\setminus K$ 
 is conformal, then the same argument as in the proof of Lemma~\ref{lem:invtrans} shows that 
 $$M_{\Om, \mathcal{K}}(\Gamma(E,F; \Om))=
 M_{\Om', \mathcal{K'}}(\Gamma(E',F'; \Om')). $$

 \begin{remark} \label{rem:chbase} \textnormal{Similarly as for classical modulus (see Remark~\ref{rem:chb}),  transboundary modulus does not change if we replace the integrals  $\int_{\ga\cap (\Om\setminus K)}\rho\,ds$
and $\int_{\Om\setminus K} \rho^2\, d\Sig$ in its definition by similar integrals with respect to a different base metric that is conformally equivalent to the spherical metric.
This will be important in Section~\ref{s:square} where it is convenient to use the flat metric with length element $|dz|/|z|$  as a base metric on $\C^*$.} 
\end{remark}

\section{Loewner regions}
\label{s:loewner} 

\no
 Let $\Om\sub \Sph$ be a region  
in $\oC$. 
  If  there exists a non-increasing function 
$\phi\: (0,\infty)\ra (0, \infty) $ such that 
$$  \Mod(\Ga(E,F;\Om) )\ge \phi( \Delta(E,F)), $$
whenever $E$ and $F$ are disjoint continua in $\overline \Om$, then we call 
$\Om$ a {\em Loewner region}  (or a $\phi$-{\em Loewner region} if we want to emphasize $\phi$).   
A region $\Om$ is Loewner if and only if the following statement is true: for each $t>0$ there exists  $m=m(t)>0$ such that if $E$ and $F$ are disjoint continua in $\overline \Om$ with    $\Delta(E,F)\le t$ and $\rho$ a  density on $\oC$ with $\int\rho^2\, d\Sigma<m$, then there exists a rectifiable path in $\Om$ connecting  $E$ and $F$ such that 
$\int_\ga \rho\, ds<1$.   Indeed, if $\Om$ is $\phi$-Loewner, then we can take $m=m(t):=\phi(t)$ for $t>0$ in this  condition. Conversely, if 
the condition   is satisfied, then $\Om$ is $\phi$-Loewner with 
 $\phi(s):=\sup\{ m(t):t\ge s\}$ for $s>0$.  

Loewner regions  are examples for  {\em Loewner spaces}  as introduced by Heinonen and Koskela \cite{HK98}. 

Let $\Om$ be a proper subregion of $\Sph$. Then $\Om$ is called 
$A$-{\em uniform}, where  $A\ge 1$, if the following condition holds:
for any points $x,y$ in $\Om$ there exists a parametrized arc $\ga\:[0,1]\ra \Om$
such that $\ga(0)=x$, $\ga(1)=y$, 
$$ \length (\ga) \le A \sigma(x,y), $$ 
and 
$$ \dist(\ga(t), \partial \Om)\ge \frac 1{A} (\length(\ga|[0,t]) \wedge
\length(\ga|[t,1])) $$
for all $t\in [0,1]$. 

The unit disk $\D$ is an example of a uniform region.
If $\delta>0$ and  $\Om=N_\delta(\D)\setminus \overline \D$, 
then $\Om$  is an annulus for $\delta\in (0,\sqrt 2)$ and so  this region is $A$-uniform with $A=A(\delta)$ (recall from section Section~\ref{s:nt} that $N_\delta(A)$ denotes the open $\delta$-neighborhood of a set $A$).  We will use these     facts below. They are essentially well-known and so we omit the easy (and tedious) proof.

Uniform regions are Loewner regions, quantitatively.  
\begin{proposition} \label{prop:unifmod}
Every $A$-uniform region $\Om\sub \OC$ is $\phi$-Loewner 
with $\phi=\phi_A$ only depending on $A$.
\end{proposition}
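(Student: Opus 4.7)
The plan is to bound $\Mod(\Gamma(E,F;\Om))$ from below by exploiting the $A$-uniformity to transport the classical two-dimensional Loewner estimate for the ambient sphere $\OC$ into $\Om$. Given disjoint continua $E,F\sub\overline\Om$ with $\Delta(E,F)\le t$, set $d=\dist(E,F)$ and $D=\diam(E)\wedge\diam(F)\ge d/t$. I would first pick anchors: points $x_0\in E$, $y_0\in F$ with $\sig(x_0,y_0)=d$, and points $x_1\in E$, $y_1\in F$ with $\sig(x_0,x_1)\ge D/2$ and $\sig(y_0,y_1)\ge D/2$, so that the four anchors have scales comparable to $D$.

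Next, using the $A$-uniform condition (approximating anchors that lie on $\partial\Om$ by nearby interior points and passing to a subsequential limit), I would construct uniform arcs $\alpha_0$ from $x_0$ to $y_0$ and $\alpha_1$ from $x_1$ to $y_1$ in $\overline\Om$, each of length at most $A'(d+D)$ and each satisfying the twisted double-cone bound $\dist(\alpha_j(s),\partial\Om)\ge c_A\min(\length(\alpha_j|[0,s]),\length(\alpha_j|[s,1]))$ at every parameter $s$. Together with sub-continua of $E$ joining $x_0$ to $x_1$ and of $F$ joining $y_0$ to $y_1$, the arcs $\alpha_0,\alpha_1$ bound a ``quadrilateral'' $U\sub\overline\Om$ whose diameter is comparable to $D$ and whose interior lies in $\Om$ at a quantitative boundary distance at every point.

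Finally I would transport the $2$-dimensional Loewner estimate into $U$. Since $\OC$ is itself Loewner in the classical sense --- for disjoint continua $E',F'\sub\OC$ one has $\Mod(\Gamma(E',F';\OC))\ge\psi(\Delta(E',F'))$ with a universal decreasing $\psi$ --- an admissible density for $\Gamma(E,F;\OC)$ can be truncated via a Whitney decomposition of $U$ to a density supported in $U\sub\Om$, with at most $A$-controlled loss of energy, that remains admissible for the subfamily $\Gamma(E,F;U)\sub\Gamma(E,F;\Om)$. This yields $\Mod(\Gamma(E,F;\Om))\ge\phi_A(t)$ with $\phi_A$ non-increasing and depending only on $A$. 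The main obstacle is the second step: producing a \emph{single} uniform arc is immediate from the definition, but showing that the full quadrilateral $U$ (and not just the central arc) stays inside $\Om$ with a boundary distance comparable to the local scale --- which is exactly what is needed to run the Whitney-type admissibility transfer in step three --- requires a careful geometric argument based on the double-cone property. Alternatively, one could cite the general Heinonen--Koskela theorem that uniform subdomains of $Q$-Loewner spaces are $Q$-Loewner quantitatively, applied with $Q=2$ and ambient space $\OC$.
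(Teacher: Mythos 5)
The paper does not actually prove this proposition: it records that the statement is essentially well-known, goes back to Gehring--Martio, and ``can be derived from the fact that $\oC$ is Loewner and from [Rem.~6.38 and Thm.~6.47] of Bonk--Heinonen--Koskela.'' So your fallback option --- citing the general theorem that uniform subdomains of Loewner spaces are Loewner, quantitatively --- is precisely the intended argument, and on that reading your proposal matches the paper.

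Your hand-crafted sketch, however, has a genuine gap beyond the one you flag. The density transfer in your third step runs in the wrong direction. Truncating a density admissible for $\Gamma(E,F;\OC)$ to a density supported in $U$ that is admissible for the subfamily $\Gamma(E,F;U)$ with controlled energy loss would only yield $\Mod(\Gamma(E,F;U))\le C_A\,\Mod(\Gamma(E,F;\OC))$, i.e.\ an \emph{upper} bound on the modulus of the smaller family --- which already follows trivially from $\Gamma(E,F;U)\sub\Gamma(E,F;\OC)$ and is useless for the Loewner property. To get the required lower bound you must go the other way: start from an arbitrary density $\rho$ admissible for $\Gamma(E,F;\Om)$ (which may vanish off $\Om$ and need not be admissible for the ambient family) and inflate it, e.g.\ by setting $\tilde\rho(x)$ equal to a supremum or average of $\rho$ over a Whitney-scale ball of $\Om$ associated to $x$, so that $\tilde\rho$ becomes admissible for $\Gamma(E,F;\OC)$ with $\int\tilde\rho^2\le C_A\int\rho^2$; the Loewner bound for $\OC$ then forces $\int\rho^2\ge \psi(\Delta(E,F))/C_A$. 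Equivalently, in the contrapositive form the paper uses elsewhere, a low-energy density yields a short path in $\OC$ by the ambient Loewner property, and one must reroute that path into $\Om$ at Whitney scale --- and it is exactly this shadowing/rerouting step, together with the quadrilateral-thickness claim you yourself identify as the main obstacle, that constitutes the substance of the cited results. Since the paper treats the proposition as a black box, the clean resolution is to cite [GM] and [BHK]; carrying out the direct proof requires reversing your step three and supplying the Whitney-scale shadowing lemma in full.
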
 

Again this  statement   is essentially well-known and goes back to \cite{GM}. See \cite[Ch.~6]{BHK}, and in particular  \cite[Rem.~6.6] {BHK},   for more background. The  statement can be derived from the fact that $\oC$ is Loewner and from  \cite[Rem.~6.38 and  Thm.~6.47] {BHK}.

Images of Loewner regions under  quasi-M\"obius maps on $\OC$ are Loewner, quantitatively. 

\begin{proposition} \label{prop:Loewqcinv}
Let $\Om\sub  \OC$ be a $\phi$-Loewner region and 
$f\:\Sph \ra\Sph$ be  an $\eta$-quasi M\"obius map. Then 
$\Om'=f(\Om)$ is  a $\psi$-Loewner region with $\psi$ only depending on $\eta $ and $\phi$.  
\end{proposition}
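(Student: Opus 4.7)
The plan is to pull back a configuration in $\Omega'$ to $\Omega$ via $f^{-1}$, apply the $\phi$-Loewner property there, and then push the resulting modulus bound back to $\Omega'$ using the fact that $f$ is quasiconformal with distortion controlled by $\eta$. The three ingredients are already available in the excerpt: (a) the cross-ratio machinery (Lemmas~\ref{lem:modcr} and \ref{lem:crrelsep}) relating relative distance to cross-ratios, (b) the implication ``quasi-M\"obius $\Rightarrow$ quasiconformal'' from Proposition~\ref{prop:inter}(i), and (c) the modulus distortion estimate in Proposition~\ref{prop:moddist}.

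Let $E',F'$ be disjoint continua in $\overline{\Omega'}$, and set $E=f^{-1}(E')$, $F=f^{-1}(F')$. Since $f$ is a homeomorphism of $\Sph$, the sets $E,F$ are disjoint continua and $f^{-1}(\overline{\Omega'})=\overline\Omega$, so $E,F\subset\overline\Omega$. The first step is to bound $\Delta(E,F)$ from above by a quantity depending only on $\eta$ and $\Delta(E',F')$. Applying Lemma~\ref{lem:crrelsep} to $E',F'$ gives $D(E',F')\le 2\Delta(E',F')$, so there exist $4$-tuples $(y_1,y_2,y_3,y_4)$ with $y_1,y_4\in E'$, $y_2,y_3\in F'$ and $\langle y_1,y_2,y_3,y_4\rangle$ arbitrarily close to $D(E',F')$. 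By Lemma~\ref{lem:modcr} the usual cross-ratio $[y_1,y_2,y_3,y_4]$ is then controlled quantitatively by $\Delta(E',F')$. Since $f^{-1}$ is $\eta^*$-quasi-M\"obius with $\eta^*$ depending only on $\eta$, the cross-ratios $[x_1,x_2,x_3,x_4]$ at the corresponding preimages $x_i=f^{-1}(y_i)\in E\cup F$ are bounded by a function of $\eta$ and $\Delta(E',F')$. Applying Lemmas~\ref{lem:modcr} and \ref{lem:crrelsep} once more, this time in the reverse direction, yields $\Delta(E,F)\le t'$, where $t'=t'(\eta,\Delta(E',F'))$ is a non-decreasing function of $\Delta(E',F')$. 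This is precisely the argument used in the proof of Corollary~\ref{cor:qmobinvcond}, where the quasicircle hypothesis played no role in this portion.

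Now the $\phi$-Loewner property of $\Omega$ gives
\[
\Mod(\Gamma(E,F;\Om))\ge \phi(\Delta(E,F))\ge \phi(t'),
\]
since $\phi$ is non-increasing. The homeomorphism $f\:\Sph\to\Sph$ maps $\Gamma(E,F;\Om)$ bijectively onto $\Gamma(E',F';\Om')$ (a path $\ga\:[a,b]\to\Sph$ lies in $\Gamma(E,F;\Om)$ iff $f\circ\ga$ lies in $\Gamma(E',F';\Om')$, because $f(\Om)=\Om'$ and $f$ takes endpoints to endpoints). By Proposition~\ref{prop:inter}(i), $f$ is $H$-quasiconformal on $\Sph$ with $H=H(\eta)$, and Proposition~\ref{prop:moddist} applied with source and target $\Sph$ yields
\[
\Mod(\Gamma(E',F';\Om'))\ge \frac{1}{K}\Mod(\Gamma(E,F;\Om))\ge \frac{\phi(t')}{K},
\]
where $K=K(H)=K(\eta)$. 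Setting
\[
\psi(u):=\frac{\phi\bigl(t'(\eta,u)\bigr)}{K(\eta)}
\]
gives a non-increasing positive function depending only on $\phi$ and $\eta$ that witnesses that $\Om'$ is $\psi$-Loewner.

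The argument is essentially routine given the tools already assembled; the only point that requires any care is the first step, where one must ensure that the dependence of $t'$ on $\Delta(E',F')$ is monotone (which can be arranged by replacing $t'$ by $\sup\{s:s'(\eta,s)\le \Delta(E',F')\}$ with $s'$ the function produced in Corollary~\ref{cor:qmobinvcond}) so that it composes properly with the non-increasing $\phi$. There is no substantive obstacle beyond this bookkeeping.
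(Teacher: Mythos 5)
Your proof is correct and follows essentially the same route as the paper's: pull back $E',F'$ under $f^{-1}$, control $\Delta(E,F)$ via the cross-ratio lemmas, and transfer the modulus bound using Proposition~\ref{prop:inter}(i) together with Proposition~\ref{prop:moddist} applied on all of $\Sph$. Your care about the monotonicity and direction of the function $t'$ is if anything slightly more precise than the paper's own wording of the same step.
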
 

\begin{proof} Let $E'$ and $F'$ be disjoint continua in 
$ \overline \Om'$. Then   $E=f^{-1}(E')$ and $F=f^{-1}(F')$ are disjoint continua in $\overline \Om$.  Since  $f$ is $\eta$-quasi-M\"obius, it follows from  Lemma~\ref{lem:modcr} and Lemma~\ref{lem:crrelsep} that there exists a homeomorphism $\theta\: [0,\infty)\ra [0,\infty)$ 
that can be chosen only depending on $\eta$  such that 
$$\Delta (E,F)\ge \theta(\Delta(E',F')). $$ 
Moreover, we have 
$f(\Gamma(E,F; \Om))=\Gamma(E',F'; \Om')$, which by  Proposition~\ref{prop:inter} (i) and Proposition~\ref{prop:moddist}  
implies that  
$$\Mod (\Gamma(E',F'; \Om'))\ge \frac 1K  \Mod( \Gamma(E,F; \Om)),$$ where 
$K=K(\eta)\ge 1$. Since $\Om$ is $\phi$-Loewner we conclude that 
$$ \Mod( \Gamma(E',F'; \Om'))\ge \frac 1K\phi(\Delta(E,F))\ge 
\psi(\Delta(E',F')),$$ where 
$\psi(t)=\frac 1K \phi(\theta(t))>$ for $t>0$. Since $\psi$ can be chosen 
only depending on $\eta$ and $\phi$, the statement follows. 
\end{proof}

Open Jordan  regions in $\Sph$ bounded by 
quasicircles  are Loewner regions, quantitatively.

\begin{proposition}\label{prop:qdisksLoewner}
Let $\Om\sub\Sph$ be an open Jordan  region whose boundary $\partial \Om$ is a $k$-quasicircle. Then $\Om$ is $\phi$-Loewner with $\phi$ only depending on $k$.
\end{proposition}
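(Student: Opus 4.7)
The plan is to reduce the statement to the Loewner property of the unit disk by pulling $\Omega$ back to a round disk via a quasi-M\"obius map. Concretely, I would proceed as follows.

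First, apply Proposition~\ref{prop:planeqcirc} to the $k$-quasicircle $\partial\Omega$: there exists an $\eta$-quasi-M\"obius homeomorphism $f\:\Sph\ra\Sph$, with $\eta=\eta_k$ depending only on $k$, such that $f(\partial\D)=\partial\Omega$. The map $f$ carries the two components of $\Sph\setminus\partial\D$ onto the two components of $\Sph\setminus\partial\Omega$, so either $f(\D)=\Omega$ or $f(\Sph\setminus\overline\D)=\Omega$. In the latter case, pre-compose $f$ with the M\"obius involution $z\mapsto 1/z$, which swaps $\D$ and $\Sph\setminus\overline\D$ while leaving $\partial\D$ fixed; since M\"obius transformations preserve cross-ratios, the composition is again $\eta$-quasi-M\"obius with the same $\eta$. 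Thus we may assume $f(\D)=\Omega$.

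Next, note that the unit disk $\D$, being an $A$-uniform region for some universal constant $A$, is $\phi_0$-Loewner with a universal function $\phi_0$ by Proposition~\ref{prop:unifmod}. Since $f\:\Sph\ra\Sph$ is $\eta$-quasi-M\"obius and $\Omega=f(\D)$, Proposition~\ref{prop:Loewqcinv} applies and yields that $\Omega$ is $\psi$-Loewner with $\psi$ depending only on $\eta$ and $\phi_0$. Because $\eta=\eta_k$ depends only on $k$ and $\phi_0$ is universal, $\psi=\psi_k$ depends only on $k$, which is the desired conclusion.

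There is no substantive obstacle here: the argument is essentially a one-line application of the quasi-M\"obius invariance of the Loewner property established in Proposition~\ref{prop:Loewqcinv} combined with the Loewner property of the uniform region $\D$. The only minor point requiring care is ensuring that the quasi-M\"obius uniformizer of $\partial\Omega$ actually sends $\D$ onto $\Omega$ (as opposed to onto the complementary Jordan region), which is handled by the M\"obius swap described above.
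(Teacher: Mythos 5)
Your proof is correct and follows essentially the same route as the paper: uniformize $\partial\Omega$ by an $\eta_k$-quasi-M\"obius map sending $\D$ onto $\Omega$ (the paper cites the remark after Proposition~\ref{prop:planeqcirc} for this), note that $\D$ is uniform and hence Loewner by Proposition~\ref{prop:unifmod}, and conclude via the quasi-M\"obius invariance of the Loewner property (Proposition~\ref{prop:Loewqcinv}). Your extra care about which complementary component of $\partial\D$ is mapped onto $\Omega$ is a reasonable point that the paper absorbs into the cited remark.
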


\no{\em Proof.} By the remark following Proposition~\ref{prop:planeqcirc} the region 
$\Om$ is the image of the unit $\D$ under an $\eta$-quasi M\"obius  map,  
where $\eta=\eta_k$. Since the unit disk $\D$ is  a uniform region   and hence Loewner by 
Proposition~\ref{prop:unifmod}, it follows from Proposition~\ref{prop:Loewqcinv} that $\Om$ is $\phi$-Loewner, where $\phi=\phi_k$. 
\qed 

\medskip
The goal of this section is to prove a similar statement for regions with finitely many complementary components (see Proposition~\ref{thm:lowmod}). 
We first prove the following lemma for preparation.  

\begin{lemma} [Collar Lemma]\label{lem:collar} Let $n\ge 2$, and 
let  $\Om$ be  a region in $\Sph$ such that 
$$\Om= \Sph\setminus \bigcup_{i=1}^n  D_i,$$ where 
the sets $D_i$ are pairwise disjoint closed Jordan  regions. 
Suppose that  the boundaries $\partial D_i$ are  $k$-quasicircles and  the regions $D_i$ are 
$s$-relatively separated for $i=1, \dots,n$, and  that $d=\diam(D_n)\le \diam (D_i)$ for $i=1,\dots,n$.

Then there exists an open Jordan  region $V\supseteq D_n$ in $\oC$ with the following properties: 

\begin{itemize} 

\smallskip
\item[(i)] $U:=V\setminus D_n \sub \Omega$,   

\smallskip
\item[(ii)] $N_{cd}(D_n)\sub V $ where 
$c=c(s,k)>0$ is a constant only depending on $s$ and $k$,

\smallskip
\item[(iii)] $U$ is a $\phi$-Loewner region 
with $\phi=\phi_{s,k}$  only depending on $s$ and $k$.

\end{itemize} 
\end{lemma}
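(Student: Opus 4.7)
The region $V$ will be taken as the image of a slight enlargement of $\overline\D$ under a quasiconformal extension of $\partial D_n$. By Proposition~\ref{prop:qround} applied to $D_n$, one can choose a M\"obius normalization $m$ of $\Sph$ making $\diam_\sigma(m(D_n))\asymp 1$ with constants depending only on $k$; by Corollary~\ref{cor:qmobinvcond}, the normalized collection remains uniformly relatively separated with constant $s'=s'(s)$ and uniformly quasicircle-bounded (M\"obius maps preserve quasicircles exactly). It suffices to produce the required region in the normalized picture and then pull back via $m^{-1}$, which rescales chordal neighborhoods of $D_n$ by a factor $\asymp d$ (with constant depending on $k$), to be absorbed into $c$ and $\phi$ at the end. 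Working in the normalized picture and retaining the notation $D_n$, Proposition~\ref{prop:planeqcirc} provides an $\eta_k$-quasi-M\"obius homeomorphism $F\:\Sph\to\Sph$ with $F(\overline\D)=D_n$. I propose
\[
V\;:=\;F(B(0,1+\delta)),\qquad U\;:=\;V\setminus D_n\;=\;F(A_\delta),
\]
where $A_\delta=\{1<|z|<1+\delta\}$ and $\delta=\delta(s,k)\in(0,1)$ is a parameter to be chosen.

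\textbf{Loewner property (iii).} For any fixed $\delta\in(0,1)$, the round annulus $A_\delta$ is a uniform region (with uniformity constant depending only on $\delta$), and so it is $\phi_0$-Loewner by Proposition~\ref{prop:unifmod}. Since $F$ is $\eta_k$-quasi-M\"obius, Proposition~\ref{prop:Loewqcinv} yields that $U=F(A_\delta)$ is $\phi$-Loewner with $\phi=\phi_{\delta,k}$; so (iii) holds as soon as $\delta$ is fixed in terms of $s,k$.

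\textbf{Metric bounds (i) and (ii).} I would further pre-compose $F$ with a M\"obius automorphism of $\D$ chosen so that the cube roots of unity map via $F$ to three points on $\partial D_n$ at pairwise chordal distance $\ge 1/C(k)$; such triples on $\partial D_n$ exist since it is a $k$-quasicircle of chordal diameter $\asymp 1$. With this boundary normalization, Proposition~\ref{prop:inter}(iii) shows that both $F|\overline\D\:\overline\D\to D_n$ and $F|\oC\setminus\inte\D\:\oC\setminus\inte\D\to\oC\setminus\inte D_n$ are $\tilde\eta$-quasisymmetric with $\tilde\eta=\tilde\eta_k$. The outer quasisymmetric estimate, applied with the radial projection $z_1=z/|z|\in\partial\D$ and a fixed reference point $w\in\partial\D$ far from $z_1$, yields $\sigma(F(z),\partial D_n)\le C_k\,\tilde\eta(|z|-1)$ for $z\in A_\delta$. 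Choosing $\delta=\delta(s,k)\in(0,1)$ so that $C_k\,\tilde\eta(\delta)<s'/4$ then gives $V\sub N_{s'/4}(D_n)$, whence (i) holds (the remaining $D_i$ lying at chordal distance $\ge s'$ from $D_n$). Symmetrically, the inverse quasisymmetric estimate on $\oC\setminus D_n$ bounds $|F^{-1}(y)|-1$ by $\tilde\eta'(\sigma(y,\partial D_n))$ for $y$ close to $\partial D_n$; taking $c=c(s,k)>0$ with $\tilde\eta'(c)<\delta$ gives $N_c(D_n)\sub V$, whence (ii). Un-normalizing via $m^{-1}$ converts these into the required inclusions $N_{c'd}(D_n)\sub V\sub N_{(s/2)d}(D_n)$ in the original picture for some $c'=c'(s,k)>0$.

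\textbf{Main obstacle.} The principal difficulty is the metric analysis of the last paragraph: one must coordinate the global M\"obius normalization $m$ with the three-point boundary normalization of $F$, and verify that the resulting quasisymmetric estimates on both sides of $\partial\D$ combine to give bounds uniform in $d=\diam D_n$. The degenerate case $d\to\diam(\Sph)=2$ must be handled separately using Lemma~\ref{lem:Jordiam} to relate $\diam(\partial D_n)$ to $\diam(\oC\setminus\inte D_n)$, so that the outer three-point configuration can be chosen with constants depending only on $k$.
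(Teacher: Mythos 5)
Your construction coincides with the paper's: $V=F(N_\delta(\D))$ for a quasi-M\"obius parametrization $F\:\oC\to\oC$ of $D_n$ by $\overline\D$, with $\delta=\delta(s,k)$, a three-point normalization on $\partial\D$, and property (iii) obtained from uniformity of the round annulus together with Propositions~\ref{prop:unifmod} and~\ref{prop:Loewqcinv}. Where you diverge is in how (i) and (ii) are executed: the paper keeps the scale $d=\diam(D_n)$ explicit inside a direct cross-ratio computation (the image triple on $\partial D_n$ has pairwise distances $\ge d/2$, Lemma~\ref{lem:3norm} supplies the reference point, and the estimates close because $\eta(t)\to 0$ as $t\to 0$), whereas you first renormalize by a global M\"obius map $m$ so that $\diam(m(D_n))\asymp 1$ and then upgrade to two-sided quasisymmetric estimates via Proposition~\ref{prop:inter}(iii). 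That is workable, but it concentrates all the scale-dependence in the unproved assertion that $m^{-1}$ rescales chordal neighborhoods of $D_n$ by a factor $\asymp d$. For (i) no such control is needed, since disjointness of $V$ from the other $D_i$ is preserved by the homeomorphism $m^{-1}$; but for (ii) it is essential, and it is not a consequence of quasi-M\"obius invariance alone: it is a bi-Lipschitz statement about the M\"obius map $m$ near $D_n$, which holds only after you additionally arrange, say, $m(D_n)\subset \overline B(0,1)$, and which then requires a short computation with the spherical derivative $\Vert Dm\Vert$ on $N_{\eps d}(D_n)$. You should either supply that computation or avoid the renormalization entirely, as the paper does. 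A small further inaccuracy: M\"obius transformations do not preserve the quasicircle constant $k$ exactly in the chordal metric; they preserve cross-ratios, so $k$ changes, but only quantitatively --- this is Corollary~\ref{cor:qmobinvcond} with $\eta=\operatorname{id}$.
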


This lemma says that under the given hypothesis one can put a 
``Loewner collar" 
$U$ around the smallest complementary component $D_n$ of $\Omega$ 
that lies in $\Omega$, has  a definite thickness  proportional  to the diameter $d$ of 
$D_n$ with a proportionality constant depending on $s$ and $k$, and  is $\phi$-Loewner with $\phi$ controlled by $s$ and $k$. 

\medskip
\no{\em Proof.} 
Let $D=D_n$. Since $\partial  D$ is  a $k$-quasicircle,
there exists an $\eta$-quasi-M\"obius map $f\:\OC\ra \OC$ with $f(\overline \D)=  D$, where $\eta$ only depends on $k$
(see the remark after Proposition~\ref{prop:planeqcirc}).  We denote by  $u'=f(u)$ the image  of an arbitrary point  $u\in \OC$.

Since $n\ge 2$ and $ D=D_n$ has the smallest diameter of all the sets $D_i$, $i=1, \dots, n$, 
we have $\diam(\OC\setminus  D)\ge \diam(  D)$.
Hence Lemma~\ref{lem:Jordiam} implies that 
$$d= \diam(  D)= \diam(D)\wedge \diam(\OC\setminus  D)\le \diam(\partial  D). $$

We can pick points $x_1,x_2, x_3\in \partial \D$
such that for their image points we have 
\begin{equation}
\sig(x_i',x_j')\ge \diam(\partial  D)/2\ge d/2 \forr i\ne j.
\end{equation}
By pre-composing $f$ with a M\"obius transformation if necessary, we may assume  the points $x_1,x_2,x_3$ are the third roots of unity.
Then 
 $$ \sigma(x_i,x_j)=\sqrt 3  \forr i\ne j.$$
 Since the sets  $D_i$, $i=1, \dots, n$,  are  $s$-relatively separated, and 
 $ D=D_n$ has the smallest diameter of the sets, we have 
 $$\dist(D_i, D)\ge s d \forr i=1, \dots, n-1. $$
 In particular, $N_{sd}(D)\setminus D\sub \Omega$. 
 We claim that we can thicken up $ \D$  by a definite amount only depending on $s$ and $k$ to a larger set  that 
is mapped  into $N_{sd}( D)$ by $f$. More precisely, 
we claim that 
\begin{equation}\label{desincl}
f(N_\de(\D)) \sub N_{sd}( D)
\end{equation}
if  $\delta=\delta(s,k)\in (0,1)$ is suitably chosen.

So assume $\delta>0$ is a small constant whose precise value will be chosen later. Let $v=0$, and $u\in N_\de(\D)\setminus \overline \D$ be arbitrary. Let   $z$ be the closest point to $u$ on $\partial \D$.  
Then  $\sigma(u,z)< \de$.  By Lemma~\ref{lem:3norm} there  there exists 
 $w\in \{x_1, x_2,x_3\}$ such that 
 $$ \sigma(u,w)\ge \sqrt 3/2 \ge 1/2 \quad \text{and} \quad 
 \sigma(v',w')\ge d/4. $$
 We also have the relations $\sig(v',z')\le \diam(  D)=d$,
 $\sig(v,z)=\sqrt 2\ge 1$,  $\sig(v,w)\le \diam(\overline \D)= 2,$ and 
 $\sig(z',w')\le \diam( D)=d.$ 
 Since $f$ is $\eta$-quasi-M\"obius, we obtain   
 \begin{eqnarray}
 \frac {\sig(u',z')}{d} &\le & 
 \frac {\sig(u',z')} {\sig(v',z')}\nonumber \\ &\le &
 \eta\biggl( 
  \frac{ \sig(u,z)\sig(v,w)}
  {\sig(v,z)\sig(u,w)}  \biggr)
\frac {\sig(u',w')}{\sig(v',w')} \nonumber \\
&\le & \eta\biggl( \de
  \frac{\sig(v,w)}
  {\sig(u,w)}  \biggr)
\frac {\sig(u',w')}{\sig(v',w')} \\
&\le &\frac 4 d\eta(4\de )(\sig(u',z')+\sig(z',w '))\nonumber
\\
&\le & 4 \eta(4\de ) \bigg(1+\frac{\sig(u',z')}{d}\biggr)  \nonumber
\end{eqnarray}
Since $\eta(t)\to 0$ as $t\to 0$ this implies that 
$4 \eta(4\de)<1$ if $\de>0$ is small. For such $\de $ we have 
$$ \frac{\sig(u',z')}{d}\le \frac {4 \eta(4\de )} {1-4 \eta(4\de )}.$$
Again using $\eta(t)\to 0$ as $t\to 0$, this  shows  that we can choose
$\delta=\delta(s,\eta)=\delta(s,k)>0$ such that the left hand side 
in the last inequality is less that $s$.
For such $\delta$ we have 
$\dist(p,   D )<sd$ whenever $p\in f(N_\de(\D))$. This gives the desired inclusion
\eqref{desincl}.

Now define $V=f(N_\de(\D))$. Then (i) is true, because
$$V\setminus D_n\sub N_{sd}(D)\setminus D\sub \Om. $$

To show an inclusion of type 
(ii), let $v\in \oC\setminus N_\de(\D)$ and  $z\in \overline \D$ be arbitrary.  
Then $\sig(v,z)\ge \de$. 
We can choose $u\in \overline \D$ such that 
$\sig(u', z')\ge d/2$. Similarly as above, we can then choose
$w\in \{x_1, x_2,x_3\}$ such that 
 $$ \sigma(u,w)\ge 1/2 \quad \text{and} \quad 
 \sigma(v',w')\ge d/4. $$
 We also have $\sig(u,z)\le 2$, $\sig(v,w)\le 2$, and $\sig(u', w')\le d$. 
 Then using these estimates, we get
 \begin{eqnarray}
\frac {d}{2\sig(v',z')} &\le & 
 \frac {\sig(u',z')} {\sig(v',z')} \\ &\le &
 \eta\biggl( 
  \frac{ \sig(u,z)\sig(v,w)}
  {\sig(v,z)\sig(u,w)}  \biggr)
\frac {\sig(u',w')}{\sig(v',w')} \nonumber \\
&\le & \eta( 8/\de)
\frac {\sig(u',w')}{\sig(v',w')} \le 4 \eta( 8/\de).\nonumber
\end{eqnarray}
This shows that that for $c:=c(s,k)=1/(8\eta( 8/\de))>0$ we have
$$ \sig(v',z')\ge cd,$$ 
whenever $v\in \oC\setminus  N_\de(\D)$ and  $z\in \overline  \D$.
It follows that $$\dist (\oC\setminus V,   D) \ge cd.$$ 
This implies $N_{cd}( D)\sub V$ as desired.

It remains to show (iii). The annulus
$N_\de(\D)\setminus\overline \D$ is an $A$-uniform region with $A=A(\delta)=A(s,k)$. Thus this annulus is $\psi$-Loewner with $\psi$ 
only depending on $s$ and $k$ by Proposition \ref{prop:unifmod}. 
Since by Proposition~\ref{prop:Loewqcinv} quasi-M\"obius images of Loewner regions are Loewner
regions,  quantitatively, it follows that 
$U=V\setminus D=f(N_\de(\D)\setminus\overline \D)$
is $\phi$-Loewner with $\phi$ only depending on $\eta$, $s$, $k$. But since $\eta$ was chosen to depend only on $s$ and $k$, this means that $\phi$ can also be chosen to depend only on these parameters. 
\qed \medskip

\begin{proposition} \label{thm:lowmod} 
Let $n\ge 1$, and  $\Om$ be  a region in $\Sph$ such that 
$$\Om= \Sph\setminus \bigcup_{i=1}^n  D_i,$$ where 
the sets $D_i$ are pairwise disjoint closed Jordan  regions. 
Suppose that  the boundaries $\partial D_i$ are $k$-quasicircles and the  regions $D_i$ are
$s$-relatively separated for  $i=1, \dots,n$.    Then $\Om$ is a
 $\phi$-Loewner region with   
 $\phi=\phi_{n,s,k}$ only depending on $n$, $s$, and 
 $k$. \end{proposition}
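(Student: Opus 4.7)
The plan is to proceed by induction on $n$. The base case $n=1$ is exactly Proposition~\ref{prop:qdisksLoewner}. For the inductive step I would assume the result for $n-1$ with Loewner function $\phi_{n-1}=\phi_{n-1;s,k}$, take $\Om=\Sph\setminus\bigcup_{i=1}^n D_i$ satisfying the hypotheses, and relabel so that $\diam(D_n)=d$ is minimal. Applying the Collar Lemma (Lemma~\ref{lem:collar}) then produces an open Jordan region $V\supset D_n$ and a $\phi_{U;s,k}$-Loewner annular collar $U=V\setminus D_n\sub\Om$ with $N_{cd}(D_n)\sub V$. Setting $\Om':=\Sph\setminus\bigcup_{i=1}^{n-1}D_i=\Om\cup D_n$, the inductive hypothesis gives that $\Om'$ is $\phi_{n-1}$-Loewner.

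Given disjoint continua $E,F\sub\overline\Om$ with $\Delta(E,F)\le t$ and a density $\rho$ admissible for $\Ga(E,F;\Om)$, the idea is to produce an admissible density $\tilde\rho$ for the larger family $\Ga(E,F;\Om')$ and then invoke the Loewner property of $\Om'$. I would first treat the model case $E\cap\overline V=F\cap\overline V=\emptyset$: every path in $\Ga(E,F;\Om')$ that enters $D_n$ must cross $U$ between $\partial V$ and $\partial D_n$, so if $\rho_U$ denotes the extremal admissible density for $\Ga(\partial V,\partial D_n;U)$, extended by zero outside $U$, then $\tilde\rho:=\rho+\rho_U$ is admissible for $\Ga(E,F;\Om')$. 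The $L^2$-mass of $\rho_U$ equals $\Mod(\Ga(\partial V,\partial D_n;U))$, which is bounded by some $M_U=M_U(s,k)$ because $U$ is an annulus of controlled geometry (inner boundary diameter $d$, thickness at least $cd$). The Loewner property of $\Om'$ then yields $\phi_{n-1}(t)\le\int\tilde\rho^2\le 2\int\rho^2+2M_U$. The remaining cases, when $E$ or $F$ meets $\overline V$, I would reduce to the model case by enlarging the offending continuum to $\tilde E:=E\cup\overline U$, which is connected because $E\sub\overline\Om$ forces $E\cap\overline V\sub\overline U$; the minimality of $\diam D_n$ together with Lemma~\ref{lem:Jordiam} then keeps the geometric parameters of $(\tilde E,F)$ controlled in terms of $s,k,t$.

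The main obstacle will be that the additive bound $\int\rho^2\ge\tfrac12\phi_{n-1}(t)-M_U$ is only informative when $\phi_{n-1}(t)>2M_U$, and so fails in the regime of large $t$ where $\phi_{n-1}$ is small. To handle this regime I would supply a separate multiplicative comparison $\Mod(\Ga(E,F;\Om))\ge c(s,k)\Mod(\Ga(E,F;\Om'))$, obtained by decomposing each path in $\Ga(E,F;\Om')\setminus\Ga(E,F;\Om)$ at $\partial D_n$ into two sub-paths in $\Om$ and applying the series rule for extremal length together with the uniform annular modulus estimate coming from the Loewner collar $U$. Taking the better of the two bounds and using that $\phi_{n-1}$ may be chosen non-increasing then produces a strictly positive Loewner function $\phi_n=\phi_{n;s,k}$.
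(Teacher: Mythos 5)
Your skeleton---induction on $n$ together with the Collar Lemma applied to the smallest complementary component---is exactly the paper's, and your additive estimate in the model case is sound: when $E$ and $F$ avoid $\overline V$, every path of $\Ga(E,F;\Om')$ that meets $D_n$ contains a subpath of $\Ga(\partial V,\partial D_n;U)$, so $\rho+\rho_U$ is admissible for $\Ga(E,F;\Om')$ and $\int\rho^2\ge\tfrac12\phi_{n-1}(t)-M_U$. But two steps have genuine gaps. First, the reduction of the remaining cases by enlarging $E$ to $\tilde E=E\cup\overline U$ runs the wrong way: since $E\sub\tilde E$ we have $\Ga(E,F;\Om)\sub\Ga(\tilde E,F;\Om)$, so a Loewner bound for the pair $(\tilde E,F)$ gives no lower bound on $\Mod(\Ga(E,F;\Om))$; moreover a density admissible for $\Ga(E,F;\Om)$ controls nothing about paths emanating from $\overline U\setminus E$, and a path from $\tilde E$ to $F$ in $\Om'$ starting on $\partial V$ need not cross $U$ at all, so the admissibility of $\rho+\rho_U$ also fails for the enlarged pair. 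Second, the multiplicative comparison $\Mod(\Ga(E,F;\Om))\ge c(s,k)\,\Mod(\Ga(E,F;\Om'))$ must carry the whole regime $\phi_{n-1}(t)\le 2M_U$---an unbounded range of $t$, since $M_U$ is the modulus of the family connecting the boundary components of a \emph{thin} ring (roughly $2\pi/\log(1+c(s,k))$, hence large), while $\phi_{n-1}(t)\to 0$---and it is only asserted. The serial composition law bounds $\Mod(\Ga(E,F;\Om'))$ from above by the moduli of the families $\Ga(E,\partial D_n;\,\cdot\,)$ and $\Ga(\partial D_n,F;\,\cdot\,)$, which are different families; it does not compare $\Ga(E,F;\Om')$ with its subfamily of paths avoiding $D_n$, which is what the claim requires.

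What is needed in place of both steps is a path surgery, and this is how the paper argues: using the reformulation of the Loewner property in terms of a mass threshold $m(t)$, the induction hypothesis supplies a path $\alpha$ in $\Om\cup D_n$ with $\int_\alpha\rho<1/2$; if $\alpha$ meets $D_n$, one truncates it at its first entrance into a shrunken collar from either end, cuts subcontinua $E'\sub\alpha'\cup E$ and $F'\sub\alpha''\cup F$ to size comparable to $cd$ inside $\overline U$, and uses the Loewner property of $U$ (not merely the bound on $M_U$) to splice in a detour $\beta\sub U$ with $\int_\beta\rho<1/2$. The paper's three cases (neither continuum near $D_n$; both not too small relative to $cd/t$; one of them inside the collar) exist precisely to guarantee $\Delta(E',F')\le C(s,k,t)$ in every configuration---the estimate your enlargement trick was meant to substitute for---and this single argument covers all $t$ at once, so no separate large-$t$ regime arises. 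Your additive observation is a nice shortcut for small $t$, but the heart of the proof is the rerouting, which your proposal does not supply.
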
 

In the proof we need a simple fact about the existence of subcontinua. Namely, if $x\in \Sph$, $r>0$,
and $E\sub \oC$ is a continuum with $x\in E$ and $E\setminus B(x,r)\ne \emptyset$, then there exists a 
subcontinuum $E'\sub E$ with $x\in E'$, $E'\sub \overline 
B(x,r)$, and $E'\cap \partial B(x,r)\ne \emptyset$. Note that then  $r\le \diam(E')\le 2r$. So every continuum can be ``cut to size" near each of its points. 

  To see that this  statement is true let $E'$ be the connected component of $E\cap \overline B(x,r)$ containing 
$x$. Then $E'$ is a closed subset of $E$ with $x\in E'\sub\overline B(x,r)$. If we had $E'\cap \partial B(x,r)=\emptyset$, then $E'$ would be relatively open in $E$ and so $E=E'\sub B(x,r)$. This is impossible since $E\setminus B(x,r)\ne \emptyset$. 
So $E'$ is a continuum  with the desired properties. 

\medskip 
\no
{\em Proof of Proposition~\ref{thm:lowmod}.}  The proof is by induction on $n$ with $s$ and $k$ fixed. The induction beginning 
$n=1$ is covered by Proposition 
\ref{prop:qdisksLoewner} (the requirement of $s$-relative separation is vacuous is this case). For the induction step suppose that $n\ge 2$ and that the statement is true for regions with the stated properties and $n-1$ complementary components.

We may assume that 
$D_n$ is the complementary component of $\Omega$ with smallest 
diameter $d:=\diam(D_n)$. Let $E$ and $F$ be arbitrary continua 
in $\overline \Omega$ with relative separation 
$\Delta(E,F)\le t$ where $t>0$. 
We have to show that if $\rho$ is an arbitrary non-negative Borel function  on $\OC$
with sufficiently small mass 
$ \int_\Om\rho^2\,d\Sig< m$, where $m=m(n,s,k,t)>0$, then there exists a  rectifiable path $\gamma$ in  $\Omega$ 
connecting $E$ and $F$ with $\int_\gamma \rho\,ds <1$.

By induction hypothesis we can can find $m_1=m_1(n,s,k,t)>0$ such that if 
$$ \int \rho^2\,d\Sig< m_1,$$ then there exists a rectifiable path $\alpha$ in 
$\widetilde \Omega:=\Omega\cup D_n$ 
that connects $E$ and $F$ and satisfies 
\begin{equation}\label{eq:shortalpha}  
\int_\alpha \rho \,ds<1/2. 
\end{equation}

A suitable constant $m$ will be found in the course of the proof.  We make the preliminary choice $m=m_1$. 
Then there exists a  rectifiable path   $\alpha$ in $\widetilde \Om$ connecting $E$ and $F$ satisfying \eqref{eq:shortalpha}.  If $\alpha$ stays inside $\Omega$ (with the possible exception of its endpoints),  we can take $\gamma=\alpha$.  So we may assume that  $\alpha$ hits $D_n$. Let  $U$  be the  Loewner collar  around $D_n$ found in Lemma~\ref{lem:collar}.  The idea now is to remove $\alpha\cap D_n$
from $\alpha$ and to connect suitable  pieces of $\alpha\setminus D_n$  
by a rectifiable path $\beta$ in $U$  such  that $\int_\beta\rho\,ds<1/2$. A  concatenation of $\beta$ with  pieces of $\alpha$ will  
then give a rectifiable path $\gamma$ in $\Omega$ with $\int_\ga\rho\,ds < 1$ as desired. 

For carrying out the details of this argument, 
we consider several cases.  Let $c=c(s,k)>0$ be the constant from  
Lemma~\ref{lem:collar} with $N_{sd}(D_n)\setminus D_n\sub U$. 

\smallskip
{\em 1.\ Case.} Neither $E$ nor $F$ is contained in $N_{\frac13cd}(D_n)$.

We choose  a closed, possibly degenerate,  subpath $\alpha'$ of $\alpha$ by 
starting at the endpoint $x$ of $\alpha$ in $E$ and  traveling  along 
$\alpha$ until we first hit $\overline {N_{{\frac 16}cd}(D_n)}$ at 
the point $x'\in \overline {N_{{\frac 16}cd}(D_n)}$, say.  Since $\alpha$ meets $D_n$, there exists such a point $x'$. 
Then  $\alpha'\setminus \{x\}\sub\widetilde  \Om\setminus D_n=\Omega$.

 The  set    
 $\alpha'\cup E$ is a continuum that contains the point $x'$, but 
that  is not contained in 
$N_{\frac13cd}(D_n)$ by our  assumption in this case. So if  
  we choose  $r={\frac16cd}$, then $(\alpha'\cup E)\setminus B(x',r)\ne \emptyset$. By  
the statement about  the existence of subcontinua discussed before the proof, we can find a continuum 
$E'\sub \alpha' \cup E$ that is contained in 
$\overline B(x',r)$ 
such that $\diam(E')\ge r=\frac16 cd$.   Then $E'
\sub N_{cd}(D_n)\cap \overline \Omega\sub \overline U$. 

In the same way, we choose  a closed subpath $\alpha''$  of $\alpha$  with endpoints $y\in F$ and $y'\in \overline {N_{{\frac 16}cd}(D_n)}$ such that $\alpha'\setminus \{y\}\sub \Om$.  Again we can  
find a subcontinuum 
$F'$ of $\alpha'' \cup F$ that is contained in $N_{cd}(D_n)\cap \overline \Om \sub \overline U$
such that $\diam(F')\ge\frac16 cd.$  
Then $E',F'\sub \overline U$ and 
$$\dist(E',F')\le (2c+1)d\le (12+6/c) (\diam(E')\wedge \diam(F')).$$
The last inequality implies
that $\Delta(E',F')\le C(s,k)$. Since $U$ is $\phi$-Loewner with 
$\phi=\phi_{s,k}$ there exists  a contant $m_2=m_2(s,k)>0$ with the following property: if we impose the additional condition 
$$ \int  \rho^2\,d\Sig< m_2$$ on $\rho$ (as we may), then there exists  a rectifiable 
path $\beta$ in $U\sub \Om$ with $\int_\beta\rho\,ds<1/2$ that connects $E'$ and $F'$. 
The path $\beta$ will lie in $\Om$  with the possible exception of it endpoints. One endpoint of $\beta$ lies in $E'\sub \alpha'\cup E\sub \Om \cup E$, and one in $F'\sub \alpha''\cup F\sub \Om \cup F$. So by concatenating 
$\beta$ with suitable pieces of $\alpha'$ and $\alpha''$, we obtain a rectifiable path $\gamma$ in
$\Omega$  with $\int_\gamma \rho\,ds<1$ that connects $E$ and $F$. 

\smallskip
 {\em 2.\ Case.}  $t(\diam (E)\wedge \diam(F))\ge \frac 13 cd$. 
 
 We choose subpaths $\alpha'$ and $\alpha''$ of $\alpha$ as in Case 1. Arguing similarly as in this case, we can find continua  $E'\sub \alpha' \cup  E$
 and $F'\sub  \alpha'' \cup  F$ with $E',F'\sub N_{cd}(D_n)\cap \overline \Om\sub \overline U$ such that 
 $\diam(E')\ge \frac 13 (\diam(E)\wedge cd)$
 and $\diam(F')\ge \frac 13 (\diam(F)\wedge cd)$.
 Then again  we have
$$ \dist(E',F')\le (2c+1)d,$$
and also
$$\diam(E')\wedge \diam(F')\ge \tfrac13(\diam(E)\wedge \diam(F)\wedge cd)
\ge \frac{cd}{9(t\vee 1)}.$$
Hence  $$ \Delta(E',F')\le C(s,k,t).$$
In other words, the relative distance of $E'$ and $F'$ is  controlled 
by $s$, $k$, and $t$.   By the Loewner property of $U$ we know that if 
$$ \int \rho^2\,d\Sig< m_3,$$ where $m_3=m_3(s,k,t)>0$, then there exists a  
exists 
a rectifiable path $\beta$ in $U\sub \Om$ with $\int_\beta\rho\,ds<1/2$ that connects $E'$ and $F'$. 
As in Case 1, this leads to a path $\gamma$ as desired. 

\smallskip
{\em 3.\ Case.} $E$ or $F$ lies in $N_{\frac13cd}(D_n)$,
and  we have
$$t(\diam (E)\wedge \diam(F))< \tfrac13cd.$$

We may assume  $E\sub N_{\frac13 cd}(D_n)$. Then 
$E\sub N_{cd}(D_n)\cap \overline \Om\sub \overline U$, and 
 $$\dist(E,F)\le t (\diam (E)\wedge \diam(F))\le \tfrac13cd.$$
Pick points $x\in E$ and $y\in F$ with $\sigma(x,y)=\dist(E,F)$, and let $r=\frac13(\diam(F)\wedge  cd)$. 
There exists a continuum $F'\sub F\cap \overline B(y,r)$ 
with $y\in F$ and $\diam(F')\ge r=\frac13(\diam(F)\wedge  cd)$. Then $F'\sub N_{cd}(D_n)\cap \overline \Om\sub \overline U$ and  
 $\dist (E,F')=\sigma(x,y)=\dist (E,F)$. 

This implies that  
 $$\dist (E,F')=\dist (E,F)\le t(\diam (E)\wedge \diam(F))< \tfrac13cd, $$ 
 and so  
$$\dist (E,F')\le t\diam (E)\wedge t\diam(F)\wedge \tfrac13cd
\le 3(t\vee 1) (\diam (E)\wedge \diam(F')).$$ 
We conclude that  $\Delta(E,F')\le3 (t\vee 1)$.
Since $U$ is Loewner we know that if 
$$ \int \rho^2\,d\Sig< m_4,$$ where $m_4=m_4(s,k,t)>0$, then there exists a  
rectifiable  path $\beta$ in $U\sub \Om$ with $\int_\beta\rho\,ds<1$ that connects  $E$ and $F'\sub F$. 
In this case we can take $\gamma=\beta$.

In conclusion,  if $$ \int \rho^2\,d\Sig< m,$$
where $m=\min\{m_1,m_2,m_3,m_4\}$, then we can 
find a  rectifiable path $\gamma$ in $\Omega$ that connects  $E$ and $F$ and satisfies 
$\int_\gamma\rho\,ds<1$ .
Since $m>0$ only depends on $n,s,k,t$,  the statement follows. \qed

\section{Bounds for transboundary modulus}
\label{s:lowtrans} 

\no
The present chapter is the technical core of the paper. We will prove various bounds for transboundary modulus.  We use   the chordal metric $\sigma$ on $\Sph$ and  the spherical measure $\Sig$.  We will make repeated use of the relation 
$\Sig(B(x,r))=\Sig(\overline B(x,r))\sim r^2$ for $x\in \Sph$ and small enough $r>0$. Actually, we have 
 \begin{equation} \label{measball}
  \Sig(B(x,r))=\Sig(\overline B(x,r))=\pi r^2
 \end{equation} valid for all $x\in \Sph$ and $0<r\le 2=\diam(\oC)$.

A set $M\sub \Sph$ is called $\lambda$-{\em quasi-round}, where $\la \ge 1$,  if there exist $x_0\in \Sph$ and $r\in (0, \diam(\oC)]=(0,2]$ such that $\overline B(x_0, r/\la)\sub M\sub \overline B(x_0,r)$.  Note that in this case $\diam (M)\ge r/\lambda$.  
By Proposition~\ref{prop:qround} every Jordan  region whose boundary is a 
quasicircle is quasi-round, quantitatively.

\begin{proposition} \label{prop:lowtrans} 
Let  $\Om$ be  a $\phi$-Loewner region in $\Sph$, and $
\mathcal{K} =\{ K_i:i\in I\}$  a finite  collection of pairwise disjoint  compact sets  in $\Om$. 
Suppose that  the sets $K_i$ are $\lambda$-quasi-round and are $s$-relatively separated for $i\in I$.  

Then there 
is a non-increasing function $\psi\:(0,\infty)
\ra (0,\infty)$ that can be chosen only depending on $\phi$, $\lambda$, and $s$ with the following property: if $E$ and $F$ are arbitrary disjoint  continua
in $\overline \Om$, then 
$$M_{\Om,\mathcal{K}}(\Gamma(E,F;\Om))\ge \psi(\Delta(E,F)). $$ 
\end{proposition}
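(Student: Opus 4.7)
The strategy is to pass from any admissible transboundary mass distribution $(\rho,(\rho_i))$ for $\Gamma(E,F;\Om)$ to a classical density $\tilde\rho$ on $\Sph$ that is admissible for the \emph{same} family and satisfies
$$\int\tilde\rho^2\,d\Sig\le C(\la,s)\Bigl(\int\rho^2\,d\Sig+\sum_i\rho_i^2\Bigr).$$
Once this conversion is carried out, the $\phi$-Loewner hypothesis on $\Om$ gives $\int\tilde\rho^2\,d\Sig\ge \phi(\Delta(E,F))$, and dividing through yields
$$\int\rho^2\,d\Sig+\sum_i\rho_i^2\ge \phi(\Delta(E,F))/C(\la,s).$$
Taking the infimum over admissible transboundary mass distributions produces the required bound with a non-increasing function $\psi$ depending only on $\phi$, $\la$, and $s$.

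\textbf{Construction of $\tilde\rho$ and the $L^2$-bound.}  Apply Proposition~\ref{prop:qround}-style $\la$-quasi-roundness to choose, for each $i\in I$, a point $x_i\in \Sph$ and a radius $r_i\in (0,2]$ with $\overline B(x_i,r_i/\la)\sub K_i\sub\overline B(x_i,r_i)$. Set $B_i=\overline B(x_i,2r_i)$ and
$$\tilde\rho=\rho\chi_{\Om\setminus K}+A\sum_{i\in I}\frac{\rho_i}{r_i}\chi_{B_i},$$
with a constant $A=A(\la,s)\ge 1$ to be fixed.  The key auxiliary fact I will establish is that the family $\{B_i\}$ has bounded overlap multiplicity $N=N(\la,s)$: the inner balls $\overline B(x_i,r_i/\la)\sub K_i$ are pairwise disjoint, and the $s$-relative separation together with quasi-roundness force $\sigma(x_i,x_j)\ge s\min(r_i,r_j)/\la$ (compare the proof of Lemma~\ref{lem:collar}); a scale-stratification packing argument in the $2$-Ahlfors regular sphere $\Sph$ then caps the number of $B_j$ meeting any given point by a constant depending only on $\la$ and $s$. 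Combined with $\Sig(B_i)=4\pi r_i^2$ and the pointwise bound $(\sum \chi_{B_i})^2\le N\sum\chi_{B_i}$, this yields $\int\tilde\rho^2\,d\Sig\le 2\int\rho^2+8\pi NA^2\sum\rho_i^2$.

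\textbf{Admissibility and the main obstacle.}  Given any locally rectifiable $\ga\in \Gamma(E,F;\Om)$, the path is also locally rectifiable in $\Om\setminus K$, so by hypothesis
$$\int_{\ga\cap(\Om\setminus K)}\rho\,ds+\sum_{\ga\cap K_i\ne\emptyset}\rho_i\ge 1,$$
and the first summand is absorbed since $\tilde\rho\ge\rho$ on $\Om\setminus K$.  For each index $i$ with $\ga\cap K_i\ne\emptyset$ such that $\ga$ is \emph{not} contained in $B_i$, the subpath of $\ga$ joining $K_i\sub\overline B(x_i,r_i)$ to $\Sph\setminus B_i$ has spherical length at least $c r_i$ for a universal $c>0$, and thus contributes $\ge cA\rho_i\ge\rho_i$ to $\int_\ga\tilde\rho\,ds$ once $A\ge 1/c$.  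The principal difficulty is the \emph{trapped case} $\ga\sub B_i$, which forces both endpoints of $\ga$ to lie in $B_i$ and hence $\dist(E,F)\le 4r_i$, but leaves $r_i$ possibly much larger than $\dist(E,F)$, so that the naive indicator density lacks sufficient concentration.  I plan to resolve this by two complementary devices: first, the disjoint inner balls $\overline B(x_i,r_i/\la)$ corresponding to trapped indices have radius $\ge\dist(E,F)/(4\la)$, so a volume packing in $\Sph$ bounds the number of trapped indices by a function of $\la$ and $\Delta(E,F)$ alone; and second, I replace the ball indicator $\chi_{B_i}$ on trapped scales by a logarithmic collar density supported in $\overline B(x_i,2r_i)\setminus \overline B(x_i,r_i/\la)$, which forces $\int_\ga\mu_i\,ds\ge \rho_i$ for every path entering $K_i$ from outside the inner ball while keeping $\int\mu_i^2$ bounded by a constant depending only on $\la,s$. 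Pasting these contributions gives $\int_\ga\tilde\rho\,ds\ge 1$, so that $\tilde\rho$ is classically admissible with overall constant $C(\la,s)$, and the Loewner inequality then closes the argument.  The trapped-path case is the main technical obstacle, and its uniform handling requires the careful interplay between the packing/overlap bookkeeping and the logarithmic refinement of the test density described above.
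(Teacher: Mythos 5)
Your overall conversion idea (turn a transboundary mass distribution into a classical density and invoke the Loewner property) is the same as the paper's, but the way you propose to close the argument has a genuine gap. You try to make $\tilde\rho$ \emph{classically admissible for every path} in $\Gamma(E,F;\Om)$, and you correctly identify the trapped case $\ga\sub B_i$ as the obstacle; however, the logarithmic-collar fix does not resolve it. A path $\ga$ contributes the weight $\rho_i$ to the transboundary admissibility sum as soon as it merely \emph{touches} $K_i$, and $K_i$ reaches out to radius $r_i$ from $x_i$, i.e.\ well into your collar $\overline B(x_i,2r_i)\setminus \overline B(x_i,r_i/\la)$. A path with endpoints in $E$ and $F$ can graze the outer fringe of $K_i$ and return without ever approaching the inner ball or leaving $B_i$, so it traverses only an arbitrarily small fraction of the collar's logarithmic width and picks up only $\eps\rho_i$ from $\mu_i$; in the extreme case where $E$ and $F$ both meet $K_i$, a path can lie entirely in $K_i$ and pick up nothing at all, even though transboundary admissibility then forces $\rho_i\ge 1$. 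No density supported near $K_i$ with $L^2$ mass $\lesssim_{\la,s}\rho_i^2$ can guarantee a line integral $\ge\rho_i$ along every such path, so unconditional classical admissibility of $\tilde\rho$ is unattainable by this construction. (Your packing count of trapped indices also depends on the absolute scale $\dist(E,F)$, not on $\Delta(E,F)$, and in any case a cardinality bound does not by itself produce the pointwise inequality $\int_\ga\tilde\rho\ge1$.) The paper sidesteps this by arguing \emph{conditionally}: assuming the total transboundary mass is below a threshold $m$, it uses the Loewner property of $\tilde\rho$ to produce a \emph{single} path $\ga$ with $\int_\ga\tilde\rho<1/2$, which controls $\sum_{i\in I_1}\rho_i$ for the sets with $4r_i<\diam(\ga)$ (these necessarily cross the annulus), while the remaining ``large'' sets meeting $\ga$ number at most $C_3(s,\la)$ by Lemma~\ref{lem:curvemeets}, and each of their weights is $<1/(2C_3)$ precisely \emph{because} the total mass was assumed small. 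That last step — converting the assumed smallness of the total mass into smallness of the finitely many problematic individual weights — is the mechanism your proposal is missing, and without it the trapped case cannot be handled uniformly in $\la$ and $s$.

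There is a second, more localized error: the claimed bounded overlap multiplicity $N(\la,s)$ of the enlarged balls $B_i=\overline B(x_i,2r_i)$ is false. Take $K_k=\overline B(p_k,10^{-k})$ with $p_k=2\cdot10^{-k}$ on the real axis: these are $\la$-quasi-round with $\la=1$ and $s$-relatively separated with $s=3$, yet every doubled ball $\overline B(p_k,2\cdot 10^{-k})$ contains the origin, so the overlap multiplicity grows with the number of scales. Separation controls the number of comparable-scale balls through a point, but not the number of scales. This is exactly why the paper's $L^2$ comparison (Lemma~\ref{lem:boia}) is proved via duality and the Hardy--Littlewood maximal operator, using only the disjointness of the \emph{inner} balls $\overline B(x_i,r_i/\la)$; you should invoke that lemma rather than a pointwise overlap bound. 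This second gap is repairable, but the first one is structural: as written, the strategy of producing an unconditionally admissible classical density cannot be completed.
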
 

So we get Loewner type bounds for the transboundary modulus in $\Om$ with a Loewner 
function $\psi$ that only depends on the Loewner function $\phi$ for classical modulus  in $\Om$, 
and the parameters $\lambda$ and $s$.

For the proof we need two lemmas.

\begin{lemma} \label{lem:curvemeets}

Under the assumptions of Proposition~\ref{prop:lowtrans}
let $A\sub \oC$ be an arbitrary set, and $t>0$.
Let $N$ be the number of sets $K_i$ such that 
$K_i\cap A\ne \emptyset $ and 
$$\diam (K_i)\ge t \diam(A).$$
Then $N\le C(s,t)$.
\end{lemma}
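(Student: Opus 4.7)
The plan is a direct volume packing estimate on the sphere that actually uses only the $s$-relative separation hypothesis and not the Loewner or quasi-roundness assumptions. First, I would handle the trivial case: if $A=\emptyset$ or $d:=\diam(A)=0$, pairwise disjointness of the $K_i$ immediately forces $N\le 1$. So I assume $d>0$. For each of the $N$ sets $K_i$ satisfying the two stated conditions, choose a representative point $x_i\in K_i\cap A$. All these points then lie in $A\sub\overline B(x_1,d)$.

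Next, the $s$-relative separation combined with the diameter lower bound $\diam(K_i)\ge td$ forces the $x_i$ to be well-spread on $\oC$. Indeed, for $i\ne j$,
$$\sigma(x_i,x_j)\ge \dist(K_i,K_j)\ge s\bigl(\diam(K_i)\wedge\diam(K_j)\bigr)\ge std,$$
so the open balls $B(x_i,std/2)$ are pairwise disjoint and are all contained in $\overline B(x_1,d(1+st/2))$.

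To conclude, I would compare spherical measures using \eqref{measball}. If $std\le 4$, each $B(x_i,std/2)$ has measure $\pi(std/2)^2$ and the ambient ball has measure at most $\pi d^2(1+st/2)^2$; dividing, the factors of $d^2$ cancel and yield $N\le (1+2/(st))^2$. If instead $std>4$, a single ball $B(x_i,std/2)$ already covers all of $\oC$, so disjointness forces $N\le 1$. Either way $N\le C(s,t)$, which is the desired conclusion. The only subtlety to watch for is the cancellation of the $d^2$ factors in the volume ratio, which is what makes the bound independent of $\diam(A)$ (and of $\lambda$ and the Loewner function $\phi$).
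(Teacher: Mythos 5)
Your proof is correct and follows essentially the same route as the paper: pick a point $x_i\in K_i\cap A$ for each qualifying set, use the $s$-relative separation together with the diameter lower bound to conclude $\sigma(x_i,x_j)\ge st\,\diam(A)$, and finish with a volume-packing count via \eqref{measball}. The paper leaves the packing step as an "easily follows" remark; your version simply spells it out (including the edge cases $\diam(A)=0$ and $std>4$), arriving at the same kind of bound $C(s,t)$.
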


This means that an arbitrary set $A\sub \oC$ can only meet
a controlled number of those  sets $K_i$ whose diameters are  not much smaller than  the diameter of $A$.

\medskip
\no
{\em Proof.}  For each set $K_i$ that meets $A$ and
satisfies  $\diam(K_i)\ge t \diam (A)$ pick a point 
$x_i\in A \cap K_i$. In this way we obtain a collection
 $\{x_i:i\in I'\}$, $I'\sub I$,  of distinct points in $A$. 
Now if $x_i$ and $x_j$, $i\ne j$, are points in this collection, then we have 
$$\sig(x_i,x_j)\ge \dist(K_i,K_j)\ge s(\diam (K_i)\wedge 
\diam(K_j))\ge st\diam(A).$$
From  \eqref{measball} it easily follows  that the number $N=\#I'$ of these points 
is bounded above by $C/(st)^2$, where $C$ is a universal 
constant.
\qed

\medskip
If $M\sub \Sph$ is an arbitrary set, we denote by $\chi_M$ its characteristic function.

\begin{lemma} \label{lem:boia} 
For each $\lambda\ge 1$, there exists a constant 
$C(\la)\ge 1$ with the following property:
if $\{\overline B(x_i, r_i) :i\in I\}$ is a collection of closed disks in
 $\Sph$ indexed by a countable index set $I$, and if  $a_i\ge 0$ for $i\in I$,
then 
$$  \int\biggl(\sum_{i\in I}a_i\chi_{\overline B(x_i,r_i)}\biggr)^2\,d\Sig
\le C(\lambda) 
\int\biggl(\sum_{i\in I}a_i\chi_{\overline B(x_i,r_i/\lambda)}\biggr)^2\,d\Sig.$$
\end{lemma}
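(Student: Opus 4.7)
\medskip
\noindent\textbf{Proof plan for Lemma~\ref{lem:boia}.}

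My plan is to establish the equivalent $L^2$-norm inequality $\|F\|_{L^2(\Sig)}\le C(\la)^{1/2} \|G\|_{L^2(\Sig)}$, where $F:=\sum_i a_i\chi_{\overline B(x_i,r_i)}$ and $G:=\sum_i a_i\chi_{\overline B(x_i,r_i/\la)}$, by pairing $F$ against a nonnegative test function $h$ and routing the estimate through the Hardy--Littlewood maximal operator $M$ on the metric measure space $(\Sph,\sig,\Sig)$. Since $\Sph$ is doubling by \eqref{measball}, $M$ is bounded on $L^2(\Sig)$ by an absolute constant; this fact will enter the argument via Cauchy--Schwarz.

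The key elementary step is a single-ball comparison. Fix an index $i$, and note that if $x\in \overline B(x_i,r_i)$ then $\overline B(x_i,r_i)\sub \overline B(x,2r_i)$. By \eqref{measball} one has $\Sig(\overline B(x,2r_i))\le 4\Sig(\overline B(x_i,r_i))$, so
\[
\frac1{\Sig(\overline B(x_i,r_i))}\int_{\overline B(x_i,r_i)} h\, d\Sig \,\le\, 4\cdot \frac1{\Sig(\overline B(x,2r_i))}\int_{\overline B(x,2r_i)} h\, d\Sig \,\le\, 4\, Mh(x).
\]
Using $\Sig(\overline B(x_i,r_i))=\la^2\Sig(\overline B(x_i,r_i/\la))$ and averaging this pointwise estimate in $x$ over the contracted ball $\overline B(x_i,r_i/\la)\sub \overline B(x_i,r_i)$, one obtains
\[
\int_{\overline B(x_i,r_i)} h\, d\Sig \,\le\, 4\la^2 \int_{\overline B(x_i,r_i/\la)} Mh\, d\Sig.
\]

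I would then multiply through by $a_i\ge 0$, sum over $i\in I$, and apply Cauchy--Schwarz together with the $L^2$-boundedness of $M$:
\[
\int F\,h\, d\Sig \,\le\, 4\la^2\int G\cdot Mh\, d\Sig \,\le\, C\la^2\|G\|_{L^2(\Sig)}\|h\|_{L^2(\Sig)}
\]
for an absolute constant $C$. Taking the supremum over nonnegative $h$ with $L^2$-norm at most $1$ yields $\|F\|_{L^2(\Sig)}\le C\la^2\|G\|_{L^2(\Sig)}$, and squaring proves the lemma with $C(\la)=C^2\la^4$.

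The main obstacle to a more direct, elementary argument is that the overlap structure does not transfer pair-by-pair: two balls $\overline B(x_i,r_i)$ and $\overline B(x_j,r_j)$ can overlap substantially while their contractions $\overline B(x_i,r_i/\la)$ and $\overline B(x_j,r_j/\la)$ are disjoint, so the bilinear expansion $\int F^2\, d\Sig=\sum_{i,j}a_ia_j\Sig(\overline B(x_i,r_i)\cap \overline B(x_j,r_j))$ cannot be dominated by $\int G^2\, d\Sig$ term-by-term. Routing through $M$ sidesteps this: the discrete overlap problem becomes a smooth averaging inequality, and it is precisely the doubling property of $\Sig$, encoded as the $L^2$-boundedness of $M$, that supplies the missing uniform control.
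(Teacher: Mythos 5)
Your proof is correct, and it follows exactly the route the paper indicates: the paper does not write out a proof but states that the lemma ``follows from a duality argument and the $L^2$-boundedness of the Hardy--Littlewood maximal operator'' (citing Bojarski), which is precisely your pairing-against-$h$ argument combined with the single-ball comparison and the doubling of $\Sigma$. The only points worth a sentence in a write-up are that one may assume $r_i\le 2$ (otherwise replace $r_i$ by $2$, which leaves $\overline B(x_i,r_i)$ unchanged and shrinks the contracted ball) and that the duality identity $\|F\|_{L^2}=\sup\{\int Fh: h\ge 0,\ \|h\|_{L^2}\le 1\}$ is used in the extended sense valid for arbitrary nonnegative measurable $F$.
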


The  lemma is a special case of  a  well-known more general fact. It follows from a duality argument and the $L^2$-boundedness of the Hardy-Littlewood maximal operator (see \cite[p.~58, Lem.~4.2]{Boj} for a very similar statement whose proof can easily be adapted to the present situation).   

\medskip
\no
{\em Proof of Proposition~\ref{prop:lowtrans}.} 
Let $E$ and $F$ be arbitrary continua 
in $\overline \Omega$ with relative separation 
$\Delta(E,F)\le t$ where $t>0 $. 
It is enough  to show that if an arbitrary transboundary  mass distribution on $\Om$
has sufficiently small total mass 
$$ \int_{\Om\setminus K}\rho^2\,d\Sig+\sum_{i\in I}\rho_i^2< m, $$ where $m=m(\phi, s,\lambda,t)>0$, then there exists a rectifiable path $\gamma$ in  $\Omega$ 
connecting $E$ and $F$ with 
$$ \int_{\ga \cap (\Om\setminus K)} \rho\, ds+
\sum_{\ga\cap K_i\ne \emptyset}\rho_i< 1.  $$
Here $K=\bigcup_{i\in I} K_i$.

Since each set $K_i$ is $\lambda$-quasi-round, we can find a disk $
\overline B(x_i,r_i)$ with $x_i\in \oC$ and $r_i\in(0,2]$ such that
 $$\overline B(x_i,r_i/\lambda) \sub K_i \sub \overline B(x_i, r_i).$$ 
If we have an arbitrary transboundary mass distribution on $\Om$, we define a  density $\tilde \rho$ on $\Sph$ as follows:
$$ \tilde \rho=\rho + \sum_{i\in I}\frac{\rho_i}{r_i}\chi_{\overline B(x_i,2r_i)} .$$ 
Here we consider $\rho$ as a function on $\oC$ by setting it equal to $0$ outside its original domain  of definition $\Om\setminus K$.

Then 
\begin{eqnarray*}\int\tilde \rho^2\,d\Sigma&\le &
2\int\rho^2\,d\Sigma +2\int\biggl(\sum_{i\in I}\frac{\rho_i}{r_i}\chi_{\overline B(x_i,2r_i)}\biggr)^2\,d\Sigma\\
&\le &2\int\rho^2\, d\Sigma+C_1(\lambda)\int\biggl(\sum_{i\in I}\frac{\rho_i}{r_i}\chi_{\overline B(x_i,r_i/\lambda)}\biggr)^2\,d\Sigma\\
&\le &2\int\rho^2\,d\Sigma+C_1(\lambda)\sum_{i\in I}\frac{\rho_i^2}{r_i^2}\int \chi_{\overline B(x_i,r_i/\lambda)}\,d\Sigma\\
&\le & C_2(\lambda)\biggl(\int\rho^2\,d\Sigma+\sum_{i\in I}\rho_i^2\biggr).
\end{eqnarray*}
In this estimate we used Lemma~\ref{lem:boia}, the fact that the disks $\overline B(x_i,r_i/\lambda)$, $i\in I$, are pairwise disjoint, and \eqref{measball}. 
 
Since $\Om$ is $\phi$-Loewner, the previous estimate  implies that there exists a constant $m_1=m_1(\phi, \lambda,t)>0$ with the following property:
if  we impose the restriction 
$$ \int_{\Om\setminus K}\rho^2\,d\Sigma+\sum_{i\in I}\rho_i^2< m_1$$ on the transboundary mass distribution (as we may),  
then there exists a rectifiable path $\gamma$ in $\Om$ with
$\int_\gamma\tilde \rho\,ds<1/2$ that connects  $E$ and $F$.

Using this path $\ga$ we define   two disjoint subsets $I_1$ and $I_2$ of $I$.  Let $I_1$ be the set of all $i\in I$ such that 
 $K_i\cap \ga\ne \emptyset$ and $4r_i<\diam(\ga)$,
 and  $I_2$ be the set of all $i\in I$ such that 
 $K_i\cap \ga\ne \emptyset$ and $4r_i\ge\diam(\ga)$.
 Note that $I_1\cup I_2$ is the set of all $i\in I$ with $K_i\cap \ga \ne \emptyset$. 
 
If $i\in I_1$, then $\ga$ meets $K_i\sub \overline B(x_i,r_i)$, but is not contained in $B(x_i,2r_i)$.
Hence $$\int_\ga \chi_{\overline B(x_i,2r_i)}\,ds \ge r_i.$$
This implies 
\begin{eqnarray*}
\int_{\ga \cap (\Om\setminus  K)}\rho\,ds+\sum_{i\in I_1}\rho_i&\le& 
\int_{\ga\cap (\Om\setminus  K)} \rho\,ds+ \sum_{i\in I_1}\frac{\rho_i}{r_i}
\int_\ga\chi_{\overline B(x_i,2r_i)}\,ds\\ &\le& \int_\ga \tilde \rho\,ds<1/2.
\end{eqnarray*}
 
If $i\in I_2$, then $\diam(K_i)\ge r_i/\lambda\ge 
\diam(\ga)/(4\lambda).$ Using Lemma~\ref{lem:curvemeets} for $A=\gamma$, we conclude that  
$N:=\#I_2\le C_3=C_3(s,\lambda)$.

If we impose the additional restriction 
$$ \int_{\Om\setminus K}\rho^2\,d\Sigma+\sum_{i\in I}\rho_i^2< m_2$$
on our transboundary mass distribution, where 
$m_2=m_2(s,\lambda)=\frac1{4C_3^{2}}$, then 
$\rho_i<\frac1{2C_3}$ for all $i\in I$ and so 
$$\sum_{i\in I_2}\rho_i<\frac{N}{2C_3}\le1/2.$$
It follows that 
if 
$$ \int_{\Om\setminus K}\rho^2\,d\Sigma+\sum_{i\in I}\rho_i^2<m=m(\phi,s,\lambda,t):=\min\{m_1, m_2\},$$
then there exists a rectifiable path $\gamma$ in $\Om$ connecting 
$E$ and $F$ with
\begin{eqnarray*}
\int_{\ga \cap (\Om\setminus K)} \rho\,ds+
\sum_{\ga\cap K_i\ne \emptyset}\rho_i&=&
\int_{\ga \cap( \Om\setminus K)} \rho\, ds+
\sum_{i\in I_1} \rho_i+\sum_{i\in I_2}\rho_i\\&<&1/2+1/2=1.
\end{eqnarray*}
Since $m>0$ only depends on $\phi$, $\lambda$, $s$,
and $t$, the proof is complete.
\qed

\medskip 
Before we formulate the next proposition we will discuss some facts  that will be  useful for  estimating  path integrals. Let $\Omega \sub \oC$ be region,   
 $\pi\: \Om \ra \R$ a continuous map, and   $\alpha\: I\ra \Om$  a locally rectifiable path in $\Om$. If  $K\sub \oC$ 
is   compact and $U\sub \oC$ is open,  then $\pi(\alpha\cap U \cap K)$ 
  is a Borel subset of $\R$. This follows from the fact that 
  both the image set of  $\alpha$   and the open set $U$ are countable unions of compact sets. Hence $\pi(\alpha\cap U \cap K)$ is a countable union of compact sets and so indeed  a Borel set. In particular,  if we denote by $m_1$ Lebesgue measure on $\R$, then
   $m_1(\pi(\alpha\cap U \cap K))$  is defined. 
  
If  $\pi$ is a $1$-Lipschitz map, i.e., if 
  $|\pi(u)-\pi(v)|\le \sigma (u,v)$ for all $u,v\in \Om$,  then we have 
  $m_1(\pi(\alpha))\le \length(\alpha)$, and more generally
  \begin{equation} \label{proiectionest}
  m_1(\pi(\alpha\cap U))\le \int_\alpha \chi_U\, ds, 
  \end{equation}
 whenever $U\sub \oC$ is open. 
We will use these statements  in the proof of the next proposition. 

\begin{proposition} \label{thm:weakuptrans} 
Let  $\Om$ be  a  region in $\Sph$, and $\mathcal{K}=\{ K_i:i\in I\}$ a finite  collection of pairwise disjoint  compact sets  in $\Om$. 
Suppose that  the sets $K_i$ are $\lambda$-quasi-round and  $s$-relatively separated for $i\in I$.  

Then there 
is a non-increasing function $\phi\:(0,\infty)
\ra (0,\infty)$ that can be chosen only depending on $\lambda$ and $s$ with the following property: if $E$ and $F$ are arbitrary disjoint  continua
in $\overline \Om$, then 
$$M_{\Om,\mathcal{K}}(\Gamma(E,F;\Om))\le \phi(\Delta(E,F)). $$ 
\end{proposition}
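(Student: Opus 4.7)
The plan is to exhibit an admissible transboundary mass distribution whose total mass is bounded by a function $\phi(t)$ of $t:=\Delta(E,F)$ depending only on $s$ and $\la$. After interchanging $E$ and $F$ if necessary, I may assume $D:=\diam(F)\le\diam(E)$, so that $D=\diam(E)\wedge\diam(F)$, and I set $d_0=\dist(E,F)$, so that $t=d_0/D$. The key tool is the $1$-Lipschitz map $\pi\co\oC\ra\R$ given by $\pi(z)=\sig(z,F)$, together with its sublevel set $V:=N_{d_0}(F)=\{z\in\oC:\pi(z)<d_0\}$. I define
$$\rho(z)=\frac{1}{d_0}\chi_V(z)\quad\text{for } z\in\Om\setminus K,$$
and for $i\in I_0:=\{i\in I:K_i\cap V\ne\emptyset\}$, set $\rho_i=\min\{\diam(K_i),d_0\}/d_0\in[0,1]$; for $i\notin I_0$, set $\rho_i=0$.

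For admissibility, any path $\ga\in\Ga(E,F;\Om)$ satisfies $\pi(\ga)\supseteq[0,d_0]$ by the intermediate value theorem (since $\ga$ starts in $E$ where $\pi\ge d_0$ and ends in $F$ where $\pi=0$), whence $m_1(\pi(\ga\cap V))=d_0$. Decomposing $\ga\cap V$ into its parts in $\Om\setminus K$ and in the individual $K_i$, the continuous piece contributes at most $d_0\int_{\ga\cap(\Om\setminus K)}\rho\,ds$ by \eqref{proiectionest} applied to each maximal subinterval of $\ga^{-1}(\Om\setminus K)$, while each $K_i$ that $\ga$ meets contributes at most $m_1(\pi(K_i\cap V))\le\min\{\diam(K_i),d_0\}=d_0\rho_i$ (using that $\pi$ is $1$-Lipschitz and $\pi<d_0$ on $V$). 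Summing and dividing by $d_0$ yields the admissibility condition \eqref{eq:defadmis}.

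For the total mass, the continuous part satisfies $\int\rho^2\,d\Sig=\Sig(V)/d_0^2\le\pi(1+1/t)^2$, using the inclusion $V\sub B(f,d_0+D)$ for any $f\in F$ and \eqref{measball}. For the discrete part I split $I_0$ into \emph{large} indices with $\diam(K_i)>d_0$ and \emph{small} indices with $\diam(K_i)\le d_0$. Each large $i$ contributes $\rho_i^2=1$, and since $\diam(V)\le 2d_0+D=d_0(2+1/t)$, the condition $\diam(K_i)>d_0\ge\diam(V)/(2+1/t)$ together with Lemma~\ref{lem:curvemeets} bounds the number of large indices by a constant depending only on $s$ and $t$. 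For small $i$, the inclusion $K_i\sub N_{2d_0}(F)$ (since the distance from $K_i$ to $F$ is at most $d_0+\diam(K_i)\le 2d_0$), the pairwise disjointness of the $K_i$, and the $\la$-quasi-round inequality $\Sig(K_i)\ge\pi\diam(K_i)^2/(4\la^2)$ yield
$$\sum_{i\text{ small}}\diam(K_i)^2\le\frac{4\la^2}{\pi}\Sig(N_{2d_0}(F))\le 4\la^2(2d_0+D)^2,$$
hence $\sum_{i\text{ small}}\rho_i^2\le 4\la^2(2+1/t)^2$.

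The main obstacle is the correct choice of the discrete weights: the naive choice $\rho_i=\diam(K_i)/d_0$ produces an unbounded contribution from the (finitely many) large $K_i$ as $d_0\to 0$. Capping $\rho_i$ at $1$ is legitimate precisely because $\pi<d_0$ on $V$ forces $m_1(\pi(K_i\cap V))\le d_0$; with the capped weights the separation bound from Lemma~\ref{lem:curvemeets} controls the large indices while the area packing bound controls the small ones. Combining these three estimates produces a single non-increasing $\phi=\phi_{s,\la}$ with $\Md_{\Om,\mathcal{K}}(\Ga(E,F;\Om))\le\phi(\Delta(E,F))$, as required.
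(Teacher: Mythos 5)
Your proposal is correct and follows essentially the same route as the paper's proof: a density $1/\dist(E,F)$ supported on a neighborhood of the smaller continuum, discrete weights $\min\{\diam(K_i),\dist(E,F)\}/\dist(E,F)$, admissibility via projection onto the distance function, and the mass bound obtained by splitting the indices into large ones (counted via Lemma~\ref{lem:curvemeets}) and small ones (controlled by disjointness and $\lambda$-quasi-roundness). The only cosmetic difference is that you center the neighborhood on $F$ and use the exact distance $d_0=\dist(E,F)$ where the paper uses $N_{td}(E)$ with $t$ a lower bound for $\Delta(E,F)$; the argument is otherwise identical.
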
 

Here we cannot guarantee that $\phi(t)\to 0$ as $t\to \infty$. The point of the lemma is to have an upper bound for $M_{\Om,\mathcal{K}}(\Gamma(E,F;\Om))$ if $\Delta(E,F)$ is small. 

\medskip
\no {\em Proof.} 
Let $E$ and $F$ be arbitrary disjoint continua in $\overline \Om$, 
 $\Gamma=\Gamma(E,F;\Om)$, and 
 $\Delta(E,F)\ge t>0$. It suffices to produce 
 an admissible transboundary mass distribution for $\Ga$ whose mass can be bounded above by a constant only depending on 
 $s$, $\la$, and $t$. 
 
 For this we may assume 
 $d:=\diam(E)\le \diam(F)$. Then $\dist(E,F)\ge t d$.
 Let $K=\bigcup_{i\in I}K_i$ and  define
 $$\rho(u)= \frac 1{td}\quad \text{if}\quad u\in  N_{td}(E)\cap(\Om\setminus K) $$
 and $\rho(u)=0$ elsewhere. 
 Moreover, for $i\in I$ set 
 $$\rho_i= 1\wedge \biggl(\frac {\diam(K_i)} {td}\biggr)\quad \text{if} \quad K_i\cap N_{td}(E)\ne \emptyset$$
 and $\rho_i=0$ otherwise. 
 
 We claim that this transboundary mass distribution is admissible for 
 $\Ga$. To see this  let $\ga\in \Ga$ be an arbitrary  path that is locally rectifiable in $\Omega\setminus K$, and consider the  map $\pi\:  \oC\ra 
 [0,\infty)$ defined by $u \mapsto \dist(u,E)$. Since $\ga$ has an endpoint in $E$, but leaves the set $N_{td}(E)$, we have  \begin{eqnarray}\label{inclchain}
  [0, td)&=&\pi(\ga \cap N_{td}(E)) \nonumber \\ &\sub&
\{0\}\cup  \pi(\ga \cap N_{td}(E)\cap(\Om\setminus K))\cup
 \bigcup_{i\in I} \pi(\ga \cap N_{td}(E)  \cap K_i) \\
 &\sub & \{0\} \cup \pi(\ga \cap N_{td}(E)\cap(\Om\setminus K))\cup
 \bigcup_{\ga\cap N_{td}(E)\cap K_i\ne \emptyset} \pi( N_{td}(E)  \cap K_i).  \nonumber 
 \end{eqnarray}
All  subsets of $\R$ appearing in these inclusions are Borel sets as follows from the discussion before the statement of the proposition.
 
Inequality \eqref{proiectionest} applied to the map $\pi$, the set $U=  
 N_{td}(E)$, and the pieces of the path $\ga$ in $\Omega\setminus K$ 
 implies  that
 $$ m_1(\pi(\ga \cap N_{td}(E)\cap(\Om\setminus K) )) \le \int_{\ga \cap (\Om\setminus K)} \chi_{N_{td}(E)}\, ds = td \int_{\ga\cap (\Om\setminus K)} \rho \,ds. $$
Combining this with \eqref{inclchain} we obtain 
  \begin{eqnarray*}
  1&\le&  \frac1{td}m_1(\pi(\ga \cap N_{td}(E))) \\
  &\le &\frac 1{td} m_1(\pi(\ga \cap N_{td}(E)\cap(\Om\setminus K)))+\frac 1{td}
   \sum_{\ga \cap N_{td}(E)\cap  K_i\ne \emptyset}
  m_1( \pi( N_{td}(E)  \cap K_i))\\
  &\le&
 \int_{\ga\cap (\Om\setminus K)} \rho \,ds+ \frac1{td}\sum_{\ga \cap N_{td}(E)\cap  K_i\ne \emptyset} ((td)\wedge \diam (K_i))\\
 &\le & \int_{\ga\cap (\Om\setminus K)} \rho \,ds+\sum_{\ga\cap K_i\ne 0} \rho_i.
   \end{eqnarray*} 
    The admissibility of our transboundary mass distribution follows.   
 
 To estimate the total mass for our transboundary mass distribution, we define two subsets $I_1$ and $I_2$ of $I$
similarly as in the proof of 
 Proposition~\ref{prop:lowtrans}.  Namely, let $I_1$ be the set of all $i\in I$ such 
 that $N_{td}(E)\cap K_i\ne \emptyset$ and $\diam(K_i)<td $, and let $I_2$ be the set of all $i\in I$ such 
 that $N_{td}(E)\cap K_i\ne \emptyset$ and $\diam(K_i)\ge td $.
 
 Since each set $K_i$ is $\lambda$-quasi-round, we can find  disks $B(x_i,r_i)$ with $x_i\in \oC$ and $r_i\in (0,2]$ such that
 $$\overline B(x_i,r_i/\lambda) \sub K_i \sub \overline B(x_i, r_i).$$ 

If $i\in I_1$, then $\overline B(x_i,r_i/\la)\sub K_i\sub N_{2td}(E)$ and so 
$$\bigcup_{i\in I_1}  B(x_i,r_i/\lambda)\sub N_{2td}(E). $$
 Note that the balls in this union are pairwise disjoint and that the  set $N_{2td}(E)$ is contained in a ball of radius $(2t+1)d$ centered at any point in $E$. So using  \eqref{measball} we obtain
 \begin{eqnarray*}\sum_{i\in I_1}\rho_i^2
&\le &
\frac 1{t^2d^2}
\sum_{i\in I_1}\diam(K_i)^2 
\le \frac {4}{t^2d^2} \sum_{i\in I_1}r_i^2\\
&\le & \frac {4\lambda^2}{\pi t^2d^2} \sum_{i\in I_1}
 \Sigma(B(x_i, r_i/\lambda))\\
&=& \frac {4\lambda^2}{\pi t^2d^2} \Sigma\biggl
(\bigcup_{i\in I_1} B(x_i, r_i/\la)\biggl)\\
&\le& \frac {4\lambda^2}{\pi t^2d^2} 
\Sigma(N_{2td}(E))\\ &\le & \frac {4\lambda^2(2t+1)^2}{ t^2} =C_1(\la,t). 
\end{eqnarray*}

If $i\in I_2$, then $K_i\cap N_{td}(E) \ne \emptyset$ and 
$$\diam(K_i)\ge td\ge \frac{t}{2t+1}\diam(N_{td}(E)). 
$$

 Using Lemma~\ref{lem:curvemeets} for 
 $A=N_{td}(E)$, we conclude that  
$\#I_2\le C_2=C_2(s,t)$. Hence
 
 \begin{eqnarray*}\int \rho^2\,d\Sigma+\sum_{i\in I}\rho_i^2
& \le &
\frac1 {t^2d^2} \Sigma(N_{td}(E))+\sum_{i\in I_1}\rho_i^2+
\sum_{i\in I_2}\rho_i^2\\
&\le &\pi \frac{(t+1)^2}{t^2}+C_1(\la,t)+\sum_{i\in I_2}1\\
&\le &\pi \frac{(t+1)^2}{t^2}+ C_1(\la,t)+ C_2(s,t)=C(\la,s,t).
\end{eqnarray*}
The claim follows. 
 \qed

\medskip Let $(X,d)$ be a locally compact metric space, and
 $\nu$ be a Borel measure on $X$. 
A measurable set $M\sub X $ is called $\mu$-{\em fat} (for given $(X,d,\nu)$), where 
$\mu>0$, if for all $x\in M$ and all $0<r\le \diam(M)$ 
we have
$$\nu(M\cap B(x,r))\ge \mu\nu(B(x,r)).$$
In other words, a set $M$ is fat if the intersection of $M$  with every sufficiently small ball centered at a point in $M$ has  measure comparable to the measure of the whole ball. The notion of a fat set in the context of conformal mapping
theory  was introduced in \cite[Sect.~2]{oS95}.

A {\em (metric) annulus} in a metric space $(X,d)$ is a set $A\sub X$ of the form
$$A=A(x; r,R):=\{y\in X: r<d(y,x)<R\}, $$
where $x\in X$, $0<r<R< \diam(X)/2$. Note that by the restriction 
on $R$ both sets $\overline B(x,r)$ and $X\setminus 
B(x,R)$ are non-empty. We call  them 
the  {\em complementary parts}  of $A(x; r,R)$.
The {\em width} $w_A$ of the annulus $A=A(x; r,R)$
is defined as $w_A=\log(R/r)$.

If  $K\sub X$ is  a compact set with $K\cap A\ne \emptyset$, 
then we define two numbers that describe how the set lies relative to the annulus $A=A(x; r,R)$, namely 
$$ r_A(K):= \inf_{y\in K\cap A} d(y,x) 
\quad \text{and}\quad R_A(K):=
  \sup_{y\in K\cap A} d(y,x). $$
  Then $r\le r_A(K)\le R_A(K)\le R$.  
  We define the {\em width} $w_A(K)$ 
  {\em of $K$ relative to $A$} as 
  $$ w_A(K)=\log(R_A(K)/r_A(K)).$$ 
If $K\cap A= \emptyset$ it is useful to set 
$w_A(K)=0$. 
 
 In the following we consider annuli and fat sets in the metric space
 $(\oC, \sigma)$ equipped with the measure $\Sigma$. 
 Note that in this space a  closed disk $M=\overline B(a,R)$ with  $a\in \oC$ and   $0<R\le 2$,  is $\mu$-fat with $\mu=1/4$. Indeed, let  $x\in M$ and $r\le\diam(M)\le 2R$.
If $\sigma(a,x)\ge r/2$ we can pick a point $y\in M$ on the minimizing spherical geodesic segment connecting $x$ and $a$ with $\sigma(x,y)=r/2$. If 
 $\sigma(a,x)<r/2$ pick $y=a$. In both cases $B(y,r/2)\sub M\cap B(x,r)$ and so 
 $$\Sigma(M\cap B(x,r))\ge \Sigma(B(y,r/2))= \pi r^2/4=
 \Sigma(B(x,r))/4 . $$

\begin{lemma} \label{lem:ringfat}
Let  $K_1,\dots, K_n$
be  pairwise disjoint $\mu$-fat sets in $(\oC, \sigma, \Sigma)$, and  
suppose that there exists a metric   annulus $A\sub \oC$ with $w_A\ge 1$ such that each set $K_i$ meets both complementary parts  of $A$.
Then $n\le N(\mu)\in \N$.
\end{lemma}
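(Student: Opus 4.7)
The plan is to exploit $\mu$-fatness to show that each $K_i$ occupies a definite share of the measure of a single fixed ball, so the number of $K_i$ is bounded by a volume count.

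First I would choose, for each $i \in \{1,\dots,n\}$, a point $z_i \in K_i \cap \overline{B}(x,r)$ and a point $z_i' \in K_i$ with $\sigma(z_i',x) \geq R$; these exist because $K_i$ meets both complementary parts of $A = A(x;r,R)$. The triangle inequality gives
\begin{equation*}
\diam(K_i) \;\ge\; \sigma(z_i',z_i) \;\ge\; R-r.
\end{equation*}
Since $w_A = \log(R/r) \geq 1$, we have $R \geq e\,r$, and therefore $\diam(K_i) \geq (e-1)r > r$. In particular the radius $r$ lies in the admissible range for the $\mu$-fatness condition applied to $K_i$ at $z_i$.

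Next I would apply the definition of $\mu$-fatness to $M = K_i$, $x = z_i$, and radius $r$, using \eqref{measball} (note that $r < R < \diam(\oC)/2 = 1$, so $2r \leq 2$ and the formula $\Sigma(B(\cdot,\rho)) = \pi \rho^2$ applies for $\rho \in \{r, 2r\}$):
\begin{equation*}
\Sigma\bigl(K_i \cap B(z_i,r)\bigr) \;\ge\; \mu\, \Sigma(B(z_i,r)) \;=\; \mu \pi r^2.
\end{equation*}
Since $z_i \in \overline B(x,r)$, we have $B(z_i,r) \subseteq B(x,2r)$, and hence $K_i \cap B(x,2r)$ has $\Sigma$-measure at least $\mu \pi r^2$.

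Finally, the sets $K_i \cap B(x,2r)$ for $i = 1,\dots,n$ are pairwise disjoint (since the $K_i$ are), and all lie inside $B(x,2r)$. Summing and comparing with $\Sigma(B(x,2r)) = 4\pi r^2$, I obtain
\begin{equation*}
n \,\mu \pi r^2 \;\le\; \sum_{i=1}^{n} \Sigma\bigl(K_i \cap B(x,2r)\bigr) \;\le\; \Sigma(B(x,2r)) \;=\; 4\pi r^2,
\end{equation*}
so $n \leq 4/\mu$, and one may take $N(\mu) := \lfloor 4/\mu \rfloor$. There is no real obstacle here; the only point requiring care is verifying the range condition $r \leq \diam(K_i)$ needed to invoke $\mu$-fatness, which is exactly what the hypothesis $w_A \geq 1$ supplies.
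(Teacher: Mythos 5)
Your proof is correct and follows essentially the same argument as the paper: pick a point of $K_i$ in the inner complementary part, use $w_A\ge 1$ to guarantee $\diam(K_i)\ge r$ so that fatness applies at scale $r$, and then count disjoint subsets of $B(x,2r)$ by measure. (Your bound $n\le 4/\mu$ is in fact the correct evaluation of this count; the paper states $4/\mu^2$, which is a harmless overestimate.)
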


So if pairwise disjoint $\mu$-fat sets 
meet both complementary parts  of a  sufficiently thick annulus in $\oC$, then the number of these sets is bounded by a constant 
only depending on $\mu$. 

\medskip\no
{\em Proof.}  Suppose that $A=A(x;r,R)$. Since $w_A\ge 1$, we have $R\ge er\ge 2r$. By our assumption each set 
$K_i$ meets $B=B(x,r)$ and the complement of $B'=B(x,2r)
\sub B(x,R)$. Hence  $\diam(K_i)\ge r$.  Picking a point 
$a_i\in K_i\cap B$, we see that 
$$ \Sigma(B'\cap  K_i)\ge\Sigma(B(a_i,r)\cap  K_i)  
\ge \mu \Sigma(B(a_i,r))=\pi \mu r^2.$$
Since the sets $K_i\cap B'$, $i=1, \dots, n$,  are pairwise disjoint 
and contained in $B'=B(x,2r)$, we conclude that the number 
of these sets is bounded above by 
$$\Sigma(B(x,2r))/(\pi \mu r^2)= 4/ \mu^2.$$ So for $N(\mu)$ we can take the smallest 
integer $\ge 4/\mu^2$.
\qed

\medskip 
For the proof of Theorem~\ref{thm:simulunif} we are interested in the case where 
the sets $K_1,\dots, K_n$ are pairwise disjoint closed disks in $\oC$. Then $\mu =1/4$ and the previous proof gives the bound $n\le 64$. It is not hard to see that if  $A$ is sufficiently thick, say $w_A\ge  100$, then actually  $n\le 2$.

\begin{lemma} \label{lem:subann}
Suppose that the collection $\{K_i:i\in I\}$
consists of pairwise 
 disjoint compact and $\mu$-fat sets in $(\oC, \sigma, \Sigma)$. Let $N=N(\mu)\in \N$ be a number as in 
 Lemma~\ref{lem:ringfat}.
 
 If $A=A(x;r,R)$ is an arbitrary annulus in $\oC$ with 
 $w_A\ge 1$, then there exists 
 a subannulus $A'=A(x;r',R')\sub A$ and a set $I_0\sub I$ with the following properties:
 
 \begin{itemize}
 
 \smallskip
 \item[(i)] $\#I_0\le N$,
 
 \smallskip
 \item[(ii)] $w_{A'}\ge w_A^{1/3^N}$,
 
 \smallskip
 \item[(iii)] $w_{A'}(K_i)\le w_{A'}^{1/3}$ for all $i\in I\setminus I_0.$
 \end{itemize}
\end{lemma}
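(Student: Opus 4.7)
The plan is to build, by induction on $j$, a finite nested sequence of concentric subannuli $A = A_0 \supseteq A_1 \supseteq \cdots \supseteq A_{j^*}$ and a nested sequence of index sets $\emptyset = I_0^{(0)} \subset I_0^{(1)} \subset \cdots \subset I_0^{(j^*)} \subseteq I$ with $|I_0^{(j)}| = j$. Two invariants will be maintained throughout: first, $w_{A_j} \ge w_A^{1/3^j}$ (which remains $\ge 1$ throughout since $w_A \ge 1$); second, every $K_i$ with $i \in I_0^{(j)}$ meets both complementary parts of $A_j$. The iteration is halted as soon as condition (iii) of the lemma is satisfied for $A_j$; the final output will then be $A' := A_{j^*}$ and $I_0 := I_0^{(j^*)}$, and the bound $j^* \le N$ will be forced by Lemma~\ref{lem:ringfat}.

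For the inductive step, suppose the invariants hold at stage $j$ but (iii) fails, so there exists $i^* \in I \setminus I_0^{(j)}$ with $w_{A_j}(K_{i^*}) > w_{A_j}^{1/3}$. Setting $\alpha := \log r_{A_j}(K_{i^*})$ and $\beta := \log R_{A_j}(K_{i^*})$, one has $\beta - \alpha > w_{A_j}^{1/3}$. I then define
\[
A_{j+1} := A\bigl(x;\, e^\alpha,\, e^{\alpha + w_{A_j}^{1/3}}\bigr),
\]
which is a subannulus of $A_j$ with $w_{A_{j+1}} = w_{A_j}^{1/3} \ge w_A^{1/3^{j+1}}$. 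Compactness of $K_{i^*}$ promotes the infimum and supremum defining $r_{A_j}$ and $R_{A_j}$ to actual attained values, producing points $y, z \in K_{i^*}$ with $\sigma(y,x) = e^\alpha$ and $\sigma(z,x) = e^\beta$; these lie in the inner and outer complementary parts of $A_{j+1}$, respectively. For each previously added index $i \in I_0^{(j)}$, the inner (resp.\ outer) complementary part of $A_{j+1}$ contains that of $A_j$, because $A_{j+1} \subseteq A_j$ forces inner radii to increase and outer radii to decrease; hence the crossing property for $K_i$ is preserved. Setting $I_0^{(j+1)} := I_0^{(j)} \cup \{i^*\}$ closes the induction.

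The termination argument is what makes the whole scheme work, and is the main content of the proof. If the iteration were to continue past stage $N$, we would have constructed $N+1$ pairwise disjoint $\mu$-fat sets $K_i$ (for $i \in I_0^{(N+1)}$), each meeting both complementary parts of the annulus $A_{N+1}$ whose width is at least $w_A^{1/3^{N+1}} \ge 1$. This directly contradicts Lemma~\ref{lem:ringfat}, so (iii) must hold at some stage $j^* \le N$. The three conclusions now follow immediately: (i) from $|I_0^{(j^*)}| = j^* \le N$; (ii) from the width invariant together with $w_A \ge 1$ and $j^* \le N$, which give $w_{A'} \ge w_A^{1/3^{j^*}} \ge w_A^{1/3^N}$; and (iii) from the very condition that triggered termination. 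The only mild subtlety is preserving the nested crossing property at each step, which is precisely why the subannulus is shrunk concentrically inside the projection interval $[\alpha, \beta]$ of the newly added $K_{i^*}$ rather than an arbitrary subinterval of $[\log r_j, \log R_j]$.
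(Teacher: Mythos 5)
Your proof is correct and follows essentially the same iterative scheme as the paper: repeatedly pass to the subannulus determined by an offending $K_{i^*}$, record its index, and invoke Lemma~\ref{lem:ringfat} to bound the number of iterations by $N$. The only (harmless) difference is that you truncate each new annulus to width exactly $w_{A_j}^{1/3}$, whereas the paper takes the full annulus $A(x;r_{A_j}(K_{i^*}),R_{A_j}(K_{i^*}))$; both preserve the crossing invariant and the width lower bound.
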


This lemma will be applied when the width of $A$ is every large. It then says that by removing a controlled number of compact sets  in   the given collection, we can find a subannulus $A'$ of $A$ whose width is 
not much smaller than the width of $A$ and is much  larger then the width relative to $A'$ of the remaining sets in the collection. 

\medskip 
\no
{\em Proof.} If $w_A(K_i)\le w_A^{1/3}$ for all 
$i\in I$, we can choose $I_0=\emptyset$ and $A'=A$.

Otherwise, there exists $i_1\in I$ such that 
$w_A(K_{i_1})\ge w_A^{1/3}$.
Let $$A_1=A(x; r_A(K_{i_1}), R_A(K_{i_1})).$$
Then $K_{i_1}$ meets both complementary 
parts  of  $A_1$. This follows from the definitions of $r_A(K_{i_1})$ and  $R_A(K_{i_1})$, and the facts that $K_{i_1}$ is compact while $A_1$ is open. 
We also have  $w_{A_1}= w_A(K_{i_1})\ge w_A^{1/3}.$

If $w_{A_1}(K_i)\le w_{A_1}^{1/3}$ for all $i\in I\setminus 
\{i_1\}$, we choose $I_0=\{i_1\}$ and $A'=A_1$. Otherwise,
there exists $i_2\in I$, $i_2\ne i_1$ such that 
$w_{A_1}(K_{i_2})\ge w_{A_1}^{1/3}$. 
Define $$A_2=A(x; r_{A_1}(K_{i_2}), R_{A_1}(K_{i_2})).$$
Then $A_2$ is a subannulus of $A_1$ with 
$w_{A_2}\ge w_A^{1/3^2}$ and the sets $K_{i_1}$ and
$K_{i_2}$ meet both complementary parts  of 
$A_2$. 

Continuing in this manner we obtain a sequence of annuli 
$A_1,\dots, A_k$, and indices $i_1, \dots, i_k$. The process must stop after $k\le N$ 
steps, because otherwise we would obtain 
$N+1$ distinct $\mu$-fat sets 
 $K_{i_1}, , \dots, K_{i_{N+1}}$ 
that meet both complementary parts of the annulus  
$ A_{N+1}$. This is impossible by Lemma~\ref{lem:ringfat}, since  $w_{A_{N+1}}\ge 
w_A^{1/3^{N+1}} \ge 1$. 

The annulus  $A'=A_k$ and the set $I_0=\{i_1,\dots, i_k\}$ have the desired properties.
\qed
\medskip

\begin{proposition} \label{thm:uptrans} 
Let   $\mathcal{K}=\{ K_i:i\in I\}$ be a finite  collection of pairwise disjoint  
continua     in $\oC$. 
Suppose that the sets $K_i$ are 
 $\mu$-fat sets  in $(\Sph,\sigma, \Sigma)$ for $i\in I$, and let $N=N(\mu)\in \N$ be a number as in 
 Lemma~\ref{lem:ringfat}.
 
  Then there exists  a function
 $\psi\:(0,\infty)
\ra (0,\infty)$  with $$\lim_{t\to \infty}\psi(t)=0$$ that can be chosen only depending  on $\mu$ and satisfies the following property: 
 if $E$ and $F$ are arbitrary disjoint  continua
in $\Sph\setminus   \bigcup_{i\in I}\inte(K_i)$ with $\Delta(E,F)\ge 12$, then 
there exists a set $I_0\sub I$ with $\#I_0\le N$ 
such that for the transboundary modulus 
in the open set  $\Omega'=\Sph\setminus \bigcup_{i\in I_0}
K_i$ with respect to the collection $\mathcal{K}'=\{K_i:i\in I\setminus I_0\}$ we have
$$M_{\Om', \mathcal{K}'}(\Gamma(E,F;\Om'))\le \psi(\Delta(E,F)). $$ 
\end{proposition}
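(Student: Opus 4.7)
The plan is to construct an explicit admissible transboundary mass distribution concentrated on a carefully chosen annulus around a fixed point of $E$, and to estimate its total mass. I assume without loss of generality that $d := \diam(E) \le \diam(F)$, fix $x_0 \in E$, and set $R_0 = \dist(x_0, F)$, so that $R_0 \ge t d$ with $t := \Delta(E,F) \ge 12$. Since $\dist(E,F) \le \diam(\oC) = 2$, the lower bound $t \ge 12$ forces $d < 1/6$, and the choice $R^* := \min(R_0, 3/4)$ satisfies $R^* > d$, producing a valid annulus $A := A(x_0;\, d, R^*)$ of width $w_A \ge \log t - O(1)$ that separates $E$ from $F$. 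Applying Lemma~\ref{lem:subann} to $A$ and $\mathcal{K}$ furnishes the set $I_0 \subset I$ of the proposition, with $\#I_0 \le N(\mu)$, together with a subannulus $A' = A(x_0;\, r', R') \subset A$ such that $w_{A'} \ge w_A^{1/3^N}$ and $w_{A'}(K_i) \le w_{A'}^{1/3}$ for every $i \in I \setminus I_0$. Set $\Om' := \oC \setminus \bigcup_{i \in I_0} K_i$, $\mathcal{K}' := \{K_i : i \in I \setminus I_0\}$, and $K := \bigcup_{i \in I \setminus I_0} K_i$.

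I would then define the logarithmic transboundary mass distribution by
$$\rho(y) := \frac{1}{\sigma(y, x_0)\, w_{A'}} \text{ on } A' \setminus K, \qquad \rho_i := \frac{w_{A'}(K_i)}{w_{A'}} \text{ for } i \in I \setminus I_0,$$
with $\rho = 0$ elsewhere and $\rho_i = 0$ when $K_i \cap A' = \emptyset$. Admissibility is verified through the radial projection $\pi(y) := \log \sigma(y, x_0)$, which satisfies $|\nabla \pi| \le 1/\sigma$: for any $\gamma \in \Gamma(E,F;\Om')$ locally rectifiable in $\Om' \setminus K$, continuity forces $\sigma(\gamma, x_0)$ to sweep through $[r', R']$, so $\pi(\gamma \cap A')$ covers $(\log r', \log R')$. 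Decomposing this interval according to whether $\gamma$ lies in $A' \setminus K$ or in some $K_i$ with $i \in I \setminus I_0$, and using $m_1(\pi(\gamma \cap U)) \le \int_\gamma \chi_U\, ds/\sigma$ together with $m_1(\pi(K_i \cap A')) \le w_{A'}(K_i)$, one divides by $w_{A'}$ to obtain the admissibility inequality $1 \le \int \rho\, ds + \sum \rho_i$. In spherical coordinates centered at $x_0$, the chordal area element is $d\Sigma = \sigma\, d\sigma\, d\phi$, which gives directly $\int \rho^2\, d\Sigma = 2\pi/w_{A'}$.

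The main obstacle is the bound $\sum_{i \in I \setminus I_0} \rho_i^2 = w_{A'}^{-2} \sum w_{A'}(K_i)^2 \to 0$ as $t \to \infty$; the crude estimate $w_{A'}(K_i) \le w_{A'}^{1/3}$ alone is inadequate, since the number of $K_i$'s with small $w_{A'}(K_i)$ is not controlled by $\mu$. I split the sum at the threshold $w_{A'}(K_i) = 1$. For the large regime, a shifted dyadic partition of $[\log r', \log R']$ into pieces of length $u \ge 1$ combined with Lemma~\ref{lem:ringfat} (applied to each piece as a width-$u$ annulus) shows that every $K_i$ with $w_{A'}(K_i) \ge u$ contains a full such piece and each piece is crossed by at most $N(\mu)$ of the $K_i$'s, giving $\#\{i : w_{A'}(K_i) \ge u\} \le C(\mu)\, w_{A'}/u$; a layer-cake integration truncated at $u = w_{A'}^{1/3}$ then yields $\sum_{w_i \ge 1} w_{A'}(K_i)^2 \le C(\mu)\, w_{A'}^{4/3}$. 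For the small regime, I bin by $r_{A'}(K_i) \in [2^k r', 2^{k+1} r')$: the inequality $R_{A'}(K_i) - r_{A'}(K_i) \ge r_{A'}(K_i)\, w_{A'}(K_i)$ combined with $\mu$-fatness forces $\Sigma(K_i) \ge c(\mu)(2^k r')^2 w_{A'}(K_i)^2$, while $K_i$'s confined to the bin lie in a ball around $x_0$ of area comparable to $(2^k r')^2$, so disjointness yields $\sum_{\text{bin}} w_{A'}(K_i)^2 \le C(\mu)$; summing over the $O(w_{A'})$ bins, with a separate fatness-based count for the $K_i$'s crossing the circles $\{\sigma(\cdot, x_0) = r'\}$ or $\{\sigma(\cdot, x_0) = R'\}$, produces $\sum_{w_i < 1} w_{A'}(K_i)^2 \le C(\mu)\, w_{A'}$. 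Combining both regimes gives $\sum \rho_i^2 \le C(\mu)\, w_{A'}^{-2/3}$, so the total mass is at most $C(\mu)\, w_{A'}^{-2/3}$; since $w_{A'} \ge w_A^{1/3^N} \ge (c\log t)^{1/3^N}$, this is dominated by $\psi(t) := C(\mu)(\log t)^{-2/(3 \cdot 3^N)}$, which tends to $0$ as $t \to \infty$.
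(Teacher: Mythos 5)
Your construction coincides with the paper's: the same annulus $A$ around a point of $E$ separating $E$ from $F$ with $w_A\gtrsim \log t$, the same appeal to Lemma~\ref{lem:subann} to produce $I_0$ and the subannulus $A'$, the same logarithmic transboundary mass distribution $\rho=1/(w_{A'}\sigma(\cdot,x_0))$, $\rho_i=w_{A'}(K_i)/w_{A'}$, the same admissibility argument via the radial projection $\pi=\log\sigma(\cdot,x_0)$, and the same computation $\int\rho^2\,d\Sigma\le 2\pi/w_{A'}$.

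The only place you diverge is the bound on $\sum_i\rho_i^2$, where your argument is correct but considerably more laborious than necessary. The paper splits at $w_{A'}(K_i)=\log 2$ and uses the single integral $\int_{A'}\sigma(\cdot,x_0)^{-2}\,d\Sigma=2\pi w_{A'}$ as a budget for both halves: for thin sets, $\log(R_i/r_i)\le 2(R_i-r_i)/R_i$ combined with fatness gives $\log(R_i/r_i)^2\lesssim_\mu\int_{A'\cap K_i}\sigma^{-2}\,d\Sigma$, so the whole sum is $\lesssim_\mu w_{A'}$ at once (no dyadic binning by $r_{A'}(K_i)$ needed); for thick sets the same integral bounds their number by $\lesssim_\mu w_{A'}$, which together with $w_{A'}(K_i)\le w_{A'}^{1/3}$ finishes the estimate. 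Your dyadic partition plus Lemma~\ref{lem:ringfat} for the count $\#\{i:w_{A'}(K_i)\ge u\}\lesssim_\mu w_{A'}/u$ and your binning by $r_{A'}(K_i)$ both work (and buy you a marginally better exponent, $w_{A'}^{-2/3}$ versus $w_{A'}^{-1/3}$, which is immaterial since only $\psi(t)\to0$ is needed), but note one small wrinkle: for $1\le u<2$ the pieces of length $u/2$ fall below the width-$1$ threshold in Lemma~\ref{lem:ringfat}, so that range of $u$ must be handled separately (e.g., by the count at $u=2$ or by the integral argument above); this is easily patched and does not affect the conclusion.
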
 

In general, $\Omega'$ will only be an open subset of 
$\oC$ and not necessarily a region. The definitions of the path family $\Gamma(E,F;\Om')$ and of the transboundary modulus $M_{\Om', \mathcal{K}'}(\Gamma(E,F;\Om'))$   for an  open set $\Om'$  are exactly the same as for  regions. 
 Note that 
$$E,F\sub \oC\setminus\bigcup_{i\in I}\inte(K_i)=\overline 
{\oC\setminus\bigcup_{i\in I}K_i}\sub \overline 
{\oC\setminus\bigcup_{i\in I_0}K_i}=\overline {\Om}'.$$

To explain what the proposition  means suppose that 
$E$ and $F$ are continua in $\Sph\setminus 
\bigcup_{i\in I}\inte(K_i)$ whose relative distance is 
large. Consider the family $\Gamma$  of all paths  in $\Sph$ that connect $E$ and $F$. Then in general the transboundary modulus  of $\Gamma$ in $\Sph$ with respect to the collection $\{K_i:i\in I\}$ need not be small. 
The reason is that there could be some sets $K_i$ that are very close to both $E$ and $F$ and serve as a 
``bridge" between $E$ and $F$.
If there are many such bridges, the transboundary modulus of $\Gamma$ can  be large even if $E$ and $F$ have large relative distance.  The proposition  says that if we impose a uniform fatness condition on the sets $K_i$, and  remove  some elements $K_i$ from our collection, then the transboundary modulus of the family of paths 
connecting $E$ and $F$  in the complementary region of the discarded sets behaves in the expected way; namely, it is uniformly small if the relative separation of $E$ and $F$ is large. The sets $K_i$ that we have to  remove from the collection may depend on $E$ and $F$, but their number is uniformly bounded  
only depending on  the fatness parameter $\mu$. 

Using the remark following Lemma~\ref{lem:ringfat} one can show that if  the sets $K_i$ are round disks and $\Delta(E,F)$ is large enough,  one has to discard at most two disks in order to get a modulus bound of the desired type.
  
The restriction $\Delta (E,F)\ge 12$ in Proposition~\ref{thm:uptrans}
is not very essential and one can prove a more general version.  For this one has to find an appropriate   bound for $M_{\Om', \mathcal{K}'}(\Gamma(E,F; \Om'))$ also  for small  $\Delta(E,F)>0$.  This can be done  by an argument  very similar to  the proof   of Proposition~\ref{thm:weakuptrans}.  The present   version of Proposition~\ref{thm:uptrans} will be  sufficient for our purpose.

  In the proof of this proposition we need a variant of inequality 
  \eqref{proiectionest}. To formulate it,    
  let $(X,d)$ be a metric space, 
 $x\in X$, $\pi \: X\setminus\{x\}\ra \R $ be the  map defined by 
$u\in X\setminus\{x\} \mapsto \pi(u)=\log d(u,x)$, and  $\alpha$ be a locally rectifiable path in $X\setminus\{x\}$. Then  
\begin{equation}\label{proiest2}
m_1(\pi(\alpha))\le \int_\alpha \frac{ ds}{d(x,\cdot)},
\end{equation} 
where integration is with respect to arclength and $m_1$ again denotes $1$-dimensional Lebesgue measure. 

One can easily reduce this statement  to the case when 
$\alpha\:[0,L]\ra X\setminus\{x\} $ is  a rectifiable path in arclength parametrization, where $L=\length (\alpha)$.  By considering a suitable subpath and reversing orientation of the path if necessary  one can further assume that 
$p=\alpha(0)$ is a point on $\alpha$ with minimal distance to $x$,  
and $q=\alpha(L)$ a point with maximal distance.
Then $\pi(\alpha)=[\log d(x,p), \log d(x,q)]$, and so 
$$\int_\alpha \frac{ ds}{d(x,\cdot)}\ge \int_0^L \frac{ds}{d(x,p)+s}=\log\bigg(1+\frac{L}{d(x,p)}\biggr)\ge \log\bigg(\frac{d(x,q)}{d(x,p)}\biggr)=m_1 (\pi(\alpha))$$
as desired.

 \medskip\no
 {\em Proof of Proposition~\ref{thm:uptrans}.} Let $E$ and $F$ be disjoint continua in $\OC\setminus \bigcup_{i\in I}\inte(K_i)$ with $\Delta(E,F)=t\ge 12$. 
 We may assume that $\diam(E)\le \diam(F)$. 
 Pick a point $x\in E$, and define $r=2\diam (E)$
 and $R=\dist(E,F)/2$.  Then $R/r=t/4\ge 3$. Consider the annulus 
 $A=A(x;r,R)$. Then $E$ is contained in $B(x,r)$ and 
 $F$ in the complement of $B(x,R)$. So the annulus $A$ separates the sets $E$ and $F$. Moreover, 
  $$w_A=\log(R/r)= \log (t/4)\ge 1.$$
Then we can find a subannulus  $A'=A(x;r',R')$ of $A$ and a set $I_0\sub I$ as in Lemma~\ref{lem:subann}.

We define $\Omega'=\Sph\setminus\bigcup_{i\in I_0}K_i$,
and consider the transboundary modulus of the path
family $\Gamma(E,F;\Omega')$  in $\Omega'$ with respect to the
collection $\mathcal{K}'=\{K_i:i\in I\setminus I_0\}$. We set  
$K':=\bigcup_{i\in I\setminus I_0}K_i$.

 We have to find a bound for 
 $M_{\Omega', \mathcal{K}'}(\Gamma(E,F;\Omega'))$  depending on $t$ and $\mu$ that is  small  if $t$ is large. 
 We define a transboundary mass distribution as follows. 
 We let 
 $$ \rho(u)=\frac 1{w_{A'}\sig(u,x)} \forr u\in A'\cap (\Omega'\setminus  K')$$
 and $\rho(u) =0 $  elsewhere.
 Moreover, we let $$\rho_i=w_{A'}(K_i)/w_{A'} \forr 
 i\in I\setminus I_0 \text{ with }K_i\cap A'\ne \emptyset,$$
 and $\rho_i= 0$ for all other $i\in I\setminus I_0$.
 
 We claim that this transboundary mass distribution is admissible 
 for $\Gamma(E,F;\Omega')$. To see this let $\ga \in
 \Gamma(E,F;\Omega')$ be an arbitrary path that is locally rectifiable in $\Om'\setminus K'$. Since $A'$ is a 
 subannulus of $A$, it also separates $E$ and $F$. 
  Hence $\gamma$ meets 
 both complementary parts of $A'$, and so there exists an open  subpath $\alpha$ of $\gamma$ that lies in 
 $A'$   and connects the components of 
 the boundary of $A'$. 
 Obviously,
 $$ \int_{\ga\cap (\Omega'\setminus  K')}\rho\,ds+\sum_{i\in I\setminus I_0,\, K_i\cap \ga \ne \emptyset}\rho_i \ge 
 \int_{\alpha\cap  (\Omega'\setminus  K')}\rho\,ds+\sum_{i\in I\setminus I_0,\, K_i\cap \alpha \ne \emptyset} \rho_i .$$
 We want  to show that the right hand side of this inequality  is bounded below by  $1$.
 
 Let $\pi$ be the map on  $\overline A'$ to the interval 
 $[\log r', \log R']$ defined by $u\mapsto \pi(u):=\log \sig(u,x)$.
 Then 
 \begin{equation}\label{eq:proiect}
 (\log r', \log R')= \pi(\alpha)\sub \pi (\alpha\cap (\Omega'\setminus K'))
 \cup \bigcup_{i\in I\setminus I_0, \, \alpha\cap K_i\ne \emptyset}\pi({A'}\cap K_i).
 \end{equation}
By using \eqref{proiest2} for $d=\sigma$ and the pieces of $\alpha$ in $\Omega'\setminus K'$, we see that  
$$\int_{\alpha\cap (\Omega'\setminus K')}\rho\, ds  \ge \frac 1 {w_{A'}}
m_1 (\pi(\alpha\cap (\Omega'\setminus K'))).$$
We also have 
$$\rho_i\ge  \frac 1 {w_{A'}}
m_1(\pi({A'}\cap K_i)) \text{ for all } i\in I\setminus I_0, $$ 
and so  \eqref{eq:proiect} implies 
\begin{eqnarray*}
 \int_{\alpha\cap(\Omega'\setminus K')}\rho\, ds+\sum_{i\in I\setminus I_0,\, K_i\cap \alpha \ne \emptyset} \rho_i&\ge &\\
  \frac 1 {w_{A'}}  m_1(\pi(\alpha\cap (\Omega'\setminus K')))
\!\!\!&+&  \!\!\!\frac 1 {w_{A'}} \sum_{i\in I\setminus I _0,\, \alpha \cap K_i\ne \emptyset}  m_1(\pi({A'}\cap K_i))\\
&\ge & \frac 1 {w_{A'}} m_1((\log r', \log R'))=1.
\end{eqnarray*}
The admissibility of our transboundary mass distribution follows.

To obtain mass bounds for our transboundary mass distribution, we first 
note that 
\begin{equation} \label{integral}
\int_{A'}\frac {d\Sigma(u)}{\sig(u,x)^2}=\int_{r'}^{R'}\frac{d(\Sigma(B(x,v))}{v^2}=2\pi\int_{r'}^{R'}\frac{dv}{v}=2\pi \log(R'/r')=
2\pi w_{A'}.
\end{equation}

Hence for the density  part  of the mass we get 
\begin{equation}\label{eq:contpart}
\int_{\Omega'\setminus  K'}\rho^2\, d\Sigma \le \frac 1{w_{A'}^2}\int_{A'}
\frac {d\Sigma(u)}{\sig(u,x)^2}=
\frac{2\pi}{w_{A'}}.
\end{equation}

To estimate the mass of the discrete part, we 
consider two subsets $I_1$ and $I_2$ of $I\setminus I_0$.
Namely, let $I_1$ be the set of all $i\in I\setminus I_0$ such that $A'\cap K_i\ne\emptyset$ and $w_{A'}(K_i)\le \log 2$,
and  $I_2$ be the set of all $i\in I\setminus I_0$ such that $A'\cap K_i\ne\emptyset $ and $w_{A'}(K_i)>\log 2$.
Since $\rho_i=0$ for all $i\in I\setminus I_0$ with $A'\cap K_i=\emptyset$, we have 
\begin{equation}\label{eq:split}
\sum_{i\in I\setminus I_0} \rho_i^2=
\sum_{i\in I_1} \rho_i^2 +\sum_{i\in I_2} \rho_i^2.
\end{equation}

For $i\in I_1\cup I_2$ let $r_i:=r_{A'}(K_i)$ and 
$R_i:=r_{A'}(K_i)$. For these $i$ we then have
$$ \rho_i=\frac 1{w_{A'}}\log(R_i/r_i) $$
and  $\diam(K_i)\ge (R_i-r_i)$. 

Since $K_i$ is connected, we can find a point $a_i\in A'\cap K_i$ with 
$\sig(a_i,x)=\tfrac 12(r_i+R_i).$
Let $B_i:=B(a_i,\tfrac12 (R_i-r_i))\sub A'$.
The disk $B_i$ is centered at a point in $K_i$ and has a radius not exceeding the diameter of $K_i$. 
Hence the $\mu$-fatness of $K_i$ gives 
\begin{equation} \label{eq:massAintK}
\Sigma(A'\cap K_i)\ge\Sigma(B_i\cap K_i)
\ge \mu \Sigma(B_i)=\frac \pi 4 \mu (R_i-r_i)^2,
\end{equation}
and so
\begin{equation}\label{eq:integr}
\int_{A'\cap K_i}
\frac {d\Sigma(u)}{\sig(u,x)^2}\ge 
\frac {\Sigma(A'\cap K_i)}{R_i^2}
\ge\pi \mu \frac {(R_i-r_i)^2}{4R_i^2}.
\end{equation}
Now if $i\in I_1$, then $R_i\le 2r_i$ and so
$$\log(R_i/r_i)=\log\biggl(1+\frac{R_i-r_i}{r_i}\biggr)\le 
\frac{R_i-r_i}{r_i}\le 2\frac{R_i-r_i}{R_i}.$$

These inequalities imply
\begin{eqnarray}\label{eq:first}
\sum_{i\in I_1}\rho_i^2&=&\frac1{w_{A'}^2}
\sum_{i\in I_1}\log(R_i/r_i)^2 \nonumber\\
&\le &\frac{16  }{\pi \mu w_{A'}^2}
\sum_{i\in I_1} \int_{A'\cap K_i}
\frac {d\Sigma(u)}{\sig(u,x)^2}\\
&\le &\frac{16 }{\pi \mu w_{A'}^2}\int_{A'}\frac 
{d\Sigma(u)}{\sig(u,x)^2}\le 
\frac{32}{\mu w_{A'}}. \nonumber
\end{eqnarray}

If $i\in I_2$, then $R_i>2r_i$ and so \eqref{eq:integr} shows that
$$ 
\int_{A'\cap K_i}
\frac {d\Sigma(u)}{\sig(u,x)^2}\ge \frac\pi{16}\mu.
$$
This implies that
$$\frac\pi {16} \mu\cdot\#I_2\le\sum_{i\in I_2} \int_{A'\cap K_i}
\frac {d\Sigma(u)}{\sig(u,x)^2}\le 
\int_{A'}\frac {d\Sigma(u)}{\sig(u,x)^2}\le 2\pi w_{A'}.
$$
Hence 
$$\#I_2\le \frac{32}{\mu} w_{A'}.$$
By choice of $I_0$ according to Lemma~\ref{lem:subann}, we have
$$ w_{A'}(K_i)<w_{A'}^{1/3}$$
for all $i\in I\setminus I_0$. Using this with the upper bound on $\#I_2$ 
 we conclude
\begin{eqnarray}\label{eq:scnd}
\sum_{i\in I_2}\rho_i^2 &=& \frac1{w_{A'}^2}
\sum_{i\in I_2} w_{A'}(K_i)^2 
\le \frac1{w_{A'}^2}\cdot \#I_2\cdot w_{A'}^{2/3} \\
&\le &  \frac{32}{\mu w_{A'}^{1/3}}.\nonumber
\end{eqnarray}
By choice of $A$  and $A'$ we have 
$w_{A'}\ge w_A^{1/3^N}= \log (t/4)^{1/3^N}\ge 1.$
Combining this with \eqref{eq:contpart},  \eqref{eq:first}, and \eqref{eq:scnd}, we arrive at the bound

\begin{eqnarray*}M_{\Om', \mathcal{K}'}(\Gamma(E,F;\Om'))&\le&
 \int_{\Omega'\setminus  K'}\rho^2\, d\Sigma +\sum_{i\in I\setminus I_0}\rho_i^2  \le 
\frac {2\pi }{w_{A'}}+ \frac{32}{\mu w_{A'}}+   \frac{32}{\mu w_{A'}^{1/3}}\\ &\le& \frac{C(\mu)}{w_{A'}^{1/3}}\le 
\frac{C(\mu)}{\log(t/4)^{1/3^{N+1}}}.
\end{eqnarray*}
Since $N=N(\mu)$ this gives the desired uniform bound in $\mu$ and $t$ that becomes small
if $t$ becomes large. 
\qed \medskip

 \begin{remark} \label{rem:uptrans} \textnormal{ The previous proposition holds in greater generality.
 Namely, suppose that we have a region $U\sub \oC$ equipped with a path metric $d$ induced by a conformal length element $ds_\lambda=\lambda ds$ and an associated measure $\nu$  such that $d\nu=\lambda^2d\Sigma$ as in Remark~\ref{rem:chb}. 
 Suppose also that there exists a constant $C_0\ge 1 $ such that 
 \begin{equation}\label{eq:uppmass}
 \frac{1}{C_0}r^2 \le  \nu(B_d(a,r))\le C_0r^2
  \end{equation}  
 whenever $a\in U$ and $0<r\le \diam_d(U)$.}
 \textnormal{Then an analog  of Proposition~\ref{thm:uptrans} holds
 in the metric measure space $(U, d, \nu)$ (instead of $(\OC,\sigma, \Sigma)$)
 with a  constant $N=N(\mu, C_0)$ and a function $\psi=\psi_{\mu, C_0}$.}
 
 \textnormal{  Indeed, it is clear that  versions  of Lemmas~\ref{lem:ringfat} and \ref{lem:subann}  are true    in this greater generality with a constant $N=N(\mu, C_0)$.  Based on this, the proof of Proposition~\ref{thm:uptrans} can easily be adapted by  changing  the metric $\sigma$ to  $d$ and the measure $\Sigma$ to  $\nu$. 
 All inequalities will  remain valid up to an adjustment of the multiplicative constants.  The upper mass bound in 
 \eqref{eq:uppmass}  is used to derive  an    inequality for the analog of the integral on the left hand side  in
 \eqref{integral}   for sufficiently thick annuli $A'$.  The bound will be a  multiple of $w_{A'}$ with 
 a suitable constant depending on $C_0$. The lower mass bound  \eqref{eq:uppmass} is used in the proof  of Lemma~\ref{lem:ringfat} and in \eqref{eq:massAintK}. Actually, in both cases we only need the lower mass bound for disks $B_d(a,r)$ with $r\le 
 \sup_{i\in I} \diam(K_i)$.} 
 
\textnormal{  We will   later  formulate a specific case explicitly in Proposition~\ref{prop:uptrans2}.  
} \end{remark}

 \section{Classical uniformization }
\label{s:unifcyl} 
\no
In this section we discuss some facts related to  classical uniformization. 
The main result is  Theorem~\ref{thm:squareunif}. It  can be derived from  the remark in \cite[p.~412]{oS96} on periodic uniformization.  We will give a different  proof based on   the methods developed in \cite{oS96}.  We will use some standard facts from complex analysis such as Montel's Theorem, the Argument Principle, etc. See, for example, 
 \cite{Ru} for precise statements and general background. 

  We consider {\em finitely connected} regions in $U\sub \OC$, i.e., regions  with finitely many  complementary components. The region is called {\em labeled} if its complementary  components are labeled by the numbers $0, \dots, n$, i.e., if a bijection between the set of complementary  components and the set $\{0, \dots, n\}$ has been  specified.  Here we assume that there are $n+1\ge 2$ complementary components. Two labeled regions are considered  equal if the underlying sets are the same and the labels on complementary  components agree. If $U$ is a labeled region, then  
we denote the component of the complement   with label $i$ by 
$\widehat \partial_i U$ and the boundary of this component by
$\partial_iU$. Then we have 
$$\partial U= \partial_0U\cup \dots \cup \partial_nU.$$

Let  $f\: U \ra V$ be a conformal map between labeled regions
$U$ and $V$.  Then  the number of complementary components of $U$ and $V$  is   the same and the map  $f$ induces a bijection $\phi$ on $\{0,\dots, n\}$ with the following property:  for all $i=0, \dots, n$ and all  sequences  $(z_k)$ in $U$ with 
 $z_k\to   \partial_iU$ we have  $f(z_k)\to  \partial_{\phi(i)}V$ (see
  \cite[Sect.~15.3]{Co}, and in particular \cite[p.~81, Prop.~15.3.2]{Co}). 
 The map $f$ is {\em label-preserving} if $\phi$ is the identity on 
 $\{0,\dots, n\}$.

 A complementary or boundary component of a region $U$ is called {\em degenerate} or {\em non-degenerate} depending on whether 
 it consists of one or of more than one point. If $f\:U\ra V$ is  a 
 label-preserving conformal map between labeled regions, then 
for each $i=0, \dots, n$ the component  $\widehat \partial U_i$ 
is degenerate if and only if  $\widehat \partial V_i$ is degenerate; this easily follows from the fact that points are removable singularities for bounded analytic functions.

\begin{lemma}\label {lem:Iordanext} Let $f\:U\ra V$ be a
label-preserving conformal map between labeled regions $U$ and $V$ in $\oC$ with  finitely   boundary components. If $\partial_0U$ and 
$\partial_0V$ are Jordan  curves, then there exists a unique  extension of 
$f$ to a homeomorphism from $U\cup\partial_0U$ onto
$V\cup\partial_0V$. 
\end{lemma}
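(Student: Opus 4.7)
The plan is to reduce via Carath\'eodory's extension theorem to a standard configuration and then control $f$ on an annular neighborhood of $\partial_0 U$ using the classical uniformization of doubly connected regions bounded by Jordan curves.

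First I will reduce. Since $\partial_0 U$ and $\partial_0 V$ are Jordan curves, the regions $\Omega_U := \oC \setminus \widehat \partial_0 U$ and $\Omega_V := \oC \setminus \widehat \partial_0 V$ are Jordan domains, so their Riemann maps onto $\D$ extend to homeomorphisms of closures by Carath\'eodory's theorem. Pre- and post-composing $f$ with these Riemann maps, I may assume $\widehat \partial_0 U = \widehat \partial_0 V = \oC \setminus \D$ and $\partial_0 U = \partial_0 V = \partial \D$, so $U, V \sub \D$. Since the remaining complementary components of $U$ are compact in $\D$, there exists $r_1 < 1$ with $\{r_1 \le |z| < 1\} \sub U$; set $A := \{r_1 < |z| < 1\}$, and choose similarly $r_2 < 1$ with $\{r_2 \le |w| < 1\} \sub V$. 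The label-preserving hypothesis then reads $|f(z)| \to 1$ as $|z| \to 1$ in $U$, and conversely for $f^{-1}$.

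Next I will show that $f(A)$ is a doubly connected region whose boundary is the disjoint union of two Jordan curves, namely $\gamma := f(\{|z| = r_1\})$ and $\partial \D$. Since $A$ is a neighborhood of $\partial \D$ in $U$ disjoint from the other complementary components, any boundary point of $f(A)$ is a limit $\lim f(z_n)$ where a subsequence of $z_n \in A$ accumulates on $\{|z| = r_1\} \sub U$, yielding a point of $\gamma$, or on $\partial \D$, yielding a point of $\partial \D$ by the label-preserving property. Conversely, applying the label-preserving property to $f^{-1}$ produces $r_2' < 1$ such that every $w \in V$ with $|w| > r_2'$ satisfies $|f^{-1}(w)| > r_1$, and hence $w \in f(A)$; thus $\{r_2' < |w| < 1\} \sub f(A)$, so in particular $\partial \D \sub \partial f(A)$.

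Finally, since both boundary components of $f(A)$ are Jordan curves, the classical uniformization of doubly connected regions with Jordan boundary yields a conformal map $h \colon \{\rho < |w| < 1\} \to f(A)$ that extends to a homeomorphism of closures sending $\{|w| = 1\}$ to $\partial \D$ and $\{|w| = \rho\}$ to $\gamma$; this boundary extension follows from Carath\'eodory's theorem once a Jordan cross-cut reduces the annular region to a simply connected Jordan domain. Conformal invariance of the annular modulus forces $\rho = r_1$, and $h^{-1} \circ f|A$ is then a conformal automorphism of $A$ preserving the labels of the boundary components, hence a rotation. Consequently $f|A$ extends to a homeomorphism $\overline A \to \overline{f(A)}$; gluing this extension with $f$ on $U$ (they agree on the overlap $A$) produces a continuous extension of $f$ to $U \cup \partial \D$. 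Running the same argument for $f^{-1}$ yields a continuous inverse on $V \cup \partial \D$; by continuity and the density of $U$, these are mutual inverses, giving the desired homeomorphism. Uniqueness is automatic from the density of $U$ in $U \cup \partial_0 U$. The principal technical point is the doubly-connected boundary extension of $h$ invoked in this last step: it is classical, but passes through the Jordan cross-cut reduction before Carath\'eodory applies, so some care is needed in keeping the two boundary components correctly labelled.
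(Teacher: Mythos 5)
Your argument is correct, but it takes a different route from the paper. The paper's proof (cited from Conway and sketched in Remark~\ref{rem:indproof}) establishes uniform continuity of $f$ on a topological annulus $A\sub U$ collaring $\partial_0U$ directly, via the length--area estimate of Wolff's Lemma~\ref{lem:wolf} and the crosscut separation facts preceding Lemma~\ref{lem:unifcont}; the continuous extension then exists for free, and applying the same to $f^{-1}$ gives the homeomorphism. You instead normalize by Carath\'eodory so that $\partial_0U=\partial_0V=\partial\D$, insert a \emph{round} annulus $A=\{r_1<|z|<1\}\sub U$, show that $f(A)$ is a doubly connected region bounded by the two Jordan curves $f(\{|z|=r_1\})$ and $\partial\D$ (this is the key step, and it correctly uses the label-preserving property in both directions), and then outsource the boundary behaviour to the classical extension theorem for conformal maps of doubly connected Jordan domains, with the modulus computation pinning down the uniformizing annulus and reducing $f|A$ to $h$ composed with a rotation. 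What your approach buys is that all the hard analysis is packaged into two standard citations (Carath\'eodory and its annular analogue), at the cost of the topological bookkeeping in identifying $\partial f(A)$; what the paper's approach buys is self-containedness, since the Wolff-lemma machinery is already set up for Lemma~\ref{lem:unifcont}. Two small points you leave implicit and should record: first, that $\widehat\partial_0U$ is a (nondegenerate) closed Jordan region --- so that $\oC\setminus\widehat\partial_0U$ really is a Jordan domain --- requires a short argument from the facts that $\widehat\partial_0U$ is a connected complementary component of $U$ with boundary the Jordan curve $\partial_0U$; second, the inclusion $\{r_2'<|w|<1\}\sub f(A)$ needs $r_2'$ taken at least as large as your $r_2$ so that this round annulus actually lies in $V$. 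Neither affects the validity of the proof.
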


This follows from \cite[p.~83, Thm.~15.3.6 (b)]{Co}  (note that the  points on   $\partial_0U$ and 
$\partial_0V$ are {\em simple} boundary points of $U$ and $V$, respectively; see \cite[p.~52, Def.~14.5.9]{Co}  and \cite[p.~53, Cor.~14.5.11]{Co}). See  also  Remark~\ref{rem:indproof} below where an outline of the proof will be given. 
Lemma~\ref{lem:Iordanext}  implies that  if all boundary components of $U$ and $V$ are Jordan  curves or degenerate, then $f$ extends to a homeomorphism 
from $\overline U$ onto $\overline V$ (see also \cite[p.~82, Thm.~15.3.4]{Co}). 

We need a statement similar in spirit to Lemma~\ref{lem:Iordanext} on  uniform convergence  of sequences of conformal maps ``up to the boundary".  It relies on  some equicontinuity result for boundary maps which will be derived from the  
following well-known fact.  

\begin{lemma}[Wolff's Lemma]
\label{lem:wolf}  Let $U\sub \C$ be open,  $z_0\in \C$, 
  and
$f\:U\ra \C$ be a conformal map with  $f(U)\sub \D$. For $r>0$ let 
$\gamma_r=U\cap\{z\in \C:|z-z_0|=r\}.$ 

Then for all 
$0<t<1$ there exists $s\in (t,\sqrt t)$ such that 
$$ \textnormal{length}_\C(f(\ga_s))\le \frac{2\pi}{\sqrt {\log(1/t)}}.$$ 
\end{lemma}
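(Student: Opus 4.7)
The plan is to use the classical length--area method. For each $r>0$ with $\gamma_r\ne\emptyset$, I would write
\[
\length_\C(f(\gamma_r))=\int_{\gamma_r}|f'(z)|\,|dz|,
\]
where arclength is taken along $\gamma_r$, which is a (possibly disconnected) subset of the circle $\{|z-z_0|=r\}$ and so has arclength measure at most $2\pi r$. An application of the Cauchy--Schwarz inequality to the factors $|f'|$ and $1$ then yields
\[
\length_\C(f(\gamma_r))^2\le \length_\C(\gamma_r)\cdot \int_{\gamma_r}|f'(z)|^2\,|dz|\le 2\pi r\int_{\gamma_r}|f'(z)|^2\,|dz|.
\]

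Next I would divide by $r$ and integrate from $t$ to $\sqrt t$. Writing $A=U\cap\{t<|z-z_0|<\sqrt t\}$ and using Fubini together with the polar-coordinate identity
\[
\int_t^{\sqrt t}\!\!\int_{\gamma_r}|f'(z)|^2\,|dz|\,dr=\int_A|f'(z)|^2\,dm_2(z)=m_2(f(A)),
\]
valid because $f$ is conformal and hence injective, I would conclude
\[
\int_t^{\sqrt t}\frac{\length_\C(f(\gamma_r))^2}{r}\,dr\le 2\pi\, m_2(f(A))\le 2\pi\cdot m_2(\D)=2\pi^2,
\]
since $f(A)\sub f(U)\sub\D$.

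To extract the desired value of $s$, I would argue by contradiction: if
\(\length_\C(f(\gamma_s))^2>4\pi^2/\log(1/t)\) for every $s\in(t,\sqrt t)$, then integrating against $dr/r$ and using $\int_t^{\sqrt t}dr/r=\tfrac12\log(1/t)$ would give
\[
\int_t^{\sqrt t}\frac{\length_\C(f(\gamma_r))^2}{r}\,dr>\frac{4\pi^2}{\log(1/t)}\cdot\frac{\log(1/t)}{2}=2\pi^2,
\]
contradicting the previous bound. Hence some $s\in(t,\sqrt t)$ satisfies $\length_\C(f(\gamma_s))\le 2\pi/\sqrt{\log(1/t)}$, as required.

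The argument is essentially routine once the correct ingredients are in place; the only mild technical point is measurability and the justification of Fubini (the integrand $|f'|^2$ is continuous on the open set $A$, so this is not an issue), together with the observation that $\gamma_r$ need not be a full circle but still has length bounded by $2\pi r$, which is all that Cauchy--Schwarz requires.
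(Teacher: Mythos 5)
Your proof is correct. The paper does not prove Wolff's Lemma itself but cites \cite[p.~20, Prop.~2.2]{Pom}, and your length--area argument (Cauchy--Schwarz on each circle, integration of $\length_\C(f(\gamma_r))^2/r$ over $r\in(t,\sqrt t)$, change of variables to bound by $2\pi\, m_2(\D)=2\pi^2$, then averaging) is precisely the standard proof given there.
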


Note that $\gamma_r=U\cap\{z\in \C:|z-z_0|=r\}$ is a circle or consists of a countable collection of open circular arcs. In the statement $\textnormal{length}_\C(f(\ga_s))$ denotes the total Euclidean length of the images under $f$ (recall from Section~\ref{s:nt} that the subscript 
$\C$ refers to the Euclidean metric on $\C$). 

For the proof of Lemma~\ref{lem:wolf} see    \cite[p.~20, Prop.~2.2]{Pom}.  

A {\em crosscut} in an open Jordan  region $D\sub \oC$ is an arc $\alpha$ whose interior points 
lie in $D$ and whose endpoints lie on $\partial D$. 
A  crosscut  $\alpha$ in $D$ {\em separates} two points $p\in D$ and $q\in \partial D$  if every path $\phi\:[0,1]\ra \OC$ with  $p=\phi(0)$, $q=\phi(1)$, and $\phi([0,1))\sub D$   meets $\alpha$. 

The set of points on $\partial D$ separated by a crosscut $\alpha$ in $D$ from a given point  $p\in D\setminus \alpha$     is equal to one of the subarcs $\gamma$ of $\partial D$ with the same endpoints 
as $\alpha$.  
In the special case $D=\D$ and $p=0$ a simple argument shows 
 there is a constant $c_0>0$ such that  if $\diam_\C(\alpha)\le c_0$, then $\gamma$ is the smaller arc on 
$\partial \D$ with the same endpoints as $\alpha$, and so 
$\diam_\C(\gamma)\le \diam_\C(\alpha)$.  
 It is not hard to see that $c_0=1$   is the sharp constant in this statement.   

Based on this and the Sch\"onflies Theorem one can show  that if  $D\sub \oC$ is an arbitrary open Jordan  region,  and $p\in D$,   
then for every $\eps>0$ there exists $\delta>0$ such that for every crosscut $\alpha$ in $D$ with $p\notin \alpha$ and  $\diam(\alpha)<\delta$ we have  $\diam(\gamma)<\eps$ for the arc $\gamma$ of points on $\partial D$ separated by $\alpha$ from $p$.

\begin{lemma}[Equicontinuity of boundary maps]
\label{lem:unifcont}  Let $r\in (0,1)$, 
$$A=\{z\in \C: r<|z|\le 1\},$$  and let 
$\mathcal{F}$ be the family of all homeomorphism $f$ on $A$ that are conformal on $\inte(A)$ so that  $f(A)\sub \overline \D$,   $f(\partial \D)=\partial \D$, and $0$ is contained in the bounded component of $\C\setminus f(\inte(A))$. 

Then the family $\{f|\partial \D: f\in   \mathcal{F}\}$ of boundary maps is equicontinuous with respect to the Euclidean metric.
Moreover, every sequence in $\mathcal{F}$ has a subsequence that converges to a function in $\mathcal{F}$ uniformly  on compact subsets of  $A$. 
\end{lemma}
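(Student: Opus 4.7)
The plan is to combine Wolff's Lemma for equicontinuity of the boundary maps with Montel's Theorem and a diagonal subsequence extraction to produce uniform convergence on compact subsets of $A$, and then to verify that the limit lies in $\mathcal{F}$ via Hurwitz's Theorem, degree theory for circle maps, and the boundary extension result Lemma~\ref{lem:Iordanext}.

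For equicontinuity, fix $z_0\in\partial\D$ and $f\in\mathcal{F}$. For $0<s<1-r$, the set $\gamma_s:=\inte(A)\cap\{|z-z_0|=s\}$ is a crosscut in $\inte(A)$ and, together with $\alpha_s:=\partial\D\cap B(z_0,s)$, bounds the Jordan region $R_s:=B(z_0,s)\cap A\sub A$. Applying Lemma~\ref{lem:wolf} to the conformal map $f|\inte(A)\:\inte(A)\to\D$ centered at $z_0$, for each $t\in(0,(1-r)^2)$ we obtain $s=s(t,f)\in(t,\sqrt{t})$ with $\length_\C(f(\gamma_s))\le 2\pi/\sqrt{\log(1/t)}$. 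Since $0\in\D$ lies in the bounded complementary component of $f(\inte(A))$ and $\diam_\C(f(\gamma_s))\le c_0=1$ for $t$ small, the crosscut discussion preceding the lemma shows that the arc of $\partial\D$ separated from $0$ by $f(\gamma_s)$ is the smaller one, with diameter at most $\length_\C(f(\gamma_s))$. Because $f$ is a homeomorphism of $A$ with $f(\partial\D)=\partial\D$, this arc coincides with $f(\alpha_s)$, and $f(R_s\cap A)$ is the Jordan region it bounds together with $f(\gamma_s)$. Therefore every $z\in A$ with $|z-z_0|<t$ satisfies $z\in R_s\cap A$ and
$$
|f(z)-f(z_0)|\le\diam(f(R_s\cap A))\le\frac{4\pi}{\sqrt{\log(1/t)}},
$$
a bound independent of $f\in\mathcal{F}$.

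For the convergence, let $(f_n)\sub\mathcal{F}$. Since $|f_n|\le 1$, Montel's Theorem yields a subsequence converging locally uniformly on $\inte(A)$ to a holomorphic $f\:\inte(A)\to\overline{\D}$. The equicontinuity just established, together with uniform boundedness and a diagonal extraction over an exhaustion of $A$ by compact sets, lets Arzel\`a--Ascoli produce a further subsequence converging uniformly on every compact subset of $A$ to a continuous $f\:A\to\overline{\D}$ that agrees with the Montel limit on $\inte(A)$.

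To see $f\in\mathcal{F}$, set $g:=f|\partial\D$. Each $f_n|\partial\D$ is an orientation-preserving homeomorphism of $\partial\D$ of topological degree $+1$; degree is continuous under uniform convergence of continuous circle maps whose images lie in $\partial\D$, so $g$ has degree $+1$, hence is surjective and non-constant. By continuity of $f$ on $A$, $f|\inte(A)$ is therefore non-constant; Hurwitz's Theorem applied to the injective sequence $(f_n|\inte(A))$ yields injectivity of $f|\inte(A)$, and Hurwitz applied to the non-vanishing sequence $(f_n)$ gives $f\ne 0$ on $\inte(A)$. Moreover, continuity of the winding number of the Jordan curve $f_n(\{|z|=r'\})$ about $0$ as $n\to\infty$, for any fixed $r'\in(r,1)$ (each such winding number being $\pm 1$ by the $\mathcal{F}$-assumption), places $0$ in the bounded complementary component of $f(\inte(A))$. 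The outer boundary component of the doubly connected region $f(\inte(A))$ is $g(\partial\D)=\partial\D$, a Jordan curve, so Lemma~\ref{lem:Iordanext} produces a unique homeomorphic extension of $f|\inte(A)$ to $\inte(A)\cup\partial\D=A$; uniqueness forces it to coincide with our continuous extension $f$, so $g$ is a homeomorphism. Hence $f\:A\to f(A)$ is a homeomorphism with $f\in\mathcal{F}$. The main obstacle is precisely this injectivity of $g$: a uniform limit of orientation-preserving homeomorphisms of $\partial\D$ need only be a monotone continuous surjection, so injectivity must be recovered from the interior conformality of $f$ via the Carath\'eodory-type extension result Lemma~\ref{lem:Iordanext}.
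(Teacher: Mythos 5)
Your argument follows the paper's own proof in its essentials: Wolff's Lemma plus the crosscut observation gives the uniform modulus of continuity at $\partial\D$; Montel's Theorem combined with that boundary control gives subconvergence on compact subsets of $A$; and Hurwitz's Theorem, a degree/winding-number argument, and Lemma~\ref{lem:Iordanext} identify the limit as an element of $\mathcal{F}$. Two of your variations are harmless and correct: you glue interior and boundary convergence by upgrading the boundary modulus of continuity to equicontinuity on compact subsets of $A$ and applying Arzel\`a--Ascoli, where the paper instead passes to a subsequence converging uniformly on $\partial\D$ and on a circle $\{|z|=r'\}$ and invokes the Maximum Principle on the annulus between them; and your winding-number argument for placing $0$ in the bounded complementary component of $f(\inte(A))$ is a legitimate, more explicit version of the paper's terse claim.

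There is, however, one step you assert without justification, and it is exactly the step the paper devotes its Argument Principle sentence to. To apply Lemma~\ref{lem:Iordanext} you need $\partial\D$ to be the boundary of a complementary component of $V=f(\inte(A))$, i.e.\ you need $\partial E_\infty=\partial\D$, where $E_\infty$ denotes the unbounded component of $\C\setminus V$. From $g(\partial\D)=\partial\D$ and continuity of $f$ on $A$ one only gets $\partial\D\sub\partial E_\infty$; a priori $E_\infty$ could reach into $\D$ (a doubly connected region such as $\{w: 1/2<|w|<1\}\setminus[3/4,1)$ has $\partial E_\infty=\partial\D\cup[3/4,1]$, which is not a Jordan curve), and then Lemma~\ref{lem:Iordanext} does not apply with $\partial_0V=\partial\D$. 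The paper excludes this by showing, via the Argument Principle, that every point of $\D$ sufficiently close to $\partial\D$ lies in $f(\inte(A))$; since $E_\infty$ is connected and contains $\C\setminus\D$, it then cannot meet $\D$, so $\partial E_\infty=\partial\D$. Alternatively, you could invoke the boundary-correspondence fact quoted before Lemma~\ref{lem:Iordanext} (the cluster set of the conformal map $f|\inte(A)$ as $|z|\to 1$ equals the boundary component $\partial E_\infty$) to conclude $\partial E_\infty=g(\partial\D)=\partial\D$. Either way, this step must be supplied; the rest of your proof is sound.
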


The existence of a subsequence that converges uniformly  on compact subsets of $\inte(A)$ immediately follows from Montel's Theorem. The point here is that we get uniform convergence ``up to the boundary" $\partial \D$, i.e., on compact subsets of $A$. 

Note that for $f\in \mathcal{F}$ the set $f(\inte(A))$ has two complementary components.  One is equal to the complement of $\D$ while the other is a compact subset of $\D$.  

\medskip
\no{\em Proof.} Let $\eps>0$, $z_0,z_1\in \partial \D$, and 
$f\in \mathcal{F}$ be arbitrary. We may assume that $\eps<1/10$. 

Suppose that  $\delta>0$ and $|z_1-z_0|<\delta$. Lemma~\ref{lem:wolf} implies that 
if  $\delta>0$ is small enough only depending on $\eps$, then there exists  a crosscut $\alpha$ in $\D$ that lies in $A$,  separates the 
 points $z_0$ and $z_1$ from each point on the circle $\{z\in \C:|z|=r\}\sub \partial A$ and satisfies $\length_\C (f(\alpha))<\eps$. Then  $\beta=f(\alpha)$ is also 
a crosscut in $\D$, and it separates $f(z_0), f(z_1)\in \partial \D$ from each point in the bounded  component of $\C\setminus f(A)$, and hence from $0$ by our hypotheses.  Since $\length_\C(\beta)<\eps<1/10$, this implies that $f(z_0)$ and $f(z_1)$ lie on the smaller subarc 
of $\partial \D$ determined by the endpoints of $\beta$.
This arc has diameter bounded by the diameter of $\beta$. 
Hence
$$|f(z_0)-f(z_1)|\le \text{length}_\C(\beta)<\eps. $$
The equicontinuity of the family of boundary maps follows. 

Let $(f_n)$ be an arbitrary sequence in $\mathcal{F}$. 
Since the sequence is  uniformly bounded, Montel's Theorem implies that there exists a subsequence that converges uniformly on compact subsets of $\inte(A)$. By the first part 
of the proof, we know that the maps $f_n|\partial \D$ are equicontinuous. Hence by passing to a further subsequence, we may assume that our subsequence converges uniformly 
on $\partial \D$.  

Replacing our original sequence by such a subsequence,  we may assume that   $(f_n)$ converges uniformly on compact subsets of $\inte(A)$ 
and uniformly on $\partial \D$. We claim that this implies that 
$(f_n)$ converges uniformly on compact subsets of $A$. 
Indeed, if $K\sub A$ is an arbitrary compact set, then there exists $r<r'<1$ such that 
$$K\sub A'= \{z\in \C: r'\le |z|\le 1\}. $$ 
The circle $\{z\in \C: |z|=r'\}$ is a compact subset of $\inte(A)$, so the convergence of our sequence $(f_n)$ is uniform on this set. Morever, $(f_n)$ converges uniformly on $\partial \D$. 
The Maximum Principle implies that  the sequence converges 
uniformly on $A'$ and hence on $K$.  

Let $f$ be the limit function of the sequence $(f_n)$. Then $f$ is continuous on $A$. Moreover, by Hurwitz's Theorem   $f$ is either constant or a conformal map on $\inte(A)$. Here the former case is impossible,
because we have $f_n(\partial \D)=\partial \D$ and so 
$f(\partial \D)=\partial \D$; indeed, if $y\in \partial \D$ is arbitrary, and $x$ is any sublimit of a sequence $(x_n)$ in $\partial \D$ with $f_n(x_n)=y$ for each $n$, then $f(x)=y$. 

The set $f(\inte(A))$ is a region in $\D$. It follows from the Argument Principle that every point in $\D$ that is sufficiently close to $\partial \D$ lies in $f(\inte(A))$. Hence 
one of the boundary components of $f(\inte(A))$ is $\partial \D$, and so  
 Lemma~\ref{lem:Iordanext}  implies that $f$ is a 
homeomorphism  on $A$.  Since the value $0$ is not attained by any of the functions $f_n$, the function $f$ does not attain  $0$ either. This implies that $0$ is contained in the bounded component of $\C\setminus f(\inte(A))$. We conclude that $f\in \mathcal{F}$.
\qed 
\medskip

\begin{remark}\label{rem:indproof}\textnormal{We referred to Lemma~\ref{lem:Iordanext} in the proof of the previous lemma.  By using similar  ideas  as in the previous proof based on Lemma~\ref{lem:wolf} and the fact on crosscuts in Jordan regions  mentioned before Lemma~\ref{lem:unifcont},   one can actually easily give a proof of Lemma~\ref{lem:Iordanext}.   One first shows the uniform continuity of the map $f$ in Lemma~\ref{lem:Iordanext} on a
(topological)  annulus  $A\sub U$ 
with $\partial_1U\sub \partial A$. This implies that  $f$ has a continuous extension to $\partial_1U$. This extension 
satisfies $f(\partial_1U)\sub \partial_1V$. The map $f|\partial _1U$
is a homeomorphism of $\partial_1U$ onto $ \partial_1V$, because an inverse map
can be obtained by applying the same argument to $f^{-1}$ on $V$.
}
\end{remark}

A region $V\sub \OC$ is called a {\em circle domain} if
its complementary components are round, possibly degenerate, disks.  The following theorem is one of the landmarks of classical uniformization theory. 

\begin{theorem}[Koebe's  Uniformization Theorem]\label{thm:Koebunif}
Let $U\sub \OC$ be a region with finitely many complementary components. Then there exists a conformal map $f\:U\ra V$ of $U$ onto a circle domain $V$. The map $f$ is unique up to post-composition with an orientation-preserving  M\"obius transformation.
\end{theorem}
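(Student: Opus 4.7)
The plan is to treat existence and uniqueness separately. In both, the preliminary reduction is to the case where all $n+1$ complementary components $K_0, \ldots, K_n$ are non-degenerate continua, since degenerate (one-point) components are removable singularities for bounded holomorphic functions and any conformal map of the punctured region to a circle domain automatically extends across such points.

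For existence, I would use Koebe's iterative rounding procedure. Set $U_0 = U$, $f_0 = \mathrm{id}$, and at step $m$ cyclically select a boundary component $\widehat{\partial}_j U_m$ with $j \equiv m \pmod{n+1}$. The Riemann mapping theorem applied to the simply connected region $\widehat{\mathbb{C}} \setminus \widehat{\partial}_j U_m$ yields a conformal map $\psi_m$ onto the complement of a closed round disk, normalized by fixing three anchor points (say three points on $\partial_0 U_0$ once $\partial_0$ has been rounded). Set $U_{m+1} = \psi_m(U_m)$ and $f_{m+1} = \psi_m \circ f_m$. The key estimate, furnished by the Koebe distortion theorem, is that $\psi_m$ perturbs each already-rounded boundary component of $U_m$ by an amount that is a definite fraction of its current ``non-roundness''. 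A bookkeeping argument then shows that the total non-roundness decays geometrically over each cycle of $n+1$ steps, so that in the limit all boundary components become round circles. Extract a locally uniform sublimit $f$ of $(f_m)$ by Montel's theorem; by Lemma~\ref{lem:Iordanext} applied to each boundary component separately, $f$ extends to a homeomorphism of closures, and $f(U)$ is the desired circle domain.

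For uniqueness, it suffices to prove the rigidity theorem: every conformal bijection $g \colon V_1 \to V_2$ between finitely connected circle domains is the restriction of a Möbius transformation of $\widehat{\mathbb{C}}$. Uniqueness then follows immediately, since $f_2 \circ f_1^{-1}$ is such a bijection. To prove the rigidity theorem, first apply Lemma~\ref{lem:Iordanext} to extend $g$ to a homeomorphism of closures sending the round circles bounding $V_1$ to those bounding $V_2$. Iteratively apply the Schwarz reflection principle across these round circles: reflection across a round circle is an inversion (a Möbius map), matched by the corresponding inversion of the image circle. Each reflection step produces a conformal extension, and after $k$ cycles one obtains a conformal bijection $g_k \colon \widehat{\mathbb{C}} \setminus L_k \to \widehat{\mathbb{C}} \setminus L_k'$, where $L_k$ is a nested sequence of compact sets, each a union of iteratively inverted images of the complementary disks. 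The spherical areas $\Sigma(L_k)$ shrink geometrically by the Koebe area theorem applied to each reflection, so the limit set $L = \bigcap_k L_k$ has spherical measure zero. The set $L$ has the structure of the limit set of a finitely generated reflection group; in particular it has Hausdorff dimension strictly less than two and is removable for bounded conformal maps. The resulting extension is a conformal self-homeomorphism of $\widehat{\mathbb{C}}$ and hence a Möbius transformation.

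The main obstacle is the removability of $L$ in the uniqueness argument: merely having spherical measure zero is not sufficient for removability of conformal maps, and one must exploit the specific Schottky-like geometric structure of $L$ to conclude. For $n = 0$ this is empty (the Riemann mapping case), for $n = 1$ the limit set consists of two points (the standard annulus case), and for $n \ge 2$ one needs Hausdorff dimension bounds for Schottky limit sets, or alternatively one can route the argument through the Schottky double, where $g$ extends to a biholomorphism between two compact Riemann surfaces of genus $n$ equipped with a canonical anti-holomorphic involution. The quantitative decay estimate in the existence argument is a secondary but technically demanding hurdle, requiring careful tracking of the shape of each boundary component through the iteration using Koebe distortion bounds.
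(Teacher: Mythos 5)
The paper does not actually prove this theorem: it cites Conway for existence and uniqueness in the case of non-degenerate complementary components, and only records the reduction of the general statement to that case via removability of isolated punctures --- a reduction you also carry out correctly at the start. So you are supplying a proof sketch where the paper supplies a citation, and the route you choose is the classical one: Koebe's iterative circle normalization for existence, Schwarz reflection for uniqueness. Both are legitimate. It is worth noting that when the paper needs to \emph{prove} a uniformization statement of this flavor (Theorem~\ref{thm:squareunif}), it deliberately avoids the iteration and instead uses Schramm's continuity/degree method (Lemma~\ref{lem:degree}): one shows the natural map between parameter spaces of domains is continuous and proper and concludes surjectivity. That route trades the convergence analysis of the iteration --- which is the genuinely hard part of your existence argument, and which ``a bookkeeping argument then shows'' compresses considerably --- for a properness argument, and is the reason the paper can afford to treat Koebe's theorem itself as a black box.

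There is one claim in your uniqueness argument that is wrong if taken literally: that $L$ ``has Hausdorff dimension strictly less than two and is removable for bounded conformal maps.'' Neither Hausdorff dimension $<2$ nor spherical measure zero implies conformal removability (you acknowledge the second point yourself but then lean on the first). The dimension of the limit set of a reflection group in $n+1\ge 3$ disjoint circles can exceed $1$, so the $\sigma$-finite-length removability criterion is not available either. The correct ways to close this step are the ones your sketch gestures at but does not commit to: either show directly, via the area theorem applied to the normalized generation-$k$ extensions $g_k$, that the deviation of $g_k$ from a fixed M\"obius map is controlled by the total area of the generation-$k$ disks and hence tends to zero; or show that $L$ is a null set for extremal distance (absolute area zero), which is the removability class that the nested-disk structure actually delivers. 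With either substitution the argument is complete; without it, the final appeal to ``removable because dimension $<2$'' is a gap.
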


See \cite[p.~106, Thm.~15.7.9]{Co} for the existence, and 
\cite[p.~102, Prop.~15.7.5]{Co} for the uniqueness statement (note that in \cite{Co} this is only formulated for regions $U$ whose complementary components are non-degenerate, but our more general 
version can easily be derived from this).
\medskip 

If we equip $\C$  with the Euclidean metric, then  the cyclic 
group $\Ga$ generated by the translation $z\mapsto z+2\pi \iu$, where $\iu$ is the imaginary unit,  acts on $\C$ by isometries.   The Riemannian quotient
$\C/\Gamma$  is isometric to the 
infinite cylinder $Z=\{(x,y,z)\in \R^3: x^2+y^2=1\}$  with the Riemannian metric induced from $\R^3$. The exponential function induces an isometry of $\C/\Gamma\cong
Z$ 
with $\C^{*}=\C\setminus\{0\}$. Here $\C^{*}$ is equipped with 
the {\em flat metric} $d_{\C^*}$ induced    by the length element
$$ds_{\C^*}=\frac{|dz|}{|z|}. $$
We use terminology  for sets in $\C^{*}$ that is  suggested 
by this identification of $\C^{*}$ with the cylinder $Z$.

A {\em finite $\C^{*}$-cylinder}   is a set $A$ of the form
$$A=\{z\in \C: r<|z|<R\}, $$
where $0<r<R$. We denote by $\partial_i A=\{z\in \C: |z|=r\}$ its {\em inner}, and by $\partial_o A=\{z\in \C: |z|=R\}$ its {\em outer} boundary component. The {\em height} $h_A$ of the finite $\C^*$-cylinder $A$ is the 
quantity $h_A=\log(R/r)$. A {\em $\C^*$-square} $Q$ 
is a set of the form 
$$Q=\{\rho e^{\iu t}: \alpha\le t\le\beta \text { and } 
r\le \rho\le R\}, $$
where $\alpha<\beta$, $\beta-\alpha<2\pi$, $0<r<R$,
and $\beta-\alpha=\log(R/r)$. 
The {\em side length} $\ell(Q)$  of $Q$ is defined as $$\ell(Q)=\beta-\alpha=\log(R/r).$$
Note that $0< \ell(Q)<2\pi$.  We call the point 
$ p_Q=\sqrt {rR}e^{\iu (\alpha+\beta)/2}$
the {\em center} of $Q$.   Sometimes it is useful to allow the case of  {\em degenerate} $\C^*$-squares, where $r=R$, $\alpha=\beta$, and $\ell(Q)=0$. Then $Q$ only  consists   of the  point $p_Q$. \medskip

In the following we fix $n\in \N$.  We denote by $\mathcal{S}$  the  set 
of  labeled regions $U \sub \C^*$  that can be written 
as 
\begin{equation}\label{sqdom}
U=\D\setminus (Q_1\cup\dots \cup Q_n) 
\end{equation}
where the sets 
$Q_1, \dots, Q_{n}$ are pairwise disjoint subsets of $\D$ 
such that  $Q_1, \dots , Q_{n-1}$ are $\C^*$-squares, and $Q_n$ is a closed Euclidean disk centered at $0$.  Here we allow degenerate squares and disks,  i.e., sets  consisting of only one point.   The complementary  components 
of $U$ as in \eqref{sqdom} are the sets $$\OC\setminus \D, 
 Q_1, \dots, Q_{n},  $$ 
We assume they are labeled by $0, \dots, n$ in this order. 

Similarly, we denote by $\mathcal{C}$ the set 
of all labeled regions $V \sub \D$ that can be written 
as 
\begin{equation}\label{circdom}
V=\D\setminus  (C_1\cup \dots \cup C_n), 
\end{equation} 
where 
$C_1, \dots, C_{n}$ are pairwise disjoint closed Euclidean disks contained in $\D$ such that $C_n$ has center $0$.  Again we allow degenerate  disks $C_i$ consisting  of only one point.  
Moreover, we assume that the complementary  components  
$$\OC\setminus  \D,  C_1, \dots, C_{n}$$ of $V$ are labeled by $0, \dots, n$, respectively. 

There are natural identifications  of the  spaces $\mathcal{S}$
and $\mathcal{C}$ with certain (relatively) open and connected subsets 
 $S$ and $C$ of $\D^{n-1}\times [0,\infty)^n$, respectively. 
Indeed, if a region $U\in \mathcal{S}$ is written as in 
\eqref{sqdom}, we let it correspond to the 
point 
$$x=(p_1, \dots, p_{n-1}, r_1, \dots, r_n)\in\D^{n-1}\times  [0,\infty)^n ,$$ 
where $p_i\in \D$ is the center and $r_i\ge 0$ is the sidelength
of the $\C^*$-square $Q_i$ for $i=1, \dots, n-1$, and $r_n\ge 0$ is the radius of $Q_n$. 
It is clear that the correspondence $U\leftrightarrow x$ gives 
a bijection of the space $\mathcal{S}$
and an   open  subset $S$ of $\D^{n-1}\times [0,\infty)^n$. The set $S$ is path-connected and hence connected. 
Indeed, if $x\in S$ is arbitrary, then we can get a path in $S$ 
connecting $x$ to a basepoint in $S$  by performing the following procedure on the region $U$ corresponding to $x$:  we shrink the complementary components of $U$ to points, and then 
move these points in $\D$ to prescribed positions   while avoiding collisions of the points. 

Similarly, if $V\in \mathcal{C}$ is written as in \eqref{circdom}, we let it correspond to the point
$$y=(q_1, \dots, q_{n-1}, s_1, \dots, s_n)\in\D^{n-1}\times [0,\infty)^n,$$ 
where $q_i\in \D$ is the center and $s_i\ge 0$ is the radius 
of the disk $C_i$ for $i=1, \dots, n-1$, and $s_n\ge 0 $ is the radius of $C_n$.
Again we get  
a bijection of the space $\mathcal{S}$
and a  open and connected subset $S$ of $\D^{n-1}\times [0,\infty)^n$.

We need a criterion when a sequence $(x_k)$ in one of the sets $S$ or $C$ has a convergent subsequence with a limit in the set.   For this the only obstacle for sequences in $C$  is when the  corresponding  regions have complementary components that get close to each other. For sequences in 
$S$ there is the additional obstacle that  some of the  complementary $\C^*$-squares of the regions may  ``wrap around" the cylinder $\C^*$ and have sidelengths approaching $2\pi$. The following lemma 
gives a simple condition that prevents these phenomena.  

In the proof we use  {\em Hausdorff convergence}
of sets. We remind the reader of the definition of this concept. Let   $(A_k)$  be  a sequence of closed subsets of a metric space $(X,d)$.  We say that the sequence $(A_k)$ {\em Hausdorff converges}  to another closed set 
$A\sub X$, written as $A_k\to A$ as $k\to \infty$, 
if for all $\eps>0$ there exists $N\in \N$ such that 
$A\sub N_\eps(A_k)$ and $A_k\sub N_\eps(A)$ whenever  $k>N$.  
We will use this for subsets of $X=\oC$. Unless otherwise specified,  
$d$ will then be the chordal metric on $\oC$.  If the sets under consideration are contained in a compact subset of $\C$,  one can  alternatively use the Euclidean metric.

\begin{lemma}[Subconvergence criterion] \label{lem:subconvcrit}Let   $(x_k)$ be a 
sequence in $S$ (or $C$), and $U_k$ be the labeled 
region in $\mathcal{S}$ (or $\mathcal{C}$) corresponding to 
$x_k$ for $k\in \N$. Suppose that there exist pairwise disjoint closed 
Jordan  regions 
$D_1, \dots, D_n \sub \D$ such that $\widehat \partial_i U_k
\sub D_i$ for all    $i=1,\dots, n$ 
and all $k\in \N$. Then the sequence $(x_k)$ has a subsequence that converges to a point in $S$ (or $C$).  
\end{lemma}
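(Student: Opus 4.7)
The plan is to extract a coordinate-wise convergent subsequence of $(x_k)$ and then to verify that the limit $x^*$ lies in $S$ (respectively $C$) rather than on its boundary. Writing $x_k=(p_1^{(k)},\dots,p_{n-1}^{(k)},r_1^{(k)},\dots,r_n^{(k)})$ in the $\mathcal{S}$-case (and notationally similar $y_k=(q_i^{(k)},s_i^{(k)})$ in the $\mathcal{C}$-case), $p_i^{(k)}$ is the center of the $\C^*$-square $Q_i^{(k)}=\widehat{\partial}_iU_k$ with sidelength $r_i^{(k)}$ for $i<n$, while $r_n^{(k)}$ is the radius of the disk $Q_n^{(k)}=\widehat{\partial}_nU_k$ centered at $0$. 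Since $Q_i^{(k)}\subset D_i$ and each $D_i$ is a compact subset of $\D$, the centers $p_i^{(k)}$ lie in a compact subset of $\D$. The sidelengths satisfy $r_i^{(k)}<2\pi$ for $i<n$ by the definition of a $\C^*$-square, while $r_n^{(k)}\le\max_{z\in D_n}|z|<1$; the $\mathcal{C}$-case is similar with radii bounded by $\tfrac12\diam(D_i)$. Passing to a subsequence, I may assume $p_i^{(k)}\to p_i^*\in D_i\subset\D$ and $r_i^{(k)}\to r_i^*\in[0,\infty)$ for each $i$, producing a candidate limit $x^*$.

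The task is to prove $x^*\in S$ (resp.\ $C$). From $0\in Q_n^{(k)}\subset D_n$ and the pairwise disjointness of the $D_j$ I obtain $0\notin D_i$ for $i<n$, so in particular $|p_i^*|>0$. Define $Q_i^*$ (resp.\ $C_i^*$) from $(p_i^*,r_i^*)$ using the standard parametrization. To confirm $x^*\in S$ I must check (i) $r_i^*<2\pi$ for each $i<n$, (ii) pairwise disjointness of $Q_1^*,\dots,Q_n^*$, and (iii) containment in $\D$. Once (i) is in hand, the parametrization $(p,\ell)\mapsto Q(p,\ell)$ is continuous at $(p_i^*,r_i^*)$ in the Hausdorff metric, so $Q_i^{(k)}\to Q_i^*$ in that metric; closedness of $D_i$ then gives $Q_i^*\subset D_i$, from which (ii) and (iii) follow via $D_i\cap D_j=\emptyset$ and $D_i\subset\D$. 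In the $\mathcal{C}$-case the parametrization $(q,s)\mapsto\overline B(q,s)$ is continuous everywhere, so the analog of (i) is automatic and one obtains $x^*\in C$ directly.

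The heart of the argument is therefore (i), where the Jordan-region hypothesis on $D_i$ is used essentially. Suppose for contradiction that $r_i^*=2\pi$ for some $i<n$. Since $|p_i^*|>0$ and the radial bounds $|p_i^{(k)}|e^{\pm r_i^{(k)}/2}$ of $Q_i^{(k)}$ converge to $|p_i^*|e^{\pm\pi}$ while the angular aperture tends to $2\pi$, one checks directly that for every $z_0=\rho_0 e^{\iu t_0}$ with $|p_i^*|e^{-\pi}<\rho_0<|p_i^*|e^{\pi}$ and any $t_0\in\R$ there exist $z_k\in Q_i^{(k)}$ with $z_k\to z_0$; closedness of $D_i$ then gives $z_0\in D_i$. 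Taking closures, the full closed annulus $A=\{z\in\C:|p_i^*|e^{-\pi}\le|z|\le|p_i^*|e^{\pi}\}$ is contained in $D_i$, and in particular $D_i$ contains the circle $\Gamma=\{z\in\C:|z|=|p_i^*|\}$. However, $D_i$ is a closed Jordan region in $\D$ with $0\notin D_i$, so $\C\setminus D_i$ is a connected open subset of $\C$ containing both $0$ and all points of sufficiently large modulus. If $\Gamma\subset D_i$, then $\C\setminus D_i$ would be contained in the disjoint union $\{|z|<|p_i^*|\}\cup\{|z|>|p_i^*|\}$ and, being connected, would lie entirely in one of these two sets, contradicting the simultaneous presence of both $0$ and points of large modulus in $\C\setminus D_i$. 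Thus $r_i^*<2\pi$, completing the verification that $x^*\in S$.
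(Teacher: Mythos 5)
Your proof is correct and follows essentially the same route as the paper: extract subsequences so that centers and sidelengths (or radii) converge, rule out sidelength $2\pi$ by showing that otherwise $D_i$ would contain a full circle around $0$ and hence separate $0\in D_n$ from $\infty$, and then use Hausdorff convergence $Q_i^{(k)}\to Q_i^*\subset D_i$ together with the disjointness of the $D_i$ to conclude the limit lies in $S$ (resp.\ $C$). The only cosmetic differences are that you take coordinate-wise subsequences all at once rather than successively, and you phrase the $\ell=2\pi$ contradiction via connectedness of $\C\setminus D_i$ rather than via the fact that a Jordan region containing a circle must contain one of the two disks it bounds.
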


\no {\em Proof.} We will only prove the statement if $n\ge 2$ and $(x_k)$ is a sequence in $S$.
The cases when $n=1$ or when  $(x_k)$ is a sequence in $C$ are  similar and easier.  

Note that $0$ is contained in each of the sets $\widehat \partial_nU_k$, and so $0\in D_n$.   
For each $k\in \N$ the boundary component $\widehat \partial_1 U_k$ of $U_k$ is a $\C^*$-square  contained in 
$D_1\sub \D$. By passing to a subsequence if necessary, we may assume that for $k\to \infty$ the centers of the $\C^*$-squares $\partial_1 U_k$  converge to a point $c_1\in D_1$ and their sidelengths converge to a number $l_1\in [0,2\pi]$. We claim that $l_1<2\pi$. 

For otherwise, $l_1=2\pi$. Then a limiting argument shows that 
the circle $\{z\in \C: |z|=|c_1|\}$ is contained in $D_1$. Since $D_1$ is a Jordan  region, this implies that 
$\{z\in \C: |z|\le |c_1|\}\sub D_1$ and so $0\in D_1$. On the other hand, $0\in D_n$. Since $n\ne 1$,  and $D_1$ and $D_n$ are disjoint by hypothesis we get a contradiction. 
So  $l_1<2\pi$.

Let $Q_1$ be
the (possibly degenerate) $\C^*$-square  with center $c_1$ and sidelength $l_1$. Then 
$\widehat \partial_1U_k\to  Q_1$ as $k\to \infty$ in the sense of Hausdorff convergence, and $Q_1\sub D_1$. 

A similar argument  shows that by passing to successive subsequences
if necessary,  we may assume that $\partial_iU_k\to Q_i$ as $k\to \infty$, where $Q_i\sub D_i$ is a $\C^*$-square for $i=1, \dots, n-1$, and a closed disk centered at $0$ for $i=n$.  
Since the sets $D_1, \dots, D_n$ are pairwise  disjoint subsets of $\D$, the same is true for the sets $Q_1, \dots, Q_n$.
It follows that   $U=\D\setminus (Q_1\cup \dots\cup Q_n)$ 
is a region in $\mathcal{S}$, where the complementary  components
$$\OC\setminus  \D,  Q_1, \dots,  Q_n$$  of $U$ 
are labeled by the numbers $0, \dots, n$ in this order. 
If $x\in S$ is the point corresponding to $U$, then it is clear that 
$(x_k)$ subconverges to $x$. \qed
\medskip

We  define a map $\eta\: S\ra C$ as follows. 
Let $x\in S$ be arbitrary, and $U\in \mathcal{S}$ be the labeled region corresponding to $x$.  By Koebe's  Uniformization 
Theorem there exists a conformal map $f$ of $U$ onto a circle domain $ V$,  unique up to post-composition with a M\"obius transformation. 
We label the complementary  components of $V$ so that $f$ is 
label-preserving.  By post-composing $f$ by a  M\"obius transformation we may assume that 
the boundary component of $V$ with label $0$ is the unit circle 
$\partial \D$ and that the one with label $n$ is of  the form
$\{z\in \C: |z|=s\}$ with $0\le s<1$. By the remark following 
Lemma~\ref{lem:Iordanext} the map  $f$ has an  extension, also denoted $f$,  to a homeomorphism from $\overline U$ to 
$\overline V$. By 
post-composing $f$ by a suitable rotation, we may also assume that $f(1)=1$.  So $f$ is normalized such 
that 
$$f(\partial \D)=\partial \D, \ f(\partial_nU)=\partial_nV=\{w\in \C:|w|=s\},  \text{ where } 0\le s<1, \text{ and } f(1)=1. $$  Note that with these normalizations the map $f$ 
is  uniquely determined, and the labeled region  $V=f(U)$ lies in $\mathcal{C}$. We let $y\in C$ be the point corresponding to $V$, and set $\eta(x):=y$. 

Our goal is to show that $\eta$ is surjective. 
We need some preparation.

\begin{lemma}\label{lem:sepalim} For $k\in \N\cup\{\infty\}$ let  
 $\varphi_k\: \partial \D\ra J_k:=\varphi_k(\partial\D)\sub \oC$ be   homeomorphisms such that $\varphi_k\ra \varphi_\infty$ uniformly on $\partial \D$ as $k\to \infty$. 
For $k\in \N$ let $M_k\sub \OC\setminus J_k$ be a set whose points are separated by $J_k$ from a basepoint $p\in \OC\setminus 
\bigcup_{k\in \N\cup\{\infty\}} J_k$. 
If $$\delta:=\liminf_{k\to \infty} \dist(M_k,J_\infty)>0,$$ then 
$J_\infty$ separates the points in $M_k$ from $p$ for all large enough $k$. 
\end{lemma}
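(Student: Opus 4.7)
The plan is to argue by contradiction, combining Hausdorff convergence $J_k\to J_\infty$ (immediate from the uniform convergence $\varphi_k\to\varphi_\infty$ on the compact set $\partial\D$) with the hypothesis $\liminf_k\dist(M_k,J_\infty)\ge \delta>0$. Let $W_p$ denote the component of $\oC\setminus J_\infty$ containing $p$; I want to show that $M_k\cap W_p=\emptyset$ for all sufficiently large $k$.

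If not, there exist an increasing sequence $(k_l)$ in $\N$ and points $x_l\in M_{k_l}\cap W_p$. Passing to a subsequence, I may assume $x_l\to x_\infty\in \oC$. The definition of $\delta$ forces $\dist(x_\infty,J_\infty)\ge \delta>0$, so $x_\infty\notin J_\infty$, and since $\overline{W_p}\sub W_p\cup J_\infty$ this gives $x_\infty\in W_p$. The set $W_p$ is open and connected, hence path-connected, so $p$ and $x_\infty$ can be joined by a path $\alpha$ whose image is a compact subset of $W_p$; in particular $\eps_0:=\dist(\alpha,J_\infty)>0$.

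I now choose $l_0$ so large that for every $l\ge l_0$ one has simultaneously $J_{k_l}\sub N_{\eps_0/2}(J_\infty)$, $J_{k_l}\sub N_{\delta/4}(J_\infty)$, and $\sigma(x_l,x_\infty)<\delta/4$. For such $l$ the path $\alpha$ is disjoint from $J_{k_l}$; and a spherical geodesic segment $\beta_l$ from $x_\infty$ to $x_l$ is contained in $\overline B(x_\infty,\delta/4)$, which is disjoint from $J_\infty$ (because $\dist(x_\infty,J_\infty)\ge \delta$) and hence from $J_{k_l}$ as well. The concatenation $\alpha\cdot\beta_l$ is then a path from $p$ to $x_l\in M_{k_l}$ that misses $J_{k_l}$, contradicting the hypothesis that $J_{k_l}$ separates $x_l$ from $p$. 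The only delicate point is the joint control on $J_{k_l}$ along both the fixed long piece $\alpha$ and the short variable piece $\beta_l$: both are secured by Hausdorff convergence together with the strict positivity of $\delta$ and $\eps_0$.
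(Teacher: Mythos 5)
Your argument is correct, but it proceeds along a genuinely different route from the one in the paper. The paper normalizes $p=\infty$, observes that a point $a\in\C\setminus J_k$ is separated from $\infty$ by $J_k$ exactly when the winding number of the parametrizing loop $\alpha_k(t)=\varphi_k(e^{\iu t})$ around $a$ is $\pm 1$, and then uses that for large $k$ the loops $\alpha_k$ and $\alpha_\infty$ are homotopic inside $N_{\delta/2}(J_\infty)$ while $M_k$ lies outside that neighborhood; homotopy invariance of the winding number then transfers the separation from $J_k$ to $J_\infty$ directly. You instead argue by contradiction with a subsequence extraction and an explicit path: a point of $M_{k_l}$ lying in the component $W_p$ of $\oC\setminus J_\infty$ containing $p$ would, after passing to a limit $x_\infty\in W_p$, be joined to $p$ by the concatenation of a fixed path $\alpha\sub W_p$ and a short geodesic $\beta_l$, both of which miss $J_{k_l}$ once $J_{k_l}\sub N_{\eps_0/2}(J_\infty)\cap N_{\delta/4}(J_\infty)$ — contradicting the separation by $J_{k_l}$. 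Your version is more elementary (no winding numbers or homotopy, only the Jordan curve theorem and path-connectedness of open connected sets), and it is slightly more general: you only use the one-sided containment $J_k\sub N_\eps(J_\infty)$, whereas the paper's homotopy argument genuinely exploits the convergence of the \emph{parametrizations}. The paper's proof, in exchange, is direct (no contradiction or subsequence) and packages the transfer of separation in one clean homotopy-invariance step. One cosmetic remark: to conclude that $J_\infty$ separates $M_k$ from $p$ you need not only $M_k\cap W_p=\emptyset$ but also $M_k\cap J_\infty=\emptyset$; the latter is immediate from $\dist(M_k,J_\infty)\ge\delta/2>0$ for large $k$, but it is worth stating.
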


Here we say that a Jordan curve $J\sub \oC$ separates two points $a,b\in  \oC$  if 
$a$ and $b$ lie in different complementary components of $J$. 

\begin{proof} We may assume that $p=\infty$. Then $J_k$ is a 
Jordan curve in $\C$ for all $k\in \N\cup \{\infty\}$. One of the two closed Jordan regions in $\oC$ bounded by $J_k$ contains $\infty$. Let 
 $D_k\sub \C $ be the other one, and let $\alpha_k$ be the loop defined by 
  $\alpha_k(t)=\varphi_k(e^{\iu t})$ for $t\in [0,2\pi]$. 
Then  a point $a\in \C\setminus J_k$ is separated from $p=\infty$ 
 by $J_k$ if and only if the winding number of the loop 
 $\alpha_k$
 around $a$ is non-zero; indeed, this winding number 
is $\pm1$ for points in $\inte(D_k)$ depending on the orientation
of $\alpha_k$, and $0$ for points in $\C\setminus D_k$. 

By our hypotheses we have $M_k\sub \C\setminus N_{\delta/2}(J_\infty)$ for large enough $k$, say for $k\ge k_1$. Moreover,
since $\varphi_k\to \varphi_\infty$ as $k\to \infty$ uniformly on $\partial \D$, for all large enough $k$, say for $k\ge k_2$, the loop $\alpha_k$ lies in $N_{\delta/2}(J_\infty)$ and is homotopic to $\alpha_\infty$  in $ N_{\delta/2}(J_\infty)$. Then for  $k\ge k_2$ the winding numbers of $\alpha_k$ and $\alpha_\infty$ around any point in $\C\setminus 
N_{\delta/2}(J_\infty)$ are the same. By our hypotheses  the winding number of $\alpha_k$ around any  point in $M_k$ is $\pm1$. Hence 
for $k\ge k_1\vee k_2$ the winding number of $\alpha_\infty$ around any point $a\in M_k$ is $\pm1$, and so  $J_\infty$ separates $a$ from $p$. 
 \end{proof}

\begin{lemma}\label{lem:etacont}
The map $\eta$ is continuous.
\end{lemma}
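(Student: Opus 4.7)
The plan is a subsequence argument: suppose $x_k\to x_\infty$ in $S$, and let $U_k,U_\infty\in\mathcal{S}$ be the corresponding labeled regions, $V_k$ the labeled circle domain corresponding to $\eta(x_k)$, and $f_k\colon U_k\to V_k$ the normalized uniformizing map (so $f_k(\partial\D)=\partial\D$, $f_k(1)=1$, and $f_k(\widehat{\partial}_nU_k)$ is centered at $0$). To prove $\eta(x_k)\to \eta(x_\infty)$ it suffices to show that every subsequence of $(x_k)$ admits a further subsequence along which $\eta$ converges to $\eta(x_\infty)$.

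First I would establish local uniform convergence of the $f_k$ on $U_\infty$. Fix a thin annular neighborhood $A\subset\D$ of $\partial\D$ contained in every $U_k$ for sufficiently large $k$; the restrictions $f_k|A$ satisfy the hypotheses of Lemma~\ref{lem:unifcont} (with $0$ lying in the bounded component of $\C\setminus f_k(\inte(A))$ because $0\in\widehat{\partial}_nV_k$), so after extracting a subsequence they converge uniformly on compact subsets of $A$ — including $\partial\D$ — to a map $g$ that is continuous on $A$, conformal on $\inte(A)$, with $g(\partial\D)=\partial\D$ and $g(1)=1$. By Montel's theorem and a diagonal argument over an exhaustion of $U_\infty$ by compact sets, I may pass to a further subsequence along which $f_k$ converges locally uniformly on $U_\infty$ to a holomorphic map $f_\infty$ that agrees with $g$ on $\inte(A)$. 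The limit $f_\infty$ cannot be constant (its boundary values on $\partial\D$ already form a homeomorphism onto $\partial\D$), so by Hurwitz's theorem $f_\infty$ is univalent and $f_\infty\colon U_\infty\to V_\infty:=f_\infty(U_\infty)\subset\D$ is conformal.

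Next, by extracting further subsequences via Hausdorff compactness, each disk $\widehat{\partial}_iV_k$ converges to a closed disk $L_i\subset\overline{\D}$ (possibly degenerate), with $L_n$ centered at $0$. An argument analogous to the one near $\partial\D$, applied in annular neighborhoods of each non-degenerate $\partial_iU_\infty$ and supplemented by the removable singularity theorem at degenerate components, extends $f_\infty$ continuously across every boundary component of $U_\infty$ and shows that $f_\infty(\partial_iU_\infty)=\partial L_i$; here Lemma~\ref{lem:sepalim} is used to verify that each $\partial L_i$ plays the correct separating role in the limit. Piecing the extensions together via Lemma~\ref{lem:Iordanext} yields a homeomorphism $\overline{U_\infty}\to\overline{V_\infty}$ extending $f_\infty$. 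Since a homeomorphism sends distinct boundary components to disjoint sets, the $L_i$ are pairwise disjoint subsets of $\D$, whence $V_\infty\in\mathcal{C}$. The normalizations pass to the limit, so by the uniqueness clause in Theorem~\ref{thm:Koebunif} the labeled circle domain $V_\infty$ is exactly the normalized uniformization of $U_\infty$. It follows that $\eta(x_k)\to\eta(x_\infty)$ in $C$ along the extracted subsequence.

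The principal obstacle is ensuring that the limiting boundary components $L_i$ neither collide with one another nor merge into $\partial\D$: a priori the circle domains $V_k$ could degenerate in the limit. What rules this out is precisely the univalence of $f_\infty$ on $U_\infty$ supplied by Hurwitz, combined with the homeomorphic extension to the closures, since disjointness of the complementary components of $U_\infty$ then forces disjointness of the $L_i$ and their separation from $\partial\D$. The technical care needed to make this extension rigorous — in particular verifying $f_\infty(\partial_iU_\infty)=\partial L_i$ and that no boundary component is ``lost'' to the interior — is where the full strength of Lemmas~\ref{lem:unifcont} and \ref{lem:sepalim} enters the argument.
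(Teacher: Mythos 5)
Your overall architecture (reduction to subsequences, Lemma~\ref{lem:unifcont} plus Montel to get locally uniform convergence of $f_k$ on $U_\infty\cup\partial\D$, Hurwitz for univalence of the limit, Hausdorff limits of the complementary disks) matches the first half of the paper's argument. But the step on which everything hinges --- ``an argument analogous to the one near $\partial\D$, applied in annular neighborhoods of each non-degenerate $\partial_iU_\infty$ \dots extends $f_\infty$ continuously across every boundary component and shows that $f_\infty(\partial_iU_\infty)=\partial L_i$'' --- is a genuine gap, not a routine analogue. Lemma~\ref{lem:unifcont} depends on two features that fail at the inner boundary components: the maps are defined on a \emph{fixed} annulus, and they map the relevant boundary circle onto the \emph{fixed} circle $\partial\D$. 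Here the domains $U_k$ vary (the $\C^*$-squares $\widehat\partial_iU_k$ move with $k$, so no fixed collar straddling $\partial_iU_\infty$ lies in all $U_k$), and the image circles $\partial(\widehat\partial_iV_k)$ vary and could a priori shrink to points or drift into each other or into $\partial\D$. Likewise Lemma~\ref{lem:Iordanext} cannot be invoked until you already know that the target of $f_\infty$ is a region whose $i$-th boundary component is the Jordan curve $\partial L_i$ --- which is exactly what you are trying to prove. Your closing paragraph is circular on this point: you rule out collision/degeneration of the $L_i$ ``by the homeomorphic extension to the closures,'' but that extension presupposes the non-degeneration you want.

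The paper circumvents this entirely. It never extends $f$ to the inner boundary components. Instead it slightly enlarges each $\widehat\partial_iU_\infty$ to a closed Jordan region $D_i$ with $\partial D_i$ a \emph{compact subset of} $U_\infty$, uses the locally uniform convergence there together with the Argument Principle and Lemma~\ref{lem:sepalim} to trap $\widehat\partial_iV_k$ inside the Jordan region $E_i$ bounded by $f(\partial D_i)$ for large $k$, and then applies the subconvergence criterion (Lemma~\ref{lem:subconvcrit}) to get $y_k\to\tilde y_\infty\in C$. The identification $\tilde y_\infty=y_\infty$ --- in particular the surjectivity $f(U_\infty)\supseteq\widetilde V_\infty$, which in your write-up is smuggled in through the unproved boundary correspondence --- is obtained by running the entire argument a second time on the inverse maps $f_k^{-1}$ and checking $f\circ g=\operatorname{id}$ on $\widetilde V_\infty$. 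To repair your proof you should replace the boundary-extension step by this two-sided (forward and inverse) trapping argument, or otherwise supply an independent proof that no $L_i$ degenerates and that $f_\infty(U_\infty)$ exhausts $\D\setminus(L_1\cup\dots\cup L_n)$.
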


\no{\em Proof.}
 Let $(x_k)$ be an arbitrary sequence in $S$ with 
$x_k\to x_\infty\in S$ as $k\to \infty$. Define $y_k=\eta(x_k)$ and 
$y_\infty=\eta(x_\infty)$. We will show that there exist a subsequence 
$(y_{k_l})$ of $(y_k)$ such that $y_{k_l}\to y_\infty$ as $l\to \infty$. 

Since $(x_k)$ is arbitrary, this fact can then also be applied to any subsequence of $(x_k)$. Hence for every subsequence of $(y_k)$ there exists a ``subsubsequence" that converges to $y_\infty$. 
This implies that the sequence $(y_k)$ itself converges to $y_\infty$, and   the continuity of $\eta$ follows. 

It remains to produce the subsequence $(y_{k_l})$ with 
$y_{k_l}\to y_\infty$.  For  $k\in \N\cup\{\infty\}$ let $ U_k\in \mathcal{S}$ be the  labeled region corresponding to 
$x_k\in S$,  $ V_k\in \mathcal{C}$ be  the labeled region corresponding to 
$y_k\in S$,  and  for $k\in \N$ let $f_k\:\overline U_k\ra \overline V_k$ be the homeomorphism
as in the definition of $\eta$. 

Since $x_k\to x_\infty$, every compact subset of $U_\infty\cup \partial \D$ lies in  $ U_k\cup \partial \D$ for sufficiently large $k$. 
In particular, the map  $f_k$ is  defined on each such set for sufficiently large $k$. 
 
Using the second part of Lemma~\ref{lem:unifcont} together 
with Montel's theorem, we can find 
 a subsequence  of the sequence  $(f_k)$ that converges uniformly on compact subsets of $U_\infty\cup \partial \D$.  By replacing our original sequence by this subsequence, we  may assume that   $(f_k)$ itself  has this convergence property, and that the limit map $f$ is a 
 homeomorphism     on $U_\infty\cup\partial \D$ that  is  conformal on $U_\infty$, and satisfies $f(\partial\D)=\partial \D$ and  
 $f(1)=1$.

Let  $D_1, \dots, D_n\sub \D$  be pairwise disjoint closed Jordan  regions with 
$$ \widehat \partial_i U_\infty \sub  \inte(D_i)$$ 
for $i=1, \dots, n$. Such Jordan  regions can be found by slightly  enlarging the complementary components $\widehat \partial_1 U_\infty ,\dots,\widehat \partial_n U_\infty$ of $U_\infty$. 
For each $i=1, \dots, n$ and all    large enough $k\in \N$ we then have $\widehat \partial_iU_k\sub \inte (D_i)$, and so   $\partial D_i$  separates the points in $\widehat \partial_iU_k$ from the point $1$.  

Let   
 $$
 U=\D\setminus (D_1\cup \dots \cup D_n).$$ 
Then $ \overline U\sub U_\infty\cup \partial\D$. 
The image 
 $V=f(U)$ can be written as 
$$V=\D\setminus (E_1\cup \dots \cup E_n),$$ 
where $E_1, \dots, E_n\sub \D$ are pairwise disjoint closed Jordan  regions  with
$f(\partial D_i)=\partial E_i$ for  $i=1, \dots,n$. 

Since $f_k\ra f$ uniformly on compact subsets of $U_\infty$, the Argument Principle implies that if $K\sub U_\infty$ is compact, then $f(K)\sub f_k(U_k)$  for all large enough $k$.  In particular,  a small neighborhood of $\partial E_i=f(\partial D_i)$ will lie in $V_k=f_k(U_k)$ if $k$ is large enough. Hence we can choose  $\delta>0$  such that   
$\dist(\partial E_i, \widehat \partial_iV_k)\ge \delta $ for all $i=1, \dots, n$ and all large enough $k$.  We also have   $\widehat \partial_iU_k\sub \inte(D_i)$ and $\partial D_i\sub U_k$ for large enough $k$. Then    $f_k(\partial D_i)$ separates the points in $\widehat  \partial_iV_k$ from the point
$1$. Since   $f_k\to f$ uniformly on $\partial D_i$, it follows from Lemma~\ref{lem:sepalim} that $\partial E_i=f(\partial D_i)$ separates 
the points in  $ \widehat \partial_iV_k$ from $1$ for all $i=1, \dots, n$ and all large enough $k$.  
For such $k$ we then have $\widehat \partial_iV_k\sub E_i$ for all $i=1,\dots, n$. 
By Lemma~\ref{lem:subconvcrit} we may  
pass to another subsequence if necessary and  assume that $y_k\to \tilde y_\infty\in C$.  

The proof will be complete if we can show that $\tilde y_\infty=y_\infty$. Let $ \widetilde V_\infty \in \mathcal{C}$ be the labeled region corresponding to $\tilde y_\infty$.  By definition of the map $\eta$ it is enough to show that $f$ is a label-preserving 
conformal map of $U_\infty$ onto $  \widetilde V_\infty$. For then $f$ has the right  normalization and so $\tilde y_\infty=\eta(x_\infty)= y_\infty$. 

It is clear that $\widehat \partial_i\widetilde V_\infty \sub E_i$ for $i=1, \dots n$.  If we let $D_i$ shrink to $\widehat \partial_i U_\infty$, then the corresponding Jordan  region $E_i$ shrinks to $\widehat \partial_i
f(U_\infty)$. Hence   $\widehat \partial_i V_\infty \sub \widehat \partial_i
f(U_\infty)$ for $i=1, \dots n$, and so   $f(U_\infty)\sub  \widetilde V_\infty$.  These inclusions  show that if in addition $f(U_\infty)\supseteq\widetilde V_\infty$, 
 then $f$ is a label-preserving conformal map between $U_\infty$ and $ \widetilde V_\infty$ as desired.

In order to establish $f(U_\infty)\supseteq \widetilde V_\infty$, we  repeat  the argument in the first part of the proof for the sequence $(f_k^{-1})$.  Passing to a subsequence if necessary, we may assume that     it converges uniformly on compact subsets of 
  $\widetilde V_\infty\cup \partial\D$ to a conformal map $g$.  
  The same proof as above then shows that 
  $g(\widetilde V_\infty)\sub U_\infty$. 
  Hence the map $f\circ g$ is well-defined on $\widetilde V_\infty$ and by uniform convergence we have  
  $$ f(g(w))=\lim_{k\to \infty} f_k(f_k^{ -1}(w))=w$$
  for $w\in \widetilde V_\infty$. This implies that $f(U_\infty)\supseteq  \widetilde V_\infty$
  as desired.  
\qed \medskip

A map between topological  spaces $X$ and $Y$  is called {\em proper} if the preimage of each compact subset of $Y$ is a compact subset of $X$. 

\begin{lemma}\label{lem:etaprop}
The map $\eta$ is proper. 
\end{lemma}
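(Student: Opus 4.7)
The plan is to mirror the proof of Lemma~\ref{lem:etacont}, working now with the inverse maps $g_k=f_k^{-1}$. Let $(x_k)$ be a sequence in $S$ with $y_k=\eta(x_k)\to y_\infty\in C$; it suffices to extract a subsequence of $(x_k)$ converging in $S$. Let $U_k\in\mathcal{S}$ and $V_k\in\mathcal{C}$ correspond to $x_k$ and $y_k$, let $V_\infty\in\mathcal{C}$ correspond to $y_\infty$, and let $f_k\:\overline U_k\to\overline V_k$ be the normalized homeomorphism from the definition of $\eta$ with inverse $g_k$. Since $y_k\to y_\infty$, the sets $\widehat\partial_iV_k$ Hausdorff-converge to $\widehat\partial_iV_\infty$, so we may fix pairwise disjoint closed Jordan regions $E_1,\dots,E_n\sub\D$ with $\widehat\partial_iV_\infty\sub\inte(E_i)$, and then for all large $k$ we have $\widehat\partial_iV_k\sub\inte(E_i)$, so every compact subset of $V_\infty$ lies in $V_k$ for large $k$.

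The uniformly bounded family $(g_k)$ takes values in $\overline\D$. By the same combination of Montel's theorem and Lemma~\ref{lem:unifcont} used in Lemma~\ref{lem:etacont} — applied to a thin annular neighborhood $A=\{r<|z|\le 1\}\sub V_\infty\cup\partial\D$ of $\partial\D$, which satisfies the hypotheses of Lemma~\ref{lem:unifcont} because $0\in\widehat\partial_nU_k$ lies in the bounded component of $\C\setminus g_k(\inte A)$ — we may pass to a subsequence along which $g_k\to g$ uniformly on compact subsets of $V_\infty\cup\partial\D$. The limit $g$ is continuous on its domain, conformal on $V_\infty$, maps $\partial\D$ homeomorphically onto itself with $g(1)=1$, and is a conformal embedding by Hurwitz's theorem. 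In particular each $g(\partial E_i)$ is a Jordan curve in $\D$; define $D_i$ as the closed Jordan region it bounds not containing $\partial\D$. These $D_i$ are pairwise disjoint: by Lemma~\ref{lem:misc}~(ii), $V_\infty$ contains a path joining $\partial E_j$ to $\partial\D$ and avoiding $\partial E_i$, whose $g$-image joins $\partial D_j$ to $\partial\D$ in the complement of $\partial D_i$, forcing $D_j\cap D_i=\emptyset$.

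To show $\widehat\partial_iU_k\sub D_i$ for all large $k$, choose a compact annular neighborhood $K$ of $\partial E_i$ in $V_\infty$. An Argument Principle argument, exactly as in the proof of Lemma~\ref{lem:etacont}, produces $\delta>0$ such that for all large $k$ the $\delta$-neighborhood of $g(\partial E_i)$ lies in $g_k(K)\sub U_k$, so $\dist(\widehat\partial_iU_k,g(\partial E_i))\ge \delta$. Lemma~\ref{lem:sepalim}, applied with $J_k=g_k(\partial E_i)$, $J_\infty=\partial D_i$, $M_k=\widehat\partial_iU_k$, and $p=\infty$, then yields $\widehat\partial_iU_k\sub D_i$ for all large $k$. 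Lemma~\ref{lem:subconvcrit} applied to $(x_k)$ with the Jordan regions $D_1,\dots,D_n\sub\D$ produces the desired convergent subsequence. The main technical point is the passage from $g_k\to g$ on compact subsets of $V_\infty$ to the positive uniform distance bound $\dist(\widehat\partial_iU_k,\partial D_i)\ge\delta$, which is supplied by the Argument Principle; once this is established, the separation lemma and subconvergence criterion close the argument.
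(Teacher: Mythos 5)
Your proof is correct and follows essentially the same route as the paper's: both apply Montel's theorem together with the boundary equicontinuity lemma to the inverse maps $g_k=f_k^{-1}$, trap the components $\widehat\partial_iU_k$ inside fixed pairwise disjoint closed Jordan regions via the Argument Principle and Lemma~\ref{lem:sepalim}, and conclude with the subconvergence criterion Lemma~\ref{lem:subconvcrit}. The only (harmless) difference is packaging: you verify properness through its sequential characterization, so the domains of the $g_k$ vary and compact subsets of $V_\infty$ are only eventually contained in $V_k$, whereas the paper first shows that each $y\in C$ has a neighborhood $N$ with $\eta^{-1}(N)$ precompact, which lets it define all the $g_k$ on one fixed region $\Om$ from the outset.
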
 

\no{\em Proof.} We claim that  every point $y\in C$ has 
a neighborhood $N\sub C$ such that $\eta^{-1}(N)$ is
relatively compact  in $S$. Given this  claim every compact set $K\sub C$ can be covered by finitely many 
such neighborhoods $N_1, \dots, N_m$. Then 
$$ \eta^{-1}(K)\sub \eta^{-1}(N_1)\cup \dots \cup \eta^{-1}(N_m)$$
is relatively compact. The set $\eta^{-1}(K)$ is also closed, since $K$ is closed and $\eta$ is continuous.  Hence $\eta^{-1}(K)$ is compact as desired.

It remains to prove the claim.  Let $y\in C$ be arbitrary, and let $V\in \mathcal{C}$ be the labeled region corresponding 
to $y$. By enlarging the complementary   
 components of $V$ with labels $1, \dots, n$ slightly,  we can 
find a neighborhood $N$ of $y$ in $C$ and  closed Jordan  
regions $ D'_i\sub  \inte (D_i)\sub  D_i\sub \D$   for $i=1, \dots, n$ with the following 
properties: the regions $D_1, \dots, D_n$ are pairwise disjoint,
 and 
 if $\widetilde V\in \mathcal{C}$ is any region corresponding to a point in $N$, then 
$\widehat \partial_i \widetilde V\sub D_i'$ and for all $i=1, \dots,n$.  In particular,  if  $\Om=\D\setminus(D_1'\cup\dots\cup D_n')$, then 
 $\partial D_1\cup \dots \cup \partial D_n\sub   \Om \sub \widetilde  V$.    
  
Now let $(x_k)$ be an arbitrary sequence in  $ \eta^{-1}(N)$, let   $U_k\in \mathcal {S}$ be the labeled region corresponding to $x_k$ and $f_k$ be the  
map  on $\overline U_k$ as in the definition of $\eta$ for $k\in \N$. Then $g_k=f_k^{-1}$ is defined on $\Om\cup \partial \D$ by choice of $N$. 

  Lemma~\ref{lem:subconvcrit} and Montel's Theorem imply that by passing to a subsequence if necessary, we may assume that the sequence $(g_k)$ converges 
to a map $g$ uniformly on compact subsets of $\Om\cup \partial \D$. The map $g$ is a homeomorphism on $\Om\cup \partial \D$, is conformal on $\Om$, and we have $g(\partial \D)=\partial \D$ and $g(1)=1$.  Let $E_i$ for $i=1, \dots, n$ be the closed Jordan  region in $\D$ bounded by the Jordan  curve
$g(\partial D_i)\sub \D$. Then  the regions $E_1, \dots,  E_n$ are pairwise disjoint. 

Similarly as in the proof of Lemma~\ref{lem:etacont} 
one can show that   $\widehat \partial_i U_k\sub E_i$ for all $i=1, \dots , n$ and all large enough $k$. By  Lemma~\ref{lem:subconvcrit}  the sequence $(x_k)$ subconverges to a point in $\mathcal{S}$. Hence $\eta^{-1}(N)$ is precompact. 
 \qed \medskip

\begin{lemma} \label{lem:degree} Let $A$ and $B$ be (relatively) open and connected subsets of $\C^m\times [0,\infty)^n$, where $m,n\in \N$.  Assume that 
$$A_0=\{(p,0)\in A: p\in \C^m, 0\in \R^n\}=A\cap(\C^m\times\{(0,\dots, 0)\})\ne \emptyset$$ and that $\eta\: A\ra B$ is  a proper and continuous map satisfying the following conditions:
for  arbitrary $x=(p_1, \dots, p_m, r_1,\dots, r_n)\in A$ and 
$\eta(x)=(q_1, \dots , q_m, s_1, \dots, s_n)\in B$ we require that
\begin{itemize} 
\smallskip
\item[\textnormal{(i)}] if $r_i=0$ for some  $i=1, \dots, n$, then $s_i=0$,  

\smallskip
\item[\textnormal{(ii)}] if $r_1=\dots=r_n=0$, then 
$p_1=q_1$, \dots, $p_m=q_m$, and $s_1=\dots=s_n=0$.

\end{itemize} 
Then the map $\eta$ is surjective.
\end{lemma}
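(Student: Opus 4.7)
The plan is to pass to a doubled Euclidean space and apply Brouwer degree theory.

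\emph{Doubling.} I would define
\[
\tilde A := \{(p,r) \in \C^m \times \R^n : (p, |r_1|, \ldots, |r_n|) \in A\}
\]
and $\tilde B$ analogously, obtained from $A$ and $B$ by reflection across each coordinate hyperplane $\{r_i = 0\}$. Each is an open subset of $\R^{2m+n}$. Since $A_0 \ne \emptyset$, and since conditions (i) and (ii) together force $A_0 \subseteq B_0$ (so also $B_0 \ne \emptyset$), the $2^n$ reflected copies of $A$ in $\tilde A$ (and of $B$ in $\tilde B$) are individually connected and all contain $A_0$ (respectively $B_0$); hence $\tilde A$ and $\tilde B$ are connected. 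Writing $\eta(p, r) = (q(p, r), s_1(p, r), \ldots, s_n(p, r))$ for $(p, r) \in A$, I would extend $\eta$ to $\tilde \eta \colon \tilde A \to \tilde B$ by
\[
\tilde \eta(p, r) := \bigl(q(p, |r|),\, \mathrm{sgn}(r_1)\, s_1(p, |r|),\, \ldots,\, \mathrm{sgn}(r_n)\, s_n(p, |r|)\bigr).
\]
Condition (i) makes $\tilde \eta$ continuous across each hyperplane $\{r_i = 0\}$: where $\mathrm{sgn}(r_i)$ is ambiguous, the corresponding factor $s_i(p, |r|)$ vanishes. The properness of $\tilde \eta$ follows from that of $\eta$ by a componentwise absolute-value argument.

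\emph{Reduction.} Surjectivity of $\eta$ then follows from surjectivity of $\tilde \eta$: given $y \in B \subseteq \tilde B$, pick $\tilde x \in \tilde A$ with $\tilde \eta(\tilde x) = y$; since the $r$-coordinates of $y \in B$ are nonnegative we have $|y| = y$, and $|\tilde x| \in A$ satisfies $\eta(|\tilde x|) = |y| = y$.

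\emph{Degree.} The map $\tilde \eta$ is a proper continuous map between connected open subsets of $\R^{2m+n}$, so it has a well-defined Brouwer degree $d \in \Z$, and a map of nonzero degree is surjective. Condition (ii) forces $\tilde \eta$ to restrict to the inclusion $A_0 \hookrightarrow B_0 \subseteq \tilde B$ on the codimension-$n$ slice $A_0 \subseteq \tilde A$, and I would use this to compute $d = \pm 1$ via a local degree computation at a point $y_0 \in A_0$: choose a small open ball $V$ around $y_0$ in $\tilde B$, isolate the connected component $U$ of $\tilde \eta^{-1}(V)$ containing $y_0$, and show that $\tilde \eta|_U \colon U \to V$ has local degree $\pm 1$ by deforming it to the inclusion $U \hookrightarrow V$ through a proper homotopy.

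The hardest step is this local degree computation: one must verify that the required proper homotopy exists and that no preimages escape to the boundary of $U$ during the deformation. Conditions (i) and (ii) provide exactly the right structural input -- (i) controls sign behavior across each hyperplane, and (ii) pins down the identity on $A_0$ -- but the detailed verification is delicate. An alternative route is induction on $n$: the restriction of $\eta$ to the connected component of $\{r_i = 0\} \cap A$ that contains $A_0$ inherits the hypotheses of the lemma with $n$ replaced by $n-1$, so a stratum-by-stratum argument could in principle be made; however, the faces $\{r_i = 0\} \cap A$ need not be connected in general, and the inductive bookkeeping would have to be carried out with care.
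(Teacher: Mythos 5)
The paper itself does not prove this lemma: it is quoted verbatim from Schramm \cite{oS96} (the ``3.4 Degree Lemma''), so what you have written is an attempt to reprove Schramm's result. Your doubling construction is sound and is indeed the natural starting point: $\tilde A$ and $\tilde B$ are open and connected (each of the $2^n$ reflected copies contains the pointwise-fixed set $A_0$, and $A_0\sub B_0$ by (ii)), the extension $\tilde\eta$ is continuous precisely because of (i), properness passes to the double, and surjectivity of $\tilde\eta$ does reduce to surjectivity of $\eta$. Up to the point where you invoke degree theory, the argument is correct.

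The gap is in the degree computation, and it is not just a matter of ``delicate verification'' of a homotopy. The degree of $\tilde\eta$ at $y_0\in A_0$ is the sum of the local degrees over \emph{all} components of $\tilde\eta^{-1}(V)$ meeting $\tilde\eta^{-1}(y_0)$, and the hypotheses do not localize this preimage near $A_0$: condition (i) says $r_i=0\Rightarrow s_i=0$ but not the converse, so $\eta$ may map points with $r_i>0$ into the face $\{s_i=0\}$. Already for $m=n=1$, take $\eta(p,r)=(p,\phi(r))$ with $\phi\:[0,1)\ra[0,1)$ proper, $\phi(0)=\phi(2/3)=0$; then $\eta^{-1}(p_0,0)$ contains $(p_0,2/3)\notin A_0$. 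Consequently, isolating the one component $U$ containing $y_0$ and deforming $\tilde\eta|U$ to an inclusion (a deformation which condition (ii) by itself does not supply) would at best produce one summand, and nothing in your argument prevents the remaining components from contributing cancelling local degrees. The structural ingredient you are missing is the $(\Z/2)^n$-equivariance of $\tilde\eta$ with respect to sign changes in the last $n$ coordinates: the non-fixed preimage pieces occur in orbits of even cardinality with local degrees equal mod $2$, so they cancel, and the computation is pushed onto the fixed-point slice $A_0$, where (ii) gives the identity and hence degree $1$ mod $2$ (one checks, using properness and $\eta|A_0=\id$, that $A_0$ is closed in $B_0$, so this restricted degree is well defined). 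Making this Smith-theory-type reduction rigorous for merely continuous maps --- or replacing it by Schramm's stratum-by-stratum induction --- is the actual content of the lemma, and it is absent from your proposal.
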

This is a special case of \cite[3.4 Degree Lemma, p.~407]{oS96}.

\begin{lemma} \label{lem:suri} The map $\eta\: S\ra C$ is surjective. 
\end{lemma}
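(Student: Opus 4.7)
The plan is to verify the hypotheses of the Degree Lemma (Lemma~\ref{lem:degree}) applied to the map $\eta\co S\to C$, with the ambient space $\C^{n-1}\times [0,\infty)^n$ (so the role of $m$ in Lemma~\ref{lem:degree} is played by $n-1$). The relative openness and connectedness of $S$ and $C$ were already observed when these parameter spaces were introduced, and $A_0=S\cap(\C^{n-1}\times\{0\})$ is non-empty because any configuration $(p_1,\dots,p_{n-1},0,\dots,0)$ with $p_1,\dots,p_{n-1}$ distinct points of $\D\setminus\{0\}$ lies in $S$ (the corresponding region is $\D$ minus $n$ distinct points). Continuity and properness of $\eta$ are already at hand as Lemma~\ref{lem:etacont} and Lemma~\ref{lem:etaprop}.

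It remains to verify conditions (i) and (ii) of the Degree Lemma. For condition (i), suppose $r_i=0$ for some $i\in\{1,\dots,n\}$. Then the corresponding complementary component of $U$ is a single point (either $p_i$ if $i<n$, or $0$ if $i=n$). Since bounded holomorphic functions extend across isolated singularities, the conformal map $f\co U\to V$ in the definition of $\eta$ extends holomorphically across this point. Label-preservation then forces the corresponding complementary component $\widehat\partial_i V$ of $V$ to be a single point as well, i.e., $s_i=0$.

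For condition (ii), suppose $r_1=\dots=r_n=0$. Then $U=\D\setminus\{p_1,\dots,p_{n-1},0\}$, and by the argument just given, $f$ extends to a conformal map from $\D$ onto itself, hence a M\"obius transformation of $\D$. The normalization of $\eta$ gives $f(\partial\D)=\partial\D$ and $f(1)=1$, while label-preservation applied to $\partial_nU$ forces the extension to satisfy $f(0)=0$ (since $\widehat\partial_nV$ is a disk centered at $0$, of radius $s_n$; but the extended value must lie in $\widehat\partial_nV$, and here $f(0)$ is a single point in $\D$). A conformal self-map of $\D$ fixing $0$ and $1$ is the identity, so $q_i=f(p_i)=p_i$ for $i=1,\dots,n-1$ and $s_i=0$ for all $i$, which is condition (ii).

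With all hypotheses verified, Lemma~\ref{lem:degree} yields surjectivity of $\eta$. The only subtle point is the normalization bookkeeping in condition (ii): one must check that, in the fully degenerate case, the label $n$ point of $V$ is forced to be $0$ (since $C_n$ is a disk centered at the origin and has radius $s_n=0$), so that the extension of $f$ truly fixes both $0$ and $1$; once this is seen, the remainder is formal.
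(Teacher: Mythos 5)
Your proof follows the paper's argument in all essentials: apply the Degree Lemma with continuity and properness supplied by Lemmas~\ref{lem:etacont} and~\ref{lem:etaprop}, and verify conditions (i) and (ii) via the removable-singularity argument for degenerate complementary components together with the normalization $f(\partial\D)=\partial\D$, $f(1)=1$, $f(0)=0$. The one point the paper treats that you omit is the case $n=1$, where $m=n-1=0$ falls outside the stated hypotheses of Lemma~\ref{lem:degree}; there $S=C$ and $\eta$ is the identity map, so surjectivity is immediate and should be dispatched separately.
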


\no {\em Proof.} If $n=1$ this is clear, because then $S=C$ and $\eta$ is the identity map. If $n\ge 2$, we apply  Lemma~\ref{lem:degree} with 
$A=S$, $B=C$, and $m=n-1\ge 1$. Then obviously $A_0\ne \emptyset$, as the points in this  set correspond to the regions $U$ in $\mathcal{S}$ with degenerate  complementary components $\widehat \partial_i U$, $i=1, \dots, n$.  The map  $\eta$ is continuous by Lemma~\ref{lem:etacont} and proper by 
Lemma~\ref{lem:etaprop}. Condition (i) in  Lemma~\ref{lem:degree}  follows from that fact that the map $f$ as in the definition of $\eta$ will send   a degenerate  complementary component of a region in $\mathcal{S}$
to a  degenerate  complementary components of a region in $\mathcal{C}$. Moreover,  if all the complementary components except the one with label $0$ are degenerate, then $f$ is the identity map.  This implies 
condition (ii) in Lemma~\ref{lem:degree}. 
\qed  \medskip

\begin{theorem}\label{thm:squareunif} Let $\Om\sub \OC$ 
be a region with $n+1\ge 2$ complementary components, one of which is non-degenerate. Then there exists
a conformal map of $\Om$ onto a region $U$ of the form 
$$U=\D\setminus (Q_1\cup \dots \cup Q_n),$$ where 
 $Q_1, \dots, Q_n$ are pairwise disjoint subsets of 
$\D$ such that  $Q_1,\dots, Q_{n-1}$ are  (possibly degenerate)
$\C^*$-squares, and $Q_n$ is a closed (possibly degenerate) Euclidean disk  centered at $0$. 
 \end{theorem}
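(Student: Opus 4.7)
The plan is to combine Koebe's Uniformization Theorem (Theorem~\ref{thm:Koebunif}) with the surjectivity of the map $\eta\: S \ra C$ established in Lemma~\ref{lem:suri}. First I would apply Koebe to obtain a conformal map $f_0\: \Om \ra V_0$ onto a circle domain. Since point components are removable singularities for bounded analytic functions, and the same reasoning applies to $f_0^{-1}$, non-degenerate complementary components correspond to non-degenerate ones under $f_0$. Hence the non-degenerate complementary component of $\Om$ yields a non-degenerate closed disk $C_0$ among the complementary components of $V_0$.

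Next I would post-compose $f_0$ with two M\"obius transformations to normalize the image into the class $\mathcal{C}$. The first M\"obius transformation of $\OC$ sends $C_0$ onto $\overline{\OC\setminus \D}$, making $\OC\setminus \D$ one of the complementary components (to be labeled $0$); the remaining complementary components then become pairwise disjoint closed (possibly degenerate) disks contained in $\D$. The second step is to pick any one of these remaining components $K$, take its hyperbolic center $p \in \D$, and apply the M\"obius automorphism $z\mapsto (z-p)/(1-\bar p z)$ of $\D$. Since M\"obius automorphisms of $\D$ preserve the hyperbolic metric and any Euclidean disk in $\D$ is simultaneously a hyperbolic disk, the image of $K$ is a hyperbolic disk with hyperbolic center $0$, hence a Euclidean disk centered at $0$ (degenerate if $K=\{p\}$). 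Labeling this image as the $n$-th complementary component and the others as $1,\dots,n-1$ arbitrarily, we obtain a labeled circle domain $V\in \mathcal{C}$ corresponding to some point $y\in C$.

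Finally, by Lemma~\ref{lem:suri} there exists $x\in S$ with $\eta(x)=y$. Let $U\in \mathcal{S}$ be the labeled region corresponding to $x$; by the very definition of $\eta$, there is a conformal map $g\: U \ra V$ (the normalized Koebe-type map of $U$). Composing, $g^{-1}\circ \mu_2 \circ \mu_1\circ f_0$ is a conformal map from $\Om$ onto $U$, and by construction $U= \D\setminus (Q_1\cup\dots\cup Q_n)$ where $Q_1,\dots,Q_{n-1}$ are (possibly degenerate) $\C^*$-squares and $Q_n$ is a closed (possibly degenerate) Euclidean disk centered at $0$, as required.

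The serious work behind this proof is not in the present theorem but in the preceding lemmas: the continuity of $\eta$ (Lemma~\ref{lem:etacont}), which depends on the equicontinuity of boundary maps via Wolff's lemma; its properness (Lemma~\ref{lem:etaprop}), relying on the subconvergence criterion (Lemma~\ref{lem:subconvcrit}); and ultimately the degree-theoretic surjectivity (Lemma~\ref{lem:suri}) via Lemma~\ref{lem:degree}. Within the present proof itself, the one subtle point is the M\"obius normalization that forces one chosen component to be a Euclidean disk centered at $0$; this is where the identification of Euclidean disks in $\D$ with hyperbolic disks, and the invariance of hyperbolic centers, is essential.
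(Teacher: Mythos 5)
Your proposal is correct and follows essentially the same route as the paper: apply Koebe's Uniformization Theorem, normalize by M\"obius transformations so that the image lies in the class $\mathcal{C}$, and then invoke the surjectivity of $\eta$ from Lemma~\ref{lem:suri}. The normalization details you supply (in particular the hyperbolic-center argument for centering one disk at the origin) are exactly what the paper leaves implicit in its one-line proof.
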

 
\no {\em Proof.}  Koebe's  Uniformization Theorem implies that  there is a conformal map of  $\Om$ onto a circle domain 
$V\in \mathcal{C}$. Since the map $\eta\:S \ra C$ introduced above is surjective, $V$,  and hence also $\Om$, is conformally equivalent to a region $U\in \mathcal{S}$. \qed  
 \medskip 
 
 If all the complementary components of $\Om$ are non-degenerate, the same is true for the region $U$ in the previous theorem. Combining this with Lemma~\ref{lem:Iordanext}, we get the following statement.  
 \begin{corollary}\label{thm:cylunif} Let $n\ge 1$, and 
suppose that $D_0,\dots, D_n$ are pairwise disjoint 
closed Jordan  regions in $\OC$. Then there 
exist a finite $\C^*$-cylinder $A$, pairwise disjoint $\C^*$-squares 
$Q_1, \dots, Q_{n-1}\sub A$,
and a homeomorphism $f\: \overline \Om\ra \overline U$, where 
$$\Om=\OC\setminus (D_0\cup \dots \cup D_n)
\quad \text{and}\quad U=A\setminus (Q_1\cup \dots \cup Q_{n-1}), $$
that is conformal on $\Om$ and maps 
$\partial D_0$ to $\partial_iA$ and  $\partial D_n$ to $\partial_oA$.
\end{corollary}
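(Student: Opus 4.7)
The plan is to deduce this corollary from Theorem~\ref{thm:squareunif} together with Koebe's Uniformization Theorem (Theorem~\ref{thm:Koebunif}) and the boundary extension result Lemma~\ref{lem:Iordanext}, after arranging the labels of complementary components so that the role of $D_0$ and $D_n$ is as prescribed. First I would apply Koebe to produce a conformal map $g_0\:\Om\ra V_0$ onto a circle domain. Since the $D_i$ are honest Jordan regions, $g_0$ extends to a homeomorphism of closures by Lemma~\ref{lem:Iordanext} applied successively at each boundary component, and in particular each complementary component of $V_0$ is a non-degenerate closed disk. Post-composing with a carefully chosen orientation-preserving M\"obius transformation then lets me normalize so that the disk corresponding to $D_0$ becomes $\OC\setminus\D$ and the disk corresponding to $D_n$ becomes a round disk centered at $0$; the remaining disks get labels $1,\dots,n-1$ in any order. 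The resulting $V$ then lies in the class $\mathcal{C}$ from Section~\ref{s:unifcyl}, and $g\:\Om\ra V$ is label-preserving with respect to this labeling.

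Next I would invoke the surjectivity of $\eta\:S\ra C$ (Lemma~\ref{lem:suri}) to find $U'\in\mathcal{S}$ with $\eta(U')=V$. By the definition of $\eta$ this provides a label-preserving conformal map $h\:U'\ra V$, so $f_0:=h^{-1}\circ g\:\Om\ra U'$ is conformal. By construction, $U'$ has the form $\D\setminus(Q_1\cup\dots\cup Q_{n-1}\cup Q_n)$ with $Q_1,\dots,Q_{n-1}$ pairwise disjoint $\C^*$-squares and $Q_n=\{z\co|z|\le r\}$ for some $r\ge 0$.

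A point I need to verify — and which is really the only delicate step beyond reading off the result — is that $r>0$ and that each $Q_i$ is non-degenerate, so that $A:=\{z\co r<|z|<1\}$ is a genuine finite $\C^*$-cylinder and the $Q_i$'s are genuine (non-degenerate) $\C^*$-squares. This follows because $f_0$ is a conformal map between regions with matching labels, and if some complementary component of $U'$ were a single point, then $f_0$ would extend holomorphically across the corresponding isolated boundary point of $\Om$, contradicting the fact that the corresponding $D_i$ is a non-degenerate Jordan region; equivalently, it follows from Lemma~\ref{lem:Iordanext} applied to the already-known extension of $g$ and of $h$ over their non-degenerate boundary components.

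Finally, with $r>0$ in hand, I set $A=\{z\co r<|z|<1\}$, so that $U'=A\setminus(Q_1\cup\dots\cup Q_{n-1})$ with $\partial_oA=\partial\D$ and $\partial_iA=\partial Q_n$. Applying Lemma~\ref{lem:Iordanext} to each of the $n+1$ Jordan boundary components of $\Om$ (and the corresponding ones of $U'$) one at a time extends $f_0$ to the desired homeomorphism $f\:\overline\Om\ra\overline{U'}$. The normalization performed in the first paragraph then guarantees $f(\partial D_0)=\partial\D=\partial_oA$ and $f(\partial D_n)=\{|z|=r\}=\partial_iA$, completing the proof. I expect the main obstacle to be purely bookkeeping: making sure the chosen labelings and M\"obius normalizations are compatible with the label-preserving character of the map $\eta$ supplied by Lemma~\ref{lem:suri}, so that the particular boundary components $\partial D_0$ and $\partial D_n$ actually land on $\partial_oA$ and $\partial_iA$.
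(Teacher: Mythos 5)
Your route is the same as the paper's: the corollary is obtained from Theorem~\ref{thm:squareunif} (whose proof is exactly Koebe plus the surjectivity of $\eta\:S\ra C$ from Lemma~\ref{lem:suri}), the observation that a label-preserving conformal map sends non-degenerate complementary components to non-degenerate ones, and a boundary extension via Lemma~\ref{lem:Iordanext}. Your unpacking of that argument is essentially correct, but your final normalization is backwards relative to the statement you are proving. You arrange for the disk corresponding to $D_0$ to become $\OC\setminus\D$ (label $0$) and the disk corresponding to $D_n$ to become the disk centered at $0$ (label $n$); this yields $f(\partial D_0)=\partial\D=\partial_oA$ and $f(\partial D_n)=\{|z|=r\}=\partial_iA$, which is the opposite of the corollary's conclusion that $\partial D_0\mapsto\partial_iA$ and $\partial D_n\mapsto\partial_oA$. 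The fix is trivial (swap the roles of $D_0$ and $D_n$ in the M\"obius normalization), but as written the last sentence of your proof asserts something different from the statement.

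Two smaller points. First, the non-degeneracy of the complementary components of the circle domain $V_0$ does not follow from Lemma~\ref{lem:Iordanext} (which assumes both boundary components are Jordan curves); it follows from the remark preceding that lemma, namely that under a label-preserving conformal map a component is degenerate if and only if its partner is, by removability of point singularities. Your later argument for the non-degeneracy of the $Q_i$ uses exactly this fact, though you phrase it as $f_0$ extending across an ``isolated boundary point of $\Om$'' --- $\Om$ has no isolated boundary points; it is $f_0^{-1}$ that would extend across the isolated boundary point of $U'$, forcing the cluster set $\partial D_i$ to be a single point, a contradiction. With these adjustments the proof is complete.
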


As we will see in Corollary~\ref{cor:uniqsquare}, the map $f$ in the previous statement is unique up to post-composition with a Euclidean similarity fixing the origin.

\section{Proof of the main result}
\label{s:proof1} 

\no We start with a definition. 
A set  $\Om\sub \oC$  is called $\la$-$LLC$ for $\la\ge 1$ ($LLC$ stands for
  {\em linearly locally connected}) 
if the following two conditions are satisfied:

\smallskip \noindent 
($\la$-$LLC_1$):$\quad$
 If $a\in \Om$, $r>0$, and  $x,y\in \Om \cap B(a,r)$, $x\ne y$, then there 
exists a continuum $E\sub \Om\cap  B(a,\la r)$ with $x,y\in E$.   

\smallskip \noindent 
($\la$-$LLC_2$):$\quad$
 If $a\in \Om$, $r>0$, and $x,y\in \Om \cap (\oC\setminus B(a,r)) $, $x\ne y$, 
 then there exists a con\-tinuum $E\sub \Om \cap (\oC\setminus
  B(a,r/\lambda))$
 with $x,y\in E$. \smallskip
 
 \begin{lemma} \label{lem:circ1LLC}
 Every  finitely connected circle domain $V\sub \oC$  is  $1$-$LLC$. 
 \end{lemma}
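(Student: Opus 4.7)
The plan is to reduce the lemma to the following key claim: for any open round disk $B\sub\oC$, the intersection $V\cap B$ is connected (possibly empty). Granting this, condition ($1$-$LLC_1$) follows by applying the claim to $B=B(a,r)$: the set $V\cap B(a,r)$ is open in $\oC$ and connected, hence path-connected, and any path from $x$ to $y$ serves as the desired continuum $E$. For condition ($1$-$LLC_2$), note that $\oC\setminus B(a,r)=\overline{B^*}$, where $B^*:=\oC\setminus\overline{B(a,r)}$ is another open round disk (the complement of a closed chordal ball is the closure of one). The claim applied to $B^*$ shows that $V\cap B^*$ is open and path-connected. Any point $z\in V\cap S(a,r)$ lies in the open set $V$ and on $\partial B^*$, so a short arc starting at $z$ and moving away from $B(a,r)$ remains in $V\cap\overline{B^*}$ and enters $V\cap B^*$ immediately; consequently $V\cap\overline{B^*}=V\cap(\oC\setminus B(a,r))$ is itself path-connected, and a connecting path supplies the continuum $E$.

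To prove the key claim, let $D_0,\dots,D_n$ be the (closed, round, pairwise disjoint) complementary components of $V$, and set $D_\infty:=\oC\setminus B$; then $V\cap B=\oC\setminus K$ with $K=D_\infty\cup D_0\cup\dots\cup D_n$ a finite union of closed round disks. The intersection pattern of the cover $\{D_\infty,D_0,\dots,D_n\}$ of $K$ is especially simple: the disks $D_0,\dots,D_n$ are pairwise disjoint, so all nonempty pairwise intersections are of the form $D_\infty\cap D_i$, and each such intersection is either empty or contractible (a closed lens, a closed disk, or a tangent point); all triple intersections are empty because they would require some $D_i\cap D_j$ with $i\ne j$. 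Each member of the cover is itself contractible, so the nerve lemma applies: $K$ is homotopy equivalent to its nerve, which is a finite forest (a star centred at $D_\infty$ together with isolated vertices for the $D_i\sub B$). Since forests have trivial first cohomology, Alexander duality on $\Sph\cong S^2$ gives $\widetilde H_0(\oC\setminus K)=0$, i.e., $V\cap B=\oC\setminus K$ has at most one connected component; being nonempty (as we may assume), it is connected.

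An elementary alternative avoiding Alexander duality runs as follows. If $V\sub B$ there is nothing to prove, since $V\cap B=V$ is a region. Otherwise, choose $p\in V\setminus B$ and apply a M\"obius transformation sending $B$ to $\D$ and $p$ to $\infty$; after this normalization each $D_i$ becomes a Euclidean closed disk in $\C$. Given $x,y\in V\cap\D$, the Euclidean segment $[x,y]$ lies in $\D$ and meets each obstructing $D_i$ in a single closed subsegment by convexity. Replace each such subsegment by a detour running along an arc parallel to $\partial D_i$ and just outside $D_i$, chosen close enough to $\partial D_i$ to avoid all other $D_j$'s (which sit at positive distance); the detour stays inside $\D$ because the arc $\partial D_i\cap\D$ connects the entry and exit points of $[x,y]$, both lying in $\D$. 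The main obstacle in either approach is the geometric case analysis when a disk $D_i$ crosses the boundary $\partial B$, so that lenses rather than full disks appear in $V\cap B$; but the pairwise disjointness of the $D_i$'s keeps this analysis manageable.
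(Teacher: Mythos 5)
Your proposal is correct. Your ``elementary alternative'' is essentially the argument in the paper: both rest on the same key geometric fact that $\partial D_i\cap B$ is a connected arc when $B$ and $D_i$ are round disks, and both build the connecting path by detouring around each obstructing disk. The paper first proves that $B\cap\overline V$ is path-connected by rerouting along $\partial D_i$ itself and then shrinks the complementary disks slightly to land in the open set $B\cap V$; you instead take the chord $[x,y]$ and push the detour a small $\eps$ off $\partial D_i$ directly into $V$ --- equivalent, provided you also leave the chord slightly \emph{before} the entry point (which lies on $\partial D_i\subset\partial V$, not in $V$), as your ``just outside'' phrasing implicitly does. Your handling of $LLC_2$ (apply the claim to the antipodal open disk $B^*$ and push boundary points of $S(a,r)$ into $B^*$ using openness of $V$) also matches the paper's observation that the complement of an open disk is a closed disk. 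The nerve-plus-Alexander-duality route is a genuinely different and more topological argument; it buys a cleaner conceptual reason for connectivity ($\widetilde H^1$ of a forest vanishes) at the cost of two citations that need care: the nerve lemma for \emph{closed} covers is not the textbook open-cover version (though it holds for this good cover by disks and lenses), and the degenerate configuration $\partial D_i=\partial B$ makes $D_i\cap D_\infty$ a circle rather than a contractible set --- harmless only because $V\cap B=\emptyset$ in that case, which is worth saying explicitly before invoking the nerve.
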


 \medskip 
\no{\em Proof.} Let $V$ be as in the statement and  $B\sub \oC$ be an 
arbitrary open or  closed  disk.   It suffices to show that $B\cap V$ is path-connected. Indeed, if this is true, then it follows that $V$ is $1$-$LLC_1$.  Noting  that the complement of an open disk  in $\oC$ (as in 
the $LLC_2$-condition) is a closed disk (centered at the point antipodal to the center of the original disk), we will also have that $V$ is $1$-$LLC_2$.  

We first  show that $B\cap \overline V$ is path-connected. Let  $x,y\in B\cap \overline V$ be  arbitrary. We  can connect $x$ and $y$ by a path 
$\alpha$ in $B$.   If $D$ is  
one of the disks which form the complementary components of $V$, then $B\cap \partial D$ is a connected set  (this is an elementary geometric fact where it is important that  $B$ and $D$ are round disks).  So if $\alpha$ meets $\inte(D)$, then we can replace a subpath 
of $\alpha$ by a path in $B\cap \partial D$, so that the new path lies in $B$, connects $x$ and $y$, but is disjoint from 
$\inte(D)$. By repeating this procedure for  the other complementary components of $V$, we finally obtain a path $\beta$ in $B$ that 
connects  $x$ and $y$ and avoids the interior of each complementary component of $V$. Then $\beta$ is a path in  $B\cap \overline V$ 
connecting $x$ and $y$. 

To show that  $B\cap  V$ is also path-connected, let again   $x,y\in B\cap  V$ be  arbitrary. By slightly enlarging the radii of the complementary components of $V$, we can find a finitely connected circle domain 
$V'\sub \oC$ with $x,y\in V'$ and $\overline V'\sub V$. Then by the first part 
of the proof there exists a path   $\beta$  in  $B\cap \overline V'\sub 
B\cap  V$ 
connecting $x$ and $y$.  The path-connectedness of $B\cap V$ follows.
  \qed 
\medskip 

 A {\em Schottky set} is a set $T\sub \oC$ that can be written 
as 
$$ T=\oC\setminus \bigcup_{i\in I} B_i,$$ 
where $\{B_i: i\in I\}$   is a collection of pairwise disjoint round open disks. 
One can define the notion of a Schottky set similarly for subsets of higher-dimensional spheres.  This concept was introduced in  \cite{BKM} (with the additional requirement $\# I\ge 3$). 
By  \cite[Prop.~2.2]{BKM}  every Schottky set is $1$-$LLC$
(the condition $\#I\ge 3$ is irrelevant for this conclusion). Since the  closure of every circle domain is a Schottky set, one can derive 
Lemma~\ref{lem:circ1LLC} from this result. We included a complete proof 
of this lemma for the convenience of the reader. 
 See also the related Lemma~\ref{lem:LLCsq} below. \medskip

The following theorem is the main ingredient in the proof of Theorem~\ref{thm:simulunif}. It is  of independent interest.

\begin{theorem} \label{thm:unifqs} 
Let  $U=\oC\setminus \bigcup_{i\in I}D_i$ be  a finitely connected region whose complementary components $D_i$ 
are closed  Jordan  regions that are 
$s$-relatively separated and whose boundaries $\partial D_i$   are $k$-quasicircles for $i\in I$. Assume  that 
$0,1,\infty\in U$.

 If $f\:U\ra V$  is a conformal map of $U$ onto a circle domain $V$ with $f(0)=0$, $f(1)=1$, and $f(\infty)=\infty$, then $f$ is $\eta$-quasisymmetric with 
$\eta$ only depending on $s$ and $k$.
  \end{theorem}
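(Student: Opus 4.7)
The plan is to extend $f$ to a homeomorphism of $T:=\overline U=\oC\setminus\bigcup_{i\in I}\inte(D_i)$ onto $T':=\overline V=\oC\setminus\bigcup_{i\in I}\inte(D_i')$ (with $D_i'$ the closed complementary round disks of $V$), using Lemma~\ref{lem:Iordanext} at each boundary curve $\partial D_i$, and then to show that this extension is \emph{weakly} $H$-quasisymmetric with $H=H(s,k)$. Both $T$ and $T'$ are subsets of $\oC$ (hence $N$-doubling for a universal $N$) and are path-connected by Lemma~\ref{lem:misc}(ii), so Proposition~\ref{prop:wkqs} will then upgrade weak quasisymmetry to $\eta$-quasisymmetry with $\eta=\eta_{s,k}$. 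The proof of weak quasisymmetry is by contradiction, combining the source lower bound for transboundary modulus (Propositions~\ref{thm:lowmod} and \ref{prop:lowtrans}) with the target upper bound (Proposition~\ref{thm:uptrans}), tied together by conformal invariance (Lemma~\ref{lem:invtrans}).

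Take $a,x,y\in T$ with $\sigma(a,x)\le\sigma(a,y)=:r$, and set $W:=\sigma(f(a),f(x))$, $w:=\sigma(f(a),f(y))$, $t:=W/w$; assume $t$ is huge. Since the triple $\{0,1,\infty\}\subset T\cap T'$ is fixed by $f$ and chordally $\sqrt 2$-separated, Lemma~\ref{lem:3norm} applied to the source point $x$ and target point $f(a)$ produces $p\in\{0,1,\infty\}$ with $\sigma(x,p)\ge\sqrt 2/2$ and $\sigma(f(a),p)\ge\sqrt 2/2$. In the main case $W\le\sqrt 2$ one has $\sigma(f(a),p)\ge W/2$ automatically; the complementary case $W>\sqrt 2$ is handled by an entirely symmetric construction in which the roles of $f(a)$ and $f(x)$ are exchanged. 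The proof of Lemma~\ref{lem:circ1LLC} shows that every round ball meets the closure of a finitely connected circle domain in a path-connected set, so $T'$ itself is $1$-$LLC$. Using this, I pick continua
\[
\tilde E'\subset T'\cap B(f(a),3w),\ \ f(a),f(y)\in\tilde E',\qquad \tilde F'\subset T'\setminus B(f(a),W/2),\ \ f(x),p\in\tilde F'.
\]
Then $\diam\tilde E'\le 6w$ and $\dist(\tilde E',\tilde F')\ge W/2-3w\ge W/4$ once $t\ge 12$, so $\Delta(\tilde E',\tilde F')\ge t/24$. Setting $E:=f^{-1}(\tilde E')$ and $F:=f^{-1}(\tilde F')$, the fact that $f$ fixes $p$ gives $a,y\in E$ and $x,p\in F$, hence $\diam E\ge r$, $\diam F\ge\sigma(x,p)\ge\sqrt 2/2$, and $\dist(E,F)\le\sigma(a,x)\le r$; this yields $\Delta(E,F)\le r/\min(r,\sqrt 2/2)\le 2\sqrt 2=:C_0$, a universal constant.

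Now the three modulus estimates combine as follows. Applying Proposition~\ref{thm:uptrans} in the target (the round disks $D_i'$ are universally $\tfrac14$-fat) yields a subfamily $I_0\subset I$ with $\#I_0\le N_0$ universal such that, writing $\Omega'^*:=\oC\setminus\bigcup_{i\in I_0}D_i'$,
\[
M_{\Omega'^*,\{D_i'\}_{i\notin I_0}}\bigl(\Gamma(\tilde E',\tilde F';\Omega'^*)\bigr)\le\psi_2\bigl(\Delta(\tilde E',\tilde F')\bigr)\le\psi_2(t/24),
\]
with $\psi_2(u)\to 0$ as $u\to\infty$. Setting $\Omega^*:=\oC\setminus\bigcup_{i\in I_0}D_i$, the map $f:U\to V$ extends across each removed hole $D_i$, $i\in I_0$, using Remark~\ref{rem:ext}, to a homeomorphism $\overline{\Omega^*}\to\overline{\Omega'^*}$ which is conformal on $U=\Omega^*\setminus\bigcup_{i\notin I_0}D_i$; Lemma~\ref{lem:invtrans} then equates the transboundary moduli of $\Gamma(E,F;\Omega^*)$ and $\Gamma(\tilde E',\tilde F';\Omega'^*)$. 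Finally, $\Omega^*$ has $\le N_0$ complementary components, each bounded by a $k$-quasicircle, so by Proposition~\ref{thm:lowmod} it is $\phi$-Loewner with $\phi=\phi_{s,k}$ (the dependence on $N_0$ is absorbed into $s,k$ since $N_0$ is universal); the $\{D_i\}_{i\notin I_0}$ inside $\Omega^*$ are $\lambda(k)$-quasi-round by Proposition~\ref{prop:qround} and $s$-relatively separated, so Proposition~\ref{prop:lowtrans} gives
\[
M_{\Omega^*,\{D_i\}_{i\notin I_0}}\bigl(\Gamma(E,F;\Omega^*)\bigr)\ge\psi_1\bigl(\Delta(E,F)\bigr)\ge\psi_1(C_0)>0,\qquad\psi_1=\psi_{1,s,k}.
\]
Chaining these three displays produces $\psi_1(C_0)\le\psi_2(t/24)\to 0$ as $t\to\infty$, the required contradiction.

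The hardest step is the construction of $\tilde E',\tilde F'$. Building them in the \emph{target} (where the $1$-$LLC$ property of $T'$ guarantees small/far continua exist in the correct locations) and pulling back under $f^{-1}$ automatically bounds the source diameters $\diam E,\diam F$ below in terms of \emph{source} quantities $\sigma(a,y)$ and $\sigma(x,p)$, which are under our direct control via the normalization and Lemma~\ref{lem:3norm}. Without this pull-back device there would be no way to bound $\Delta(E,F)$ uniformly, since a priori $f$ may distort source diameters arbitrarily—precisely the distortion the theorem quantifies.
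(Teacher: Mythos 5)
This is, up to relabeling of points, the paper's own proof: reduce to weak quasisymmetry via Proposition~\ref{prop:wkqs}, use the $1$-$LLC$ property of circle domains (Lemma~\ref{lem:circ1LLC}) together with Lemma~\ref{lem:3norm} to build a small target continuum through the target-close pair and a far target continuum through the target-far point and a normalization point of $\{0,1,\infty\}$, and then play the target upper bound of Proposition~\ref{thm:uptrans} against the source lower bound from Propositions~\ref{thm:lowmod} and~\ref{prop:lowtrans} via invariance of transboundary modulus. The only adjustment needed is your case $W>\sqrt 2$: rather than the unspecified ``symmetric construction'' (which is dubious, since nothing is known to be close to $f(x)$ in the target), simply exclude the ball $B(f(a),W/4)$ instead of $B(f(a),W/2)$ — then $W/4\le 1/2\le\sigma(f(a),p)$ holds unconditionally, the case split disappears, and the estimates survive with $t/24$ replaced by $t/48$, exactly as in the paper.
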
 

\no{\em Proof.} The map $f$ extends uniquely to a homeomorphism between $\overline U$ and $\overline V$ (see the discussion after Lemma~\ref{lem:Iordanext}). Moreover, 
we can further extend this  map  (non-uniquely) to a homeomorphism on  $\Sph$ (this  follows from Remark~\ref{rem:ext} as in the proof of Lemma~\ref{lem:misc}~(iii)). We keep denoting this homeomorphism on $\oC$ by $f$, and  use a prime to denote image points under $f$,
 i.e., $a'=f(a)$ for $a\in \Sph$.

Since all subsets of $\C$ are $N_0$-doubling with a universal constant 
$N_0$, by Proposition~\ref{prop:wkqs} it suffices to show that $f|U$ is  $H$-weakly
quasisymmetric with $H=H(s,k)$. So we have to show that there exists a constant $H=H(s,k)$ with the following property: if  $x,y,z\in U$ are arbitrary, then
$\sig(x,y)\le \sig(x,z)$ implies that 
$\sig(x',y')\le H\sig(x',z')$. 

Assume on the contrary that for some  points 
 $x,y,z\in U$  with $\sig(x,y)\le \sig(x,z)$
 we have 
 $\sig(x',y')> H\sig(x',z')$ for some large $H>>1$.  We will show that 
 this leads to a contradiction if $H$ is chosen large enough
 depending only on $s$ and $k$.  
 
 Note that under our assumption  the points $x,y,z$ must be  distinct.
 Since $V$ is a finitely connected circle domain, this set  $1$-$LLC$ by Lemma~\ref{lem:circ1LLC}.  So we can find a continuum $E'\sub V$ with $x',z'\in E'$  such that 
 $$\diam(E')\le 3\sig(x',z').$$ 
 
The points $0,1,\infty$ have mutual distance bounded below by $\sqrt 2\ge 1$. So by Lemma~\ref{lem:3norm} 
 we can find a point $u=u'\in \{0,1,\infty\}$ such that 
 $\sig(u,y)\ge 1/2$ and 
 $\sigma(u',x')\ge 1/2$. Since  $\sig(x',y')\le \diam(\Sph)= 2$, we then have $\sigma(u',x')\ge \frac14\sig(x',y')$, and so 
 $u'\not \in  B(x', \frac14\sig(x',y')) $.  Again using the $1$-$LLC$-property of $V$, this allows us to find a continuum $F'\sub V$ with 
 $y',u'\in F'$ such that 
 $$F'\cap B(x', \tfrac1{4}\sig(x',y'))=\emptyset.$$
 
 Then assuming that $H\ge 24$ we have
 $$ \dist(E',F')\ge\frac1{4}\sig(x',y')-3\sig(x',z')\ge  \frac1{8}\sig(x',y')\ge \frac H{8}
 \sig(x',z')
 $$
 and $$\diam(E')\wedge \diam(F')\le 3\sig(x',z').$$
 Therefore,
 $$ \Delta(E',F')\ge \frac H{24}.$$ 

Let $K_i=f(D_i)$ for $i\in I$. Then $\{K_i:i\in I\}$ is the  collection of complementary components of $V$. 
 Since $V$ is a circle domain, every set $K_i$ is a closed round disk.  Round disks  are $\mu$-fat in $(\oC, \sigma, \Sigma)$ with $\mu=1/4$ (see the discussion before Lemma~\ref{lem:ringfat}).   If $N=N(1/4)$ the corresponding integer 
 as provided by Lemma~\ref{lem:ringfat} ($N$ is a universal constant;  as we have seen,  one can take $N=64$, or even $N=2$), then 
 Proposition~\ref{thm:uptrans}  allows us the following 
 conclusion. There exists a universal non-increasing function $\psi\:(0,\infty)\ra (0,\infty)$ such that $\lim_{t\to \infty}\psi(t)=0$ with the following property:  for  some set 
 $I_0\sub I$ with $\#I_0\le N$ we have that 
 $$M_{\Om', \mathcal{K}'}(\Gamma(E',F';\Om'))\le \psi(\Delta(E',F'))\le \psi(H/24),$$ where 
$\Om'=\Sph\setminus\bigcup_{i\in I_0} K_i$
and transboundary modulus is with respect to the collection 
$\mathcal{K}'=\{K_i:i\in I\setminus I_0\}$.  

Define  $E=f^{-1}(E')$ and $F=f^{-1}(F')$. Then $E$  and $F$ are  continua in $U$ containing the sets $\{x,z\}$ and 
$\{y,u\}$, respectively.
Then $$\diam(F)\ge \sig(y,u)\ge \tfrac 12\ge \tfrac 14 \diam(E),$$ 
and 
$$\dist(E,F)\le \sig(x,y)\le \sig(x,z)\le \diam(E)
\le 4 (\diam(E)\wedge \diam(F)). $$
It follows that 
\begin{equation}\label {sep}
\Delta(E,F)\le 4.
\end{equation}

Since $\#I_0$ can be bounded by the  universal constant 
$N$, it follows from Proposition~\ref{thm:lowmod} that the region $\Om=
\Sph\setminus \bigcup_{i\in I_0}D_i$ is $\phi$-Loewner with 
$\phi$ only depending on $s$ and $k$.
Combining this with Proposition~\ref{prop:qround}, Proposition~\ref{prop:lowtrans}, and  \eqref{sep}, we see that there is a positive constant 
$m=m(s,k)>0$ such that 
$$ M_{\Om,\mathcal{K}}(\Gamma(E,F; \Om))\ge m,$$
where the transboundary modulus in $\Om$ is with respect to the collection $\mathcal{K}=\{D_i:i\in I\setminus I_0\}.$

Now $f(\Gamma(E,F; \Om))=\Gamma(E',F'; \Om')$
and $f$ is conformal on the set 
$\Om\setminus \bigcup_{i\in I\setminus I_0}D_i=U$.
Hence $$M_{\Om, \mathcal{K}}(\Gamma(E,F; \Om))=
M_{\Om',\mathcal{K}'}(\Gamma(E',F'; \Om'))$$
by invariance of transboundary modulus (see the discussion after Lemma~\ref{lem:invtrans})
and our estimates give
$$ m\le \psi(H/24).$$
Since $\psi$ is a fixed function with $\psi(t)\to 0$ as 
$t\to \infty$, this leads to a contradiction if $H$ is larger than a constant depending on $s$ and $k$.
\qed
\medskip  

Note that the homeomorphic extension $f\:\overline U\ra \overline V$ of the map in the previous theorem is also an $\eta$-quasisymmetry with the same function $\eta$ as for the 
map $f|U$. This follows  from  the distortion estimates for $f$ on $U$ by  a simple limiting argument. 

The previous proof is somewhat technical and it is worthwhile to summarize the main ideas of the argument. If the map $f$ does not have the desired quasisymmetry property, then, as we have seen, one can 
find continua $E$ and $F$ in $U$ with controlled relative distance such that 
the relative distance of the image continua $E'$ and $F'$ in $V$ is large. 
To get a contradiction one wants to consider a suitable family $\Gamma$ of paths
 connecting $E$ and $F$, and its  image family $\Gamma'$. 
Since $\Delta(E,F)\lesssim 1$, one  hopes to find  $\Gamma$ so that the modulus of this family is  not too small, while $\Delta(E',F')>>1$ should imply that  the modulus of $\Gamma'$ is small.  Conformal invariance of modulus will then give the desired contradiction. 

The  obvious first choice $\Gamma=\Gamma(E,F;U)$ cannot serve this 
purpose. Even though the  complementary components of $U$  are uniformly relatively separated, by restricting oneself to paths {\em in} $U$,  it is possible to    obtain a very sparse family 
whose modulus is not uniformly bounded below by a constant only depending on the relevant parameters $s$ and $k$. Using this  family 
$\Gamma(E,F;U)$ in combination with 
 Proposition~\ref{thm:lowmod}, one can actually show that $f$ is $\eta$-quasisymmetric, where $\eta$ will depend  on $s$ and $k$,  but  also on the number of complementary components of $U$ 
 (for which we have no control). 

To get a larger 
path family  
one should allow the paths to run through the ``holes" (i.e., the complementary components) of  $U$ 
and $\Gamma=\Gamma(E,F; \oC)$ seems like a better choice. 
It is clear that then one has to use transboundary modulus to get 
the necessary modulus invariance.  Proposition~\ref{prop:lowtrans} applied to 
the Loewner domain $\Om=\oC$ and the family of all complementary components of $U$, then actually gives a uniform lower  bound for the transboundary boundary modulus of 
$\Ga=\Gamma(E,F; \oC)$. Unfortunately, the corresponding transboundary modulus of the image family $\Ga'=\Gamma(E',F'; \oC)$
need not be small due to possible complementary  components 
of $V$ that serve as ``bridges" between $E'$ and $F'$. As discussed after  Proposition~\ref{thm:uptrans}, one can remedy this problem 
by disallowing the paths to run through certain holes that have to be selected depending on $E'$ and $F'$, but whose number is bounded by 
a universal constant $N$.   Accordingly, in the previous proof we considered the family $\Gamma=\Gamma(E,F; \Om)$, where 
$\Om=\oC\setminus \bigcup_{i\in I_0} D_i$,  and its image family 
$\Gamma'$. The paths  in these families are allowed to pass through  holes except through those labeled by $i\in I_0$. By Proposition~\ref{thm:uptrans}
the family $\Gamma'$ has small transboundary modulus. 
Even though we have no control which elements are in $I_0$, we have 
a uniform upper bound $\#I_0\le N$.  So  Proposition~\ref{thm:lowmod} allows us to conclude that $\Om$ is $\phi$-Loewner with a function 
$\phi$ only depending on $s$ and $k$ (the number $n=\#I_0$ of complementary components of $\Om$ does not enter as it is uniformly bounded). Together with Proposition~\ref{prop:lowtrans} this leads to a lower bound for the transboundary modulus of $\Gamma$. The crucial point  in this argument is that all quantities that are relevant in  the upper and lower estimates can be controlled by the parameters $s$ and $k$.
Hence  $f$ will be an $\eta$-quasisymmetry with $\eta=\eta_{s,k}$. 

Another subtlety in the previous proof is the initial choice of the continua $E'$ and $F'$. For the family $\Gamma(E',F';\Om')$ to be defined, we need 
$E',F'\sub \overline {\Om'}$. On the other hand, we do not know in advance 
which set  
$\Om'=\oC\setminus \bigcup_{i\in I_0} K_i$ will be, because this region depends on the choice of $I_0$. Hence we choose $E'$ and $F'$ as subsets of $V$, because this set, and hence also $E'$ and $F'$,   
 are  contained in {\em all} regions $\Om'$ that can possibly appear.

\medskip
\no
{\em Proof of Theorem~\ref{thm:simulunif}.}
Suppose that $\mathcal{S}=\{ S_i:i\in I\}$ is a collection  of  $s$-relatively separated  $k$-quasicircles 
bounding pairwise disjoint closed Jordan  regions  $D_i$. Note that by the remark following Lemma~\ref{lem:curvsepp} the regions $D_i$, $i\in I$, are also $s$-relatively separated.  
 
It is clear that the index set $I$ is at most countable.  So if $I$ is infinite, we may assume that $I=\N$.  
Let $T=\Sph\setminus\bigcup_{i\in I} \inte(D_i)$.
We first want to show there there exists  an $\eta$-quasi-M\"obius map 
$f\: T\ra T'$ of $T$  onto a Schottky set $T'$, i.e., 
$T'$ is the 
the complement of a collection of pairwise disjoint round 
open disks. Here we can choose $f$ so that its distortion function $\eta$ only depends on $s$ and $k$.

The set $T$ contains three distinct points that do not lie on any of the quasicircles $S_i=\partial D_i$, $i\in I$. This  follows from Lemma~\ref{lem:misc}
(iv). Note that if $I=\N$ then we can apply this lemma, since $\diam (D_i)\to 0$ as $i\to \infty$.  This was shown in the first part of the  proof of Proposition~\ref{prop:extend} and was derived from the fact that 
the sets  $D_i$ are $\lambda$-quasi-round with $\lambda=\lambda(k)\ge 1$
(Proposition~\ref{prop:qround}).

If we apply any M\"obius transformation
to  our collection $\mathcal{S}$, then the new collection will consist of $s'$-relatively separated $k'$-quasicircles, where 
$s'$ only depends on $s$ and $k'$ only depends on $k$ 
(Corollary~\ref{cor:qmobinvcond}).
In this way we may reduce ourselves to the case where  
$T$ contains  the points $0,1,\infty$ and  none of these points lies on any quasicircle $S_i$.

If $I$ is finite, then there exists a conformal map 
$f$ of the finitely connected region $U=\Sph\setminus\bigcup_{i\in I}D_i$ onto a circle domain  $V$
such that $f(0)=0$, $f(1)=1$, $f(\infty)=\infty$.
By Theorem~\ref{thm:unifqs} the map $f$ is $\eta$-quasisymmetric with $ \eta$ only depending on $s$ and $k$. The map $f$ extends uniquely to a homeomorphism of $\overline U=T$ onto the Schottky set 
 $T':=\overline V$ (this was pointed out in the proof of Theorem~\ref{thm:unifqs}), and the extended map $f$ 
 is    an 
$ \eta$-quasisymmetry on $\overline U=T$ (see the remark after the proof of
Theorem~\ref{thm:unifqs}). 

If $I$ is infinite, and so $I=\N$,  then 
for each $n\in \N$ we consider the finitely connected region 
$U_n=\Sph\setminus \bigcup_{i=1}^nD_i$. Then $0,1,\infty\in U_n$ for all $n\in \N$, and so  
again there exist an $\eta$-quasisymmetric map 
$f_n$ of   $\overline U_n$ onto the closure $\overline V_n$ of a circle domain $V_n$
such that $f_n(0)=0$, $f_n(1)=1$, $f_n(\infty)=\infty$.
Here $\eta$ depends only on $s$ and $k$, but not on $n$.
Note that $\bigcap_{n\in \N}\overline U_n=T$. 
Since the maps $f_n|T$, $n\in \N$, are normalized and $\eta$-quasisymmetric, the sequence $(f_n)$ subconverges to an $\eta$-quasisymmetric embedding
$f\:T\ra \Sph$, i.e., there exists a subsequence 
$(f_{n_k})$ of $(f_n)$ that converges to $f$ uniformly 
on $T$ (Lemma~\ref{lem:subcon}). 

We claim that   $f(T)$ is a Schottky set. To see this note that if $i\in \N$ is arbitrary, then $S_i$ is the boundary of the complementary component $D_i$ of 
$U_n$ for $n\ge i$. Hence $f_n(S_i)$ is a circle 
for $n\ge i$. Since $f_{n_k}\to f$ uniformly  on $T$, it follows that 
the Jordan  curve $f(S_i)$ is Hausdorff limit of a sequence of circles. Therefore, $f(S_i)$ must be a circle itself. By Lemma~\ref{lem:misc}  (iii), the circles  $f(S_i)$, $i\in I$,  bound
pairwise disjoint closed disks $D'_i$ such that $T'=f(T)=\Sph\setminus \bigcup_{i\in I} D'_i$. Hence  $T'$ is a Schottky set. 

So both when  $I$ is finite or infinite we showed that there exists an $\eta$-quasisymmetric map  of $T$ onto a Schottky set $T'$
where $\eta=\eta_{s,k}$.
Then 
 $f$ is  $\tilde \eta$-quasi-M\"obius with $\tilde \eta$
only depending on $\eta$ and hence only on $s$ and $k$ (Proposition~\ref{prop:inter}~(ii)). By Proposition~\ref{prop:extend}
we can extend   $f$  to an
$H$-quasiconformal homeomorphism on $\Sph$ with $H$ only depending on $\tilde \eta$ and $k$ and hence only on $s$ and $k$.  The map $f\:\Sph\ra \Sph$ is the desired quasiconformal map that sends each quasicircle $S_i$ to a round circle.
 \qed
 \medskip 
 
 The next example shows that we cannot omit the condition of uniform relative separation in Theorem~\ref{thm:simulunif}. 
 
 \begin{example}\label{ex:nonunif} For a set $M\sub \C$ and $a\in \C$, $b>0$, let $a+bM:=\{a+bz:z\in M\}$. Define $Q=[-1,1]\times  [-1,1]\sub \R^2\cong \C$ and
 $$Q_{2i-1}=2^{-i} +8^{-i}Q \text{ and }Q_{2i}=2^{-i}+(2+1/i)8^{-i}+8^{-i}Q$$
 for $i\in \N$. Then the sequence  $Q_i$, $i\in \N$, consists of pairwise 
 disjoint  squares lined up on the positive real axis. The main point is that for $i\to \infty$ the distance 
 $(1/i)8^{-i}$ 
 of $Q_{2i-1}$ and  $Q_{2i}$ goes to $0$ faster than the sidelength 
 $2\cdot 8^{-i}$
 of these squares. 
 
 The sets $S_i=\partial Q_i$, $i\in I$,  are uniform
 quasicircles. We claim that there is no quasiconformal  map $f\:\OC\ra \OC$ such that $f(S_i)$ is a round circle
 for each $i\in \N$. Indeed suppose there is such a map.
  By precomposing $f$ by a Euclidean similarity that maps $Q$ to  $Q_{2i-1}$ 
   and post-composing $f$ by a M\"obius transformation
  we obtain a sequence of $(f_i)$ of $H$-quasiconformal 
  maps on $\OC$ such that 
  $f_i(\partial Q)=\partial \D$, $f_i(1)=1$, $f_i(\iu)=\iu$, $f_i(-1)=-1$, 
  and such that $f_i(\partial D_i)$ is a circle where
  $D_i=(2+1/i)+Q $ for $i\in \N$. Here $H$ does not depend on $i$
  and so  the sequence is {\em uniformly quasiconformal}.  
  Hence  $(f_i)$ has a convergent subsequence that converges uniformly to a quasiconformal map $g$ on $\oC$ (a suitably normalized sequence of uniformly quasiconformal maps on $\oC$ 
  has subsequence that converges uniformly to 
   a quasiconformal map as a sublimit; see \cite[Sect.~19--Sect.~21 and Sect.~37]{Va}. The existence of the quasiconformal  sublimit $g$ can also easily be derived from Proposition~\ref{prop:inter} and Lemma~\ref{lem:subcon}).

  Then we have $g(\partial Q)=\partial \D$. Moreover, 
  since $\partial D_i\to 2+\partial Q$ in the Hausdorff sense and $f_i(D_i)$ is a circle for each $i\in \C$, the set $g(2+\partial Q)$ 
  is also a circle. Since the squares $Q$ and $2+Q$ share a common side,  and $g(\partial Q)=\partial \D$, we conclude 
 $g(2+\partial Q)=\partial \D$. This is impossible, since $g$ is a homeomorphism and so two distinct sets in $\OC$ cannot have the same image. 
 \end{example}

Finally, we give an  example showing that  in Theorem~\ref{thm:simulunif} one cannot drop the assumption that  the quasicircles bound pairwise disjoint Jordan  regions. 
 
\begin{example}\label{ex:nested}  We define a collection $S_i$, $i\in \N_0$ of uniformly relatively  separated   uniform quasicircles 
as follows. For $S_0$ we pick any quasicircle in $\oC$ that is not a round circle; to be specific, let $S_0=\partial Q$, where 
$Q=[-1,1]\times [-1,1]\sub \R^2\cong \C$. All other curves $S_i$, $i\in \N$, will be round circles that bound closed disks $D_i$ that 
are pairwise disjoint and disjoint from $S_0$. 
We can choose the circles $S_i$, $i\in \N$, such that the family 
$S_i$, $i\in \N_0$ (including $S_0$), is uniformly relatively separated and such that 
the set $T=\OC\setminus \bigcup_{i\in \N}\inte(D_i)$ has spherical measure zero.  One can obtain such disks $D_i$ and circles $S_i=\partial 
D_i$ by a    procedure that successively scoops out disks  from  the two complementary components of $S_0$. This is essentially identical to the construction in the proof of Thm.~1.3 in 
\cite[p.~435]{BKM}, so we omit the details.

 Now suppose that there was a quasiconformal
(and hence quasisymmetric) map $f\:\oC\ra \oC$ such that 
$f(S_i)$ is a round circle for each $i\in \N_0$. Then $f|T$ 
is a quasisymmetric map of the Schottky set $T$ onto the Schottky set 
$$T'=\OC \setminus \bigcup_{i\in \N}\inte(f(D_i)).$$
Since $T$ has spherical measure zero, the map $f|T$ is identical to the restriction of a M\"obius transformation \cite[Thm.~1.1]{BKM}. Since $S_0\sub T$ and $f(S_0)$ is a round circle, this implies
that $S_0$ must be a round circle itself.  This is a contradiction showing that a map $f$ as stipulated does not exist. 
\end{example}

\section{Extremal metrics for transboundary modulus}
\label{s:square} 

\no 
In this section we solve an extremal problem 
for  transboundary modulus and prove  a related variant of 
Theorem~\ref{thm:simulunif}.  
We employ the terminology 
for sets in the cylinder $\C^*\cong Z$ introduced in the beginning of Section~\ref{s:unifcyl}.
We denote by $d_{\C^*}$ the  flat metric on $\C^*$ induced by the length element 
$$ ds_{\C^*}=\frac{|dz|}{|z|},$$  and  by $A_{\C^*}$ the corresponding measure on ${\C^*}$ induced by the volume element
$$dA_{\C^*}(z)=\frac{dm_2(z)}{|z|^2}.$$ 
Here $m_2$ is $2$-dimensional Lebesgue measure. 
Note that if $Q$ is a $\C^*$-square with sidelength $\ell(Q)$, then 
$A_{\C^*}(Q)=\ell(Q)^2$.  
The length of a locally rectifiable path $\alpha$ in $\C^*$ with respect to the
 metric $d_{\C^*}$ is given  by
 $$\text{length}_{\C^*}(\alpha):=\int_\alpha \frac{|dz|}{|z|}.$$ 
  For  $z_0\in \C^*$ and $r>0$ we define 
$$B_{\C^*}(z_0,r)=\{z\in \C^*: d_{\C^*}(z_0,z)<r\}.$$  
Recall that the height $h_A$ of a finite $\C^*$-cylinder $A=\{z\in \C: r<|z|<R\}$ is given by $h_A=\log(R/r)$. 

To motivate our next result  we will discuss some background on extremal problems for classical modulus
 (for more details see \cite[Ch.~I]{LV}, for example).   
Suppose $Q\sub \C$ is a quadrilateral, i.e., a closed Jordan  region with four distinguished points 
on its boundary. These points divide $\partial Q$ into four arcs. Let $E$ and $F$ be two of these arcs that are ``opposite to each other" on $\partial Q$ (i.e., non-adjacent and separated by the other two arcs) and consider the path family $\Gamma=\Gamma(E,F;\inte(Q))$ of all paths 
in $\inte(Q)$ connecting $E$ and $F$.   It is well-known how to compute 
$\Mod (\Gamma)$ (at least in principle); namely, map $Q$ by a conformal map to a Euclidean rectangle such that the 
vertices of $Q$ and $R$ correspond to each other under the map.
If $E$ and $F$ corresponds to sides of $R$ with length $a$, and the other two arcs on $\partial Q$ to sides of $R$ with length $b$, then 
$$ \Mod (\Gamma)=a/b.$$ 
Moreover, if $f$ is the  conformal map of $Q$ onto $R$, then the 
 unique (up to changes on sets of measure zero)
extremal density $\rho$ for $\Mod (\Gamma)$ 
is $\rho(z)=|f'(z)|/b$ if we use the Euclidean metric as base metric on 
$Q$. 
This easily follows from conformal invariance of modulus, 
and the fact that if $Q=R$, then $\rho\equiv 1/b$ is the  extremal density.
The main point here is that   rectangles are ``extremal regions" for this type of modulus problem. 

To give another example, suppose $D_0,D_1\sub \oC$ are disjoint closed Jordan  regions. Consider the (topological) annulus $V=\oC\setminus(D_0\cup D_1)$, and the family 
$\Gamma=\Gamma(\partial D_0, \partial D_1; V)$ of all paths in $V$ connecting the boundary components $\partial D_0$ and $\partial D_1$  of the annulus. In this case  the extremal regions for 
$\Mod(\Gamma)$ are finite $\C^*$-cylinders. One can find a 
$\C^*$-cylinder 
$A$ that is conformally equivalent to $V$. Then
$$ \Mod(\Gamma)=\frac{2\pi}{h_A}.$$ Moreover, the essentially unique extremal density for $\Mod(\Gamma(\partial_iA, \partial_oA; A))$ is $\rho\equiv 1/h_A$ (with the flat metric on $\C^*$ as base metric), and using a  conformal map  between $V$ and $A$ one can easily identify the extremal 
density for  $\Mod(\Gamma)$. 

In this section we are interested in similar results for transboundary modulus.    Suppose  
 $D_0, \dots, D_{n+1}\sub \oC$ are pairwise disjoint closed Jordan
 regions, and $V=\oC\setminus(D_0\cup D_{n+1})$. We consider  the transboundary modulus  
 $M_{V, \mathcal{K}}(\Gamma)$, where $\mathcal{K}=\{D_1, \dots, D_{n}\}$ and $\Gamma$ is the family of all paths in $V$ connecting  $\partial D_0$ and $\partial D_{n+1}$. 
 So the paths have their endpoints on $\partial D_0$ and $\partial D_{n+1}$, but they do not meet $D_0$ and $D_{n+1}$ otherwise, and  they may pass through all
 the other Jordan  regions  $D_1, \dots, D_{n}\sub V$.  For the transboundary mass distribution we may put weights on the elements in  
 $\mathcal{K}$, i.e., on $D_1, \dots , D_{n}$, but not on $D_0$ and $D_{n+1}$. 
The density of the transboundary mass distribution  will be defined 
 on 
 \begin{equation}\label{regOm}
 \Om=V\setminus (D_1\cup \dots \cup D_n)=\oC\setminus (D_0\cup \dots \cup D_{n+1}). 
 \end{equation} 
 As we will see (cf.\ the discussion after  the proof of 
Proposition~\ref{prop:transcyl})  the extremal region (corresponding to $\Om)$ is of the form 
\begin{equation}\label{extrreg}
U= A\setminus (Q_1\cup \dots \cup Q_{n}),
\end{equation} where $A$ is a finite 
$\C^*$-cylinder and $Q_1, \dots,  Q_{n}$ are pairwise disjoint $\C^*$-squares in $A$.   

We first require a lemma.

\begin{lemma}\label{lem:LLCsq} Let $A$ be a finite $\C^*$-cylinder, and $Q_1, \dots, Q_n$ be   pairwise 
disjoint $\C^*$-squares in $ A$. Then any two points
$$x,y\in T:=\overline A\setminus\bigcup_{i=1}^n\inte(Q_i)$$
can be joined by a rectifiable path $\beta$ in $T$ with
$$\textnormal{length}_{\C^*}(\beta)\le 2d_{\C^*}(x,y). $$

Suppose in addition that 
$$\ell(Q_i)\le 2\pi-\eps_0 \forr i=1,\dots, n,$$
where $0<\eps_0<2\pi. $
Then for all $z_0\in T$ and  all $r_0\in (0,\eps_0/2]$
 the set
$T\setminus B_{\C^*}(z_0,r_0)$ is path-connected. 
\end{lemma}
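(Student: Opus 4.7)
My plan is to work in the universal cover of $(\C^*, d_{\C^*})$, which under the exponential map is isometric to $(\C, \text{Euclidean})/2\pi\iu \Z$. Under this identification, $\overline A$ lifts to a convex vertical strip $\widetilde A = \{w \in \C : \log r \le \Re w \le \log R\}$, each $\C^*$-square $Q_i$ lifts to genuine Euclidean squares of sidelength $\ell(Q_i)$, and each ball $B_{\C^*}(z_0, r_0)$ lifts to Euclidean disks of radius $r_0$ spaced at vertical distance $2\pi$.

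For (i), I take lifts $\tilde x, \tilde y \in \widetilde A$ with $|\tilde x - \tilde y| = d_{\C^*}(x,y)$ and consider the Euclidean segment $\sigma = [\tilde x, \tilde y]$. Since $\widetilde A$ is convex, $\sigma \subseteq \widetilde A$. Whenever $\sigma$ crosses the interior of a lifted square, entering at $p$ and exiting at $q$, I replace the chord $[p,q]$ by the shorter of the two boundary arcs of that square from $p$ to $q$. The crucial elementary fact is that for any Euclidean square and any chord passing through its interior with endpoints $p, q$ on the boundary, the shorter boundary arc joining $p$ to $q$ has length at most $2|p-q|$; this is verified by a brief case analysis depending on whether $p$ and $q$ lie on opposite or on adjacent sides of the square (the worst ratio, equal to $2$, is attained by a chord joining midpoints of opposite sides). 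Since the lifted squares are pairwise disjoint and each $\partial Q_i \subseteq T$, these detours stay in (the lift of) $T$, and iterating produces a path whose projection $\beta$ lies in $T$ with $\text{length}_{\C^*}(\beta) \le 2\,d_{\C^*}(x,y)$.

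For (ii), the hypotheses translate into key angular bounds: the ball $B_{\C^*}(z_0, r_0)$ has angular diameter at most $2r_0 \le \eps_0$, so it does not wrap around the cylinder; and each $Q_i$ has angular extent $\ell(Q_i) \le 2\pi - \eps_0$, so its angular complement in $S^1$ has length at least $\eps_0$. My plan is to first apply (i) to produce a path $\beta$ in $T$ from $x$ to $y$, and then reroute each in-ball excursion of $\beta$ by a detour lying in $T \setminus B_{\C^*}(z_0, r_0)$. If $\beta$ has first entry $p$ and last exit $q$ on $\partial B_{\C^*}(z_0, r_0)$, I construct the detour by following $\partial B_{\C^*}(z_0, r_0)$ from $p$ (say counterclockwise) and, whenever this arc would enter some $\inte(Q_i)$, switching onto $\partial Q_i$ until it is possible to rejoin $\partial B_{\C^*}(z_0, r_0)$ on the far side of $Q_i$, eventually arriving at $q$.

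The main obstacle in (ii) is justifying that such a detour always reaches $q$ without getting trapped. The essential point is that the combined angular extent of the ball together with any single $Q_i$ is at most $\eps_0 + (2\pi - \eps_0) = 2\pi$, with equality possible only on a single horizontal circle (where the ball attains its maximal angular width $2r_0$, which requires $r_0 = \eps_0/2$ and exact angular alignment with the gap of $Q_i$). Since squares are pairwise disjoint, no finite union of a $Q_i$ and the ball creates a separating band in $\overline A$; the constraint $r_0 \le \eps_0/2$ is precisely what guarantees the existence of the bridging arcs along $\partial Q_i$ used to connect consecutive components of $\partial B_{\C^*}(z_0, r_0) \cap T$. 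Once this is established, concatenating the detours with the portions of $\beta$ outside $B_{\C^*}(z_0, r_0)$ yields a path in $T \setminus B_{\C^*}(z_0, r_0)$ from $x$ to $y$, proving path-connectedness.
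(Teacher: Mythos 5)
Your first claim is proved correctly and by essentially the paper's own argument: the $d_{\C^*}$-geodesic is the projection of a Euclidean segment in the lifted strip, each chord through a lifted square is replaced by the shorter boundary arc, and that arc has length at most twice the chord (with the factor $2$ attained for the chord joining midpoints of opposite sides). This part is fine.

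The second claim is where there is a genuine gap, and it sits exactly at the step you yourself call ``the main obstacle.'' Your detour follows $\partial B_{\C^*}(z_0,r_0)$ and must bridge each arc of $\partial B_{\C^*}(z_0,r_0)\cap\inte(Q_i)$ by an arc of $\partial Q_i\setminus B_{\C^*}(z_0,r_0)$ joining its two endpoints. The existence of such bridges does not follow from the angular-extent count $2r_0+\ell(Q_i)\le 2\pi$, and your ``no separating band'' heuristic never invokes the hypothesis that is actually essential here, namely that $z_0\in T$, so $z_0\notin\inte(Q_i)$. Moreover the fact is delicate: even with $z_0\notin\inte(Q_i)$ and $\ell(Q_i)+2r_0\le 2\pi$, the set $\partial Q_i\setminus B_{\C^*}(z_0,r_0)$ can be \emph{disconnected}. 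Take a square of side $s$ and a disk of radius slightly larger than $s$ centered just outside the midpoint of one side: the disk covers that side, its two endpoints, most of the two adjacent sides, and the midpoint of the opposite side, but misses the two far corners, so the boundary minus the disk consists of two corner pieces. In that configuration your bridging happens to succeed (each gap of $\partial B_0$ has both endpoints on the same corner piece), but this has to be \emph{proved} by an explicit crossing analysis of the circle against the lifted square; it cannot be read off from angular extents, and you give no such argument. Note that the paper turns the construction inside out: it builds a path from scratch through the meridian antipodal to $z_0$, which misses the ball automatically because $r_0<\pi$, and then detours around the squares rather than around the ball.

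A second, independent gap: your detour lives on the circle $\partial B_{\C^*}(z_0,r_0)$, but that circle need not lie in $\overline A$. If $z_0$ is within flat distance $r_0$ of $\partial_iA$ or $\partial_oA$ (in particular whenever $h_A<2r_0$), part of $\partial B_{\C^*}(z_0,r_0)$ leaves the cylinder; and if the ball meets both boundary circles, the entry point $p$ and the exit point $q$ can lie in different components of $\partial B_{\C^*}(z_0,r_0)\cap\overline A$, so that no arc of the circle inside $\overline A$ joins them at all. One would then need additional bridging along $\partial_iA\setminus B_{\C^*}(z_0,r_0)$ and $\partial_oA\setminus B_{\C^*}(z_0,r_0)$ (each of which is a single arc, since the ball meets each boundary circle of $A$ in at most one arc because $2r_0<2\pi$). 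This case is simply absent from your construction.
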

\medskip
\no{\em Proof.} Let $x,y\in T$ be arbitrary. We connect $x$ and $y$ by a geodesic segment $\alpha$ in $\overline A$ with respect to the metric $d_{\C^*}$. If $\alpha_i:=\alpha \cap \inte(Q_i)\ne \emptyset$, then an elementary geometric argument shows that one of the two subarcs
of $\partial Q_i$ with the same endpoints as $\alpha_i$ 
has length bounded by $ 2 \text{length}_{\C^*}  (\alpha_i)$. Denote this arc by 
$\tilde \alpha_i$. If we replace each $\alpha_i\ne \emptyset$
by $\tilde\alpha_i$, then we obtain a path $\beta$ in $T$  
with  endpoints $x$ and $y$, and with $\length_{\C^*}(\beta)\le 
 2\length_{\C^*}(\alpha)= 2 d_{\C^*}(x,y).$ 
 
 For the  the proof  of the second part of the statement we may assume 
 that $z_0$ lies on the positive real axis. Since $r_0< \pi$,  the disk $B_0=B_{\C^*}(z_0,r_0)$ does not contain any point on the negative real axis. Then we can connect 
 $x$ and $y$ by a path $\alpha$ in $\overline A\setminus B_0$ as follows: there is an arc $\alpha_1$ on the 
 circle $\{z\in \C: |z|=|x|\}$ that does not meet $B_0$ 
and connects $x$ to a point $x_1$ on the negative real axis.
Similarly, $y$ can be connected by an arc $\alpha_2$ on the 
 circle $\{z\in \C: |z|=|y|\}$ that does not meet $B_0$ to a point $y_1$ on the negative real axis. Finally, 
let $\alpha_3$ be a segment on the negative real axis 
with endpoints $x_1$ and $y_1$. Then running through the paths
$\alpha_1, \alpha_2, \alpha_3$ in suitable order gives  the desired path in $\overline A\setminus B_0$ that connects $x$ and $y$. Similarly  as in the first part of the proof  we want to modify $\alpha$ to obtain a path $\beta$ in $T\setminus B_0$ that connects $x$ and $y$.

We leave it to the reader to verify the following elementary fact of Euclidean geometry: if $Q$ is a  square  and $B=B_\C(z,r)$ a disk in $\C$, and 
 $z\not\in \inte(Q)$, then the set $\partial Q\setminus B$ is 
 connected. A similar statement is true for $\C^*$-squares $Q$ and disks $B=B_{\C^*}(z,r)$ 
 in $\C^*$ with $z\not\in \inte(Q)$ if we impose the additional condition
 $\ell(Q)+2r\le 2\pi$. Indeed, this follows from the Euclidean 
 fact if we lift by the exponential function to $\C$ and note that the condition $\ell(Q)+2r\le 2\pi$ implies that 
 every lift of $Q$ can meet at most one lift of $B$.

 The center $z_0$ of $B_0$ is contained in $T$ and hence lies outside the interior of each $\C^*$-square $Q_i$ for  $i=1, \dots, n$.  Moreover, we have $\ell(Q_i)+2r_0\le(2\pi-\eps_0)+\eps_0=2\pi$. It
 follows that $\partial Q_i\setminus B_0$ is connected for all
 $i=1, \dots, n$.
 
 We use this fact to modify the path $\alpha$ obtained above as follows: if $\alpha$ meets $\inte(Q_1)$, then 
 there is a first point point $x'$ and a last point $y'$ 
 on $\partial Q_1$ as we travel from $x$ to $y$ along $\alpha$. Since $x'$ and $y'$ lie on $\alpha$ and hence outside $B_0$,
 we can connect these  points on $\partial Q_1$ by a path $\tilde \alpha$ in 
 $\partial Q_1\setminus B_0$. If we replace the subpath of  $\alpha$ between $x'$ and $y'$ by 
 $\tilde \alpha$, we obtain a new path connecting 
 $x$ and $y$ in $\overline A\setminus (B_0\cup \inte(Q_1))$.
 Continuing this procedure with the other $\C^*$-squares we finally obtain a path 
 $\beta$ connecting $x$ and $y$ in 
 $$\overline A\setminus (B_0\cup \inte(Q_1) \cup \dots \cup \inte(Q_n))=T\setminus B_0. $$ The proof is complete.   \qed
\medskip

\begin{proposition}\label{prop:transcyl} Let $A$ be a finite 
$\C^*$-cylinder, and $\mathcal{K}=\{Q_i:i=1, \dots,n\}$ be a finite (possibly empty) family of pairwise disjoint (possibly degenerate) $\C^*$-squares in $A$. Define $\Gamma=
\Ga(\partial_iA,\partial_oA; A)$  and $K=Q_1\cup\dots\cup Q_n$.
Then
$$ M_{A,\mathcal{K}}(\Gamma)=\frac{2\pi}{h_A}. $$
Moreover, the essentially unique extremal  admissible transboundary mass distribution for $\Gamma$ consisting of a Borel function $\rho$ on $A\setminus K$,
and discrete weights $\rho_i\ge 0$ for $i=1,\dots,n$ such that 
\begin{equation} \label{totmass}\int_{A\setminus  K} \rho^2\, dA_{\C^{*}}+
\sum_{i=1} ^n \rho_i^2=M_{A,\mathcal{K}}(\Gamma)= \frac{2\pi}{h_A}, \end{equation}
is given by  $\rho(z)= 1/h_A$ for  $z\in A\setminus K$, and 
$\rho_i=\ell(Q_i)/h_A$ for $i=1, \dots, n$. 
\end{proposition}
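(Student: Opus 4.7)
First, I would switch from the spherical base metric to the flat metric
$ds_{\C^*}=|dz|/|z|$ on $\C^*$: by Remark~\ref{rem:chbase} this leaves the transboundary modulus unchanged, while it turns $A$ into a Euclidean cylinder of circumference $2\pi$ and height $h_A$, so that $A_{\C^*}(A)=2\pi h_A$ and $A_{\C^*}(Q_i)=\ell(Q_i)^2$.

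For the upper bound I would verify that the proposed data $\rho\equiv 1/h_A$ on $A\setminus K$ with $\rho_i=\ell(Q_i)/h_A$ is admissible. Following the idea already used in the proof of Proposition~\ref{thm:uptrans}, I would consider the $1$-Lipschitz projection $\pi\: \overline A\ra [\log r,\log R]$ given by $\pi(z)=\log|z|$. For any $\gamma\in\Gamma$ locally rectifiable in $A\setminus K$ the endpoint condition forces $\pi(\gamma)\supseteq[\log r,\log R]$, hence
$$[\log r,\log R]\sub  \pi(\gamma\cap(A\setminus K))\cup\bigcup_{\gamma\cap Q_i\ne\emptyset}\pi(Q_i),$$
so $m_1(\pi(Q_i))=\ell(Q_i)$ and the Lipschitz bound $m_1(\pi(\gamma\cap(A\setminus K)))\le \length_{\C^*}(\gamma\cap(A\setminus K))$ give admissibility after dividing by $h_A$. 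The total mass then evaluates to
$$\frac{A_{\C^*}(A\setminus K)+\sum_i\ell(Q_i)^2}{h_A^2}=\frac{A_{\C^*}(A)}{h_A^2}=\frac{2\pi}{h_A}.$$

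For the lower bound I would test an arbitrary admissible transboundary mass distribution $(\rho,(\rho_i))$ against the family of radial paths $\alpha_\theta(t)=e^te^{\iu\theta}$, $t\in[\log r,\log R]$, $\theta\in[0,2\pi)$, each of which lies in $\Gamma$ and is rectifiable in $A\setminus K$. Admissibility gives
$$1\le \int_{\alpha_\theta\cap(A\setminus K)}\rho\,ds_{\C^*}+\sum_{\alpha_\theta\cap Q_i\ne\emptyset}\rho_i,$$
and integrating over $\theta$ and invoking Fubini in the coordinates $(\theta,t=\log|z|)$ (in which $dA_{\C^*}=d\theta\,dt$), together with the fact that the $\theta$-set for which $\alpha_\theta$ meets $Q_i$ is an arc of length $\ell(Q_i)$, produces
$$2\pi\le \int_{A\setminus K}\rho\,dA_{\C^*}+\sum_i\rho_i\ell(Q_i).$$
Applying Cauchy--Schwarz to each of the two terms on the right and then combining the two resulting estimates by a further Cauchy--Schwarz, and using $A_{\C^*}(A\setminus K)+\sum_i\ell(Q_i)^2=2\pi h_A$, yields
$$2\pi\le\left(\int_{A\setminus K}\rho^2\,dA_{\C^*}+\sum_i\rho_i^2\right)^{1/2}(2\pi h_A)^{1/2},$$
which is exactly the bound $M_{A,\mathcal{K}}(\Gamma)\ge 2\pi/h_A$.

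Essential uniqueness would then follow by inspecting the equality cases: equality in the inner applications of Cauchy--Schwarz forces $\rho$ to be $A_{\C^*}$-a.e.\ constant on $A\setminus K$ and $\rho_i/\ell(Q_i)$ to be independent of $i$ (for non-degenerate $Q_i$), equality in the outer application forces these two constants to agree, and the total-mass identity then pins their common value at $1/h_A$. I do not expect a serious obstacle in this proof: once the projection $\pi$ and the radial family $\alpha_\theta$ are in place, both bounds reduce to one-variable calculus and two applications of Cauchy--Schwarz, and the only delicate bookkeeping---tracking locally rectifiable paths that pass through the closed squares $Q_i$---is already handled in the spirit of Propositions~\ref{thm:uptrans} and \ref{prop:lowtrans}.
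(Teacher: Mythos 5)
Your proposal is correct and follows essentially the same route as the paper's proof: admissibility of the candidate mass distribution via the $1$-Lipschitz projection $\pi(z)=\log|z|$, the lower bound by averaging the admissibility inequality over the radial family $\alpha_\theta$ and a two-step Cauchy--Schwarz using $A_{\C^*}(A\setminus K)+\sum_i\ell(Q_i)^2=2\pi h_A$, and uniqueness from the equality cases. No gaps.
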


The underlying base  metric here (see Remark~\ref{rem:chbase}) is the flat metric on $\C^*$. 
Essential  uniqueness   means that if we have another admissible transboundary mass distribution for $\Gamma$ with \eqref{totmass}, then 
$\rho(z)= 1/h_A$ for {\em almost every } $z\in A\setminus K$, and 
$\rho_i=\ell(Q_i)/h_A$ for $i=1, \dots, n$. 

\medskip 
\no{\em Proof}. Suppose that $A=\{z\in \C: r<|z|<R\}$, where
$0<r<R$. Then $h_A=\log(R/r)$. 
Let $\rho(z)=1/h_A$ for $z\in A\setminus K$ and 
$\rho_i=\ell(Q_i)/h_A$ for $i=1, \dots, n$.
We claim that this transboundary mass distribution is admissible
for the family $\Gamma$. Let $\ga \in \Gamma$ be an arbitrary 
path that is locally rectifiable in $A\setminus K$. We may assume that 
$\ga$ is parametrized by the interval $[0,1]$ and that 
$\ga(0)\in \partial 
A_i$ and   $\ga(1)\in \partial 
A_o$.  By definition of  $\Gamma$ we have $\ga((0,1))\sub A$.

 Let $\pi\:\overline {A}\ra  [\log r, \log R] \ $ be the map 
  $z\mapsto \pi(z):=\log|z| $.
 Then 
 \begin{equation}\label{eq:proiect2}
 (\log r, \log R)\sub  \pi(\ga  \cap (A\setminus K) ) \cup \bigcup_{\ga \cap Q_i\ne \emptyset}\pi(\ga \cap Q_i).
 \end{equation}
Note that   \eqref{proiest2}   implies that 
$$\int_{\ga \cap (A\setminus K)}\rho\, ds_{\C^{*}} =
 \frac 1 {h_A}\int_{\ga \cap (A\setminus K)}
\frac{|dz|}{|z|}\ge  \frac 1 {h_A} m_1(\pi(\ga \cap (A\setminus K))).$$
We also have 
$\rho_i=  \frac 1 {h_A} m_1(\pi(Q_i))$ for $i=1,\dots, n$, and so  
 by \eqref{eq:proiect2} we obtain  
\begin{eqnarray*}
 \int_{\ga \cap (A\setminus K)} \rho\, ds_{\C^*}+\sum_{\alpha\cap Q_i \ne \emptyset} \rho_i&\ge & \frac 1 {h_A} m_1(\pi(\ga  \cap (A\setminus K) ))
+ \frac 1 {h_A}  \sum_{\alpha \cap Q_i\ne \emptyset} m_1(Q_i)\\
&\ge& \frac 1 {h_A}   m_1 ((\log r, \log R))=1.
\end{eqnarray*}
The admissibility of our transboundary mass distribution follows.

We conclude that  
\begin{eqnarray*}
M_{A,\mathcal{K}}(\Ga)&\le &
\int_{A\setminus K} \rho^2\,dA_{\C^*}+\sum_{i=1}^n \rho_i^2\\
&=&\frac1{h_A^2}A_{\C^*}(A\setminus K)+\frac1{h_A^2}
\sum_{i=1}^n A_{\C^*}(Q_i)\\
&=&\frac1{h_A^2}A_{\C^*}(A)=\frac{2\pi}{h_A}.
\end{eqnarray*}

To get an inequality in the other direction,  suppose that we have an  admissible  transboundary mass distribution 
for the family $\Gamma$ 
consisting of a density  $\rho$ on $A\setminus K$,
and discrete weights $\rho_i\ge 0$ for $i=1,\dots,n$.
For each $\varphi\in [0,\pi]$ the path $\alpha_{\varphi}\:[\log r, \log R]\ra \overline A $ defined by    $\alpha_{\varphi}(t):=
t e^{\iu\varphi}$ for $t\in[\log r, \log R]$  belongs to $\Ga$. 
Hence for each $\varphi\in [0,2\pi]$ we have
$$\int_{\alpha_\varphi\cap (A\setminus K)} \rho\, ds_{\C^{*}}
+\sum_{\alpha_\varphi \cap Q_i \ne \emptyset} \rho_i\ge 1. $$
Integrating this over $\varphi$, using Fubini's theorem, and the Cauchy-Schwarz inequality, we arrive at
\begin{eqnarray}
2\pi &\le &
\int_{A\setminus  K} \rho\, dA_{\C^{*}}+
\sum_{i=1} ^n \ell(Q_i)\rho_i  \label{CS1}\\
&\le & A_{\C^{*}}(A\setminus  K)^{1/2} \biggl(\int_{A\setminus  K} \rho^2\, dA_{\C^{*}}\biggr)^{1/2}+\sum_{i=1} ^n \ell(Q_i)\rho_i  \nonumber\\
&\le & \biggl(A_{\C^{*}}(A\setminus  K)+\sum_{i=1} ^n\ell(Q_i)^2\biggr)^{1/2}
\biggl(\int_{A\setminus  K} \rho^2\, dA_{\C^{*}}+
\sum_{i=1} ^n \rho_i^2\biggr)^{1/2}\label{CS2}\\
&=&(2\pi h_A)^{1/2}\biggl(\int_{A\setminus  K} \rho^2\, dA_{\C^{*}}+
\sum_{i=1} ^n \rho_i^2\biggr)^{1/2}.\nonumber
\end{eqnarray}
Hence 
$$\int_{A\setminus  K} \rho^2\, dA_{\C^{*}}+
\sum_{i=1} ^n \rho_i^2\ge \frac{2\pi}{h_A} $$
for every transboundary mass distribution that is admissible for $\Ga$.
This implies the other desired inequality
$M_{A,\mathcal{K}}(\Ga)\ge {2\pi}/{h_A}. $

If we have an admissible transboundary mass distribution satisfying \eqref{totmass}, then we must have equality in \eqref{CS1} and \eqref{CS2}. Equality in \eqref{CS2}
implies  that there 
exists $\lambda>0$ such that $\int_{A\setminus  K} \rho^2\, dA_{\C^{*}}=\la^2 A_{\C^*}({A\setminus  K}) $ and $\rho_i=\la \ell(Q_i)$ for 
$i=1, \dots, n$.  Hence $\la =1/h_A$ by \eqref{totmass}, and  so  $\int_{A\setminus  K} \rho^2\, dA_{\C^{*}}=A_{\C^*}({A\setminus  K})/h_A^2 $ and $\rho_i= \ell(Q_i)/h_A$ for 
$i=1, \dots, n$.  This and equality in \eqref{CS1} give 
$$  \int_{A\setminus  K} \rho^2\, dA_{\C^{*}}=\frac{1}{h_A^2}A_{\C^*}({A\setminus  K}) =\frac{1}{h_A}\int_{A\setminus  K} \rho\, dA_{\C^{*}}, $$
and so $\rho=1/h_A$ almost everywhere on $A\setminus K$.
\qed \medskip

A general criterion for a density to be  extremal for  the modulus of a given path family is due to Beurling (see \cite[Thm.~4.4, p.~61]{Ah2}). It is easy to extend this 
condition to a criterion for the extremality of a transboundary mass distribution. 
Based on this one   can give a  proof of 
Proposition~\ref{prop:transcyl} that is slighly more streamlined (but uses essentially the same ideas). 

 Combining Proposition~\ref{prop:transcyl} with Corollary~\ref{thm:cylunif} and invariance of transboundary modulus, one  can immediately give a solution 
to the problem discussed in the beginning of this section. If  the setup is 
as before Lemma~\ref{lem:LLCsq}, then we map the region $\Om$
in \eqref{regOm} to a region of the form $U$ as in \eqref{extrreg} by a conformal map $f$. The map $f$ has a unique extension to a homeomorphism from $\overline\Omega $ onto $\overline U$, and a further (non-unique) extension as a homeomorphism on $\oC$. We assume 
 that $f(\partial D_0)=\partial_iA$, $f(\partial D_{n+1})=\partial_oA$, and that the labeling of the other complementary components is such that 
 $f(D_i)=Q_i$ for $i=1, \dots, n$.  
 Then $f(V)=A$  and $f(\Gamma)=\Gamma(\partial_iA, \partial_oA; A)$.   
 Hence 
 $$ M_{V,\mathcal{K}}(\Gamma)=\frac {2\pi}{h_A}. $$
 Moreover, based on the last part of Proposition~\ref{prop:transcyl} one can easily identify the essentially unique  extremal 
 transboundary mass distribution for $M_{V,\mathcal{K}}(\Gamma)$ (we leave this to the reader). 

We record another application of Proposition~\ref{prop:transcyl}. 

\begin{corollary} \label{cor:uniqsquare} The map $f$  in Corollary~\ref{thm:cylunif} is unique up to a post-composition by a map of the form $z\mapsto az$, $a\in \C^{*}$. 
\end{corollary}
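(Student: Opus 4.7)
The plan is to compare two given uniformizations by passing to their composition and then extracting the form of this composition from the essential uniqueness of the extremal transboundary mass distribution identified in Proposition~\ref{prop:transcyl}. Let $f_j\:\overline\Om\ra \overline U_j$ for $j=1,2$ be two maps as in Corollary~\ref{thm:cylunif}, where $U_j=A_j\setminus(Q_{1,j}\cup\dots\cup Q_{n-1,j})$ with a finite $\C^*$-cylinder $A_j$ and pairwise disjoint $\C^*$-squares $Q_{i,j}\sub A_j$. I consider $g:=f_2\circ f_1^{-1}\:\overline U_1\ra \overline U_2$: this is a homeomorphism that is conformal on $U_1$ and satisfies $g(\partial_iA_1)=\partial_iA_2$ and $g(\partial_oA_1)=\partial_oA_2$, while permuting the inner boundary components so that $g(\partial Q_{i,1})=\partial Q_{\sigma(i),2}$ for some permutation $\sigma$ of $\{1,\dots,n-1\}$.

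First I extend $g$ across the $\C^*$-squares to a homeomorphism $G\:A_1\ra A_2$ using Remark~\ref{rem:ext} on each pair $(Q_{i,1},Q_{\sigma(i),2})$; continuity of $G$ at every point follows from Lemma~\ref{lem:misc}~(i). The extension $G$ is conformal on $A_1\setminus K_1$, where $K_j=Q_{1,j}\cup\dots\cup Q_{n-1,j}$, and it sends $\Ga(\partial_iA_1,\partial_oA_1;A_1)$ onto $\Ga(\partial_iA_2,\partial_oA_2;A_2)$. Invariance of transboundary modulus (Lemma~\ref{lem:invtrans}) combined with Proposition~\ref{prop:transcyl} then yields $2\pi/h_{A_1}=2\pi/h_{A_2}$, so $h_{A_1}=h_{A_2}=:h$.

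Next I invoke the essential uniqueness clause of Proposition~\ref{prop:transcyl}. The pullback via $G$ of the extremal transboundary mass distribution on $A_2$ is admissible for the corresponding path family on $A_1$ with the same total mass, so it is itself extremal. A direct computation with the conformal factor of the flat metric shows that on $U_1$ the pulled-back density equals $(1/h)|g'(z)|\cdot |z|/|g(z)|$; essential uniqueness forces this to agree with $1/h$ almost everywhere, and by continuity we obtain $|zg'(z)/g(z)|=1$ everywhere on $U_1$. The function $\varphi(z):=zg'(z)/g(z)$ is holomorphic on the connected open set $U_1$ (since $g$ is non-vanishing there), and its modulus is identically $1$, so by the maximum modulus principle it is a constant $c\in\C$ with $|c|=1$. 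Integrating $g'/g=c/z$ locally gives $g(z)=e^dz^c$ on $U_1$ for some $d\in\C$.

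To finish, single-valuedness of $g$ along a loop in $U_1$ winding once around the origin forces $c\in\Z$ (such a loop exists because each $\C^*$-square has $\ell(Q_{i,1})<2\pi$ and hence does not separate the inner boundary of $A_1$ from the outer one). Together with $|c|=1$ this leaves $c\in\{+1,-1\}$. The case $c=-1$ is excluded, since $g(z)=e^d/z$ would send $\partial_iA_1$ onto $\partial_oA_2$, contradicting the normalization $g(\partial_iA_1)=\partial_iA_2$. Hence $c=1$ and $g(z)=az$ with $a=e^d\in \C^*$, which is the desired uniqueness statement. The principal obstacle is the third paragraph, where the almost-everywhere information coming from essential uniqueness of the extremal distribution must be upgraded to the pointwise holomorphic identity $zg'(z)/g(z)\equiv c$, and then the resulting local antiderivative has to be shown to integrate to a single-valued function across the non-simply-connected domain $U_1$.
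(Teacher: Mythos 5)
Your proposal is correct and follows essentially the same route as the paper: reduce to the composition $g=f_2\circ f_1^{-1}$, use invariance of transboundary modulus together with Proposition~\ref{prop:transcyl} to equate the heights, pull back the extremal mass distribution and invoke its essential uniqueness to conclude that $zg'(z)/g(z)$ is constant on the domain, and then pin down the constant. The only (harmless) divergence is in that last step: the paper extends $g$ across $\partial_iA$ by Schwarz reflection and computes the winding number of the image circle to get $c=1$ directly, whereas you combine $|c|=1$ with the integrality of $c$ forced by single-valuedness around a loop separating the boundary circles and then exclude $c=-1$ via the inner/outer boundary correspondence.
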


\no {\em Proof.} Let $n\in \N_0$,  $A$ and $A'$ be finite cylinders, 
$Q_1, \dots, Q_n$ pairwise disjoint $\C^*$-squares in $A$, and 
$Q'_1, \dots, Q'_n$ pairwise disjoint $\C^*$-squares in $A'$. Let $U=A\setminus (Q_1\cup \dots \cup Q_n)$, $V=A'\setminus (Q_1\cup \dots \cup Q_n)$, and suppose that $g\: \overline U\ra  \overline V$ is a homeomorphism that 
is a conformal map on $U$ with $g(U)=V$, and satisfies $g(\partial_i A)=\partial_i A'$
and $f(\partial_o A)=\partial_o A'$.  
It suffices to show that there exists $a\in \C^*$ such that 
$g(z)=az$ for all $z\in U$.   We extend $g$ (non-uniquely) 
to a homeomorphism from $\overline A$ onto $\overline A'$, which we  also denote by $g$.

Let $\Gamma=\Gamma(\partial_i A,\partial_oA;A)$, and 
$\Gamma'=\Gamma(\partial_i A',\partial_oA';A')$.
Then $\Gamma'=g(\Gamma)$. By invariance of transboundary 
modulus and Proposition~\ref{prop:transcyl} we have
$$\frac{2\pi}{h_A}= M_{A,\mathcal{K}}(\Gamma)=
M_{A',\mathcal{K'}}(\Gamma')=\frac{2\pi}{h_{A'}}, $$
where $\mathcal{K}=\{Q_1, \dots, Q_n\}$ and 
$\mathcal{K'}=\{Q'_1, \dots, Q'_n\}$. 

As we have seen in the proof of Proposition~\ref{prop:transcyl}, 
the transboundary mass distribution consisting of the density $\rho'=1/h_{A'}$ on $V$ and the weights $\rho'_i=\ell(Q'_i)/h_{A'}$ is admissible  for the modulus $M_{A',\mathcal{K'}}(\Gamma')$ and has  minimal total mass. As in the proof of Lemma~\ref{lem:invtrans}  (using the flat metric $d_{\C^*}$ on $\C^*$ instead of the spherical metric) one sees  that  
the transboundary  mass distribution consisting of the density  
$$\rho(z)=\frac{|zg'(z)|}{h_{A'}|g(z)|}\forr z\in U$$ and the weights $\rho_i=\ell(Q'_i)/h_{A'}$ is  admissible  for the modulus $M_{A,\mathcal{K}}(\Gamma)$. Since  
$M_{A,\mathcal{K}}(\Gamma)=
M_{A',\mathcal{K'}}(\Gamma')$ by invariance of transboundary modulus,  this implies that that this transboundary mass distribution
is also extremal   for the modulus  $M_{A,\mathcal{K}}(\Gamma)$. The uniqueness statement in Proposition~\ref{prop:transcyl}  implies that  $z\mapsto |zg'(z)|/|g(z)|$ is a constant function on $U$. Hence the function $z\mapsto zg'(z)/g(z)$ is also constant on $U$, say 
$z g'(z)/g(z) \equiv c$ on $U$, where $c\in \C$. Suppose that 
$\partial_iA=\{z\in \C: |z|=r\}$, where $r>0$. Since $g$ maps the circle 
$\partial_iA$ to the circle $\partial_i A'$, the map $g$ has an analytic extension to a neighborhood of  $\partial_iA$ by the Schwarz reflection principle  and it follows that 
$ z g'(z)/g(z) = c$ for $z\in\partial_iA$.
Let $\alpha$ be the path $t\in[0,2\pi]\mapsto  \alpha(t):=re^{\iu t}$.
Then we have 
$$ \frac 1{2\pi \iu} \int_{g\circ \alpha} \frac{dw}{w}= \frac 1{2\pi \iu} \int_\alpha \frac{g'(z)}{g(z)}\, dz =   \frac c{2\pi \iu} 
\int_\alpha  \frac{dz }{z}=c. $$
On the other hand, the expression of the left-hand side represents the winding number of the path 
 $g\circ \alpha$ around $0$. Note  that $g\circ \alpha$ is a  parametrization of the circle $\partial_iA'$, the map  $g|\partial_iA$ is injective,  and $0$ lies ``on the left" of the oriented 
 path $g\circ \alpha$ since 
$g$ is orientation-preserving.  Thus,  this winding number is equal to $1$ and so  $c=1$. This implies that 
 the function $z \mapsto g(z)/z$ has vanishing derivative on $U$, and so  there exists a constant $a\in \C^*$ 
with 
$g(z)=az$ for $z\in U$ as desired. 
\qed  \medskip

\begin{lemma}\label{lem:cylqs} In Corollary~\ref{thm:cylunif} suppose in addition that 
the Jordan  curves $\partial D_0, \dots, \partial D_n$  are 
$s$-relatively separated $k$-quasi\-circles, and that 
$$\diam(\partial D_0)\wedge  \diam(\partial D_n)\ge d>0. $$

Then there exist  constants $C_1=C_1(s,k)>0$, $C_2=C_2(s,k,d)>0$, and $\eps_0=\eps_0(s,k,d)>0$ such that
\begin{equation}\label{hbdd}
C_1\le h_A\le C_2,
\end{equation}
 and 
 \begin{equation}\label{lQbdd}
\ell(Q_i)\le 2\pi-\eps_0 \foral i=1,\dots, n-1. 
\end{equation}
\end{lemma}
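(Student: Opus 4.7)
The plan for (a) is to combine Proposition~\ref{prop:transcyl} with conformal invariance of transboundary modulus (Lemma~\ref{lem:invtrans}). The map $f\:\Omega\to U$ of Corollary~\ref{thm:cylunif}, with $\Omega=\oC\setminus\bigcup_{j=0}^{n}D_j$ and $U=A\setminus\bigcup_{j=1}^{n-1}Q_j$, identifies the transboundary modulus of $\Gamma:=\Gamma(\partial D_0,\partial D_n;V)$ in $V:=\oC\setminus(D_0\cup D_n)$ with weights on $\mathcal{K}:=\{D_1,\dots,D_{n-1}\}$ as $2\pi/h_A$. The region $V$ is $\phi_{s,k}$-Loewner by Proposition~\ref{thm:lowmod} applied with two complements, and the middle sets $D_j$ are $\lambda(k)$-quasi-round (Proposition~\ref{prop:qround}) and $s$-relatively separated. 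Since the hypotheses give $s\le\Delta(\partial D_0,\partial D_n)\le 2/d$, Proposition~\ref{prop:lowtrans} yields $2\pi/h_A\ge\psi_{s,k}(2/d)$ and Proposition~\ref{thm:weakuptrans} yields $2\pi/h_A\le\phi_{s,k}(s)$, giving $C_1(s,k)\le h_A\le C_2(s,k,d)$.

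For (b), fix $i\in\{1,\dots,n-1\}$. The case $\ell(Q_i)\le\pi$ is immediate, so assume $\ell(Q_i)>\pi$ and set $\eps:=2\pi-\ell(Q_i)\in(0,\pi)$. The same invariance principle will be applied to $\Gamma_i:=\Gamma(\partial D_0,\partial D_n;V_i)$ in $V_i:=\oC\setminus(D_0\cup D_i\cup D_n)$, weighted by $\mathcal{K}_i:=\{D_j:1\le j\le n-1,\, j\ne i\}$. Extending $f$ via Remark~\ref{rem:ext} to a homeomorphism $F\:\oC\to\oC$ with $F(D_j)=Q_j$ for $1\le j\le n-1$, $F(D_0)=\{|z|\le r_A\}$, $F(D_n)=\{|z|\ge R_A\}$, the restriction $F|V_i$ is a homeomorphism onto $A\setminus Q_i$ which is conformal on $V_i\setminus\bigcup_{j\in\mathcal{K}_i}D_j=\Omega$, so Lemma~\ref{lem:invtrans} equates the two transboundary moduli.

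The upper bound is obtained on the $A$-side via the ``gap'' $G:=\{r_{Q_i}<|z|<R_{Q_i},\ \arg z\notin[\alpha_i,\beta_i]\}$, a wedge of $d_{\C^*}$-dimensions $\ell(Q_i)\times\eps$. Any locally rectifiable $\gamma\in F(\Gamma_i)$ must traverse $G$ with radial variation at least $\ell(Q_i)$, since the radial coordinate $\log|\gamma(t)|$ covers $[\log r_{Q_i},\log R_{Q_i}]$ and can only do so within $G$. The geometric crux is that any $Q_j$ ($j\ne i$) meeting $G$ must have radial range overlapping $Q_i$'s, forcing the angular ranges to be disjoint and hence $\ell(Q_j)\le\eps$; moreover all such $Q_j$ lie in the angular sector $\{\arg z\in(\beta_i,\alpha_i+2\pi)\bmod 2\pi\}\cap A$ of $d_{\C^*}$-area $\eps h_A$, so $\sum\ell(Q_j)^2\le\eps h_A$. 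Setting $\rho:=\chi_G/\ell(Q_i)$ on $U$ and $\rho_j:=\ell(Q_j)/\ell(Q_i)$ for $Q_j$ meeting $G$ yields an admissible transboundary distribution whose total mass is at most $\eps/\ell(Q_i)+\eps h_A/\ell(Q_i)^2\le\eps\cdot C_3(s,k,d)$, using $\ell(Q_i)\ge\pi$ and $h_A\le C_2(s,k,d)$ from (a).

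The complementary lower bound comes from Proposition~\ref{prop:lowtrans} applied to $V_i$, which is $\phi_{3,s,k}$-Loewner by Proposition~\ref{thm:lowmod} with three complements: the transboundary modulus is at least $\psi_{s,k}(2/d)>0$. Combining the two estimates gives $\eps\ge\psi_{s,k}(2/d)/C_3(s,k,d)=:\eps_0(s,k,d)$, whence $\ell(Q_i)\le 2\pi-\eps_0$. The main technical obstacle is the admissibility verification for the gap distribution: one must carefully track how the radial variation of $\gamma$ through $G$ splits between $U\cap G$ (contributing length$_{\C^*}$ to $\int\rho\,ds$) and the pieces inside each $Q_j\cap G$ (each of radial extent at most $\ell(Q_j)$), and confirm via the radial bound $\ell(Q_i)$ that $\int_{\gamma\cap U}\rho\,ds+\sum\rho_j\ge 1$ holds for every admissible path.
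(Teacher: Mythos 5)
Your proof is correct and follows essentially the same route as the paper: part (a) is the identical combination of Proposition~\ref{prop:transcyl}, invariance of transboundary modulus, Proposition~\ref{thm:lowmod}, Proposition~\ref{prop:lowtrans}, and Proposition~\ref{thm:weakuptrans}, and part (b) is the paper's ``channel'' argument for the region $V_i=\oC\setminus(D_0\cup D_i\cup D_n)$ with the same gap density $\chi_G/\ell(Q_i)$ and weights $\ell(Q_j)/\ell(Q_i)$. The only (harmless) cosmetic difference is that you bound $\sum_j\ell(Q_j)^2$ by the area $\eps h_A$ of the full-height angular sector of width $\eps$, whereas the paper encloses the relevant squares in a slightly radially thickened channel $\widetilde M$; both give a bound of the form $\eps\cdot C(s,k,d)$ once $h_A\le C_2(s,k,d)$ is known.
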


\no{\em Proof.} Let $V=\OC\setminus(D_0\cup D_n)$, 
 ${\mathcal K}=\{D_i:i=1, \dots, n-1\}$, and 
 $\Ga=\Ga(\partial D_0,\partial D_n; V)$.  We can extend the map $f$ in  Corollary~\ref{thm:cylunif} (non-uniquely) 
 to a homeomorphism from  $\overline V$ onto $\overline A$.   By the properties of the map $f$ we then have 
 $$f(\Ga)=\Ga(\partial_iA,  \partial_oA; A).$$
 Hence by invariance of transboundary modulus and 
 Proposition~\ref{prop:transcyl} we get 
 $$M_{V,{\mathcal K}}(\Ga)=2\pi /h_A. $$
 This shows that in order to establish  inequality \eqref{hbdd}, it suffices to show that
  $M_{V,{\mathcal K}}(\Ga)$ is bounded below by a positive constant only depending on $s$, $k$ and $d$, and bounded above a constant  only depending on $s$ and $k$. 

To produce the first bound note that   by  Lemma~\ref{lem:curvsepp}
the regions $D_0, \dots, D_n$ are also $s$-relatively separated.     
So by Proposition~\ref{thm:lowmod} the region $V=\OC\setminus (D_0\cup D_n)$ is $\phi$-Loewner, where $\phi=\phi_{s,k}$. Moreover, for the continua $\partial D_0$ and $\partial D_n$ we have 
$$ \Delta(\partial D_0,\partial D_n)\le 2/d. $$

Since the continua in ${\mathcal K}$ are 
$s$-relatively separated, and also $\la$-quasi-round with $\la=\la(k)$ by 
Proposition~\ref{prop:qround}, it follows from Proposition~\ref{prop:lowtrans} that 
$M_{V,{\mathcal K}}(\Ga)\ge C(s,k,d)>0$
as desired.  

To produce an inequality in the opposite direction,
note that 
$$ \Delta(\partial D_0,\partial D_n)\ge  s,$$
since $\partial D_0$ and $\partial D_n$ are 
$s$-relatively separated.  Hence by Proposition~\ref{thm:weakuptrans},
$$M_{V,{\mathcal K}}(\Ga)\le C(s,k). $$
The first part of the theorem  follows. 

To prove the second part of the proposition consider one of the 
$\C^*$-squares $Q_1,\dots, Q_{n-1}$, say $Q_1$. Under the map $f$ it corresponds to one of the Jordan  regions $D_1, \dots, D_{n-1}$,
say to $D_1$. Let $V'=\OC\setminus (D_0\cup D_1\cup
D_n)$. Then again by Proposition~\ref{thm:lowmod} the region $V'$ is $\phi$-Loewner with $\phi=\phi_{s,k}$. 
We can again invoke Proposition~\ref{prop:lowtrans} and 
the invariance of transboundary modulus to conclude that
$$M_{U, \mathcal{L}}(\Ga(\partial_iA,\partial_oA;U))=
M_{V',\mathcal{K}'}(\Ga(\partial D_0,\partial D_n;V'))\ge C(s,k,d)>0.$$
Here $U=A\setminus Q_1$,  $\mathcal{L}=\{Q_2,\dots, 
Q_{n-1}\}$, and $\mathcal{K}'=\{D_2, \dots, D_{n-1}\}$. 

On the other hand, suppose that $A=\{z\in C: r<|z|<R\}$.
Without loss of generality we may assume that 
$$Q_1=\{se^{\iu t}: r'\le s\le R', t\in [\alpha, 2\pi-\alpha]\},$$ where
$r<r'<R'<R$ and $\alpha\in (0,\pi)$.  Then  $\ell(Q_1)=2(\pi-\alpha)
=\log(R'/r')$. We have to show that $\alpha$ cannot be smaller than a positive constant only depending on $s$, $k$, 
and $d$. 

Note that every path $\ga\in \Ga=\Gamma(\partial_iA, \partial A_o; U)$ lies in the complement of $Q_1$ and meets both circles $\{z\in \C: |z|=r'\} $ and $\{z\in \C: |z|=R'\}$. Hence 
$\ga$  passes  through the channel
$$M=\{ se^{\iu t}: r'< s< R', t\in (-\alpha, \alpha)\} $$
meeting  ``bottom" and  ``top".  We use this fact to produce 
a transboundary mass distribution for $M_{U, \mathcal{L}}(\Ga(\partial_iA,\partial_oA;U))$ that has small mass if $\alpha$ is small.

 We use the flat metric on $\C^*$ as base metric and set  
$$\rho(u)=1/\ell(Q_1) \forr u\in M\cap U',$$ 
and $\rho=0$ elsewhere on $U'$, where 
$$ U'=U\setminus (Q_2\cup \dots \cup Q_{n-1})=A\setminus (Q_1 \cup \dots \cup Q_{n-1}). $$ 
Moreover, for $i\in \{2, \dots, n-1\}$  we set 
$$\rho_i=\ell(Q_i)/\ell(Q_1) \quad \text{if}\quad Q_i\cap M\ne \emptyset$$ and $\rho_i=0$ otherwise. By considerations very similar to the ones in the proof of Proposition~\ref{prop:transcyl} one can show that this transboundary mass distribution is admissible for  $\Ga$. 

A $\C^*$-square $Q$ that meets $M$ and is disjoint from 
$Q_1$ must satisfy $\ell(Q)<2\alpha$. This   implies 
$$Q\sub \widetilde M:=\{se^{\iu t}: r'e^{-2\alpha}<s<R'e^{2\alpha},
-\alpha<t<\alpha. \}$$
Hence 
\begin{eqnarray*} 
\int_{U'} \rho^2\, dA_{\C^*}+\sum_{i=2}^{n-1} \rho_i^2&\le &
\frac 1{\ell(Q_1)^2} \biggl(A_{\C^*}(M\cap U')+
\sum_{Q_i\cap M\ne \emptyset} A_{\C^*}(Q_i)\biggr)\\
&\le &\frac 1{\ell(Q_1)^2}A_{\C^*}(\widetilde M)=
\frac {\alpha(\pi+\alpha)}{(\pi-\alpha)^2}, 
\end{eqnarray*}
and so 
$$0<C(s,k,d)\le M_{U,\mathcal{L}}(\Ga)\le \frac {\alpha(\pi+\alpha)}{(\pi-\alpha)^2}
. $$
This shows that $\alpha\ge c(s,k,d)>0$ as desired.
\qed \medskip

\begin{proposition} \label{prop:uptrans2} There exists a number $N\in \N$,  and  a function
 $\psi\:[0,\infty)
\ra (0,\infty)$  with $$\lim_{t\to \infty}\psi(t)=0$$ satisfying   the following property:  if  
$\mathcal{K}=\{Q_i:i\in I\}$ is a collection of  pairwise disjoint $\C^*$-squares $Q_i\sub \C^*$, and   
if  $E$ and $F$ are arbitrary disjoint  continua
in $\C^*\setminus   \bigcup_{i\in I}\inte(Q_i)$ with $\Delta_{\C^*}(E,F)\ge 12$, then 
there exists a set $I_0\sub I$ with $\#I_0\le N$ 
such that for the transboundary modulus  of the path family $\Gamma(E,F;\Om')$
in the region $\Omega'=\C^*\setminus \bigcup_{i\in I_0}
Q_i$ with respect to the collection $\mathcal{K}'=\{Q_i:i\in I\setminus I_0\}$ we have
$$M_{\Om', \mathcal{K}'}(\Gamma(E,F;\Om'))\le \psi(\Delta_{\C^*}(E,F)). $$ 
\end{proposition}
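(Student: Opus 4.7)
The statement is the cylindrical analog of Proposition~\ref{thm:uptrans}: we replace the metric measure space $(\oC,\sigma,\Sigma)$ by $(\C^*,d_{\C^*},A_{\C^*})$ and the general $\mu$-fat continua by $\C^*$-squares. The plan is to follow the proof of Proposition~\ref{thm:uptrans} essentially verbatim, along the lines of the extension already anticipated in Remark~\ref{rem:uptrans}, after checking a few geometric ingredients in the cylindrical setting.

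The first ingredient is that every $\C^*$-square $Q$ is $\mu$-fat in $(\C^*, d_{\C^*}, A_{\C^*})$ with a universal $\mu>0$. In the logarithmic coordinates $(u,\theta)\in\R\times(-\pi,\pi]$ given by $z=e^{u+\iu\theta}$, the flat metric $d_{\C^*}$ becomes the quotient Euclidean metric, $A_{\C^*}$ becomes Lebesgue measure $du\,d\theta$, and $Q$ corresponds to a Euclidean square of sidelength $\ell(Q)<2\pi$. Fatness of Euclidean squares (with a universal constant) is classical; and since $\diam_{\C^*}(Q)\le\sqrt 2\,\ell(Q)<2\sqrt 2\pi$, only radii in the bounded range $\rho\le 2\sqrt 2\pi$ enter the fatness check, so local Euclidean considerations suffice. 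The same lift supplies the lower mass bound $A_{\C^*}(B_{\C^*}(a,\rho))\ge c\rho^2$ for a universal $c>0$ in the same range, which by Remark~\ref{rem:uptrans} is all that the $\C^*$-analogs of Lemmas~\ref{lem:ringfat} and~\ref{lem:subann} require.

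The second ingredient replaces the integral identity (\ref{integral}) of the original proof: for every metric annulus $A'=A(x;r',R')$ in $(\C^*,d_{\C^*})$ I claim
\[
\int_{A'}\frac{dA_{\C^*}(u)}{d_{\C^*}(u,x)^2}\le 2\pi\,w_{A'}.
\]
By the rotational symmetry of $d_{\C^*}$ one may take $x=1$; in log coordinates the integrand is $du\,d\theta/(u^2+\theta^2)$ and the integration region sits inside the planar annulus $\{r'<\sqrt{u^2+\theta^2}<R'\}$, for which the integral equals $2\pi w_{A'}$ by polar coordinates. Crucially, the cylindrical wrap-around can only shrink the domain, never enlarge it, so the planar bound transfers to $\C^*$ uniformly in $r'$ and $R'$ and bypasses the need for a global upper mass bound of the type in (\ref{eq:uppmass}).

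With these facts I would run the argument of Proposition~\ref{thm:uptrans} unchanged. Assuming $\diam_{\C^*}(E)\le\diam_{\C^*}(F)$, pick $x\in E$, set $r=2\diam_{\C^*}(E)$ and $R=\tfrac12\dist_{\C^*}(E,F)$, so that $A=A(x;r,R)$ separates $E$ from $F$ with $w_A=\log(\Delta_{\C^*}(E,F)/4)\ge 1$. The $\C^*$-analog of Lemma~\ref{lem:subann} then yields a subannulus $A'\subseteq A$ and a set $I_0\subseteq I$ with $\#I_0\le N=N(\mu)$ universal, such that $w_{A'}(Q_i)\le w_{A'}^{1/3}$ for $i\in I\setminus I_0$. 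Setting $\Omega'=\C^*\setminus\bigcup_{i\in I_0}Q_i$ and $K'=\bigcup_{i\in I\setminus I_0}Q_i$, I would define the transboundary mass distribution by $\rho(u)=1/(w_{A'}d_{\C^*}(u,x))$ on $A'\cap(\Omega'\setminus K')$ and $\rho_i=w_{A'}(Q_i)/w_{A'}$ for $i\in I\setminus I_0$, check admissibility via the projection $\pi(u):=\log d_{\C^*}(u,x)$ (which is $1$-Lipschitz on $\C^*\setminus\{x\}$ regardless of wrap-around), and split the total-mass estimate into the same $I_1/I_2$ dichotomy as in the original proof. The step requiring the most care is verifying that the cylindrical geometry of large balls and annuli --- when $R>\pi$, $B_{\C^*}(x,R)$ is no longer a topological disk --- does not corrupt any of these estimates. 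I expect the log-coordinate viewpoint above to make it transparent that, after lifting, every step reduces to a Euclidean one with the $2\pi\iu\Z$-identification only shrinking the relevant quantities, so one finally arrives at a bound of the form $C/\log(\Delta_{\C^*}(E,F)/4)^{1/3^{N+1}}$ as desired.
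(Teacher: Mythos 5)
Your proposal is correct and follows essentially the same route as the paper: the paper likewise reduces the statement to (the proof of) Proposition~\ref{thm:uptrans} via Remark~\ref{rem:uptrans}, verifying that $\C^*$-squares are uniformly fat in $(\C^*,d_{\C^*},A_{\C^*})$ and that the required mass bounds hold on the relevant range of radii. The only (harmless) variation is that you obtain the key bound $\int_{A'}dA_{\C^*}/d_{\C^*}(\cdot,x)^2\le 2\pi w_{A'}$ directly by lifting to logarithmic coordinates, whereas the paper deduces it from the global upper mass bound $A_{\C^*}(B_{\C^*}(a,r))\le\pi r^2$.
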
 

Here $\Delta_{\C^*}(E,F)$ denotes (in accordance with our convention from Section~\ref{s:nt}) the relative distance of $E$ and $F$ with respect to the flat metric $d_{\C^*}$ on $\C^*$.  
Note that  if $E$ and $F$ are as in the statement, then 
$$E,F\sub \C^*\setminus   \bigcup_{i\in I}\inte(Q_i)\sub
\C^*\setminus   \bigcup_{i\in I_0}\inte(Q_i)=\overline \Om'.$$

\begin{proof}  The proposition immediately follows from   Remark~\ref{rem:uptrans}.
We have to check the relevant conditions   in this remark.  For the mass  bounds in the metric measure space 
 $(\C^*, d_{\C^*}, A_{\C^*})$ note that if 
 $a\in \C^*$, then  
we have 
$$A_{\C^*}(B_{\C^*}(a,r))\le \pi r^2$$ for all $r>0$,  and 
$$ A_{\C^*}(B_{\C^*}(a,r))= \pi r^2$$ for all $r\le  \pi$. The last equality  implies that 
$$ A_{\C^*}(B_{\C^*}(a,r))\ge \frac{\pi}{5} r^2$$  for all 
$r\le \sup\{ \diam_{\C^*}(Q): Q \text { is a $\C^*$-square}\}= \pi \sqrt 5  $. So we get the relevant upper and lower mass bounds.

Moreover, it is clear that  a $\C^*$-square  $Q$ in $(\C^*, d_{\C^*}, A_{\C^*})$ is  $\mu$-fat for some  universal constant $\mu>0$. 
To produce an explicit (non-sharp) constant $\mu$ let $x\in Q$
and $0<r\le \diam_{\C^*}(Q)\le \sqrt 2 \ell(Q)$ be arbitrary.  If $0\le s\le \ell(Q)/2$, then $Q\cap B_{\C^*}(x,s)$ contains at least a ``quarter" of the disk   $ B_{\C^*}(x,s)$. 
If  we apply this for 
$s=r/(2\sqrt 2)\le \ell(Q)/2\le  \pi$, we obtain 
\begin{eqnarray*} A_{\C^*}(Q\cap  B_{\C^*}(x,r))&\ge&
 A_{\C^*}(Q\cap  B_{\C^*}(x,s))\ge  \frac 14  A_{\C^*}( B_{\C^*}(x,s))\\ &=&\frac \pi 4s^2=
\frac{\pi}{32}r^2\ge \frac 1{32}A_{\C^*}(  B_{\C^*}(x,r)) . 
\end{eqnarray*}
So we can take $\mu=1/32$. 
\end{proof}

\begin{proposition}\label{thm:cylqs} In Corollary~\ref{thm:cylunif} suppose in addition that 
the Jordan  curves $\partial D_0, \dots, \partial D_n$  are 
$s$-relatively separated $k$-quasi\-circles, and that 
$$\diam(\partial D_0)\wedge  \diam(\partial D_n)\ge d>0. $$

Then  $f$ is an $\eta$-quasisymmetric map from $\overline \Om$ equipped with the chordal  metric to $\overline U$ 
equipped with flat metric on $\C^*$. Here $\eta$ only depends
 on $s$,  $k$, and   $d$. 
  \end{proposition}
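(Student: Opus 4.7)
The plan is to adapt the proof of Theorem~\ref{thm:unifqs} to the cylindrical target, with $(\overline{U},d_{\C^*})$ playing the role of the Schottky set, the $\C^*$-squares $Q_1,\dots,Q_{n-1}$ replacing the round complementary disks, and the boundary curves $\partial D_0,\partial D_n$ (together with $\partial_iA,\partial_oA$) playing the role of the normalization triple $\{0,1,\infty\}$. Both $\overline{\Omega}$ (with $\sigma$) and $\overline{U}$ (with $d_{\C^*}$) are connected as closures of connected open sets; $\overline{\Omega}\subset\oC$ is universally doubling, and $\overline{U}$ is $N$-doubling with $N=N(s,k,d)$ since Lemma~\ref{lem:cylqs} bounds $h_A\le C_2(s,k,d)$, which bounds $\diam_{\C^*}(\overline{U})$ in terms of $s,k,d$. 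By Proposition~\ref{prop:wkqs} it therefore suffices to establish $H$-weak quasisymmetry of $f$ with $H=H(s,k,d)$. Arguing by contradiction, pick $x,y,z\in\overline{\Omega}$ with $\sigma(x,y)\le\sigma(x,z)$ but $d_{\C^*}(f(x),f(y))>H\,d_{\C^*}(f(x),f(z))$ for arbitrarily large $H$, and set $\alpha=d_{\C^*}(f(x),f(z))$. By Lemma~\ref{lem:LLCsq}(1), join $f(x)$ and $f(z)$ in $\overline{U}$ by a rectifiable continuum $E'$ with $\diam_{\C^*}(E')\le 2\alpha$.

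Next, I would produce an anchor $u\in\partial D_0\cup\partial D_n$ satisfying $\sigma(y,u)\ge d/4$ and $d_{\C^*}(f(x),f(u))\ge C_1(s,k)/2$: since $\diam(\partial D_j)\ge d$ for $j\in\{0,n\}$, pick pairs $\{u_1,u_2\}\subset\partial D_0$ and $\{u_3,u_4\}\subset\partial D_n$ with respective mutual $\sigma$-distance $\ge d/2$, and the identity $d_{\C^*}(f(x),\partial_iA)+d_{\C^*}(f(x),\partial_oA)\ge h_A\ge C_1$ forces one of $\partial_iA,\partial_oA$ to lie at $d_{\C^*}$-distance $\ge C_1/2$ from $f(x)$; take $u$ from the corresponding pair, choosing whichever candidate satisfies $\sigma(y,u)\ge d/4$. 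Setting $r=\min\{\eps_0/2,\,d_{\C^*}(f(x),f(y))/4,\,C_1/4\}$ with $\eps_0=\eps_0(s,k,d)$ from Lemma~\ref{lem:cylqs}, Lemma~\ref{lem:LLCsq}(2) makes $\overline{U}\setminus B_{\C^*}(f(x),r)$ path-connected and it contains both $f(y)$ and $f(u)$, yielding a continuum $F'$ in this set through $f(y)$ and $f(u)$. Using $\alpha<d_{\C^*}(f(x),f(y))/H$ and the a priori bound $d_{\C^*}(f(x),f(y))\le\diam_{\C^*}(\overline{U})\le D_0(s,k,d)$, one gets $r\ge c(s,k,d)H\alpha$, whence $\Delta_{\C^*}(E',F')\ge(r-2\alpha)/(2\alpha)\ge c'(s,k,d)H\ge 12$ for $H$ large enough. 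Proposition~\ref{prop:uptrans2} now produces $I_0\subset\{1,\dots,n-1\}$ with $\#I_0\le N_0$ universal and
\[
M_{\Omega',\mathcal{K}'}\bigl(\Gamma(E',F';\Omega')\bigr)\le\psi\bigl(\Delta_{\C^*}(E',F')\bigr),
\]
where $\Omega'=\C^*\setminus\bigcup_{i\in I_0}Q_i$, $\mathcal{K}'=\{Q_i:i\in\{1,\dots,n-1\}\setminus I_0\}$, and $\psi(t)\to 0$ as $t\to\infty$.

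Extend $f$ to a homeomorphism $\hat{f}\:\oC\to\oC$ via Remark~\ref{rem:ext}. The images $\hat{f}(D_0)$ and $\hat{f}(D_n)$ are the two closed round disks complementary to $A$ in $\oC$, so for $W=\oC\setminus\bigcup_{i\in I_0\cup\{0,n\}}D_i$ we have $\hat{f}(W)=A\setminus\bigcup_{i\in I_0}Q_i\subset\Omega'$, and $\hat{f}$ is conformal on $W\setminus\bigcup\mathcal{K}^{\mathrm{pre}}=\Omega$, where $\mathcal{K}^{\mathrm{pre}}=\{D_i:i\in\{1,\dots,n-1\}\setminus I_0\}$. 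Lemma~\ref{lem:invtrans} combined with the straightforward monotonicity of transboundary modulus under enlargement of the ambient region (any admissible density on the larger region restricts to an admissible density of no greater mass on the smaller, since every path in $\Gamma(E',F';\hat f(W))$ lies entirely in $\hat f(W)$) therefore yields
\[
M_{W,\mathcal{K}^{\mathrm{pre}}}\bigl(\Gamma(E,F;W)\bigr)\le\psi\bigl(\Delta_{\C^*}(E',F')\bigr),
\]
with $E=\hat{f}^{-1}(E')$, $F=\hat{f}^{-1}(F')$. On the other hand, $W$ has at most $N_0+2$ complementary components drawn from the original $s$-relatively separated collection of $k$-quasicircles, so by Proposition~\ref{thm:lowmod} it is $\phi$-Loewner with $\phi=\phi_{s,k}$; the sets in $\mathcal{K}^{\mathrm{pre}}$ are $\lambda(k)$-quasi-round (Proposition~\ref{prop:qround}) and $s$-relatively separated, while $\diam(F)\ge\sigma(y,u)\ge d/4$ together with $\dist(E,F)\le\sigma(x,y)\le\sigma(x,z)\le\diam(E)\le 2$ forces $\Delta(E,F)\le C(d)$. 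Proposition~\ref{prop:lowtrans} then produces $M_{W,\mathcal{K}^{\mathrm{pre}}}(\Gamma(E,F;W))\ge m(s,k,d)>0$, and combined with the upper bound this forces $m(s,k,d)\le\psi(c'(s,k,d)H)$, which fails once $H$ exceeds some $H_0(s,k,d)$, yielding the desired contradiction. The main technical obstacle is the joint construction of the anchor $u$: it must simultaneously be $\sigma$-far from $y$ and have its image $d_{\C^*}$-far from $f(x)$, with no quantitative control on the boundary behaviour of $f$ available a priori; this is resolved by the case split on the $d_{\C^*}$-position of $f(x)$ relative to the two boundary circles of $A$.
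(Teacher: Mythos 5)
Your proof is correct and follows essentially the same route as the paper's: reduce to weak quasisymmetry via Proposition~\ref{prop:wkqs}, argue by contradiction, build $E'$ and $F'$ in $\overline U$ using Lemma~\ref{lem:LLCsq} together with the quantitative bounds of Lemma~\ref{lem:cylqs}, bound the transboundary modulus of the image family from above by Proposition~\ref{prop:uptrans2} and that of $\Gamma(E,F;W)$ from below by Propositions~\ref{thm:lowmod} and~\ref{prop:lowtrans}, and compare the two via invariance and monotonicity of transboundary modulus. The only local deviation is the anchor point $u$: the paper notes that $y$ is $\sigma$-far from one entire curve $\partial D_0$ or $\partial D_n$ (by $s$-relative separation) and then picks $u$ on that curve with $f(u)$ outside $B_{\C^*}(f(x),\eps_0/4)$ using $\diam_{\C^*}(\partial_iA)=\pi$, whereas you split on which boundary circle of $A$ is $d_{\C^*}$-far from $f(x)$ using $h_A\ge C_1$ and then choose $u$ from a $d/2$-separated pair on the corresponding $\partial D_j$; both constructions deliver the two required properties of $u$.
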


\no{\em Proof.}
The proof of the  theorem is very similar to the proof of Theorem~\ref{thm:unifqs}. Note that both metric spaces 
$(\oC, \sig)$ and $(\C^*, d_{\C^*})$ are  doubling, and so 
 every subset of one of these spaces is  $N_0$-doubling, where $N_0$ is a universal constant.  So by Proposition~\ref{prop:wkqs} it is enough 
 to show that on $\overline \Om$ the map $f$ is weakly $H$-quasi\-symmetric with $H=H(s,k,d)$. We can extend $f$ (non-uniquely) to a homeomorphism from $\oC\setminus (\inte(D_0)\cup \inte(D_n))$ onto 
$\overline A$. 
We will use the notation $u':=f(u)$ for $u\in \oC\setminus (\inte(D_0)\cup \inte(D_n))$. 

To reach a contradiction assume that for some points 
 $x,y,z\in \overline \Om$  with $\sig(x,y)\le \sig(x,z)$
 we have 
 $d_{\C^*}(x',y')> H d_{\C^*}(x',z')$ for some large $H>>1$. 
 Then the points $x,y,z$ are distinct. 
 We want to find continua $E'$ and $F'$ in 
 $$\overline U=\overline A\setminus (\inte(Q_1)\cup \dots \cup \inte(Q_{n-1}))$$ whose relative distance is large, but for which the relative distance of the preimages $E$ and $F$ is controlled.

 By Lemma~\ref{lem:LLCsq} we can find a continuum $E'\sub \overline U$ connecting $x'$ and $z'$ such that 
 $$\diam_{\C^*}(E')\le 2d_{\C^*}(x',z').$$ 

 The choice of $F'$ is more involved.  
Since the sets $\partial D_0$ and $\partial D_n$ are $s$-relatively separated, we have  
$$ \dist(\partial D_0,\partial D_n)\ge s(\diam(\partial D_0)\wedge \diam(\partial D_n))\ge sd. $$
Hence $y$ must have distance $\ge sd/2$ to one of the sets 
$\partial D_0$ and $\partial D_n$, say to $\partial D_0$.
Then 
$$ \dist(y, \partial D_0)\ge sd/2. $$
Let $\eps_0=\eps_0(s,k,d)\in (0,2\pi)$ be as 
\eqref{lQbdd}.  Then 
$$ \diam_{\C^*}( B_{\C^*}(x, \eps_0/4))\le \eps_0/2<\pi=\diam_{\C^*}(\partial_i A). $$
Hence there exists a point $u\in \partial D_0$ such that 
for its image point we have 
$u'\in \partial_i A\setminus B_{\C^*}(x,\eps_0/4)$. 
Note that then 
\begin{equation}\label{bbbsss}
\sigma(u,y)\ge sd/2.
\end{equation}

By \eqref{hbdd}  we  have 
$$d_{\C^*}(x',y')\le \diam_{\C^*}(\overline A)\le (\pi+h_A)\le(\pi+ C_2(s,k,d))=:C_3(s,k,d).$$
If we define $c_4:=c_4(s,k,d)=\eps_0/(4C_3)<1$, then
$c_4d_{\C^*}(x',y'))\le \eps_0/4$, and so   both points $u'$ and $y'$ lie outside the 
ball $B_{\C^*}(x', c_4d_{\C^*}(x',y'))$. 
By Lemma~\ref{lem:LLCsq} 
can find a continuum $F'\sub \overline U$
 connecting $y'$ and $u'$ such that 
 $$F'\cap B_{\C^*}(x', c_4d_{\C^*}(x',y'))=\emptyset.$$

Combining this with the diameter bound for $E'$, we see  (as in the proof of Theorem~\ref{thm:unifqs})  that
 if $H\ge C_{5}(s,k,d)$, then for the relative distance of $E'$ and $F'$ with respect to the metric $d_{\C^*}$ we have 
  $$ \Delta_{\C^*}(E',F')\ge H/C_{6}\ge 12,$$ 
  where $C_{6}=C_{6}(s,k,d)$. 
  
  Define  $E=f^{-1}(E')$ and $F=f^{-1}(F')$. Then $E$  and $F$ are  continua in $\overline \Om$ containing the sets $\{x,z\}$ and 
$\{y,u\}$, respectively.
Then $ \dist(E,F)\le \sigma(x,y)$. Using 
\eqref{bbbsss} we get, 
$$\diam(E)\wedge \diam(F)\ge \sigma(x,z)\wedge \sigma(y,u)\ge 
\sigma(x,z)\wedge (sd/2). $$
Hence 
 \begin{equation}\label{sep5} 
 \Delta(E,F)\le \frac  {\sigma(x,y)} {\sigma(x,z)\wedge (sd/2)}
\le 1\vee (4/(sd))=:C_{7}(s,d).
\end{equation}

 Let $N\in \N$ and $\psi\:(0,\infty)\ra (0,\infty)$ with  $\lim_{t\to \infty}\psi(t)=0$ be as in Proposition~\ref{prop:uptrans2}.  Then  for  some set 
 $I_0\sub I:=\{1,\dots,n-1\}$ with $\#I_0\le N$ we have that 
 $$M_{W,\mathcal{K}'}(\Gamma(E',F';W))\le \psi(\Delta_{\C^*}(E',F'))\le
  \psi( H/C_{6}),$$ where 
$W=\C^*\setminus \bigcup_{i\in I_0}{Q_i}$
and transboundary modulus is with respect to the collection 
$\mathcal{K}'=\{Q_i:i\in I\setminus I_0\}$.   If $V':=A\setminus \bigcup_{i\in I_0}{Q_i}$, then $U\sub V'\sub W$, and  $\Gamma(E',F';V')\sub \Gamma(E',F';W)$, and so 
$$ M_{V',\mathcal{K}'}(\Gamma(E',F';V'))\le M_{W,\mathcal{K}'}
(\Gamma(E',F';W))\le \psi( H/C_{6}). $$

Define $V=\Sph\setminus (D_0\cup D_n\cup \bigcup_{i\in I_0}D_i)$. 
Note that by Lemma~\ref{lem:curvsepp} the complementary components $D_i$, $i\in I_0\cup\{0,n\}$, of $V$ are $s$-relatively separated. Since $\#I_0$ can be bounded by the  universal constant 
$N$, it follows from Proposition~\ref{thm:lowmod} that the region $V=\Sph\setminus (D_0\cup D_n\cup \bigcup_{i\in I_0}D_i)$  is $\phi$-Loewner with 
$\phi$ only depending on $s$ and $k$.
Combining this with \eqref{sep5} and Proposition~\ref{prop:lowtrans}, we see that there  is a positive constant 
$C_{8}=C_{8}(s,k,d)>0$ such that 
$$ M_{V, \mathcal{K}}(\Gamma(E,F; V))\ge C_{8},$$
where the transboundary modulus in $V$ is with respect to the collection $\mathcal {K}=\{D_i:i\in I\setminus I_0\}.$

 Our (extended) map $f$  is a homeomorphism from $\overline V$ onto $\overline V'$, and a conformal map from $V\setminus \bigcup_{i\in I\setminus I_0}D_i=\Om$ onto $V'\setminus \bigcup_{i\in I\setminus I_0}Q_i
 =U$. Moreover, $f(\Gamma(E,F;V))=\Gamma(E',F'; V')$,  and so invariance of transboundary modulus gives 
 $$M_{V,\mathcal{K}}(\Gamma(E,F; V))=
M_{V',\mathcal{K}'}(\Gamma(E',F'; V')).$$
Hence our estimates lead to the inequality
$$ C_{8}\le \psi( H/C_{6}).$$
Since $\psi$ is a fixed function with $\psi(t)\to 0$ as 
$t\to \infty$, this leads to a contradiction if $H$ is larger than a constant only depending on $s$,  $k$, and $d$.
\qed \medskip

\begin{theorem}\label{thm:cylunif1a} Let $I=\{0,\dots, n\}$, where 
$n\ge 1$, or $I=\N_0$. Suppose that $\{D_i: i\in I\}$ is a collection of pairwise disjoint closed Jordan  regions whose boundaries $\partial D_i$, $i\in I$, form a family of uniformly 
relatively separated   uniform quasicircles.  Then there 
exists a finite $\C^*$-cylinder $A$, pairwise disjoint $\C^*$-squares 
$Q_i\sub A$ for  $i\in I\setminus\{0,1\}$, 
and a quasisymmetric homeomorphism $f\:  T\ra  T'$, where 
\begin{equation}\label{T'}
T=\OC\setminus \bigcup_{i\in I}\inte(D_i)
\quad \text{and}\quad T'=\overline A\setminus \bigcup_{i\in I\setminus\{0,1\}}\inte(Q_i),
\end{equation} that 
 maps 
$\partial D_0$ to $\partial_iA$ and  $\partial D_1$ to $\partial_oA$. Here $T$ and $T'$ are equipped with the restriction of the chordal metric and the flat metric on $\C^*$, respectively.  \end{theorem}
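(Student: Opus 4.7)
The plan is to reduce to the finite case, where the result follows from Corollary~\ref{thm:cylunif} combined with Proposition~\ref{thm:cylqs}, and then to extract a subsequential limit for the infinite case using the uniform structural bounds provided by Lemma~\ref{lem:cylqs}.

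For the finite case $I=\{0,1,\ldots,n\}$ with $n\ge 1$, I relabel the indices so that $D_1$ plays the role of the outermost region $D_n$ in Corollary~\ref{thm:cylunif}. That corollary yields a homeomorphism $f\:\overline{\Omega}\to\overline U$ that is conformal on $\Omega=\oC\setminus\bigcup_{i=0}^n D_i$, maps $\partial D_0\to\partial_iA$, $\partial D_1\to\partial_oA$, and $\partial D_i\to\partial Q_i$ for $i=2,\ldots,n$. Since $D_0,\ldots,D_n$ are closed Jordan regions, $\overline{\Omega}=T$ and $\overline U=T'$, so $f$ is already the desired map at the set level. Fixing $d_0:=\diam(\partial D_0)\wedge\diam(\partial D_1)>0$ --- a quantity that depends only on the two distinguished curves and \emph{not} on $n$ --- Proposition~\ref{thm:cylqs} asserts that $f$ is $\eta$-quasisymmetric from $(T,\sigma)$ to $(T',d_{\C^*})$ with $\eta=\eta_{s,k,d_0}$.

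For the infinite case $I=\N_0$, I apply the finite case to each truncated family $\{D_0,D_1,D_2,\ldots,D_n\}$, obtaining uniformly $\eta$-quasisymmetric maps $f_n\:T_n\to T_n'$, where $T_n=\oC\setminus\bigcup_{i=0}^n\inte(D_i)$ and $T_n'=\overline{A_n}\setminus\bigcup_{i=2}^n\inte(Q_i^n)$. Lemma~\ref{lem:cylqs} then gives uniform bounds $C_1(s,k)\le h_{A_n}\le C_2(s,k,d_0)$ and $\ell(Q_i^n)\le 2\pi-\epsilon_0(s,k,d_0)$. Using Corollary~\ref{cor:uniqsquare}, I post-compose each $f_n$ with a similarity $z\mapsto a_nz$ normalizing $A_n=\{z\in\C\: 1<|z|<R_n\}$, so that $R_n=e^{h_{A_n}}\in[e^{C_1},e^{C_2}]$ and all $T_n'$ lie in the fixed compact set $K=\{1\le|z|\le e^{C_2}\}$ of the proper metric space $(\C^*,d_{\C^*})$. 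Applying Lemma~\ref{lem:subcon} to the restrictions $(f_n|T)$ with test set $A:=\partial D_0\cup\partial D_1$ (whose image has $d_{\C^*}$-diameter bounded below uniformly by $h_{A_n}\ge C_1$), we extract a subsequence converging uniformly on $T$ to an $\eta$-quasisymmetric embedding $f\:T\to(\C^*,d_{\C^*})$. A diagonal argument yields, up to a further subsequence, $R_n\to R\in[e^{C_1},e^{C_2}]$ and, for each fixed $i\ge 2$, Hausdorff convergence $Q_i^n\to Q_i$ to a $\C^*$-square $Q_i\subset\overline{A}$ with $A:=\{1<|z|<R\}$; the uniform upper bound on $\ell(Q_i^n)$ prevents the limit squares from wrapping around the cylinder.

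It remains to verify that $f(T)=T':=\overline{A}\setminus\bigcup_{i\ge 2}\inte(Q_i)$ with the correct identification of boundary components. Uniform convergence of $f_n$ combined with Hausdorff convergence of the boundary sets identifies $f(\partial D_0)=\partial_iA$, $f(\partial D_1)=\partial_oA$, and $f(\partial D_i)=\partial Q_i$ for $i\ge 2$; since $f$ is injective on each Jordan curve $\partial D_i$, each limit square $Q_i$ is necessarily non-degenerate. By Lemma~\ref{lem:misc}(iii), $f(T)=\oC\setminus\bigcup_{i\in I}\inte(D_i')$ for closed Jordan regions $D_i'$ with $\partial D_i'=f(\partial D_i)$; the containment $f(T)\subset\overline{A}$ then forces $D_0'$ and $D_1'$ to be the two complementary round disks of $A$ in $\oC$ and $D_i'=Q_i$ for $i\ge 2$, yielding the desired identification. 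The main obstacle is ensuring the limiting configuration is non-degenerate --- that $A$ has positive finite height and the squares neither collapse nor wrap around the cylinder --- which is precisely what the uniform bounds of Lemma~\ref{lem:cylqs} guarantee; those bounds, in turn, rest on the extremal modulus calculation of Proposition~\ref{prop:transcyl}, which is arguably the deepest ingredient entering the proof.
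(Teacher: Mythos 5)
Your proposal is correct and follows essentially the same route as the paper: the finite case from Corollary~\ref{thm:cylunif} together with Proposition~\ref{thm:cylqs} (with the key observation that $\eta$ depends only on $s$, $k$, and $d_0=\diam(\partial D_0)\wedge\diam(\partial D_1)$, not on $n$), and the infinite case by normalizing the cylinders, extracting a uniform sublimit via Lemma~\ref{lem:subcon}, and identifying the limit configuration through Hausdorff convergence and Lemma~\ref{lem:misc}~(iii). Your explicit appeal to the bounds of Lemma~\ref{lem:cylqs} to rule out degeneration or wrapping of the limit squares is a slightly more quantitative variant of the paper's argument (which deduces non-degeneracy from injectivity of the limit embedding), but the substance is the same.
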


\no{\em Proof.}
If $I$ is finite, then the statement follows from Proposition~\ref{thm:cylqs}.

If $I=\N_0$,  for each $n\in \N$ we  consider  the finitely connected region 
$\Om_n=\Sph\setminus \bigcup_{i=0}^nD_i$.
Then  $\bigcap_{n\in \N}\overline \Omega_n=T$. 
By  Proposition~\ref{thm:cylqs} there exists an $\eta$-quasisymmetric embedding 
$f_n$ of   $\overline \Om_n$ into the closure $\overline  A_n$ of a finite $\C^*$-cylinder $A_n$ auch that $f_n(\partial D_0)=\partial_oA_n$, $f_n(\partial D_1)=\partial_iA_n$, and such that the complementary components of $f_n(\Om_n)$ in $A_n$ are $\C^*$-squares.  
Here the distortion function $\eta$ does not depend  on $n$. 
Postcomposing $f_n$ with a suitable dilation $z\mapsto \lambda z$, 
$\lambda\ne 0$, which does not affect $\eta$, we may in addition assume that $\partial_o A_n=
\partial \D$ for all $n\in \N$.

By Lemma~\ref{lem:subcon} the sequence   $(f_n)$ subconverges on $T$ to an $\eta$-quasisymmetric embedding
$f\:T\ra \C^*$, i.e., there exists a subsequence 
$(f_{n_l})$ of $(f_n)$ that converges to $f$ uniformly 
on $T$. Since   $\partial D_i$ is the boundary of the complementary component $D_i$ of 
$\Om_n$ for $n\ge i$ and  $f_{n_k}\to f$ uniformly, it follows that 
for fixed $i\in \N_0$ the Jordan  curve $f(\partial D_i)$ is the Hausdorff limit  of the sets $f_{n_l}(\partial D_i)$ as $l\to \infty$. Therefore, $f(\partial D_0)=\partial \D$. Since $f_n(\partial D_1)=
\{z\in \C:|z|=r_n\} $ with $r_n\in (0,1)$ for $n\ge 1$, it follows that    $f(\partial D_1)=\{z\in \C:|z|=r\}$  for some $0< r\le 1$. Since $f$ is an embedding, we have $0<r<1$.

 By a similar consideration it follows  $f(\partial D_i)=\partial Q_i$ for $i\ge 2$, where $Q_i$
is a (non-degenerate) $\C^*$-square.    Here  $Q_i\cap Q_{j}=\emptyset$ for $i\ne j$. Indeed, it  is clear that 
$\inte(Q_i)\cap  \inte(Q_{j})=\emptyset$, because
$Q_i$ and $Q_{j}$ can be written as Hausdorff limits of sequences of $\C^*$-squares, where corresponding $\C^*$-squares in the sequences have empty intersection. Moreover, 
$\partial Q_i\cap  \partial Q_{j}=f(\partial D_i)\cap 
f(\partial D_{j})=\emptyset$ for $i\ne j$, because $f$ is an embedding.

Let $A\sub \C^*$ be the finite cyclinder with $\partial_oA=\partial \D$ 
and 
$$\partial_{i}A=f(\partial D_1)=\{z\in \C:|z|=r\}.$$  Since 
$f_n(\overline \Om_n)\sub \overline A_n$ for all $n\ge 1$, and 
$\overline A_{n_l}\ra  \overline A$ as $l\to \infty$, we have 
$T'=f(T)\sub \overline A$. Since $f$ is an embedding,  this implies 
that the $\C^*$-squares $Q_i$, $i\ge 2$, lie in $A$.   
As follows from Lemma~\ref{lem:misc} (iii), we  have  $f(T)\sub Q_i$
or $f(T)\sub \overline A\setminus Q_i$. Here the former case is impossible as $f(\partial D_{j})=\partial Q_{j}$ has empty intersection with $Q_i$ for $j\ne i$. Putting this all together, 
Lemma~\ref{lem:misc} (iii) shows that $T'=f(T)$ can be written as in \eqref{T'}. 
 \qed\medskip
 
 As follows from Proposition~\ref{thm:cylqs} and the previous proof, the statement in Theorem~\ref{thm:cylunif1a} is quantitative in the following 
 sense: if the collection $\partial D_i$, $i\in I$, consists of $s$-relatively separated $k$-quasicircles, and $\diam(\partial D_0)\wedge  \diam(\partial D_1)\ge d>0$, then 
 one can find an $\eta$-quasisymmetric map $f$ with $\eta=\eta_{s,k,d}$. 
 The dependence on $d$ here is unavoidable.
 This can be seen as follows (in the ensuing argument we leave some details to the reader). 
 
  Suppose we could always choose $\eta=\eta_{s,k}$. Then  for each 
 $n\in \N$ we can  find an $\eta$-quasisymmetric map $f_n$  (with $\eta$
 independent of $n$)
mapping  the closure of the  finite $\C^*$-cylinder $A_n=\{z\in \C:1/n<|z|<1\}$ equipped with the chordal metric to  the closure of a finite 
$\C^*$-cylinder $A'_n=\{z\in \C:r_n<|z|<1\}$
  equipped with the   flat metric such that $f_n(\partial \D)=\partial \D$. 
 One can then pass to a sublimit  (this does not  follow directly from Lemma~\ref{lem:subcon}, but from the methods of its proof) 
which produces a quasisymmetric embedding $f$ of $\overline \D\setminus \{0\}$
 equipped with the chordal metric into $\overline  \D\setminus\{0\}$ equipped with the flat metric. This map $f$  also  satisfies  $f(\partial \D)=\partial \D$.   
  
Since $\overline \D\setminus \{0\}$  has  finite  diameter in the chordal metric, its image set $f(\overline \D\setminus \{0\})$  must have  finite diameter in the flat metric. Since $f$ is a quasisymmetry, this implies that 
$f$ is uniformly continuous and so it  has  a continuous extension as a map from 
$\overline \D$ to $\overline \D\setminus \{0\}$ (note that $0$ is ``infinitely far away" in the flat metric). This is impossible for topological reasons. Namely, since  the Jordan curve  $\partial \D$ is contractible in $\D$, its image  $\partial \D=f(\partial \D) $ is contractible in $f(\overline  \D)\sub \overline \D\setminus \{0\}$,  and hence in $\C^*$. This is absurd.

\section{Sierpi\'nski carpets and carpet modulus} \label{s:carpet}

\no
The {\em standard Sierpi\'nski carpet} $T$ is defined as follows.
Let $T_0=[0,1]\times [0,1]\sub \R^2\cong \C$ be the unit square in $\C$. We subdivide $T_0$ into nine subsquares of equal sidelength and remove the interior of the ``middle" square. The resulting set $T_1$ is the union of eight non-overlapping closed squares of Euclidean sidelength $1/3$.  On each of these squares we perform an operation similar to the one that was used to construct $T_1$ from $T_0$. Continuing successively in this manner, we obtain a nested sequence of compact sets 
$T_0\supset T_1\supset T_2 \supset \dots$ such that $T_n$ consists of $8^n$ non-overlapping squares of sidelength $1/3^n$.  Now $T$ is defined as $T=\bigcap_{n\in \N_0}T_n$. 

A {\em (Sierpi\'nski) carpet} is a topological space homeomorphic to the  standard Sierpi\'nski carpet. A metric space $X$ is a carpet if and only if it a locally connected continuum that is  planar, has topological dimension $1$, and has no  local cut-points \cite{Why}.  Here  $X$ is called {\em planar} if it is homeomorphic to a subset of $\oC$. A {\em local cut point} in $X$ is a point $p\in X$ such that for all sufficiently small neighborhoods $U$ of $p$ the set $U\setminus\{p\}$ is not connected.  

A set $T\sub \Sph$ is a carpet if and only if $\inte(T)=\emptyset$ 
and it can be written as 
\begin{equation}\label{carprep}
T=\Sph\setminus \bigcup_{i\in \N_0} \inte(D_i),
\end{equation}
where the sets $D_i$, $i\in \N_0$, form a collection of pairwise disjoint closed Jordan  regions in $\Sph$ with $\diam(D_i)\to 0$ as $i\to \infty$ \cite{Why}. 

A Jordan  curve $S$ in a carpet $T$ is called a {\em peripheral 
circle} if $T\setminus S$ is a connected set. The peripheral 
circles of a carpet as in \eqref{carprep} are precisely the Jordan  curves $\partial D_i$, $i\in \N_0$. In particular, the collection of the peripheral circles of the standard Sierpi\'nski carpet $T$ consists 
of the boundary $\partial T_0$ of the unit square and the boundaries of the squares that were successively  removed from $T_0$ in the construction of $T$. 

A carpet $T\sub \Sph$ is called {\em round} if its peripheral circles are round circles. This is true if and only if the Jordan  regions  $D_i$ in the representation of $T$ as in \eqref{carprep} are round disks. So every round carpet is a Schottky set (see Section~\ref{s:proof1}). Hence it  follows from  \cite[Thm.~1.1]{BKM} that round carpets of spherical measure zero are {\em rigid} in the following sense: if $T\sub \oC$ is a round carpet of spherical measure zero and $f\:T\ra T'$ is a quasisymmetric map of $T$ onto another 
round carpet $T'\sub \oC$, then $f$ is the restriction of a M\"obius transformation to $T$. 

\medskip 
\no{\em Proof of Corollary~\ref{cor:carpetunif}.}  Let  $T$ be  a carpet as in the statement.  Then $T$ can be written as in \eqref{carprep}. By Theorem~\ref{thm:simulunif} there exists  a quasiconformal map $f\:\Sph \ra \Sph$ such that $f(\partial D_i)$ is a round circle for each $i\in \N_0$. Hence we can write 
$T'=f(T)$ as 
$$ T'=\Sph\setminus \bigcup_{n\in \N_0} \inte(D'_i), $$ 
where the sets $D'_i=f(D_i)$ are pairwise disjoint closed disks. 
Since $\inte(T)=\emptyset$, we also have $\inte(T')=\emptyset$, and so $T'$ is a round carpet. By Proposition~\ref{prop:inter}
the map $f$ is a quasisymmetry, and hence also its restriction 
$f|T\: T\ra T'$. The existence part of the statement follows.

Suppose in addition that $T$ has spherical measure zero. Since quasiconformal maps on $\Sph$ preserve such sets (see  \cite[Def.~24.6 and Thm.~33.2]{Va}), the round carpet $T'$ is also a set of spherical measure zero. 
Let  $g\: T\ra \widetilde T$ be  another quasisymmetry onto a round carpet $\widetilde T\sub \Sph$. Then $g\circ f^{-1}$ is a quasisymmetry of $T'$ onto $\widetilde T$. Since round carpets 
of measure zero are rigid, the map $g\circ f^{-1}$ is the restriction of a M\"obius transformation, and so $g$ is equal to $f$ post-composed with 
a M\"obius transformation. So we also have the uniqueness part of the statement,   and the proof is complete. \qed \medskip

Let $T\sub \Sph$ be a carpet, and $f\: T\ra \Sph$ be an embedding. Then $T'=f(T)$ is also a carpet, and $f$ induces a bijection between the peripheral circles of $T$ and $T'$. It was shown in the proof of Lemma~\ref{lem:misc} (iii) that there exists a homeomorphism 
$F\: \Sph\ra \Sph$ with $F|T=f$.  We call $f$ {\em 
orientation-preserving} if $F$ is orientation-preserving (with respect to the standard orientation on $\oC$). This does not depend on the choice of the homeomorphic extension $F$ of $f$. In more intuitive terms, $f$ is orientation-preserving if the following condition is true: if we orient each  peripheral circle $S$ 
of $T$ so that $T$ lies ``on the left" of $S$, then the induced orientation on the  peripheral circle $S'=f(S)$ of $T'=f(T)$ is such that $T'$ lies  ``on the left" of $S'$. 

\medskip
\no {\em Proof of Theorem~\ref{thm:3circ}.} Let $T\sub \C$ be a carpet as in the statement. As we have seen in the proof of Corollary~\ref{cor:carpetunif}, there exists a quasiconformal map $g\: \Sph \ra \Sph$ such that $T'=g(T)$ is a round carpet of measure zero. If $f\: T\ra T$ is a quasisymmetry, then $f'=g\circ f \circ g^{-1}|T'$ is also a quasisymmetry. Since $T'$ is rigid, it follows that $f'=F'|T'$ is the restriction of a M\"obius transformation $F'\: \Sph \ra \Sph$.  Suppose in addition that  $f$ is orientation-preserving. Then  the same is true for $f'$ and hence for $F'$.

If $f$ has three distinct fixed points, the same is true 
for $f'$ and  for $F'$. So $F'$ is the identity map on $\Sph$, which implies that $f$ is the identity of $T$.

Similarly, if $f$ fixes three distinct peripheral circles of $T$ setwise, then $f'$ fixes three distinct peripheral circles of $T'$ setwise. Since the peripheral circles of $T'$ are round circles,
it follows that $F'$ fixes three disjoint round circles setwise. Moreover, these circles bound pairwise disjoint disks. Since $F'$ is an orientation-preserving M\"obius transformation, $F'$ must be the identity map on $\Sph$. Hence $f$ is the identity on $T$. \qed  \medskip

\no {\em Proof of Theorem~\ref{thm:cylunif0}.} This is a special case of Theorem~\ref{thm:cylunif1a}. \qed \medskip

Let $T\sub \Sph$ be a carpet represented as in \eqref{carprep}, and $\Ga$ be a collection of paths in $\OC$. We 
define the {\em carpet modulus} of $\Ga$ {with respect to $T$}, denoted by $\M_T(\Ga)$, as follows.
Let $\rho_i\ge 0$ for $i\in \N_0$. We call the weight sequence 
$(\rho_i)_{i\in \N_0}$ {\em admissible} for $\Ga$ (with given $T$) if 
there exists a family $\Ga_0\sub\Ga$ with 
$\Mod(\Ga_0)=0$ such that 
$$ \sum_{\ga\cap D_i\ne \emptyset} \rho_i\ge 1 \foral \ga\in \Ga\setminus \Ga_0. $$
Then 
$$ \M_T(\Ga):= \inf_{(\rho_i)} \sum_{i\in \N_0}\, \rho_i^2, $$
where the infimum is taken over  all weight sequences $(\rho_i)$
that are admissible for $\Ga$.  An admissible weight sequence for which this infimum is attained is called {\em extremal}. 

It is  essential  here to allow the exceptional 
family $\Ga_0$ with  vanishing modulus in the classical sense.   Of course, one could define carpet modulus by requiring  the inequality in the admissibility condition for {\em all} $\ga\in \Ga$, but  this leads  to a notion of carpet modulus that is not very interesting.
Our notion of carpet modulus is useful for  
studying the quasiconformal geometry of carpets, since it is related to the geometry of the carpet and is   invariant under quasiconformal maps.

\begin{proposition} [Quasiconformal invariance of carpet modulus]  \label{prop:invcarmod}
Let $T\sub \OC$ be a carpet,   $\Ga$  a family of paths in 
$\OC$, and $f\: \OC\ra \OC$  a quasiconformal map.
Then 
$$ \M_T(\Ga)=\M_{f(T)}(f(\Ga)). $$
\end{proposition}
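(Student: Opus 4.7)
The plan is to observe that the definition of carpet modulus splits cleanly into two ingredients: a purely combinatorial admissibility condition (which holes does a given path meet?) and an exceptional family of classical modulus zero. Since $f$ is a homeomorphism of $\OC$, writing $T=\OC\setminus\bigcup_{i\in\N_0}\inte(D_i)$ we have $f(T)=\OC\setminus\bigcup_{i\in\N_0}\inte(f(D_i))$, and the sets $f(D_i)$ form a family of pairwise disjoint closed Jordan regions. Thus $f(T)$ is indeed a carpet, and the definition of $\M_{f(T)}(f(\Gamma))$ uses weight sequences indexed by the same set $\N_0$ via the correspondence $D_i\leftrightarrow f(D_i)$. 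The homeomorphism property gives $\gamma\cap D_i\neq\emptyset$ if and only if $f(\gamma)\cap f(D_i)\neq\emptyset$, so for any weight sequence $(\rho_i)$ the discrete sum appearing in the admissibility condition is literally the same for $\gamma$ and $f(\gamma)$.

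Next I would show that if $(\rho_i)$ is admissible for $\Gamma$ with exceptional set $\Gamma_0\subseteq\Gamma$ satisfying $\Mod(\Gamma_0)=0$, then the same sequence $(\rho_i)$ is admissible for $f(\Gamma)$ with exceptional set $f(\Gamma_0)\subseteq f(\Gamma)$. The key point is that $\Mod(f(\Gamma_0))=0$; this is where quasiconformality (and not merely homeomorphy) of $f$ enters. Invoking Proposition~\ref{prop:moddist} applied to a suitable region containing the paths (or more concretely, applied on all of $\OC$ after discarding the constant paths at $\infty$, or by a direct localization), we obtain $\Mod(f(\Gamma_0))\le K\,\Mod(\Gamma_0)=0$. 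Once this is known, the combinatorial observation above immediately shows that for every $\gamma'\in f(\Gamma)\setminus f(\Gamma_0)$, writing $\gamma'=f(\gamma)$ with $\gamma\in\Gamma\setminus\Gamma_0$, we have $\sum_{\gamma'\cap f(D_i)\ne\emptyset}\rho_i=\sum_{\gamma\cap D_i\ne\emptyset}\rho_i\ge 1$. Hence $\M_{f(T)}(f(\Gamma))\le\sum_i\rho_i^2$, and taking the infimum over admissible $(\rho_i)$ gives $\M_{f(T)}(f(\Gamma))\le\M_T(\Gamma)$.

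The reverse inequality follows from the same argument applied to the quasiconformal map $f^{-1}\colon\OC\to\OC$, the carpet $f(T)$, and the path family $f(\Gamma)$; this yields $\M_T(\Gamma)=\M_{f^{-1}(f(T))}(f^{-1}(f(\Gamma)))\le\M_{f(T)}(f(\Gamma))$. Combining the two inequalities gives the claimed equality. There is no real obstacle here beyond verifying that the quasiconformal distortion estimate for classical modulus (Proposition~\ref{prop:moddist}) does indeed apply to the exceptional family $\Gamma_0$; this is standard but deserves a line of comment because the statement of Proposition~\ref{prop:moddist} is for path families in a region with $f$ defined there, and we are applying it on $\OC$. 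The proof is otherwise essentially tautological once one recognizes that carpet modulus depends on the underlying map only through (i) the bijection between peripheral holes and (ii) the class of null sets in the sense of classical modulus, both of which are preserved by any quasiconformal self-map of $\OC$.
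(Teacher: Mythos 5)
Your argument is correct and follows essentially the same route as the paper: the combinatorial admissibility condition is preserved because $f$ is a homeomorphism ($\gamma\cap D_i\ne\emptyset$ iff $f(\gamma)\cap f(D_i)\ne\emptyset$), the exceptional family keeps vanishing modulus by the quasiconformal distortion estimate of Proposition~\ref{prop:moddist}, and the reverse inequality comes from applying the same reasoning to $f^{-1}$. No gaps.
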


\no{\em Proof.} Note that $T'=f(T)$ is also a carpet. If 
$T$ is represented as in \eqref{carprep}, then
$$T'=\oC\setminus \bigcup_{i\in \N_0} \inte(D_i'),$$ where 
$D'_i=f(D_i)$ for $i\in \N_0$. 
Moreover, we have $\ga\cap D_i\ne \emptyset $ for $\ga\in \Ga$ if and only if $f(\ga)\cap f(D_i)\ne \emptyset$.

A quasiconformal map preserves the modulus  of a path family up to a fixed multiplicative constant (Proposition~\ref{prop:moddist}). So if $\Ga_0\sub \Ga$ and $\Mod(\Ga_0)=0$, then $\Mod(f(\Ga_0))=0$.   
This implies that if $(\rho_i)$ is an admissible weight sequence for $\Ga$ with 
respect to the carpet $T$, then it is also admissible for 
$\Ga'=f(\Ga)$ with respect to the carpet $T'$. 
Hence $\M_{T'}(\Ga')\le \M_T(\Ga)$. Applying the same argument to the quasiconformal map  $f^{-1}$, we get an inequality in the other direction. Hence 
$\M_{T'}(\Ga')= \M_T(\Ga)$ as desired. \qed \medskip

The crucial  point in the previous proof was that while 
 quasiconformal maps only preserve the moduli of general path families  up to a multiplicative constant, they  preserve the modulus of a path family  with vanishing modulus. 
 
 Suppose $T$ is a carpet as in \eqref{carprep}. Consider the path family 
 $$\Gamma=\Gamma_o(\partial D_0, \partial D_1; \Sph\setminus 
(D_0\cup D_1))$$ of all open paths in the topological annulus $\Sph\setminus 
(D_0\cup D_1)$ connecting its boundary components $\partial D_0$ and $\partial D_1$. We are interested in finding  $\M_T(\Gamma)$. The next statement shows that with suitable assumptions on $T$ the answer is very similar to   the answer to the corresponding question for transboundary modulus 
studied in Section~\ref{s:square}.   A subtlety here is that it is better  to consider the family of {\em open} paths  
$\Gamma$ instead of the family of closed paths 
$\Gamma'=\Gamma(\partial D_0, \partial D_1; \Sph\setminus 
(D_0\cup D_1)).$  In contrast to the paths in $\Gamma$, the paths in $\Gamma'$ meet $D_0$ and $D_1$, so one obtains more admissible sequences by putting  non-zero weights on $D_0$ and $D_1$. By choosing 
 the weights $1/2$ on $D_0$ and $D_1$, and all other weights equal to $0$, for example, one gets  trivial  
 inequalities such as 
  $\M_T(\Gamma)\le 1/2$ which do not reflect the geometry of $T$.

\begin{corollary} \label{cor:annmod}Let $T\sub \C$ be a carpet of spherical measure zero whose peripheral circles are uniformly separated uniform quasicircles. Suppose $T$ is represented as in \eqref{carprep} and $f\: T\ra T'$ is a quasisymmetric map as in Theorem~\ref{thm:cylunif0} with  
$$T'=\overline A\setminus \bigcup_{i\ge 2}\inte(Q_i), $$
where  $A$  a finite $\C^*$-cylinder and the sets $Q_i$, $i\ge 2$, are pairwise disjoint $\C^*$-squares in $A$, and we have 
$f(\partial D_0)=\partial_iA$ and $f(\partial D_1)=\partial_oA$.  
Let $$\Gamma=\Gamma_o(\partial D_0, \partial D_1; \Sph\setminus 
(D_0\cup D_1)). $$ Then 
$$\M_T(\Gamma)=\frac {2\pi}{h_A}. $$ 

Moreover, a   unique extremal weight sequence $(\rho_i)_{i\in \N_0}$ for 
$\M_T(\Gamma)$ exists and is given by 
\begin{equation}\label{extdens}
\rho_0=\rho_1=0\quad \text{and}\quad  \rho_i=\ell(Q_i)/h_A\forr i\ge 2.\end{equation} 
\end{corollary}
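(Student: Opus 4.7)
The plan is to transport the problem via $f$ to the model cylinder $A$, and then run a discrete analogue of the classical extremal computation for $\C^*$-cylinders already performed in Proposition~\ref{prop:transcyl}. Since $f\:T\ra T'$ is quasisymmetric and the peripheral circles of $T$ are uniform quasicircles, Proposition~\ref{prop:extend} yields a quasiconformal extension $F\:\Sph\ra\Sph$ of $f$. A short topological argument shows $F(D_0)=\{|z|\le r\}$ and $F(D_1)=\{|z|\ge R\}\cup\{\infty\}$ where $\partial_iA=\{|z|=r\}$ and $\partial_oA=\{|z|=R\}$, while $F(D_i)=Q_i$ for $i\ge 2$; consequently $F(\Gamma)=\Gamma_o(\partial_iA,\partial_oA;A)$. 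By the quasiconformal invariance of carpet modulus (Proposition~\ref{prop:invcarmod}) it suffices to show $\M_{T'}(F(\Gamma))=2\pi/h_A$ with extremal weights $\rho'_0=\rho'_1=0$ and $\rho'_i=\ell(Q_i)/h_A$ for $i\ge 2$.

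For the upper bound I would verify admissibility of this candidate sequence modulo the exceptional family $\Gamma_0$ of locally rectifiable paths in $F(\Gamma)$ with positive $d_{\C^*}$-length inside $E:=\overline A\setminus\bigcup_{i\ge 2}Q_i$. Since $T$ has spherical measure zero and $F$ is quasiconformal, $T'$ and hence $E$ has measure zero; a standard covering argument (cover $E$ by open sets of arbitrarily small flat measure and use a large constant times the indicator as an admissible density for $\Gamma_0^n=\{\ga\:\length_{\C^*}(\ga\cap E)\ge 1/n\}$) gives $\Mod(\Gamma_0)=0$. For $\ga'\in F(\Gamma)\setminus\Gamma_0$, the $1$-Lipschitz map $\pi(z)=\log|z|$ on $(\overline A,d_{\C^*})$ satisfies $\pi(\ga')=(\log r,\log R)$ by continuity and endpoint behavior, so
\begin{equation*}
h_A=m_1(\pi(\ga'))\le m_1(\pi(\ga'\cap E))+\sum_{\ga'\cap Q_i\ne\emptyset} m_1(\pi(Q_i))=0+\sum_{\ga'\cap Q_i\ne\emptyset}\ell(Q_i),
\end{equation*}
which gives admissibility after dividing by $h_A$. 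The total mass is $\sum_{i\ge 2}\ell(Q_i)^2/h_A^2=A_{\C^*}(\bigcup Q_i)/h_A^2=A_{\C^*}(A)/h_A^2=2\pi/h_A$, again using that $E$ has measure zero.

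For the lower bound I would test an arbitrary admissible sequence $(\rho'_i)$ against the family of radial paths $\alpha_\varphi\:(\log r,\log R)\ra A$, $\alpha_\varphi(t)=e^te^{\iu\varphi}$, for $\varphi\in[0,2\pi]$. A direct Cauchy-Schwarz estimate on admissible densities (as in Proposition~\ref{prop:transcyl}) shows that the classical modulus of $\{\alpha_\varphi\:\varphi\in E'\}$ is bounded below by $m_1(E')/h_A$; hence the exceptional family intersects the radial rays only in a set of Lebesgue-measure zero in $\varphi$. For a.e.~$\varphi$ the admissibility condition $\sum_{\alpha_\varphi\cap Q_i\ne\emptyset}\rho'_i\ge 1$ therefore holds, and integrating over $\varphi\in[0,2\pi]$ (the radial paths miss $D'_0$ and $D'_1$) followed by Cauchy-Schwarz yields
\begin{equation*}
2\pi\le\sum_{i\ge 2}\rho'_i\ell(Q_i)\le\biggl(\sum_{i\ge 2}(\rho'_i)^2\biggr)^{1/2}\biggl(\sum_{i\ge 2}\ell(Q_i)^2\biggr)^{1/2}=\biggl(\sum_{i\ge 0}(\rho'_i)^2\biggr)^{1/2}(2\pi h_A)^{1/2},
\end{equation*}
so $\sum_i(\rho'_i)^2\ge 2\pi/h_A$. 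Uniqueness then follows from the equality case in Cauchy-Schwarz, which forces $\rho'_i=c\,\ell(Q_i)$ with $c=1/h_A$ for $i\ge 2$, together with the observation that no path in $F(\Gamma)$ meets $D'_0$ or $D'_1$, so $\rho'_0$ and $\rho'_1$ contribute nothing to admissibility and must vanish at the extremal.

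The main obstacle will be the careful treatment of the exceptional families in both directions: establishing that paths with positive length in the null set $E$ form a classical-modulus-zero family for the upper bound, and that the subfamily of radial rays lying in the exceptional family has angular-parameter measure zero for the lower bound. Once the spherical-measure-zero hypothesis on $T$ is fully exploited in this way, the remainder of the argument is essentially a discretization of the transboundary computation in Proposition~\ref{prop:transcyl}, with the continuous density piece replaced by ``zero density on a null set.''
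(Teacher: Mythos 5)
Your proposal is correct and follows essentially the same route as the paper: transport via the quasiconformal extension $F$ and invariance of carpet modulus, verify admissibility of the candidate weights using the projection $\pi(z)=\log|z|$ together with the measure-zero exceptional family, and obtain the lower bound and uniqueness from the radial paths via Fubini and Cauchy--Schwarz. The only cosmetic difference is that the paper disposes of the exceptional family $\Gamma_0$ by the one-line trick of taking the density equal to $+\infty$ on the null set $T'$ and $0$ elsewhere, rather than your covering argument; both are standard and valid.
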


\no {\em Proof.} Since the metric $d_{\C^*}$  and the spherical metric are comparable on $\overline A$, the map $f$ is a quasisymmetric and hence also a 
quasi-M\"obius embedding from $T$ into $\Sph$ (equipped with the chordal metric). By
Proposition~\ref{prop:extend} it has an extension to quasiconformal map  $F\:\Sph \ra \Sph$.  Since $T$ has measure zero and  quasiconformal maps preserve such sets,  the set $T'=f(T)=F(T)$ has spherical measure zero. 
Hence also $A_{\C^*}(T')=0$ and so  we have 
\begin{equation}\label{measallQ}\sum_{i\ge 2}\ell(Q_i)^2=\sum_{i\ge 2}A_{\C^*}(Q_i)=A_{\C^*}(A)=
2\pi h_A.
\end{equation} 

Note that $\Gamma':=F(\Gamma)=\Gamma_o(\partial_iA, \partial_oA; A)$.  So by quasiconformal invariance of carpet modulus (Proposition~\ref{prop:invcarmod}) for the first part of the statement it suffices to show that 
$$ \M_{T'}(\Gamma')=\frac{2\pi}{h_A}. $$
Now the argument   is very similar to the proof of Proposition~\ref{prop:transcyl}. We can write $A=\{z\in \C: r<|z|<R\}$, where 
$0<r<R$.  Then the closures of the  complementary components of the carpet $T'$ in $\Sph$ are the sets $\overline{B}_{\C}(0,r)$, $\Sph\setminus {B}_{\C}(0,R)$, and $Q_i$, $i\ge 2$.
They are labeled by $0$, $1$, and $i$, respectively. We  define 
a corresponding weight sequence $(\rho_i)_{i\in \N_0}$ by $\rho_0=\rho_1=0$ and 
$\rho_i=\ell(Q_i)/h_A$ for $i\ge 2$.

We claim that this weight  sequence  is admissible
for the modulus  $M_{T'}(\Gamma')$. To see this let $\Gamma_0\sub \Gamma'$ be the family of all paths $\alpha\in \Gamma'$ that are not locally rectifiable or are locally rectifiable and satisfy
$$\text{length}(\alpha\cap T'):=\int_\alpha \chi_{T'}\, ds>0. $$ Since $T'$ has measure zero, we have $\Mod(\Gamma_0)=0$. Indeed, the function 
$\rho$ defined by $\rho(z)=\infty$ for $z\in  T'$ and $\rho(z)=0$ for $z\in \Sph\setminus T'$ is an admissible density for 
$\Gamma_0$ with $\int\rho^2\,d\Sigma=0$.

Now let $\alpha\in \Ga\setminus \Gamma_0$ be  arbitrary.  If $z\mapsto \pi(z):=\log|z| $
 is the map  of $ {A}$ to the interval 
 $(\log r, \log R)$ defined by $z\mapsto \pi(z):=\log|z| $, 
 then 
 \begin{equation}
 (\log r, \log R)= \pi(\alpha )\sub \pi (\alpha\cap T')
 \cup \bigcup_{\alpha\cap Q_i\ne \emptyset}\pi(\alpha\cap Q_i).
 \end{equation}

All subsets of $\R$ appearing in the last inclusion are Borel sets, and hence measurable. Since $\alpha\not\in \Gamma_0$, this path is locally rectifiable and we have  $\length(\alpha\cap T')=0$. Since 
$\pi $ is Lipschitz (it is $1$-Lipschitz if $ A$ is equipped with flat metric, and hence also Lipschitz with respect to the chordal metric) this implies 
that $m_1(\pi (\alpha\cap T'))=0$. Hence 
\begin{eqnarray*}\sum_{\alpha\cap Q_i \ne \emptyset} \rho_i&=& 
 \frac 1{h_A}\sum_{\alpha \cap Q_i\ne \emptyset} \ell(Q_i)= \frac 1{h_A}\sum_{\alpha \cap Q_i\ne \emptyset} m_1 (\pi(Q_i)) \\
&\ge&  \frac 1{h_A} m_1(\pi (\alpha\cap T'))+ \frac 1{h_A}
\sum_{\alpha \cap Q_i\ne \emptyset} 
m_1(\pi(\alpha\cap Q_i))\ge  \frac 1{h_A}  m_1((\log r, \log R))=1.
\end{eqnarray*}
The admissibility of our  weight sequence  follows, and 
we conclude that  
\begin{eqnarray*}
\M_{T'}(\Ga')&\le &
\sum_{i\in \N_0} \rho_i^2 \, =\, \frac1{h_A^2}
\sum_{i\ge 2} \ell(Q_i)^2\\
&=&\frac1{h_A^2}
\sum_{i\ge 2} A_{\C^*}(Q_i)\, = \, \frac1{h_A^2}A_{\C^*}(A)=\frac{2\pi}{h_A}.
\end{eqnarray*}

To get an inequality in the other direction,  suppose that we have an  admissible  weight sequence  $(\rho_i)_{i\in \N_0}$ 
for the family $\Gamma'$.  

For each $\varphi\in [0,2\pi]$ the path  $\alpha_{\varphi}\:(\log r, \log R)\ra A $ defined by  $\alpha_{\varphi}(t):=
t e^{\iu \varphi}$ for   $t\in(\log r, \log R)$ belongs to $\Ga'$.  There exists a family $\Gamma_0\sub \Gamma'$ with $\Mod(\Gamma_0)=0$ such that 
\begin{equation}\label{aephi}
\sum_{\alpha_\varphi \cap Q_i \ne \emptyset} \rho_i\ge 1 
\end{equation} 
for all $\varphi\in [0,2\pi]$ with $ \alpha_{\varphi}\not\in \Gamma_0$. 
The set $E$ of all $\varphi\in [0,2\pi]$ for which  this inequality fails is a Borel set 
($E$ is the  preimage of $[0,1)$ under   the  Borel function on $[0,2\pi]$ given 
by  $\sum_{i\ge 2} \rho_i\chi_{F_i}$; here 
$F_i=\{\varphi \in [0,2\pi]:  \alpha_{\varphi}\cap Q_i\ne \emptyset\}$ is a closed set for $i\ge 2$). Hence $E$   is measurable, and it must have $1$-dimensional Lebesgue measure  zero, 
since the corresponding family of  paths $\{\alpha_\varphi: \varphi\in E\}$ is  contained in $\Gamma_0$ and so is a  family with vanishing modulus.  
Thus,   \eqref{aephi} is valid for almost every $\varphi\in [0,2\pi]$.

By integrating this inequality  over $\varphi$, and using Fubini's theorem, the Cauchy-Schwarz inequality and \eqref{measallQ}, we arrive at
\begin{eqnarray}
2\pi &\le &
\sum_{i\ge 2} \ell(Q_i)\rho_i  \
\,\le \, \biggl(\sum_{i\ge 2}\ell(Q_i)^2\biggr)^{1/2}
\biggl(
\sum_{i\ge 2} \rho_i^2\biggr)^{1/2}\label{CS3}\\
&=& (2\pi h_A)^{1/2}\biggl(
\sum_{i\ge 2} \rho_i^2\biggr)^{1/2}\le (2\pi h_A)^{1/2}\biggl(
\sum_{i\in \N_0} \rho_i^2\biggr)^{1/2}.\nonumber
\end{eqnarray}

It follows that 
$$
\sum_{i\in \N_0} \rho_i^2\ge \frac{2\pi}{h_A} $$
for every weight sequence  that is admissible for $\Ga'$.
This implies the other  inequality
$\M_{T'}(\Ga')\ge {2\pi}/{h_A}$, and so  $\M_{T}(\Ga)=\M_{T'}(\Ga')= {2\pi}/{h_A}$ as desired. 

If we have 
$$
\sum_{i\in \N_0} \rho_i^2= \frac{2\pi}{h_A} $$
for an admissible weight sequence,  then all inequalities
in  \eqref{CS3}   must be  equalities. This implies that $\rho_0=\rho_2=0$ 
and that  there 
exists $\lambda>0$ such that  $\rho_i=\la \ell(Q_i)$ for 
$i\ge 2$.  Then  $\la =1/h_A$, and  so  $\rho_i= \ell(Q_i)/h_A$ 
for $i\ge 2$.  This shows that \eqref{extdens}  gives the unique extremal weight sequence  for 
$\M_{T'}(\Gamma')$.
 Since admissible weight sequences  for $\M_{T'}(\Gamma')$ and $\M_{T}(\Gamma)$ correspond to each other by the map $F$ (see the proof of Proposition~\ref{prop:invcarmod}), we see that the weight sequence \eqref{extdens} is also the unique extremal weight sequence  for 
 $\M_T(\Gamma)$. \qed\medskip 
 
 Similarly as classical modulus and transboundary modulus are useful for   proving uniqueness results for conformal maps (see Corollary~\ref{cor:uniqsquare}), carpet modulus can be employed to establish rigidity statements  for quasisymmetric maps on carpets. For example, using  this 
 concept (in combination with other ideas) one can show that every quasisymmetric self-homeomorphism of the standard Sierpi\'nski carpet 
 (equipped with the restriction of the Euclidean metric) is an isometry. 
 In particular, there are precisely $8$ such quasisymmetries (the obvious rotations and reflections). See \cite{BM}
 for this result and related investigations.

\section{Hyperbolic groups with carpet boundary}
\label{s:hypgroups} 

\no 
The material in this section is independent of the rest of the paper. Its purpose is the proof of Proposition~\ref{prop:groupcarpets} that motivates the study of carpets whose peripheral circles are uniformly relatively separated uniform quasicircles.

We quickly
review some standard facts on Gromov hyperbolic groups. See \cite{gh}
and \cite{BuS} for general background on Gromov hyperbolic groups and Gromov hyperbolic spaces.

Let  $G$ be  a finitely generated group, 
and $S$ a finite set of generators of $G$ that is {\em symmetric} (i.e., if
 $s\in S$, then $s^{-1}\in S$). The  group   $G$ is called {\em Gromov hyperbolic} if the  Cayley  
graph $\mathcal{G}(G, S)$  of $G$ with respect to $S$  
is  Gromov hyperbolic as a metric space.   In this  case, 
$\mathcal{G}(G,S')$ is Gromov hyperbolic for each  (finite and symmetric) generating 
sets $S'$.  For the  basic definitions and facts here, see \cite[Ch.~1]{gh}.   

Associated with  every Gromov hyperbolic metric space $X$ is a boundary 
at infinity $\partial_\infty X$ equipped with a natural class of visual metrics \cite[Ch.~2]{BuS}.  Accordingly, one  defines the boundary at infinity of a Gromov hyperbolic group $G$ as $\partial_\infty G=\partial_\infty \mathcal{G}(G,S)$. This is well-defined, because if $S'$ is another  generating set, then there is a natural identification 
 $\partial_\infty \mathcal{G}(G,S')=\partial_\infty \mathcal{G}(G,S)$ (the elements in both spaces can be represented by equivalence classes of sequences {\em in} $G$ converging to infinity; moreover, equivalence of such sequences is independent of the generating sets $S$ and $S'$).  If $d'$ and $d$ are  visual metrics on $\partial_\infty \mathcal{G}(G,S')$ and $\partial_\infty  \mathcal{G}(G,S)$,
respectively, then there are quasisymmetrically equivalent, i.e., 
the identity map between $(\partial_\infty  \mathcal{G}(G,S'), d')$ and $(\partial_\infty  \mathcal{G}(G,S), d)$ is a quasisymmetry (this follows from the fact that $\mathcal{G}(G,S)$ and $\mathcal{G}(G,S')$
are quasi-isometric, and that every quasi-isometry between geodesic Gromov hyperbolic metric spaces induces a quasisymmetric map between their boundaries at infinity; see \cite[5.35 Thm.]{Va3} for a precise quantitative version of the last fact).   So if we equip 
 $\partial_\infty  G$ with any of these visual metrics $d$, then we can unambiguously speak of quasisymmetric and quasi-M\"obius maps 
 on $\partial_\infty G$.  Moreover, the space $(\partial_\infty G, d)$ is doubling (see   \cite[Thm.~9.2]{BS}  and the remarks after  this theorem; note that the proof of \cite[Thm.~9.2]{BS} contains some inaccuracies; they can easily be corrected). 
 
 The natural left-action of $G$ on $ \mathcal{G}(G,S)$ by isometries  induces an action 
 of $G$ on $\partial_\infty G$ by quasisymmetries. So each $g\in G$ can be considered as a quasisymmetry  on $\partial_\infty G$, and  we write $g(x)$ for the image of a point $x\in \partial_\infty G$  under $g\in G$. In general the  action of $G$ on $\partial_\infty G$ is not {\em effective}, i.e., there can be elements $g\in G$ that act as the identity on 
  $\partial_\infty G$.  If $G$ is {\em non-elementary} (i.e., if $\#\partial_\infty G\ge 3$), then the elements of $G$ acting on $\partial_\infty G$ form a finite and normal subgroup of $G$, the {\em ineffective kernel} (this follows  from \cite[Ch.~8, 36.-Cor.]{gh}; note that every element in the ineffective kernel is {\em elliptic}  and hence has finite order \cite[Ch.~8, 28.-Prop.]{gh}).  
  
  Two properties of the action of $G$ on $\partial_\infty G$ (equipped with a fixed visual metric $d$) will be used in the following. This action is 
  {\em uniformly quasi-M\"obius}, i.e., there exists a
  homeomorphism $\eta\:[0,\infty] \ra [0,\infty]$ such that each 
  $g\in G$ acts as a $\eta$-quasi-M\"obius map on $\partial_\infty G$  (this goes back to the remark preceding \cite[Thm.~5.4]{Pau}; it  easily follows from \cite[5.38 Thm.]{Va3}). 
  
  Moreover, the action is {\em cocompact on triples}. This means that 
  there exists a constant $\eps_0>0$ with the following property:
whenever $z_1,z_2,z_3$ are three distinct points in $\geo G$,
then  
there exists $g\in G$ such that 
\begin{equation}\label{eq:coco}
 d(g(z_i),g(z_j))\ge\eps_0 \forr i,j=1,2,3,\ i\ne j 
\end{equation} 
(see  the discussion in \cite[pp.~215--216]{Gr}).

Before we turn to the proof of Proposition~\ref{prop:groupcarpets}, we have to explain the terminology used in its statement.  
Let $T$ be a metric carpet, and $\mathcal{S}=\{S_i:i\in I\}$ be  the collection of its  peripheral circles labeled by a countable index set. Recall from Section~\ref{s:quasicircles} that we call the collection  $\mathcal{S}$ uniformly relatively separated if there exists $s>0$ such that $\Delta(S_i, S_j)\ge s$ whenever $i,j\in I$, $i\ne j$. 
We say that $\mathcal{S}$ consists of {\em uniform quasicircles}
if any of the quantitatively equivalent conditions in Proposition~\ref{prop:metricqcirc}  is satisfied for each peripheral circle $S_i$, $i\in I$,
with the same parameters. If $T$ is doubling, then the peripheral circles are {\em uniformly doubling}, i.e., there exists $N\in \N$ such that $S_i$ is $N$-doubling for each $i\in I$. In this case one can establish that 
$\mathcal{S}$ consists of  uniform quasicircles by showing that there exists $\delta>0$ such that whenever $i\in I$ and $x_1,x_2,x_3,x_4$ are four distinct points in cyclic order on $S_i$, then 
$$[x_1,x_2,x_3,x_4]\ge \delta.$$

Finally, we say that the peripheral circles of $T$ {\em occur on all locations and scales}  if there exists a constant $c>0$ such that for each $x\in T$ and each $0<r\le \diam(T)$ there exists a peripheral circle 
$S$   of $T$ with $S\sub B(x,r)$ and $\diam(S)\ge cr$. Note that in this case 
we also have $\diam(S)\le2 r$. So  the peripheral circles occur on all locations and scales if every ball in $T$ of radius $r\le \diam(T)$ contains a peripheral circles of diameter comparable to $r$.

The ensuing  proof of Proposition~\ref{prop:groupcarpets} uses a well-known idea in complex dynamics and in the theory of Kleinian groups, namely the ``principle of the conformal elevator" (see \cite{HP} for more discussion): in order to establish a geometric property on all scales, one uses the dynamics to map to the ``top scale", verifies the relevant condition there, and uses suitable distortion estimates 
to translate between scales.

\medskip
\no {\em Proof of Proposition~\ref{prop:groupcarpets}. }
Let $G$ be a Gromov hyperbolic group whose boundary at infinity $\partial_\infty G$ is a carpet. We equip 
$\partial_\infty G$ with a fixed visual metric $d$. We denote 
the peripheral circles of $T=\partial_\infty G$ by
$S_i$, $i\in \N$. Since the action of $G$ on $\geo G$ is 
uniformly quasi-M\"obius, there exists a distortion function 
$\eta$ such that $g\:\geo G\ra \geo G$ is an 
$\eta$-quasi-M\"obius homeomorphism for each $g\in G$. Moreover, since the
action of $G$ on $\geo G$ is cocompact on triples, there exists a constant $\eps_0>0$ as in \eqref{eq:coco}.  

The basic idea now  is to  apply  the conformal elevator principle mentioned before the proof. Since the action of $G$ on  $\geo G$ is cocompact on triples, we  will be able to ``map every scale to the top scale" by a suitable  group element. The relevant distortion estimates will be derived from the fact that the action of $G$ on $\geo G$ is 
uniformly quasi-M\"obius. Accordingly, we will  formulate  the  geometric conditions in question in terms of  cross-ratios. 

 Since   $\partial_\infty G$ is doubling,  there exists 
$N\in \N$ such that each circle $S_i$, $i\in \N$, is $N$-doubling.
So for  proving that the collection $S_i$, $i\in \N$, consists of uniform quasicircles it is by Proposition~\ref{prop:metricqcirc} enough to find $\de>0$ such that 
$$[x_1,x_2,x_3,x_4]\ge \de, $$
whenever $x_1, x_2,x_3, x_4$ are four distinct points on one of the circles $S_i$ that are in cyclic order on $S_i$. 

We argue by contradiction and assume that no such $\delta>0$ exists. Then for   
 $n\in \N$ we can find distinct  points $x^n_1, x^n_2,  x^n_3, x^n_4$ 
that lie in cyclic order on some  peripheral circle $S'_n\in \{S_i:i\in \N\}$ such that 
$$ [x^n_1, x^n_2, x^n_3,  x^n_4]\to 0 \quad\text{as}\quad n\to \infty.$$
Since  the action of $G$ on $ \geo G$ is  cocompact on triples,
  for each  $n\in \N$ there exists 
 $g_n\in G$     such that 
 \begin{equation}\label{uptsep0}
  d(y^n_i,y^n_j)\ge \eps_0 \forr i,j=1,2,3,\  i\ne j. 
  \end{equation}
 Here we set  $y_i^n=g_n(x^n_i)$ for $i=1,2,3,4$, $n\in \N$.

Since the action $G$ on $ \geo G$ is uniformly 
quasi-M\"obius, we have
$$ [y^n_1, y^n_2,  y^n_3, y^n_4]\to 0 \quad\text{as}\quad n\to \infty.$$
Every homeomorphism on a carpet preserves the collection of peripheral circles  and the cyclic order of points on peripheral circles. It follows that for each $n\in \N$ the set $J_n=g_n(S'_n)$ is a peripheral circle of $\geo G$ on which the points $y^n_1, y^n_2, y^n_3, y^n_4$ are in cyclic order. 
By \eqref{uptsep0} we have 
$$ \diam (J_n)\ge \eps_0>0 \foral n\in \N.$$
Since every carpet has only finitely many peripheral circles
whose diameter exceeds a given positive constant (this follows from the corresponding fact from the standard carpet), 
there are only finitely many peripheral circles among the sets 
$J_n$, $n\in \N$. In particular, one circle, say $J:=J_{n_0}$, 
is repeated infinitely often in the sequence $J_1, J_2,\dots$.
So by passing to a subsequence if necessary, we may assume that all points $y^n_1, y^n_2, y^n_3,  y^n_4$, $n\in \N$, lie on the peripheral circle $J$.  By passing to further subsequences if necessary, we may assume that 
$$ y_i^n \to y_i\in J \quad\text{as}\quad n\to \infty \forr i=1,2,3,4. $$

By \eqref{uptsep0} we have 
$$ y_i\ne y_j \forr i=1,2,3,\ i\ne j.$$
Moreover, since the points $y^n_1, y^n_2, y^n_3,  y^n_4$ are  in cyclic order on $J$, the point $y_4$ is contained in the subarc $\alpha$ of $J$ 
with endpoints $y_1$ and $y_3$ that does not contain 
$y_2$. Hence $y_2\ne y_4$, and it follows that
\begin{eqnarray*}
0&= & \lim_{n\to \infty} [y^n_1, y^n_2, y^n_3,  y^n_4] \\
&=& \lim_{n\to \infty}
 \frac{d(y^n_1, y^n_3)d(y^n_2,y^n_4)}
 {d(y^n_1, y^n_4)d(y^n_2,y^n_3)} \\
 &=& \frac{d(y_1, y_3)d(y_2,y_4)}{d(y_1, y_4)d(y_2,y_3)}\in 
 (0,+\infty].
\end{eqnarray*}
Here the last expression is interpreted as $+\infty$ if 
$d(y_1, y_4)=0$, and is a finite non-zero number if 
$d(y_1, y_4)\ne 0$. Note that all other terms are non-zero. In any case we get  a contradiction showing that 
the peripheral circles of $\geo G$ are uniform quasicircles.

The argument for showing uniform relative separation 
of the peripheral circles uses similar ideas. Again we argue by contradiction and assume that 
there is a sequence of pairs $S'_n$ and $S''_n$ of two distinct  peripheral circles of $\geo G$ such that 
$$ \Delta(S'_n, S''_n)\to 0 \quad\text{as}\quad n\to \infty.$$
By Lemma~\ref{lem:crrelsep} and Lemma~\ref{lem:modcr} we can then find points 
$x^n_1,x^n_4\in S'_n$ and $x^n_2,x^n_3\in S''_n$
such that 
$$ [x^n_1, x^n_2, x^n_3, x^n_4]\to 0 \quad\text{as}\quad n\to \infty.$$

Again using that the action of $G$  on $\geo G$ is cocompact on triples,  
we can find  
 $g_n\in G$ for  $n\in \N$ such that 
 \begin{equation}\label{uptsep}
  d(y^n_i,y^n_j)\ge \eps_0 \forr i,j=1,2,3,\  i\ne j, 
  \end{equation}
 where  $y_i^n=g_n(x^n_i)$ for $i=1,2,3,4$, $n\in \N$.
Since the action of $G$ on $\geo G$ is uniformly
quasi-M\"obius, we see that 
\begin{equation}\label{crto0}
 [y^n_1, y^n_2, y^n_3, y^n_4]\to 0 \quad\text{as}\quad n\to \infty.
\end{equation} 
Let $J_n=g_n(S_n')$ and $J'_n=g_n(S''_n)$ for $n\in \N$.
For each $n\in \N$ the sets  $J_n$ and $J'_n$ are two distinct peripheral circles of $\geo G$ with $y^n_1,y^n_4\in 
J_n$ and $y^n_2,y^n_3\in J'_n$. Using \eqref{crto0} in combination with Lemma~\ref{lem:crrelsep} and Lemma~\ref{lem:modcr}, we conclude that 
\begin{equation}\label{usepto0}
 \Delta(J_n,J_n') \to 0\quad\text{as}\quad n\to \infty.
 \end{equation}
Note that 
$$\diam(J'_n)\ge d(y^n_2, y^n_3)\ge \eps_0 \forr n\in \N,$$
and
$$ [y^n_1,  y^n_2, y^n_3, y^n_4]=
\frac{d(y^n_1, y^n_3)d(y^n_2,y^n_4)}
 {d(y^n_1, y^n_4)d(y^n_2,y^n_3)} \ge 
 \frac{ \eps_0d(y^n_2,y^n_4)}{\diam(\geo G)^2}.$$
 This forces the relation $d(y^n_2,y^n_4)\to 0$ as $n\to \infty$,
 and hence 
$$ \diam(J_n)\ge d(y^n_1,y^n_4)\ge d(y^n_1,y^n_2)-
d(y^n_2,y^n_4) \ge \eps_0/2$$
for large $n$.

So all but finitely many of the peripheral circles  $J_n$ and $J_n'$ have diameter $\ge \eps_0/2>0$. As in the first part of the proof, this  shows that the collection of all peripheral circles $J_n$ and $J_n'$, $n\in \N$, is 
 finite, and hence there are only finitely many pairs
$(J_n,J'_n)$. Since for each pair $\Delta(J_n,J_n')>0$,  we must have
$$ \inf_{n\in \N} \Delta(J_n,J_n')>0. $$
This contradicts \eqref{usepto0}, showing that the peripheral circles of $\geo G$ are indeed uniformly 
relatively separated. 

To prove the final statement we start with two general remarks about arbitrary carpets. Namely, if $T$ is a carpet, then 
every nonempty open set $U\sub T$ contains a peripheral circle.
This is obviously true for the standard Sierpi\'nski carpet, and so it holds for all carpets.

Secondly, if $T$ is an arbitrary metric carpet, then for every 
$r>0$ there exists $\de>0$ such that every open ball in $T$ of radius $r$ contains a peripheral circle $J$ with $\diam(J)>\de$. For otherwise, there exists $r>0$,  and a sequence of balls 
$B_n=B(x_n,r)$ in $T$ such that $B_n$ does not contain any 
peripheral circle of diameter $\ge 1/n$. Using the compactness of $T$ and passing to a subsequence 
if necessary we may assume that $x_n\to x\in T$ as $n\to \infty$. Then $B=B(x,r/2)\sub  B(x_n,r) $ for large $n$ and so 
the open and nonempty set $B$ cannot contain any peripheral circle of $T$. This contradicts the first remark.

Now let $G$ be a Gromov hyperbolic group with carpet boundary $\geo G$ as in the beginning of the proof. Let $B=B(x,r)$ with $x\in\geo G$ and 
$0<r\le \diam(\geo G)$ be arbitrary. Let  $\la\ge 2$ be a large constant whose precise value we will determine later.
Define $x_1=x$. Since $\geo G$ is connected, we can find 
points $x_2,x_3\in B(x,r/\la)$ such that 
$$ d(x_i,x_j)\ge r/(4\la)\forr i,j=1,2,3, \ i\ne j. $$
Since the action of $G$ on $\geo G$ is cocompact on triples,
we can find $g\in G$ such that 
$$ d(y_i,y_j)\ge \eps_0\forr i,j=1,2,3, \ i\ne j, $$
where $y_i=g(x_i)$ for $i=1,2,3$. 

We claim that  if $\la$ is large enough, only depending on 
$\eta$, $\eps_0$ and $\diam(\geo G)$, then 
\begin{equation}\label{diambdd}
 \diam(\geo G\setminus g(B))=
\diam(g(\geo G\setminus B))< \eps_0/2. 
\end{equation}
To find such $\la$ let $u,v\in \geo G\setminus B$ be arbitrary.
Then using the inequalities 
$$d(x_1,x_3)\le r/\la\le r/2 \le \frac 12 d(u,x_1)$$ and 
$$d(u,x_3)\ge d(u,x_1)-d(x_3,x_1)\ge \frac12 d(u,x_1)$$
we obtain 
\begin{eqnarray*}
[g(x_1), g(u), g(x_3), g(v)] &\le & \eta([x_1, u, x_3, v])\\
&=& \eta\biggl(\frac{d(x_1,x_3) d(u,v)}{d(x_1,v) d(u,x_3)}\biggr)\\
&\le&\eta\biggl(\frac{2r}\la\cdot \frac{ d(u, x_1) +d(v,x_1)}
{d(v,x_1) d(u,x_1)}\biggr)\\
&\le &
\eta\biggl(\frac{2r}\la\cdot \frac{2}
{d(v,x_1)\wedge d(u,x_1)}\biggr)\\
&\le &
\eta(4/\la). 
\end{eqnarray*}
On the other hand, 
\begin{eqnarray*}
[g(x_1), g(u), g(x_3), g(v)] &= & 
 \frac{d(y_1,y_3) d(g(u),g(v))}{d(y_1,g(v)) d(g(u),y_3)}\\
&\ge&\frac{\eps_0d (g(u), g(v))}{\diam(\geo G)^2}.
 \end{eqnarray*}
This implies that
$$ \diam(\geo G\setminus g(B))= \sup_{u,v\in \geo G\setminus B} d(g(u), g(v)) \le \frac 1{\eps_0} \diam(\geo G)^2
\eta(4/\la). $$
As $\eta(t)\to 0$ for $t\to 0$ this shows that we can indeed
find $\la=\la(\eps_0, \eta, \diam(\geo G))\ge 2$ independent of our initial choice of $B$ such that 
 \eqref{diambdd} holds.

By the remark above we can find $\de>0$ such that every open ball in $\geo G$ of radius $\eps_0/4$ contains a peripheral circle of diameter $\ge \de$. Hence each ball 
$B_i=B(y_i, \eps_0/4)$, $i=1,2,3$, contains a peripheral circle of 
diameter $\ge \de$. 
Note that $\dist (B_i,B_j)\ge d(y_i,y_j)-\eps_0/2\ge\eps_0/2$.  Therefore, the set $\geo G\setminus g( B)$ can meet at most one of the balls, and we can 
pick one of the balls,
say $B':=B_k$, where $k\in \{1,2,3\}$, so that 
 $B'\cap \geo G\setminus g(B)=\emptyset.  $
The ball $ B'$ contains a peripheral circle $J'$  with $\diam(J')
\ge \delta $. Let 
$J:=g^{-1}(J')$. Then $J$ is a peripheral circle 
with $$J\sub g^{-1}( B')\sub g^{-1}(g(B))=B.
$$
It remains to show that $J$ has a diameter comparable to 
$r$. To see this pick $u,v\in J$ such that 
$$ d(g(u), g(v))=\diam (g(J)) =\diam(J')\ge \de. $$ 
Two of the points $x_1,x_2,x_3$ must have distance $\ge 
r/(8\la)$ to $u$. Of these two, one must have distance 
$\ge r/(8\la)$ to $v$. It follows that there exist
$k,l\in \{1,2,3\}$, $k\ne l$, such that 
$d(x_k,u)\ge r/(8\la)$ and $d(x_l,v)\ge r/(8\la)$.
Then
\begin{eqnarray*}
[g(x_k), g(u), g(x_l), g(v)] &\le & \eta([x_k, u, x_l, v])\\
&=& \eta\biggl(\frac{d(x_k,x_l) d(u,v)}{d(x_k,v) d(u,x_l)}
\biggr)\\
&\le&\eta\biggl(128 \la\cdot \frac{ d(u,v)}r
\biggr). 
\end{eqnarray*}
On the other hand,
\begin{eqnarray*}
[g(x_k), g(u), g(x_l), g(v)] &= & 
 \frac{d(y_k,y_l) d(g(u),g(v))}{d(y_k,g(v)) 
d(g(u),y_l)}
\\
&\ge&\frac{\eps_0\de}{\diam(\geo G)^2}=:c_1>0.
\end{eqnarray*}
Hence 
$$  \frac 1r\diam(J) \ge \frac 1r d(u,v)\ge \frac 1{128\la}
\eta^{-1}(c_1) =:c_2>0.$$ 
Since $c_2>0$ is a positive constant independent of the ball 
$B$, it follows that every ball $B$ in $\geo G$ of radius $r\le \diam(\geo G)$ contains a peripheral circle of comparable size, where the constant of comparability is independent of the ball. The proves the last statement. 
\qed

\end{document}